\setlist[description]{leftmargin=\parindent,labelindent=\parindent}
\newtheorem{thm}{Theorem}[section]
\newtheorem{prop}[thm]{Proposition}
\newtheorem{lem}[thm]{Lemma}
\newtheorem{cor}[thm]{Corollary}
\theoremstyle{definition}
\newtheorem{definition}[thm]{Definition}
\newtheorem{rem}[thm]{Remark}
\newtheorem{question}[thm]{Question}
\numberwithin{equation}{section}
\renewcommand{\L}{\mathcal{L}}
\newcommand{\U}{\mathcal{U}}
\newcommand{\X}{\mathcal{X}}
\newcommand{\Y}{\mathcal{Y}}
\newcommand{\V}{\mathcal{V}}
\newcommand{\B}{\mathcal{B}}
\newcommand{\G}{\mathcal{G}}
\newcommand{\zz}{\mathbb{Z}}
\newcommand{\cc}{\mathbb{C}}
\newcommand{\qq}{\mathbb{Q}}
\newcommand{\I}{\mathcal{I}}
\newcommand{\C}{\mathcal{C}}
\newcommand{\W}{\mathcal{W}}
\newcommand{\M}{\mathcal{M}}
\newcommand{\p}{\mathbb{P}}
\newcommand{\pp}{\mathbb{P}}
\newcommand{\A}{\mathbb{A}}
\renewcommand{\H}{\mathcal{H}}
\newcommand{\F}{\mathcal{F}}
\renewcommand{\P}{\mathcal{P}}
\DeclareMathOperator{\cl}{cl}
\newcommand{\E}{\mathcal{E}}
\renewcommand{\O}{\mathcal{O}}
\newcommand{\Mg}{\mathcal{M}_g}
\DeclareMathOperator{\ct}{ct}
\DeclareMathOperator{\rt}{rt}
\renewcommand{\tilde}{\widetilde}
\DeclareMathOperator{\Hyp}{Hyp}
\DeclareMathOperator{\GL}{GL}
\DeclareMathOperator{\SL}{SL}
\DeclareMathOperator{\Supp}{Supp}
\DeclareMathOperator{\Sym}{Sym}
\DeclareMathOperator{\BSL}{BSL}
\DeclareMathOperator{\BPGL}{BPGL}
\DeclareMathOperator{\BGL}{BGL}
\renewcommand{\gg}{\mathbb{G}}
\newcommand{\dn}{\mathrm{dn}}
\newcommand{\Mb}{\overline{\M}}
\newcommand{\Cb}{\overline{\mathcal{C}}}
\newcommand{\hannah}[1]{{\color{teal} ($\spadesuit$ Hannah: #1)}}
\title{On the Chow and cohomology rings of moduli spaces of stable curves}
\author{Samir Canning}
\author{Hannah Larson}
\thanks{
S.C. was partially supported by NSF RTG grant DMS-1502651 and by the Swedish Research Council under grant no. 2016-06596 while in residence at Institut Mittag-Leffler in Djursholm, Sweden in fall 2021. 
This research was partially conducted during the period H.L served as a Clay Research Fellow. H.L. was partially supported by the Hertz Foundation and NSF GRFP grant DGE-1656518. }
\begin{document}

\maketitle


\begin{abstract}
In this paper, we ask: for which $(g, n)$ is the rational Chow or cohomology ring of $\Mb_{g,n}$ generated by tautological classes? This question has been fully answered in genus $0$ by Keel (the Chow and cohomology rings are tautological for all $n$ \cite{Keel}) and genus $1$ by Belorousski (the rings are tautological if and only if $n \leq 10$ \cite{Belorousski}). For $g \geq 2$, work of van Zelm \cite{vanZelm} shows the Chow and cohomology rings are not tautological once $2g + n \geq 24$, leaving finitely many open cases. Here, we prove that the Chow and cohomology rings of $\Mb_{g,n}$ are isomorphic and generated by tautological classes for $g = 2$ and $n \leq 9$ and for $3 \leq g \leq 7$ and $2g + n \leq 14$. For such $(g, n)$, this implies that the tautological ring is Gorenstein and 
$\Mb_{g,n}$ has polynomial point count. 
\end{abstract}

\section{Introduction}
In his landmark 1983 paper, Mumford proposed the study of the rational Chow rings of the moduli space of curves $\M_g$ and its Deligne--Mumford compactification $\Mb_g$ in low genus, and began the project by determining $A^*(\Mb_{2})$ \cite{Mumford}. In 1990, Faber determined $A^*(\Mb_{3})$ \cite{FaberI}. Since then, substantial progress has been made for the open moduli spaces, eventually determining $A^*(\M_g)$ for $g \leq 9$ \cite{FaberII,Izadi,PenevVakil,789}. 
The aim of this paper is to bring progress to the more
challenging problem of determining $A^*(\Mb_{g})$.
In particular, for $g \leq 7$, we will prove $A^*(\Mb_{g})$ is generated by tautological classes and the cycle class map is an isomorphism. Thus, the ring is algorithmically determinable, as we explain in Section \ref{gorsec}.

Due to the nature of its boundary, the first step in pursuing results for the compactification $\Mb_g$ is to consider all the moduli spaces $\Mb_{g,n}$ of $n$-pointed stable curves of genus $g$.  Let $R^*(\Mb_{g,n}) \subseteq A^*(\Mb_{g,n})$ be the subring generated by tautological classes (see Definition \ref{tdef}).

\begin{question} \label{mainq}
For which $(g, n)$ do we have $A^*(\Mb_{g,n}) = R^*(\Mb_{g,n})$ ?
\end{question}

Our answers to Question \ref{mainq} will also be used to give answers to its analogue in cohomology, which in turn have several applications.
The tautological ring in cohomology $RH^*(\Mb_{g,n}) \subseteq H^*(\Mb_{g,n})$ is defined as the image of $R^*(\Mb_{g,n})$ under the cycle class map. 
\begin{question} \label{cohq}
For which $(g, n)$ do we have $H^*(\Mb_{g,n}) = RH^*(\Mb_{g,n})$?
\end{question}
\noindent

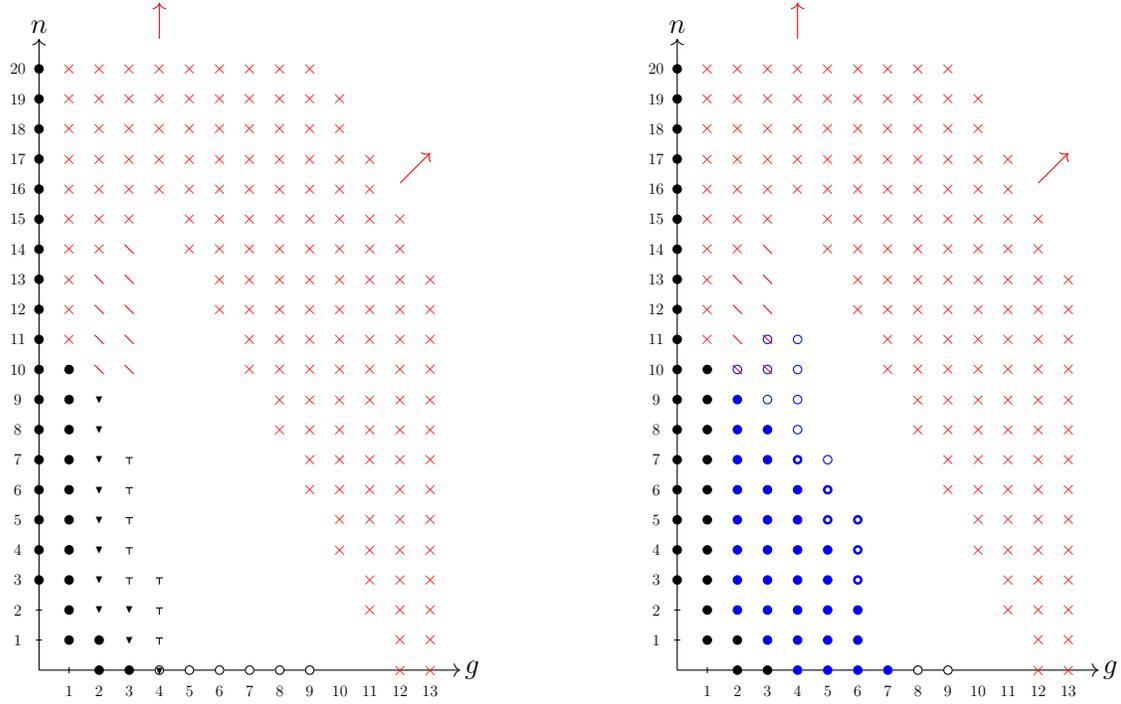
\begin{figure}[h!] 
\vspace{-.2in}
\begin{center}
\begin{tikzpicture}[scale = .4]

\node[scale=.35] at (2, 2) {$\blacktriangledown$};
\node[scale=.35] at (2, 3) {$\blacktriangledown$};
\node[scale=.35] at (2, 4) {$\blacktriangledown$};
\node[scale=.35] at (2, 5) {$\blacktriangledown$};
\node[scale=.35] at (2, 6) {$\blacktriangledown$};
\node[scale=.35] at (2, 7) {$\blacktriangledown$};
\node[scale=.35] at (2, 8) {$\blacktriangledown$};
\node[scale=.35] at (2, 9) {$\blacktriangledown$};

\node[scale=.5] at (1, -.7) {$1$};
\node[scale=.5] at (2, -.7) {$2$};
\node[scale=.5] at (3, -.7) {$3$};
\node[scale=.5] at (4, -.7) {$4$};
\node[scale=.5] at (5, -.7) {$5$};
\node[scale=.5] at (6, -.7) {$6$};
\node[scale=.5] at (7, -.7) {$7$};
\node[scale=.5] at (8, -.7) {$8$};
\node[scale=.5] at (9, -.7) {$9$};
\node[scale=.5] at (10, -.7) {$10$};
\node[scale=.5] at (11, -.7) {$11$};
\node[scale=.5] at (12, -.7) {$12$};
\node[scale=.5] at (13, -.7) {$13$};

\node[scale=.5] at (-.7, 1) {$1$};
\node[scale=.5] at (-.7, 2) {$2$};
\node[scale=.5] at (-.7, 3) {$3$};
\node[scale=.5] at (-.7, 4) {$4$};
\node[scale=.5] at (-.7, 5) {$5$};
\node[scale=.5] at (-.7, 6) {$6$};
\node[scale=.5] at (-.7, 7) {$7$};
\node[scale=.5] at (-.7, 8) {$8$};
\node[scale=.5] at (-.7, 9) {$9$};
\node[scale=.5] at (-.7, 10) {$10$};
\node[scale=.5] at (-.7, 11) {$11$};
\node[scale=.5] at (-.7, 12) {$12$};
\node[scale=.5] at (-.7, 13) {$13$};
\node[scale=.5] at (-.7, 14) {$14$};
\node[scale=.5] at (-.7, 15) {$15$};
\node[scale=.5] at (-.7, 16) {$16$};
\node[scale=.5] at (-.7, 17) {$17$};
\node[scale=.5] at (-.7, 18) {$18$};
\node[scale=.5] at (-.7, 19) {$19$};
\node[scale=.5] at (-.7, 20) {$20$};

\draw[->] (0, 0) -- (14, 0);
\draw[->] (0, 0) -- (0, 21);
\node[scale=.9] at (14.4,0) {$g$};
\node[scale=.9] at (0, 21.4) {$n$};
\draw (-.1, 1) -- (.1, 1);
\draw (-.1, 2) -- (.1, 2);
\draw (1, -.1) -- (1, .1);
\draw (2, -.1) -- (2, .1);
\draw (3, -.1) -- (3, .1);
\draw (4, -.1) -- (4, .1);
\filldraw (0, 3) circle (4pt);
\filldraw (0, 4) circle (4pt);
\filldraw (0, 5) circle (4pt);
\filldraw (0, 6) circle (4pt);
\filldraw (0, 7) circle (4pt);
\filldraw (0, 8) circle (4pt);
\filldraw (0, 9) circle (4pt);
\filldraw (0, 10) circle (4pt);
\filldraw (0, 11) circle (4pt);
\filldraw (0, 12) circle (4pt);
\filldraw (0, 13) circle (4pt);
\filldraw (0, 14) circle (4pt);
\filldraw (0, 15) circle (4pt);
\filldraw (0, 16) circle (4pt);
\filldraw (0, 17) circle (4pt);
\filldraw (0, 18) circle (4pt);
\filldraw (0, 19) circle (4pt);
\filldraw (0, 20) circle (4pt);
\filldraw (1, 1) circle (4pt);
\filldraw (1, 2) circle (4pt);
\filldraw (1, 3) circle (4pt);
\filldraw (1, 4) circle (4pt);
\filldraw (1, 5) circle (4pt);
\filldraw (1, 6) circle (4pt);
\filldraw (1, 7) circle (4pt);
\filldraw (1, 8) circle (4pt);
\filldraw (1, 9) circle (4pt);
\filldraw (1, 10) circle (4pt);
\node[scale = .6, color = red] at (1, 11) {$\times$};
\node[scale = .6, color = red] at (1, 12) {$\times$};
\node[scale = .6, color = red] at (1, 13) {$\times$};
\node[scale = .6, color = red] at (1, 14) {$\times$};
\node[scale = .6, color = red] at (1, 15) {$\times$};
\node[scale = .6, color = red] at (1, 16) {$\times$};
\node[scale = .6, color = red] at (1, 17) {$\times$};
\node[scale = .6, color = red] at (1, 18) {$\times$};
\node[scale = .6, color = red] at (1, 19) {$\times$};
\node[scale = .6, color = red] at (1, 20) {$\times$};

\draw[color=red] (2-.15,10+.15) -- (2+.15, 10-.15);
\draw[color=red] (2-.15,11+.15) -- (2+.15, 11-.15);
\draw[color=red] (2-.15,12+.15) -- (2+.15, 12-.15);
\draw[color=red] (2-.15,13+.15) -- (2+.15, 13-.15);
\draw[color=red] (3-.15,10+.15) -- (3+.15, 10-.15);
\draw[color=red] (3-.15,11+.15) -- (3+.15, 11-.15);
\draw[color=red] (3-.15,12+.15) -- (3+.15, 12-.15);
\draw[color=red] (3-.15,13+.15) -- (3+.15, 13-.15);
\draw[color=red] (3-.15,14+.15) -- (3+.15, 14-.15);

\node[scale = .6, color = red] at (2, 14) {$\times$};
\node[scale = .6, color = red] at (2, 15) {$\times$};
\node[scale = .6, color = red] at (2, 16) {$\times$};
\node[scale = .6, color = red] at (2, 17) {$\times$};
\node[scale = .6, color = red] at (2, 18) {$\times$};
\node[scale = .6, color = red] at (2, 19) {$\times$};
\node[scale = .6, color = red] at (2, 20) {$\times$};

\node[scale = .6, color = red] at (3, 15) {$\times$};
\node[scale = .6, color = red] at (3, 16) {$\times$};
\node[scale = .6, color = red] at (3, 17) {$\times$};
\node[scale = .6, color = red] at (3, 18) {$\times$};
\node[scale = .6, color = red] at (3, 19) {$\times$};
\node[scale = .6, color = red] at (3, 20) {$\times$};

\node[scale = .6, color = red] at (4, 16) {$\times$};
\node[scale = .6, color = red] at (4, 17) {$\times$};
\node[scale = .6, color = red] at (4, 18) {$\times$};
\node[scale = .6, color = red] at (4, 19) {$\times$};
\node[scale = .6, color = red] at (4, 20) {$\times$};

\node[scale = .6, color = red] at (5, 14) {$\times$};
\node[scale = .6, color = red] at (5, 15) {$\times$};
\node[scale = .6, color = red] at (5, 16) {$\times$};
\node[scale = .6, color = red] at (5, 17) {$\times$};
\node[scale = .6, color = red] at (5, 18) {$\times$};
\node[scale = .6, color = red] at (5, 19) {$\times$};
\node[scale = .6, color = red] at (5, 20) {$\times$};

\node[scale = .6, color = red] at (6, 12) {$\times$};
\node[scale = .6, color = red] at (6, 13) {$\times$};
\node[scale = .6, color = red] at (6, 14) {$\times$};
\node[scale = .6, color = red] at (6, 15) {$\times$};
\node[scale = .6, color = red] at (6, 16) {$\times$};
\node[scale = .6, color = red] at (6, 17) {$\times$};
\node[scale = .6, color = red] at (6, 18) {$\times$};
\node[scale = .6, color = red] at (6, 19) {$\times$};
\node[scale = .6, color = red] at (6, 20) {$\times$};

\node[scale = .6, color = red] at (7, 10) {$\times$};
\node[scale = .6, color = red] at (7, 11) {$\times$};
\node[scale = .6, color = red] at (7, 12) {$\times$};
\node[scale = .6, color = red] at (7, 13) {$\times$};
\node[scale = .6, color = red] at (7, 14) {$\times$};
\node[scale = .6, color = red] at (7, 15) {$\times$};
\node[scale = .6, color = red] at (7, 16) {$\times$};
\node[scale = .6, color = red] at (7, 17) {$\times$};
\node[scale = .6, color = red] at (7, 18) {$\times$};
\node[scale = .6, color = red] at (7, 19) {$\times$};
\node[scale = .6, color = red] at (7, 20) {$\times$};

\node[scale = .6, color = red] at (8, 8) {$\times$};
\node[scale = .6, color = red] at (8, 9) {$\times$};
\node[scale = .6, color = red] at (8, 10) {$\times$};
\node[scale = .6, color = red] at (8, 11) {$\times$};
\node[scale = .6, color = red] at (8, 12) {$\times$};
\node[scale = .6, color = red] at (8, 13) {$\times$};
\node[scale = .6, color = red] at (8, 14) {$\times$};
\node[scale = .6, color = red] at (8, 15) {$\times$};
\node[scale = .6, color = red] at (8, 16) {$\times$};
\node[scale = .6, color = red] at (8, 17) {$\times$};
\node[scale = .6, color = red] at (8, 18) {$\times$};
\node[scale = .6, color = red] at (8, 19) {$\times$};
\node[scale = .6, color = red] at (8, 20) {$\times$};

\node[scale = .6, color = red] at (9, 6) {$\times$};
\node[scale = .6, color = red] at (9, 7) {$\times$};
\node[scale = .6, color = red] at (9, 8) {$\times$};
\node[scale = .6, color = red] at (9, 9) {$\times$};
\node[scale = .6, color = red] at (9, 10) {$\times$};
\node[scale = .6, color = red] at (9, 11) {$\times$};
\node[scale = .6, color = red] at (9, 12) {$\times$};
\node[scale = .6, color = red] at (9, 13) {$\times$};
\node[scale = .6, color = red] at (9, 14) {$\times$};
\node[scale = .6, color = red] at (9, 15) {$\times$};
\node[scale = .6, color = red] at (9, 16) {$\times$};
\node[scale = .6, color = red] at (9, 17) {$\times$};
\node[scale = .6, color = red] at (9, 18) {$\times$};
\node[scale = .6, color = red] at (9, 19) {$\times$};
\node[scale = .6, color = red] at (9, 20) {$\times$};

\node[scale = .6, color = red] at (10, 4) {$\times$};
\node[scale = .6, color = red] at (10, 5) {$\times$};
\node[scale = .6, color = red] at (10, 6) {$\times$};
\node[scale = .6, color = red] at (10, 7) {$\times$};
\node[scale = .6, color = red] at (10, 8) {$\times$};
\node[scale = .6, color = red] at (10, 9) {$\times$};
\node[scale = .6, color = red] at (10, 10) {$\times$};
\node[scale = .6, color = red] at (10, 11) {$\times$};
\node[scale = .6, color = red] at (10, 12) {$\times$};
\node[scale = .6, color = red] at (10, 13) {$\times$};
\node[scale = .6, color = red] at (10, 14) {$\times$};
\node[scale = .6, color = red] at (10, 15) {$\times$};
\node[scale = .6, color = red] at (10, 16) {$\times$};
\node[scale = .6, color = red] at (10, 17) {$\times$};
\node[scale = .6, color = red] at (10, 18) {$\times$};
\node[scale = .6, color = red] at (10, 19) {$\times$};

\node[scale = .6, color = red] at (11, 2) {$\times$};
\node[scale = .6, color = red] at (11, 3) {$\times$};
\node[scale = .6, color = red] at (11, 4) {$\times$};
\node[scale = .6, color = red] at (11, 5) {$\times$};
\node[scale = .6, color = red] at (11, 6) {$\times$};
\node[scale = .6, color = red] at (11, 7) {$\times$};
\node[scale = .6, color = red] at (11, 8) {$\times$};
\node[scale = .6, color = red] at (11, 9) {$\times$};
\node[scale = .6, color = red] at (11, 10) {$\times$};
\node[scale = .6, color = red] at (11, 11) {$\times$};
\node[scale = .6, color = red] at (11, 12) {$\times$};
\node[scale = .6, color = red] at (11, 13) {$\times$};
\node[scale = .6, color = red] at (11, 14) {$\times$};
\node[scale = .6, color = red] at (11, 15) {$\times$};
\node[scale = .6, color = red] at (11, 16) {$\times$};
\node[scale = .6, color = red] at (11, 17) {$\times$};

\node[scale = .6, color = red] at (12, 0) {$\times$};
\node[scale = .6, color = red] at (12, 1) {$\times$};
\node[scale = .6, color = red] at (12, 2) {$\times$};
\node[scale = .6, color = red] at (12, 3) {$\times$};
\node[scale = .6, color = red] at (12, 4) {$\times$};
\node[scale = .6, color = red] at (12, 5) {$\times$};
\node[scale = .6, color = red] at (12, 6) {$\times$};
\node[scale = .6, color = red] at (12, 7) {$\times$};
\node[scale = .6, color = red] at (12, 8) {$\times$};
\node[scale = .6, color = red] at (12, 9) {$\times$};
\node[scale = .6, color = red] at (12, 10) {$\times$};
\node[scale = .6, color = red] at (12, 11) {$\times$};
\node[scale = .6, color = red] at (12, 12) {$\times$};
\node[scale = .6, color = red] at (12, 13) {$\times$};
\node[scale = .6, color = red] at (12, 14) {$\times$};
\node[scale = .6, color = red] at (12, 15) {$\times$};

\node[scale = .6, color = red] at (13, 0) {$\times$};
\node[scale = .6, color = red] at (13, 1) {$\times$};
\node[scale = .6, color = red] at (13, 2) {$\times$};
\node[scale = .6, color = red] at (13, 3) {$\times$};
\node[scale = .6, color = red] at (13, 4) {$\times$};
\node[scale = .6, color = red] at (13, 5) {$\times$};
\node[scale = .6, color = red] at (13, 6) {$\times$};
\node[scale = .6, color = red] at (13, 7) {$\times$};
\node[scale = .6, color = red] at (13, 8) {$\times$};
\node[scale = .6, color = red] at (13, 9) {$\times$};
\node[scale = .6, color = red] at (13, 10) {$\times$};
\node[scale = .6, color = red] at (13, 11) {$\times$};
\node[scale = .6, color = red] at (13, 12) {$\times$};
\node[scale = .6, color = red] at (13, 13) {$\times$};

\draw[color=red, ->] (12, 16.2) -- (13, 17.2);
\draw[color=red,->] (4, 21) -- (4, 22.2);

\filldraw (2, 0) circle (4pt);
\filldraw (2, 1) circle (4pt);
\filldraw (3, 0) circle (4pt);
\filldraw[color=white] (4, 0) circle (4pt);
\draw[color=black] (4, 0) circle (4pt);
\node[scale=.35] at (4.0075, 0-.02) {$\blacktriangledown$};

\draw (4-.12,.07+1) -- (4.12, .07+1);
\draw (4, .07+1) -- (4, -.15+1);

\draw (4-.12,.07+2) -- (4.12, .07+2);
\draw (4, .07+2) -- (4, -.15+2);

\draw (4-.12,.07+3) -- (4.12, .07+3);
\draw (4, .07+3) -- (4, -.15+3);

\node[scale=.35] at (3, 1) {$\blacktriangledown$};
\node[scale=.35] at (3, 2) {$\blacktriangledown$};

\draw (3-.12,.07+3) -- (3.12, .07+3);
\draw (3, .07+3) -- (3, -.15+3);

\draw (3-.12,.07+4) -- (3.12, .07+4);
\draw (3, .07+4) -- (3, -.15+4);

\draw (3-.12,.07+5) -- (3.12, .07+5);
\draw (3, .07+5) -- (3, -.15+5);

\draw (3-.12,.07+6) -- (3.12, .07+6);
\draw (3, .07+6) -- (3, -.15+6);

\draw (3-.12,.07+7) -- (3.12, .07+7);
\draw (3, .07+7) -- (3, -.15+7);

\filldraw[color=white] (5, 0) circle (4pt);
\draw[color=black] (5, 0) circle (4pt);

\filldraw[color=white] (6, 0) circle (4pt);
\draw[color=black] (6, 0) circle (4pt);

\filldraw[color=white] (7, 0) circle (4pt);
\draw[color=black] (7, 0) circle (4pt);

\filldraw[color=white] (8, 0) circle (4pt);
\draw[color=black] (8, 0) circle (4pt);

\filldraw[color=white] (9, 0) circle (4pt);
\draw[color=black] (9, 0) circle (4pt);
\end{tikzpicture}
\hspace{.7in}
\begin{tikzpicture}[scale = .4]
\node[scale=.5] at (1, -.7) {$1$};
\node[scale=.5] at (2, -.7) {$2$};
\node[scale=.5] at (3, -.7) {$3$};
\node[scale=.5] at (4, -.7) {$4$};
\node[scale=.5] at (5, -.7) {$5$};
\node[scale=.5] at (6, -.7) {$6$};
\node[scale=.5] at (7, -.7) {$7$};
\node[scale=.5] at (8, -.7) {$8$};
\node[scale=.5] at (9, -.7) {$9$};
\node[scale=.5] at (10, -.7) {$10$};
\node[scale=.5] at (11, -.7) {$11$};
\node[scale=.5] at (12, -.7) {$12$};
\node[scale=.5] at (13, -.7) {$13$};

\node[scale=.5] at (-.7, 1) {$1$};
\node[scale=.5] at (-.7, 2) {$2$};
\node[scale=.5] at (-.7, 3) {$3$};
\node[scale=.5] at (-.7, 4) {$4$};
\node[scale=.5] at (-.7, 5) {$5$};
\node[scale=.5] at (-.7, 6) {$6$};
\node[scale=.5] at (-.7, 7) {$7$};
\node[scale=.5] at (-.7, 8) {$8$};
\node[scale=.5] at (-.7, 9) {$9$};
\node[scale=.5] at (-.7, 10) {$10$};
\node[scale=.5] at (-.7, 11) {$11$};
\node[scale=.5] at (-.7, 12) {$12$};
\node[scale=.5] at (-.7, 13) {$13$};
\node[scale=.5] at (-.7, 14) {$14$};
\node[scale=.5] at (-.7, 15) {$15$};
\node[scale=.5] at (-.7, 16) {$16$};
\node[scale=.5] at (-.7, 17) {$17$};
\node[scale=.5] at (-.7, 18) {$18$};
\node[scale=.5] at (-.7, 19) {$19$};
\node[scale=.5] at (-.7, 20) {$20$};

\draw[->] (0, 0) -- (14, 0);
\draw[->] (0, 0) -- (0, 21);
\node[scale=.9] at (14.4,0) {$g$};
\node[scale=.9] at (0, 21.4) {$n$};
\draw (-.1, 1) -- (.1, 1);
\draw (-.1, 2) -- (.1, 2);
\draw (1, -.1) -- (1, .1);
\draw (2, -.1) -- (2, .1);
\draw (3, -.1) -- (3, .1);
\draw (4, -.1) -- (4, .1);
\filldraw (0, 3) circle (4pt);
\filldraw (0, 4) circle (4pt);
\filldraw (0, 5) circle (4pt);
\filldraw (0, 6) circle (4pt);
\filldraw (0, 7) circle (4pt);
\filldraw (0, 8) circle (4pt);
\filldraw (0, 9) circle (4pt);
\filldraw (0, 10) circle (4pt);
\filldraw (0, 11) circle (4pt);
\filldraw (0, 12) circle (4pt);
\filldraw (0, 13) circle (4pt);
\filldraw (0, 14) circle (4pt);
\filldraw (0, 15) circle (4pt);
\filldraw (0, 16) circle (4pt);
\filldraw (0, 17) circle (4pt);
\filldraw (0, 18) circle (4pt);
\filldraw (0, 19) circle (4pt);
\filldraw (0, 20) circle (4pt);
\filldraw (1, 1) circle (4pt);
\filldraw (1, 2) circle (4pt);
\filldraw (1, 3) circle (4pt);
\filldraw (1, 4) circle (4pt);
\filldraw (1, 5) circle (4pt);
\filldraw (1, 6) circle (4pt);
\filldraw (1, 7) circle (4pt);
\filldraw (1, 8) circle (4pt);
\filldraw (1, 9) circle (4pt);
\filldraw (1, 10) circle (4pt);
\node[scale = .6, color = red] at (1, 11) {$\times$};
\node[scale = .6, color = red] at (1, 12) {$\times$};
\node[scale = .6, color = red] at (1, 13) {$\times$};
\node[scale = .6, color = red] at (1, 14) {$\times$};
\node[scale = .6, color = red] at (1, 15) {$\times$};
\node[scale = .6, color = red] at (1, 16) {$\times$};
\node[scale = .6, color = red] at (1, 17) {$\times$};
\node[scale = .6, color = red] at (1, 18) {$\times$};
\node[scale = .6, color = red] at (1, 19) {$\times$};
\node[scale = .6, color = red] at (1, 20) {$\times$};

\draw[color=red] (2-.15,10+.15) -- (2+.15, 10-.15);
\draw[color=red] (2-.15,11+.15) -- (2+.15, 11-.15);
\draw[color=red] (2-.15,12+.15) -- (2+.15, 12-.15);
\draw[color=red] (2-.15,13+.15) -- (2+.15, 13-.15);
\node[scale = .6, color = red] at (2, 14) {$\times$};
\node[scale = .6, color = red] at (2, 15) {$\times$};
\node[scale = .6, color = red] at (2, 16) {$\times$};
\node[scale = .6, color = red] at (2, 17) {$\times$};
\node[scale = .6, color = red] at (2, 18) {$\times$};
\node[scale = .6, color = red] at (2, 19) {$\times$};
\node[scale = .6, color = red] at (2, 20) {$\times$};

\draw[color=red] (3-.15,10+.15) -- (3+.15, 10-.15);
\draw[color=red] (3-.15,11+.15) -- (3+.15, 11-.15);
\draw[color=red] (3-.15,12+.15) -- (3+.15, 12-.15);
\draw[color=red] (3-.15,13+.15) -- (3+.15, 13-.15);
\draw[color=red] (3-.15,14+.15) -- (3+.15, 14-.15);
\node[scale = .6, color = red] at (3, 15) {$\times$};
\node[scale = .6, color = red] at (3, 16) {$\times$};
\node[scale = .6, color = red] at (3, 17) {$\times$};
\node[scale = .6, color = red] at (3, 18) {$\times$};
\node[scale = .6, color = red] at (3, 19) {$\times$};
\node[scale = .6, color = red] at (3, 20) {$\times$};

\node[scale = .6, color = red] at (4, 16) {$\times$};
\node[scale = .6, color = red] at (4, 17) {$\times$};
\node[scale = .6, color = red] at (4, 18) {$\times$};
\node[scale = .6, color = red] at (4, 19) {$\times$};
\node[scale = .6, color = red] at (4, 20) {$\times$};

\node[scale = .6, color = red] at (5, 14) {$\times$};
\node[scale = .6, color = red] at (5, 15) {$\times$};
\node[scale = .6, color = red] at (5, 16) {$\times$};
\node[scale = .6, color = red] at (5, 17) {$\times$};
\node[scale = .6, color = red] at (5, 18) {$\times$};
\node[scale = .6, color = red] at (5, 19) {$\times$};
\node[scale = .6, color = red] at (5, 20) {$\times$};

\node[scale = .6, color = red] at (6, 12) {$\times$};
\node[scale = .6, color = red] at (6, 13) {$\times$};
\node[scale = .6, color = red] at (6, 14) {$\times$};
\node[scale = .6, color = red] at (6, 15) {$\times$};
\node[scale = .6, color = red] at (6, 16) {$\times$};
\node[scale = .6, color = red] at (6, 17) {$\times$};
\node[scale = .6, color = red] at (6, 18) {$\times$};
\node[scale = .6, color = red] at (6, 19) {$\times$};
\node[scale = .6, color = red] at (6, 20) {$\times$};

\node[scale = .6, color = red] at (7, 10) {$\times$};
\node[scale = .6, color = red] at (7, 11) {$\times$};
\node[scale = .6, color = red] at (7, 12) {$\times$};
\node[scale = .6, color = red] at (7, 13) {$\times$};
\node[scale = .6, color = red] at (7, 14) {$\times$};
\node[scale = .6, color = red] at (7, 15) {$\times$};
\node[scale = .6, color = red] at (7, 16) {$\times$};
\node[scale = .6, color = red] at (7, 17) {$\times$};
\node[scale = .6, color = red] at (7, 18) {$\times$};
\node[scale = .6, color = red] at (7, 19) {$\times$};
\node[scale = .6, color = red] at (7, 20) {$\times$};

\node[scale = .6, color = red] at (8, 8) {$\times$};
\node[scale = .6, color = red] at (8, 9) {$\times$};
\node[scale = .6, color = red] at (8, 10) {$\times$};
\node[scale = .6, color = red] at (8, 11) {$\times$};
\node[scale = .6, color = red] at (8, 12) {$\times$};
\node[scale = .6, color = red] at (8, 13) {$\times$};
\node[scale = .6, color = red] at (8, 14) {$\times$};
\node[scale = .6, color = red] at (8, 15) {$\times$};
\node[scale = .6, color = red] at (8, 16) {$\times$};
\node[scale = .6, color = red] at (8, 17) {$\times$};
\node[scale = .6, color = red] at (8, 18) {$\times$};
\node[scale = .6, color = red] at (8, 19) {$\times$};
\node[scale = .6, color = red] at (8, 20) {$\times$};

\node[scale = .6, color = red] at (9, 6) {$\times$};
\node[scale = .6, color = red] at (9, 7) {$\times$};
\node[scale = .6, color = red] at (9, 8) {$\times$};
\node[scale = .6, color = red] at (9, 9) {$\times$};
\node[scale = .6, color = red] at (9, 10) {$\times$};
\node[scale = .6, color = red] at (9, 11) {$\times$};
\node[scale = .6, color = red] at (9, 12) {$\times$};
\node[scale = .6, color = red] at (9, 13) {$\times$};
\node[scale = .6, color = red] at (9, 14) {$\times$};
\node[scale = .6, color = red] at (9, 15) {$\times$};
\node[scale = .6, color = red] at (9, 16) {$\times$};
\node[scale = .6, color = red] at (9, 17) {$\times$};
\node[scale = .6, color = red] at (9, 18) {$\times$};
\node[scale = .6, color = red] at (9, 19) {$\times$};
\node[scale = .6, color = red] at (9, 20) {$\times$};

\node[scale = .6, color = red] at (10, 4) {$\times$};
\node[scale = .6, color = red] at (10, 5) {$\times$};
\node[scale = .6, color = red] at (10, 6) {$\times$};
\node[scale = .6, color = red] at (10, 7) {$\times$};
\node[scale = .6, color = red] at (10, 8) {$\times$};
\node[scale = .6, color = red] at (10, 9) {$\times$};
\node[scale = .6, color = red] at (10, 10) {$\times$};
\node[scale = .6, color = red] at (10, 11) {$\times$};
\node[scale = .6, color = red] at (10, 12) {$\times$};
\node[scale = .6, color = red] at (10, 13) {$\times$};
\node[scale = .6, color = red] at (10, 14) {$\times$};
\node[scale = .6, color = red] at (10, 15) {$\times$};
\node[scale = .6, color = red] at (10, 16) {$\times$};
\node[scale = .6, color = red] at (10, 17) {$\times$};
\node[scale = .6, color = red] at (10, 18) {$\times$};
\node[scale = .6, color = red] at (10, 19) {$\times$};

\node[scale = .6, color = red] at (11, 2) {$\times$};
\node[scale = .6, color = red] at (11, 3) {$\times$};
\node[scale = .6, color = red] at (11, 4) {$\times$};
\node[scale = .6, color = red] at (11, 5) {$\times$};
\node[scale = .6, color = red] at (11, 6) {$\times$};
\node[scale = .6, color = red] at (11, 7) {$\times$};
\node[scale = .6, color = red] at (11, 8) {$\times$};
\node[scale = .6, color = red] at (11, 9) {$\times$};
\node[scale = .6, color = red] at (11, 10) {$\times$};
\node[scale = .6, color = red] at (11, 11) {$\times$};
\node[scale = .6, color = red] at (11, 12) {$\times$};
\node[scale = .6, color = red] at (11, 13) {$\times$};
\node[scale = .6, color = red] at (11, 14) {$\times$};
\node[scale = .6, color = red] at (11, 15) {$\times$};
\node[scale = .6, color = red] at (11, 16) {$\times$};
\node[scale = .6, color = red] at (11, 17) {$\times$};

\node[scale = .6, color = red] at (12, 0) {$\times$};
\node[scale = .6, color = red] at (12, 1) {$\times$};
\node[scale = .6, color = red] at (12, 2) {$\times$};
\node[scale = .6, color = red] at (12, 3) {$\times$};
\node[scale = .6, color = red] at (12, 4) {$\times$};
\node[scale = .6, color = red] at (12, 5) {$\times$};
\node[scale = .6, color = red] at (12, 6) {$\times$};
\node[scale = .6, color = red] at (12, 7) {$\times$};
\node[scale = .6, color = red] at (12, 8) {$\times$};
\node[scale = .6, color = red] at (12, 9) {$\times$};
\node[scale = .6, color = red] at (12, 10) {$\times$};
\node[scale = .6, color = red] at (12, 11) {$\times$};
\node[scale = .6, color = red] at (12, 12) {$\times$};
\node[scale = .6, color = red] at (12, 13) {$\times$};
\node[scale = .6, color = red] at (12, 14) {$\times$};
\node[scale = .6, color = red] at (12, 15) {$\times$};

\node[scale = .6, color = red] at (13, 0) {$\times$};
\node[scale = .6, color = red] at (13, 1) {$\times$};
\node[scale = .6, color = red] at (13, 2) {$\times$};
\node[scale = .6, color = red] at (13, 3) {$\times$};
\node[scale = .6, color = red] at (13, 4) {$\times$};
\node[scale = .6, color = red] at (13, 5) {$\times$};
\node[scale = .6, color = red] at (13, 6) {$\times$};
\node[scale = .6, color = red] at (13, 7) {$\times$};
\node[scale = .6, color = red] at (13, 8) {$\times$};
\node[scale = .6, color = red] at (13, 9) {$\times$};
\node[scale = .6, color = red] at (13, 10) {$\times$};
\node[scale = .6, color = red] at (13, 11) {$\times$};
\node[scale = .6, color = red] at (13, 12) {$\times$};
\node[scale = .6, color = red] at (13, 13) {$\times$};

\draw[color=red, ->] (12, 16.2) -- (13, 17.2);
\draw[color=red,->] (4, 21) -- (4, 22.2);

\filldraw (2, 0) circle (4pt);
\filldraw (2, 1) circle (4pt);
\filldraw[color=blue] (2, 2) circle (4pt);
\filldraw[color=blue] (2, 3) circle (4pt);
\filldraw[color=blue] (2, 4) circle (4pt);
\filldraw[color=blue] (2, 5) circle (4pt);
\filldraw[color=blue] (2, 6) circle (4pt);
\filldraw[color=blue] (2, 7) circle (4pt);
\filldraw[color=blue] (2, 8) circle (4pt);
\filldraw[color=blue] (2, 9) circle (4pt);
\draw[color=blue] (2, 10) circle (4pt);
\filldraw (3, 0) circle (4pt);
\filldraw[color=blue] (3, 1) circle (4pt);
\filldraw[color=blue] (3, 2) circle (4pt);
\filldraw[color=blue] (3, 3) circle (4pt);
\filldraw[color=blue] (3, 4) circle (4pt);
\filldraw[color=blue] (3, 5) circle (4pt);
\filldraw[color=blue] (3, 6) circle (4pt);
\filldraw[color=blue] (3, 7) circle (4pt);
\filldraw[color=blue] (3, 8) circle (4pt);
\draw[color=blue] (3, 9) circle (4pt);
\draw[color=blue] (3, 10) circle (4pt);
\draw[color=blue] (3, 11) circle (4pt);
\filldraw[color = white] (4,0) circle (4pt);
\filldraw[color=blue] (4, 0) circle (4pt);
\filldraw[color=blue] (4, 1) circle (4pt);
\filldraw[color=blue] (4, 2) circle (4pt);
\filldraw[color=blue] (4, 3) circle (4pt);
\filldraw[color=blue] (4, 4) circle (4pt);
\filldraw[color=blue] (4, 5) circle (4pt);
\filldraw[color=blue] (4, 6) circle (4pt);
\filldraw[color=blue] (4, 7) circle (4pt);
\filldraw[color=white] (4, 7) circle (1.6pt);
\draw[color=blue] (4, 8) circle (4pt);
\draw[color=blue] (4, 9) circle (4pt);
\draw[color=blue] (4, 10) circle (4pt);
\draw[color=blue] (4, 11) circle (4pt);
\filldraw[color=white] (6, 0) circle (4pt);
\filldraw[color=blue] (6, 0) circle (4pt);
\filldraw[color=blue] (6, 1) circle (4pt);
\filldraw[color=blue] (6, 2) circle (4pt);
\filldraw[color=blue] (6, 3) circle (4pt);
\filldraw[color=white] (6, 3) circle (1.6pt);
\filldraw[color=blue] (6, 4) circle (4pt);
\filldraw[color=white] (6, 4) circle (1.6pt);
\filldraw[color=blue] (6, 5) circle (4pt);
\filldraw[color=white] (6, 5) circle (1.6pt);

\filldraw[color=white] (5, 0) circle (4pt);
\filldraw[color=blue] (5, 0) circle (4pt);
\filldraw[color=blue] (5, 1) circle (4pt);
\filldraw[color=blue] (5, 2) circle (4pt);
\filldraw[color=blue] (5, 3) circle (4pt);
\filldraw[color=blue] (5, 4) circle (4pt);
\filldraw[color=blue] (5, 5) circle (4pt);
\filldraw[color=blue] (5, 6) circle (4pt);
\draw[color=blue] (5, 7) circle (4pt);
\filldraw[color=white] (5, 5) circle (1.6pt);
\filldraw[color=white] (5, 6) circle (1.6pt);

\filldraw[color=white] (7, 0) circle (4pt);
\filldraw[color=blue] (7, 0) circle (4pt);

\filldraw[color=white] (8, 0) circle (4pt);
\draw[color=black] (8, 0) circle (4pt);

\filldraw[color=white] (9, 0) circle (4pt);
\draw[color=black] (9, 0) circle (4pt);
\end{tikzpicture}
\end{center}
\caption{Left: Previously known results regarding Questions \ref{mainq} and \ref{cohq}. \\
Right: Our new results (Theorem \ref{main}) in context, pictured in blue.}
\label{bigfig}
\end{figure}

\begin{center}
\begin{tikzpicture}[scale=.4]
\draw (-2, 11) -- (-2, 0) -- (19.5, 0) -- (19.5, 11) -- (-2, 11);
\filldraw (-1, 10) circle (4pt);
\node[scale=.8] at (9.5, 10) {$A^*(\Mb_{g,n})=R^*(\Mb_{g,n})$ and $H^*(\Mb_{g,n}) = RH^*(\Mb_{g,n})$};

\node[scale=.35] at (-1, 8.5) {$\blacktriangledown$};
\node[scale=.8,align=left] at (4.5, 8.5) {$H^*(\Mb_{g,n})=RH^*(\Mb_{g,n})$};

\draw (-1-.12,.07+7) -- (-1+0.12, .07+7);
\draw (-1, .07+7) -- (-1, -.15+7);
\node[scale=.8,align=left] at (3.35, 7) {$\#\Mb_{g,n}(\mathbb{F}_q)=P(q)$};

\filldraw[color=black] (-1, 5.5) circle (4pt);
\filldraw[color=white] (-1, 5.5) circle (1.6pt);
\node[scale=.8,align=left] at (4, 5.5) {$A^*(\M_{g,n}^{\ct})=R^*(\M_{g,n}^{\ct})$};

\draw (-1, 4) circle (4pt);
\node[scale=.8,align=left] at (4, 4) {$A^*(\M_{g,n})=R^*(\M_{g,n})$};

\node[scale = .6, color = red] at (-1, 2.5) {$\times$};
\node[scale=.8] at (9.5, 2.5) {$A^*(\Mb_{g,n})\neq R^*(\Mb_{g,n})$ and $H^*(\Mb_{g,n}) \neq RH^*(\Mb_{g,n})$};

\draw[color = red] (-1+.15, 1-.15) -- (-1-.15,1+.15);
\node[scale=.8,align=left] at (4.5, 1) {$H^*(\Mb_{g,n})\neq RH^*(\Mb_{g,n})$};

\filldraw (30-2, 10) circle (4pt);
\draw[->] (30-2-.4, 10-.4) -- (27.5-2+.4, 7+.4);
\draw[->] (30-2+.4, 10-.4) -- (32.5-2-.4, 7+.4);

\node[scale=.35] at (27.5-2, 7) {$\blacktriangledown$};

\draw[->] (27.5-2,7-.6) -- (27.5-2, 3.5+.6);
\draw (27.5-2-.12,.07+3.5) -- (27.5-2+0.12, .07+3.5);
\draw (27.5-2, .07+3.5) -- (27.5-2, -.15+3.5);

\filldraw[color=black] (32.5-2, 7) circle (4pt);
\filldraw[color=white] (32.5-2, 7) circle (1.6pt);
\draw[->] (32.5-2,7-.6) -- (32.5-2, 3.5+.6);
\draw (32.5-2, 3.5) circle (4pt);
\draw[color=white] (33, 3) circle (4pt);

\node[color=red, scale=.6] at (27.5-2,  1-.3) {$\times$};

\draw[color=red] (30.5-.15, 1-.3+.15) -- (30.5+.15, 1-.3-.15);
\draw[->] (27.5-2+.5, 1-.3) -- (30.5-.5,1-.3);
\end{tikzpicture}
\end{center}

\subsection{Summary of previous work} \label{previouswork}
The filled and open circles in the bottom row of Figure \ref{bigfig} represent the results mentioned in the first paragraph of the paper.
A full account of the known cases where $A^*(\Mb_{g,n}) = R^*(\Mb_{g,n})$ and $H^*(\Mb_{g,n}) = RH^*(\Mb_{g,n})$ is
\begin{itemize}
    \item $g = 0$: $n \geq 3$ by Keel in 1992 \cite{Keel},
    \item $g = 1$: $n \leq 10$ by Belorousski in 1998 \cite{Belorousski} (Chow); Petersen in 2014 \cite{Petersengenus1} (cohomology),
    \item $g = 2$: $n=0$ by Mumford in 1983 \cite{Mumford}, $n=1$ by Faber in 1988 \cite{FaberPhD},
    \item $g = 3$: $n = 0$ by Faber in 1990 \cite{FaberI}.
\end{itemize}

If $A^*(\Mb_{g,n}) = R^*(\Mb_{g,n})$ then $A^*(\M_{g,n}) = R^*(\M_{g,n})$ by excision. The weaker statement that $A^*(\M_{g,n}) = R^*(\M_{g,n})$ is pictured by open circles $\circ$ and was previously known for:
\begin{itemize}
    \item[$\circ$] $g = 4$: $n=0$ by Faber in 1990 \cite{FaberII},
    \item[$\circ$]  $g = 5$: $n=0$ by Izadi in 1995 \cite{Izadi},
    \item[$\circ$]  $g = 6$: $n=0$ by Penev and Vakil in 2015 \cite{PenevVakil},
    \item[$\circ$]  $g = 7, 8, 9$: $n=0$ by the authors in 2021 \cite{789}.
\end{itemize}

\begin{rem}[Integral coefficients]
Determining the Chow ring of $\Mb_{g,n}$ with integral coefficients is a much more subtle problem. The cases where the integral Chow ring is known are $\Mb_{0,n}$ by Keel in 1992 \cite{Keel}; $\Mb_{1,1}$ by Edidin--Graham in 1998 \cite{EdidinGraham}; $\Mb_{2}$ by E. Larson \cite{EricLarson} in 2021; $\Mb_{1,2}$ and $\Mb_{2,1}$ by Di Lorenzo, Pernice, and Vistoli in 2021 \cite{DiLorenzoPerniceVistoli}.
\end{rem}

There are also cases where results are known in cohomology but not in Chow. 
Much of the previous work computing $H^*(\Mb_{g,n})$ has used point counting. 
The number of points $\#\Mb_{g,n}(\mathbb{F}_q)$ is defined as the sum over isomorphism classes of $\mathbb{F}_q$ points, weighted by size of the automorphism group.
We say $\Mb_{g,n}$ has \emph{polynomial point count} 
 if there exists a polynomial $P$ with integer coefficients such that $\#\Mb_{g,n}(\mathbb{F}_q) = P(q)$ for all prime powers $q$.
If $H^*(\Mb_{g,n}) = RH^*(\Mb_{g,n})$ then $\Mb_{g,n}$ has polynomial point count (
see Proposition \ref{ptcounts}).
On the other hand, if $\Mb_{g,n}$ has polynomial point count, then all odd cohomology vanishes, and knowing the polynomial
determines the dimensions of all cohomology groups. Previously, $\Mb_{g,n}$ was shown to have polynomial point count (and the polynomial determined) for
\begin{itemize}
    \item[{\tiny $\top$}] $g = 2$: $n \leq 7$ by Bergstr\"om in 2009 \cite{Bergstrom2}; 
    \item[{\tiny $\top$}] $g = 3$: $n \leq 1$ by Getzler--Looijenga in 1999 \cite{GetzlerLooijenga}; $n \leq 5$ by Bergstr\"om in 2008 \cite{Bergstrom3}; 
    \item[{\tiny $\top$}] $g = 4$: $n = 0$ by 
    Bergstr\"om and Tommasi in 2006 \cite{BergstromTommasi}; $1 \leq n \leq 3$ by  Bergstr\"om, Faber and Payne in 2022
    \cite{BergstromFaberPayne}.
\end{itemize}
Upon completing this manuscript, the authors learned that Begstr\"om and Faber have improved the first two cases above to $g = 2$, $n \leq 9$ and $g = 3$, $n \leq 7$ \cite{BergstromFaber}.

Having polynomial point count does not rule out the possibility of non-tautological classes in even degrees, but with some additional work, it is known that all cohomology is tautological in the following cases:
\begin{itemize}
    \item[{\tiny $\blacktriangledown$}] $g=2$: $n \leq 9$ by work of Petersen and Tommasi in 2016 \cite{Petersen, PetersenTommasi}, which proved all even cohomology is tautological for $g = 2$ and $n < 20$.
    \item[{\tiny $\blacktriangledown$}] $g = 3$: $n \leq 1$ and $g = 4$: $n = 0$ by Schmitt and van Zelm in 2020 \cite{SchmittvanZelm}.
\end{itemize}
In the remaining genus $3$ and $4$ cases where polynomial point count is known, it is expected that all cohomology is tautological.
While computer calculations in low codimension support this expectation, 
confirming it by computer seems beyond present techniques (see \cite[Remark 1.8]{BergstromFaberPayne}).
One consequence of our theorem will be confirmation that these polynomial point counts, as well as many new ones, arise because all cohomology is tautological.

Meanwhile, non-tautological classes have been discovered in the following cases: 
\begin{itemize}
    \item[{\color{red} $\times$}] $g = 1$: $n \geq 11$. The weight $12$ discriminant cusp form gives rise to a holomorphic differential form on $\Mb_{1,11}$ (see \cite[Section 2.3]{FPhandbook} for an exposition). 
    Hence, $H^{11}(\Mb_{1,11}) \neq 0$, giving non-tautological cohomology. 
    Moreover, by a theorem of Ro\u{\i}tman \cite{Roitman},
    existence of a holomorphic form implies that the group of zero-cycles in Chow is infinite-dimensional, so they cannot all be tautological. (See \cite{zerocycles} for an discussion of which points on $\Mb_{g,n}$ can give tautological zero cycles.) Because pullback along the forgetful maps $\Mb_{g,n+1} \to \Mb_{g,n}$ is injective on Chow and cohomology, the existence of non-tautological classes propagates upward to all $n \geq 11$.
    \item[{\color{red} $\times$}] $g = 2$: $n \geq 14$ and $g = 3$: $n \geq 15$. Similarly, there are known holomorphic differential forms on $\Mb_{2,14}$ (see  \cite[Section 3.5]{FPhandbook}) and $\Mb_{3,15}$ (see \cite{CvdGF}).
     \item[{\color{red} $\times$}] Any $2g + n \geq 24$. 
     Graber--Pandharipande \cite{GraberPandharipande} produced the first explicit non-tautological \emph{algebraic} cycle (in Chow and cohomology) on $\Mb_{2,20}$, the fundamental class of the bielliptic locus with $10$ pairs of conjugate markings.
     Generalizing these techniques, van Zelm \cite{vanZelm} then produced non-tautological algebraic cycles on $\Mb_{g,n}$ for all $g,n$ with $2g + n \geq 24$, arising again from the bielliptic loci. 
     \item[{\tiny {\color{red} $\diagdown$}}] $g = 2,3$: $n \geq 10$. Upon completing this manuscript, the authors learned that $H^*(\Mb_{g,n}) \neq RH^*(\Mb_{g,n})$ in these cases by work of Bergstr\"om and Faber \cite{BergstromFaber}.
    \item[{\tiny {\color{red} $\diagdown$}}] We also note an interesting result of Pikaart \cite{Pikaart} which says $H^{33}(\Mb_{g,n}) \neq 0$ for $g$ sufficiently large.
\end{itemize}

\subsection{Statement of main result}
Our main result contributes 32 new cases, pictured by filled blue dots {\color{blue} $\bullet$}, where $A^*(\Mb_{g,n}) = R^*(\Mb_{g,n})$ and $H^*(\Mb_{g,n}) = RH^*(\Mb_{g,n})$. Upon restricting to the locus of compact type curves $\M_{g,n}^{\ct} \subset \Mb_{g,n}$, we obtain a few additional cases in Chow where $A^*(\M_{g,n}^{\ct}) = R^*(\M_{g,n}^{\ct})$ (pictured by bold open circles {\put(6,3){\color{blue}\circle*{5}}\put(6,3){\color{white}\circle*{2}}} \ \ ). Restricting further to the locus of smooth curves or those with only rational tails, $\M_{g,n} \subset \M_{g,n}^{\rt} \subset \M_{g,n}^{\ct}$, we have a few more cases where $A^*(\M_{g,n}) = R^*(\M_{g,n})$ and $A^*(\M_{g,n}^{\rt}) = R^*(\M_{g,n}^{\rt})$ (pictured by open circles {\color{blue} $\circ$}).

\begin{thm}\label{main}
The Chow/cohomology rings of the following moduli spaces are generated by tautological classes. 

\begin{center}
\begin{tabular}{c|c|c|c|c|c|c|}
& \ $g = 2$ \ & \  $g=3$ \ & \ $g=4$ \ & \ $g=5$ \ & \ $g=6$ \ & \ $g=7$ \ \\[6pt]
\hline &&&&& &\\[-6pt]  
{\color{blue} $\bullet$} $A^*(\Mb_{g,n}) = R^*(\Mb_{g,n})$ for $n \leq$ &  $9$  & $8$ & $6$ & $4$ & $2$ & $0$  \\[6pt]
{\color{white} $\bullet$} $H^*(\Mb_{g,n}) = RH^*(\Mb_{g,n})$ {\color{white} foraiii} &&&&& & \\[6pt]
\hline &&&&& &\\
{\color{white} i} {\put(-2,3){\color{blue}\circle*{5}}\put(-2,3){\color{white}\circle*{2}}}  $A^*(\M_{g,n}^{\ct}) = R^*(\M_{g,n}^{\ct})$ for $n \leq$      & $9$ & $8$ & $7$ & $6$ & $5$ & $0$ \\[12pt]
\hline &&&&& &\\[-6pt] 
{\color{blue} $\circ$} $A^*(\M_{g,n}^{\mathrm{rt}}) = R^*(\M_{g,n}^{\mathrm{rt}})$ for $n \leq $ & $10$ & $11$ & $11$ & $7$ & $5$ & $0$ \\[6pt]
{\color{white} $\circ$} $A^*(\M_{g,n}) = R^*(\M_{g,n})$ {\color{white} for $n \leq $} &&&&&&
\end{tabular}
\end{center}
\medskip
Moreover, for the $(g, n)$ in the top row, the cycle class map $A^*(\Mb_{g,n}) \to H^*(\Mb_{g,n})$ is an isomorphism.
\end{thm}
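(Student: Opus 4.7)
The plan is an induction on the topological-type stratification of $\Mb_{g,n}$ combined with excision
\[
A^*(\partial \Mb_{g,n}) \to A^*(\Mb_{g,n}) \to A^*(\M_{g,n}) \to 0
\]
and its cohomological analog. For the $(g,n)$ in the top row of the table, I would first reduce the claim on $\Mb_{g,n}$ to the corresponding claim on the open moduli spaces $\M_{g',n'}$ labeling vertices of one-edge stable graphs; these satisfy either $g' < g$ or $2g' + n' < 2g + n$, so induction applies. Each boundary stratum is the image under a gluing map of a (quotient of a) product of moduli spaces $\Mb_{g_v, n_v + k_v}$, and the tautological ring is closed under pushforward along gluing maps by definition. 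Hence classes in the image of $A^*(\partial \Mb_{g,n}) \to A^*(\Mb_{g,n})$ are tautological as soon as each boundary factor $\Mb_{g_v, n_v+k_v}$ has tautological Chow. For this inductive step to close in Chow one needs K\"unneth for products of boundary factors, which follows from the inductive hypothesis that their cohomology is purely of Tate type --- itself a consequence of the cycle class isomorphism. The whole theorem thus reduces to establishing $A^*(\M_{g,n}) = R^*(\M_{g,n})$ and $H^*(\M_{g,n}) = RH^*(\M_{g,n})$ for the $(g,n)$ in the bottom row of the table.

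For the open moduli spaces, the starting point is the known equality $A^*(\M_g) = R^*(\M_g)$ for $g \le 9$ proved in \cite{FaberII, Izadi, PenevVakil, 789}. To extend this to positive $n$, I would use the forgetful map $\pi: \M_{g,n} \to \M_{g,n-1}$ to realize $\M_{g,n}$ as an open subset of the universal curve over $\M_{g,n-1}$; the Chow of this universal curve is controlled by pullback classes, the $\psi$-class at the new marked point, and rational-tail boundary divisors appearing in the compactified universal curve, and one then stratifies by Brill--Noether / gonality loci to rule out non-tautological contributions. The middle and bottom row cases for $\M_{g,n}^{\ct}$ and $\M_{g,n}^{\rt}$ follow from exactly the same boundary-stratification argument restricted to stable graphs that are trees, or trees with rational-tail vertices, respectively.

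For the final assertion that the cycle class map $A^*(\Mb_{g,n}) \to H^*(\Mb_{g,n})$ is an isomorphism: surjectivity is automatic from $H^* = RH^*$ together with the tautological surjection $R^* \twoheadrightarrow RH^*$, so the statement reduces to injectivity $R^*(\Mb_{g,n}) \hookrightarrow RH^*(\Mb_{g,n})$. I would obtain this by a dimension count. Polynomial point counts from the references in Section \ref{previouswork} (notably \cite{Bergstrom2, Bergstrom3, BergstromTommasi, BergstromFaberPayne, BergstromFaber}) determine $\dim H^i(\Mb_{g,n})$, and these should match the upper bounds on $\dim A^i(\Mb_{g,n})$ coming from tautological relations among the finite generating set, forcing equality of Hilbert series and hence the isomorphism.

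The principal obstacle is the inductive step on $\M_{g,n}$: lifting tautological generation across each additional marked point requires a careful analysis of the universal curve's stratification by loci of special linear series, and any previously undetected algebraic cycle on such a stratum would obstruct the entire induction. The point-counting input is also essential for the cycle class map isomorphism, and would need to be carefully matched against the tautological Hilbert series for each $(g,n)$ in the top row.
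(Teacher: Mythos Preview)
Your overall inductive scheme via the boundary stratification is correct in spirit, but there are two genuine gaps that the paper fills with a specific technical device you are missing.

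First, the K\"unneth step. You say that K\"unneth for products of boundary factors ``follows from the inductive hypothesis that their cohomology is purely of Tate type.'' This does not work as stated: having $A^*(X) \cong H^*(X)$ of Tate type for a smooth proper $X$ does not obviously force $A^*(X) \otimes A^*(Y) \to A^*(X \times Y)$ to surject for \emph{arbitrary} $Y$, and in the induction the relevant factors are the \emph{open} spaces $\M_{g',n'}$, which are not proper. The paper handles this by introducing the Chow--K\"unneth generation Property (CKgP): $X$ has the CKgP if the exterior product $A_*(X)\otimes A_*(Y)\to A_*(X\times Y)$ is surjective for all $Y$. This property is preserved under open restriction, stratification, products, Grassmann and affine bundles, and proper surjections (Lemmas~\ref{prod}--\ref{surjCKgP}), which is exactly what the induction needs. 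The substantive work in Sections~\ref{hypsec}--\ref{tetsec} is then to exhibit each $\M_{g,n}$ (or its gonality strata) as built from pieces with the CKgP via explicit quotient-stack presentations of Hurwitz spaces and spaces of pointed plane curves. Your sketch ``stratify by Brill--Noether/gonality loci and use the universal curve'' names the right loci but supplies none of the mechanism; in particular, showing that pushforwards from $\H_{k,g,n}$ land in $R^*(\M_{g,n})$ requires identifying module generators over $R^*$ with products of ramification classes and invoking the Faber--Pandharipande result that such Hurwitz cycles are tautological (Section~\ref{fphsec}).

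Second, your argument for the cycle class isomorphism is backwards. You propose to match $\dim H^i(\Mb_{g,n})$ from point counts against tautological Hilbert series. But for most of the top-row cases (all of $g=5,6,7$ and several in $g=3,4$) there are no prior point-counting results; polynomial point count is a \emph{corollary} of this theorem (Corollary~\ref{ptcor}), not an input. The paper instead proves directly (Lemma~\ref{cycleclass}) that for any smooth proper Deligne--Mumford stack with the CKgP, the cycle class map is an isomorphism, via an algebraic decomposition of the diagonal. This is the step that converts the Chow-theoretic induction into the cohomological statement, with no external numerical input required.
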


Theorem \ref{main} has immediate implications for point counting.

\begin{cor} 
\label{ptcor} The moduli space $\Mb_{g,n}$ has polynomial point count for $g = 2$ and $n \leq 9$; and $g \geq 3$ and $2g + n \leq 14$.
\end{cor}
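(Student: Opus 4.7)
The plan is to derive Corollary \ref{ptcor} directly from Theorem \ref{main} combined with Proposition \ref{ptcounts}. The first step is pure bookkeeping: match the indexing in the corollary to the top row of the table in Theorem \ref{main}. The case $g = 2$, $n \leq 9$ is already in that form, while for $g \geq 3$ the bound $2g + n \leq 14$ unpacks to $n \leq 8, 6, 4, 2, 0$ for $g = 3, 4, 5, 6, 7$ respectively, agreeing exactly with the top row. Hence Theorem \ref{main} supplies $H^*(\Mb_{g,n}) = RH^*(\Mb_{g,n})$ in each case under consideration.

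The second step is to apply Proposition \ref{ptcounts}, which promotes tautologicality of the full cohomology ring to polynomial point count. The mechanism, which is standard and which I would only recall rather than reprove, is as follows: the generators $\kappa_i$, $\psi_j$, and boundary classes are cycle classes of algebraic subvarieties already defined over $\Spec \Z$. If these span $H^*(\Mb_{g,n})$, then the cohomology is pure Tate, odd cohomology vanishes, and Frobenius acts on $H^{2k}$ by multiplication by $q^k$. The Grothendieck--Lefschetz trace formula (applied to the Deligne--Mumford stack $\Mb_{g,n}$, where the weighted point count picks up the automorphism factors) then rewrites $\# \Mb_{g,n}(\mathbb{F}_q)$ as $\sum_k \dim H^{2k}(\Mb_{g,n}) \cdot q^k$, a polynomial in $q$ with integer coefficients independent of the prime power.

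There is no substantive obstacle here: the corollary is a formal consequence once Theorem \ref{main} is established, and all of the difficulty lives inside that theorem rather than in the step from tautological cohomology to polynomial point count. The only thing worth remarking on in writing up the proof is that the cases $g \geq 3$, $2g + n \leq 14$ include several pairs (such as $(g,n) = (4,4), (4,5), (4,6), (5,2), (5,3), (5,4), (6,1), (6,2), (7,0)$) for which polynomial point count was previously unknown, so Corollary \ref{ptcor} is genuinely new information even though its proof is a two-line invocation of results already in place.
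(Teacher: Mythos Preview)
Your proposal is correct and matches the paper's own proof, which simply states that Corollary \ref{ptcor} follows immediately from Theorem \ref{main} via Proposition \ref{ptcounts}. The bookkeeping you do to unpack $2g+n \leq 14$ into the per-genus bounds is accurate, and the extra explanation of the mechanism behind Proposition \ref{ptcounts} is fine though not required.
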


Considering contributions from the boundary as in \cite[Proposition 4.2]{BergstromFaberPayne}, we also obtain point counting results for the following open moduli spaces.
\begin{cor}
The moduli space $\M_{g,n}$ has polynomial point count for all $2g + n \leq 12$.
\end{cor}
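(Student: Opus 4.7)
The plan is to reduce the statement to Corollary \ref{ptcor} by inverting the boundary stratification of $\Mb_{g,n}$. Recall that the locally closed strata of $\Mb_{g,n}$ are indexed by stable graphs $\Gamma$ of type $(g,n)$, and each open stratum is a finite quotient of $\prod_{v \in V(\Gamma)} \M_{g(v), N(v)}$, where $N(v)$ is the number of markings at $v$ plus the number of half-edges incident to $v$. Counting $\mathbb{F}_q$-points weighted by automorphisms yields
$$\#\Mb_{g,n}(\mathbb{F}_q) = \sum_{\Gamma} \frac{1}{|\Aut(\Gamma)|} \prod_{v \in V(\Gamma)} \#\M_{g(v), N(v)}(\mathbb{F}_q),$$
as made explicit in \cite[Proposition 4.2]{BergstromFaberPayne}. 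Isolating the trivial graph to solve for $\#\M_{g,n}(\mathbb{F}_q)$, we then induct on the dimension $3g - 3 + n$ to show this quantity is a polynomial in $q$ whenever $2g + n \leq 12$, with base cases $\M_{0,3}$ (a point) and $\M_{1,1}$.

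The inductive step requires two inputs for each $(g, n)$ with $2g + n \leq 12$. First, $\#\Mb_{g,n}(\mathbb{F}_q)$ must be polynomial: for $g = 0$ this is classical; for $g = 1$ with $n \leq 10$, it follows from the tautological nature of the cohomology established by Belorousski and Petersen; and for $g \geq 2$ this is exactly Corollary \ref{ptcor}. Second, every vertex $v$ of every nontrivial stable graph $\Gamma$ of type $(g, n)$ must satisfy $\dim \M_{g(v), N(v)} < \dim \M_{g,n}$ and $2g(v) + N(v) \leq 12$, so that the induction hypothesis applies. The dimension inequality is automatic, since the boundary stratum has codimension at least $|E(\Gamma)| \geq 1$. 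The key observation for the size inequality is that
$$\sum_{v \in V(\Gamma)} \bigl(2g(v) + N(v)\bigr) \;=\; 2g + n + 2(|V(\Gamma)| - 1),$$
while stability forces $2g(w) + N(w) \geq 3$ at every vertex; hence any single vertex contributes at most $2g + n - (|V(\Gamma)| - 1) \leq 2g + n \leq 12$.

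The argument is really just bookkeeping once Corollary \ref{ptcor} is in hand. There is no serious obstacle: the only point deserving emphasis is that the bound $2g + n \leq 12$ is stable under passage to the boundary of $\Mb_{g,n}$, which is what makes the induction self-contained within the range provided by the main theorem.
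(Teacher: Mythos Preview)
Your argument is correct and follows the same approach the paper indicates: the paper simply refers to \cite[Proposition 4.2]{BergstromFaberPayne} for the stratification identity and leaves the inversion implicit, while you spell out the inductive inversion and make explicit the key combinatorial fact that $2g(v)+N(v)\le 2g+n$ for every vertex of every stable graph of type $(g,n)$. One minor point: listing $\M_{1,1}$ as a separate base case is unnecessary, since with your induction on dimension it is already covered once you know $\#\Mb_{1,1}(\mathbb{F}_q)$ is polynomial and $\#\M_{0,3}(\mathbb{F}_q)=1$.
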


\begin{rem}[Applications for all $g$ and $n$]
The cases in the top row of Theorem \ref{main} will serve as crucial base cases for an inductive argument describing
the cohomology of $\Mb_{g,n}$ in low degrees for all $g, n$. 
It was previously shown that $H^k(\Mb_{g,n}) = 0$ for $k = 1,3,5$ by Arbarello and Cornalba \cite{ArbarelloCornalba}. This was recently extended to $k = 7,9$ by Bergstr\"om, Faber and Payne \cite{BergstromFaberPayne}, where the key new input was knowing $\Mb_{4,n}$ has polynomial point count for $n \leq 3$.
Forthcoming work of the authors and Sam Payne will show that
$H^{11}(\Mb_{g,n}) = 0$ for all $g \geq 2$, and
$H^k(\Mb_{g,n})$ is of Hodge--Tate type for all $g, n$ and even $k \leq 12$.
\end{rem}

\subsection{Overview of the new techniques}
We prove Theorem \ref{main} with a novel method, which allows us to take advantage of the flexibility of excision in Chow, yet still obtain results in cohomology at the end. The key idea is prove our spaces have the \emph{Chow--K\"unneth generation Property (CKgP)}, which is a way of making up for the lack of a K\"unneth formula in Chow. We say $X$ has the CKgP if for all $Y$, there is a surjection
\[A^*(X) \otimes A^*(Y) \rightarrow A^*(X \times Y).\]
 
Below are some important observations about the CKgP, which pave the way for our approach. Each italicized statement is not difficult to establish (and some have already appeared in the literature \cite{BaeSchmitt2,TotaroCKgP}), but they combine into a very powerful tool when applied to moduli spaces of curves:

\begin{enumerate}
\item \textit{The CKgP plays well with stratifications, products, and finite group quotients.} Thus, using the inductive nature of its boundary, we can reduce showing $\Mb_{g,n}$ has the CKgP and $A^*(\Mb_{g,n}) = R^*(\Mb_{g,n})$ to showing the open moduli spaces $\M_{g',n'}$ have the CKgP and $A^*(\M_{g',n'}) = R^*(\M_{g',n'})$ for $g' \leq g$ and $2g' + n' \leq 2g + n$ (as explained in Section \ref{mainstrategy}).

\medskip
\item \label{2} \textit{If $X\rightarrow Y$ is proper and surjective, and $X$ has the CKgP, then $Y$ has the CKgP}. Thus, to attack each piece $\M_{g',n'}$, we can pass from studying its gonality strata to studying Hurwitz spaces of covers with marked points (see Sections \ref{fphsec}--\ref{tetsec}).

\medskip
\item \label{3} \textit{If $U \subset X$ is open and $X$ has the CKgP, then $U$ has the CKgP}. Often, we can construct a moduli space of marked curves (or curves with a map to $\pp^1$ or $\pp^2$) as a quotient of affine space minus some discriminant locus. In contrast with having polynomial point count or knowing generators in cohomology, we may throw out \emph{arbitrary} closed subsets and still preserve CKgP and generators in Chow.

\medskip
\item \textit{If $X$ is smooth and proper and has the CKgP, then the cycle class map is an isomorphism.} In other words --- no matter what techniques from Chow we have used in \eqref{2} and \eqref{3} --- so long as we put our pieces back together into a smooth, proper space, we now also have access to cohomology.
\end{enumerate}

Part (1) above naturally leads us prove results for the collection of $\overline{\M}_{g,n}$ that satisfy $2g + n \leq a$ for some $a$ (a triangular region bounded by the axes and a line of slope $-2$).
From this perspective, one should hope to prove
$A^*(\overline{\M}_{g,n}) = R^*(\Mb_{g,n})$ for all $2g + n \leq 12$ (the largest such triangular region that does not contain $\Mb_{1,11}$).
Theorem \ref{main} confirms this hope, but also provides some answers to Questions \ref{mainq} and \ref{cohq} \emph{above} this region. Obtaining filled circles with $2g + n > 12$, such as $\Mb_7$, requires some more delicate, ad hoc arguments, and we view the results in this region as more surprising.

\subsection{The Gorenstein property} \label{gorsec}
Having determined many cases where tautological classes generate the Chow ring, a natural next question is what are the relations among them?
Pixton conjectured a set of relations among the generators of the tautological ring, which were later proven to hold by Pandharipande--Pixton--Zvonkine \cite{PandharipandePixtonZvonkine} (in cohomology) and Janda \cite{Janda} (in the Chow ring). Pixton's relations restrict to the Faber--Zagier relations on $\Mg$, and one can also restrict Pixton's relations on $\Mb_{g,n}$ to $\M^{\ct}_{g,n}$ or $\M^{\rt}_{g,n}$ \cite{Pixton}. We will let $R^*_P(\M)$ denote the ring generated by tautological classes modulo Pixton's relations, where $\M$ is any of the above moduli spaces of curves. Pixton has conjectured that $R_P^*(\M)=R^*(\M)$.

It is known that $R^i(\M) = 0$ for $i > d$ \cite{Looijenga, FaberPandharipande} and $R^d(\M) = \qq$ \cite{Looijenga, fabernonvanishing, GraberVakil, FaberPandharipande} where
\begin{equation*}
d := \begin{cases} g - 2 + n - \delta_{0,g} & \text{if $\M = \M_{g,n}^{\rt}$} \\
2g - 3 + n & \text{if $\M = \M_{g,n}^{\ct}$} \\
3g - 3 + n & \text{if $\M = \Mb_{g,n}$.}
\end{cases}
\end{equation*}
One of the prominent questions asked about the tautological ring is if $R^*(\M)$ is Gorenstein with socle in codimension $d$, meaning there is a perfect pairing
\[R^i(\M) \times R^{d - i}(\M) \to R^d(\M) \cong \qq. \]

The Gorenstein property is known to hold for $R^*(\Mb_{g,n}) = R^*_P(\Mb_{g,n})$ corresponding to
black filled circles on the left of Figure \ref{bigfig}, as well as for $R^*(\M_g) = R^*_P(\M_g)$ for $g \leq 23$ by Faber. In these cases, it was computationally proven that $R^*_P(\M)$ is Gorenstein using the Sage package admcycles \cite{admcycles}; this in turn implies $R^*(\M) = R^*_P(\M)$, since the perfect pairing prohibits any further relations. 
Although it was previously speculated that $R^*(\M_g)$ may be Gorenstein \cite{Faberconjecture}, it is known that $R_P^*(\M_g)$ is not Gorenstein for $g \geq 24$.

Furthermore, the Gorenstein property was shown to fail by Petersen and Tommasi for $R^*(\Mb_{2,n})$ and $RH^*(\Mb_{2,n})$ with $n\geq 20$, as well as
$R^*(\M_{2,n}^{\ct})$ with $n\geq 8$ \cite{PetersenTommasi,Petersen}.
Pixton has also computationally shown $R_P^*(\M_{g,n}^{\rt})$ is not Gorenstein in some other cases \cite[Appendix A]{Pixton}. However, a case of Gorenstein failure has yet to be proven for $R^*(\M_{g,n}^{\rt})$ (which includes the case of $R^*(\M_g)$). See \cite{calculus} for further discussion.

Using Poincar\'e duality, the $(g, n)$ in the top row of Theorem \ref{main} provide many new cases where
the Gorenstein property does hold in Chow and cohomology.

\begin{cor} \label{gcor}
Suppose $(g, n)$ satisfies $g =2$ and $n \leq 9$ or $g \geq 3$ and $2g + n \leq 14$. Then $A^*(\Mb_{g,n})=R^*(\Mb_{g,n})=H^*(\Mb_{g,n}) = RH^*(\Mb_{g,n})$ is Gorenstein.
\end{cor}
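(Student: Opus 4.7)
The plan is to deduce Corollary \ref{gcor} directly from Theorem \ref{main} together with rational Poincar\'e duality on the smooth, proper Deligne--Mumford stack $\Mb_{g,n}$. For the $(g,n)$ listed in the top row, Theorem \ref{main} already provides three ingredients: $A^*(\Mb_{g,n}) = R^*(\Mb_{g,n})$, $H^*(\Mb_{g,n}) = RH^*(\Mb_{g,n})$, and the cycle class map $A^*(\Mb_{g,n}) \to H^*(\Mb_{g,n})$ is an isomorphism. Chaining these three statements immediately gives the four-fold equality
\[A^*(\Mb_{g,n})=R^*(\Mb_{g,n})=H^*(\Mb_{g,n}) = RH^*(\Mb_{g,n}).\]
So the only content left is the Gorenstein property of this common graded ring.

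For Gorenstein-ness, I would use that $\Mb_{g,n}$ is smooth and proper of dimension $d = 3g-3+n$, so rational Poincar\'e duality furnishes a perfect pairing $H^k(\Mb_{g,n}) \times H^{2d-k}(\Mb_{g,n}) \to H^{2d}(\Mb_{g,n}) \cong \qq$. Since the tautological ring lives purely in even cohomological degree and $H^* = RH^*$, all odd cohomology of $\Mb_{g,n}$ vanishes, and the duality pairing reduces to a perfect pairing on $H^{2i} \times H^{2(d-i)}$. Transporting along the cycle class map isomorphism, I obtain a perfect pairing
\[A^i(\Mb_{g,n}) \times A^{d-i}(\Mb_{g,n}) \to A^d(\Mb_{g,n}) \cong \qq,\]
where the identification $A^d(\Mb_{g,n}) \cong \qq$ is compatible with the degree map on the fundamental class (and also follows from the cited fact that $R^d(\Mb_{g,n})\cong\qq$). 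Since $A^* = R^*$, this is literally the Gorenstein pairing on the tautological ring; the same pairing also witnesses Gorenstein-ness of $H^* = RH^*$ via the isomorphism $A^* \cong H^*$.

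There is essentially no obstacle here beyond Theorem \ref{main}: once both $A^* = R^*$ and the cycle class map isomorphism are in hand, passage to Gorenstein-ness is formal, because Gorenstein-ness of a graded $\qq$-algebra is preserved under graded isomorphism, and Poincar\'e duality on a smooth proper DM stack is a standard input. The only minor care-point I would flag is the degree-shift bookkeeping, namely that codimension-$i$ Chow classes correspond under the cycle class map to degree-$2i$ cohomology classes and that the pairing really lands in the one-dimensional top piece.
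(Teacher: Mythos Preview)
Your proposal is correct and matches the paper's own argument: the paper simply says that the Gorenstein property follows from Theorem~\ref{main} ``using Poincar\'e duality,'' which is exactly the route you take. Your additional bookkeeping about the degree shift and the vanishing of odd cohomology is accurate and makes the one-line justification explicit.
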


In these cases, the ring is (in theory) completely determined:
a class is zero if and only if it pairs to zero with all generators in complementary dimension. The pairing on generators of $R^*(\Mb_{g,n})$ is can be effectively computed (as explained in \cite[Section 1.5]{FPhandbook}) and has been implemented in admcycles \cite{admcycles}.
Corollary \ref{gcor} also provides a large region
on which to test Pixton's conjectures: if $R^*_P(\Mb_{g,n})$ is not Gorenstein in one of these cases, then a relation is missing.
We therefore propose the cases in Theorem \ref{main} as being of particular interest for future computational study.


\subsection{Outline of the paper}
In Section \ref{preliminaries}, we discuss the basics of the tautological rings. In Section \ref{CKgP}, we discuss the Chow--K\"unneth generation property, which is a key property for studying the boundary of $\Mb_{g,n}$. In Section \ref{mainstrategy}, we explain the inductive method. In Section \ref{zeroone}, we summarize previous results on the moduli of pointed curves of genus $0$ and $1$. In Section \ref{hypsec}, we recall results from \cite{Hyperelliptic} to deal with the hyperelliptic locus (which completes everything in genus $2$). In Section \ref{planesec}, we give a quotient stack presentation for certain stacks of pointed plane curves (which can be used to complete the cases in genus $3$). In Section \ref{fphsec}, we introduce the Faber--Pandharipande--Hurwitz cycles, which are tautological cycles associated to certain finite covers of $\pp^1$. In Sections \ref{trigsec} and \ref{tetsec}, we show that the Chow rings of certain loci of pointed trigonal and tetragonal curves are generated by these cycles.

\subsection{Notations and conventions.}
All schemes are taken over a fixed algebraically closed field of characteristic $0$ or greater than $5$. Stacks are fibered in groupoids over the category of schemes over the base field (or ring). All Chow rings are taken with rational coefficients. We use the subspace convention for projective bundles and Grassmann bundles. 

\subsection*{Acknowledgments} We are grateful to Elham Izadi, Kiran Kedlaya, Jun Lau, Dan Petersen, David Stapleton, and Ravi Vakil for helpful conversations. We thank Burt Totaro for pointing us to the references \cite{Jannsen} and \cite{TotaroCKgP}, which inspired Lemma \ref{cycleclass}.
We thank Sam Payne for explaining the connection between cohomology and point counting. We are grateful to  Rahul Pandharipande, Dan Petersen, and Johannes Schmitt for comments on an earlier version of the manuscript. We also thank Johannes Schmitt for his explanations about and work on admcycles \cite{admcycles}.
We thank Jonas Bergstr\"om and Carel Faber for their comments and for sharing an early version of their work \cite{BergstromFaber} with us. We are especially grateful to Carel Faber for his extensive reading and comments.

\section{Preliminaries on the tautological ring}\label{preliminaries}
\subsection{The stable graph stratification} \label{sgsec}

The moduli space $\Mb_{g,n}$ of stable curves of genus $g$ with $n$ marked points is stratified by topological type. Associated to each topological type is a stable graph $\Gamma$. To a stable graph $\Gamma$, one associates the moduli space $\M_{\Gamma}$ consisting of points in $\Mb_{g,n}$ with dual graph $\Gamma$. The disjoint union
\[
\Mb_{g,n}=\coprod \M_{\Gamma}
\]
is called the \emph{stable graph stratification}.
We will also consider the space
\[
\Mb_{\Gamma}=\prod_{v} \Mb_{g(v),n(v)},
\]
where $v$ is a vertex of the stable graph $\Gamma$. There is an associated morphism
\begin{equation}\label{graphmap}
\xi_{\Gamma}:\Mb_{\Gamma}\rightarrow \Mb_{g,n}
\end{equation}
whose image is the closure of $\M_{\Gamma}$. The spaces $\xi_{\Gamma}(\Mb_{\Gamma})$ are in fact the union of $\M_{\Gamma'}$ for graphs $\Gamma'$
that can be taken to $\Gamma$ via a sequence of edge contractions.
A union of strata $\bigcup_{\Gamma \in S} \M_{\Gamma}$ is open if and only if the set $S$ is closed under contractions. (Contracting as far as possible, one gets to the graph with a single vertex of genus $g$, which is $\M_{g,n}$.)
 
\subsection{Tautological rings}
Roughly speaking, the tautological ring $R^*(\Mb_{g,n})\subset A^*(\Mb_{g,n})$ is the $\qq$-subalgebra of the Chow ring generated by the ``naturally occurring" cycles on $\Mb_{g,n}$. 
There are several natural morphisms between the moduli spaces of stable curves with marked points $\Mb_{g,n}$ as $g$ and $n$ vary. First, there are the forgetful morphisms
\[
\pi_j: \Mb_{g,n}\rightarrow \Mb_{g,n-1}
\]
obtained by forgetting the $j^{\mathrm{th}}$ marking. In addition, there are the gluing morphisms
\[
\xi_\Gamma:\Mb_{\Gamma}\rightarrow \Mb_{g,n}
\]
as in \eqref{graphmap} for stable graphs $\Gamma$.

\begin{definition}\leavevmode \label{tdef}
\begin{enumerate}
    \item The system of tautological rings $\{R^*(\Mb_{g,n})\}$ is the set of smallest $\qq$-subalgebras of the Chow rings $A^*(\Mb_{g,n})$ containing the unit elements $[\Mb_{g,n}]$ and that is closed under the pushforwards by the forgetful morphisms $\pi_j$ and the gluing morphisms $\xi_\Gamma$ for each stable graph $\Gamma$.
    \item Let $U$ be an open substack of $\Mb_{g,n}$. The tautological ring $R^*(U)$ is defined to be the image of $R^*(\Mb_{g,n})$ under the natural restriction map $A^*(\Mb_{g,n})\rightarrow A^*(U)$. 
    \item The tautological ring in cohomology $RH^*(\Mb_{g,n})$ is defined as the image of $R^*(\Mb_{g,n})$ under the cycle class map 
    \[
    A^*(\Mb_{g,n})\rightarrow H^*(\Mb_{g,n},\qq).
    \]
\end{enumerate}
\end{definition}
\noindent
The tautological rings are also closed under pullbacks along the forgetful and gluing maps (see \cite[Section 0.3]{FPGorenstein}).

It is also useful to have a more explicit description of the tautological ring for a given pair $(g,n)$. Let $f:\Cb_{g,n}\rightarrow \Mb_{g,n}$ be the universal curve, which comes equipped with $n$ sections $\sigma_i:\Mb_{g,n}\rightarrow \Cb_{g,n}$, corresponding to the $i^{\mathrm{th}}$ marked point. We define the $\psi$ classes as 
\[
\psi_i=\sigma_i^*c_1(\omega_f).
\]
Under the identification of $\Cb_{g,n}$ with $\Mb_{g,n+1}$, the universal curve map $f:\Cb_{g,n}\rightarrow \Mb_{g,n}$ is identified with the map forgetting the last marked point. Following \cite{ArbarelloCornalbaKappa}, we set
\[
\kappa_j=f_*(\psi_{n+1}^{j+1}).
\]
The $\lambda$ classes are the Chern classes of the Hodge bundle
\begin{equation} \label{ldef}
\lambda_i:=c_i(f_*\omega_f).
\end{equation}
They are expressed in terms of the $\kappa$ classes with rational coefficients using Grothendieck--Riemann--Roch.

Let $\alpha$ be a polynomial in $\psi$ and $\kappa$ classes on $\Mb_{\Gamma}$. The pushforward of $\alpha$ under $\xi_{\Gamma}$ is called a decorated stratum class on $\Mb_{g,n}$. The decorated stratum classes provide generators for $R^*(\Mb_{g,n})$ as a $\qq$-vector space \cite[Proposition 11]{GraberPandharipande}.

\section{The Chow--K\"unneth generation property}\label{CKgP}

Because of the nature of the stable graph stratification, we will want to compute generators for Chow rings of spaces $A^*(\prod_i \M_{g_i,n_i})$. But unlike for cohomology, there is no K\"unneth formula in general for Chow rings, so it is not obvious how the exterior product maps
\[
\bigotimes_i A_*(\M_{g_i,n_i}) \rightarrow A_*(\prod_i \M_{g_i,n_i})
\] 
behave.
In this section, we study when the exterior product map 
\[
A_*(Y)\otimes A_*(X)\rightarrow A_*(Y\times X)
\]
is surjective.
All of the stacks we need to consider later in the paper are finite type over the base field and admit a stratification by global quotient stacks. So throughout this section, we assume that all stacks are of this form. All Deligne--Mumford stacks admit a stratification by global quotient stacks (see \cite[Proposition 4.5.5]{Kresch}).


The following definition is taken from \cite{BaeSchmitt2}.
\begin{definition}[Definition 2.5 of \cite{BaeSchmitt2}] \label{ksurj}
We say $Y$ has the \emph{Chow--K\"unneth generation Property} (CKgP, for short) if for all algebraic stacks $X$ (of finite type and admitting a stratification by global quotient stacks), the exterior product map
    \[A_*(Y) \otimes A_*(X) \to A_*(Y \times X)\]
    is surjective.
\end{definition}
\noindent

\subsection{Basic lemmas}
We collect below several basic lemmas about the CKgP.
\begin{lem} \label{prod}
Suppose that $Y_1$ and $Y_2$ have the CKgP. Then $Y_1 \times Y_2$ has the CKgP. More generally, if $Y_1, \ldots, Y_n$ have CKgP then $Y_1 \times \cdots \times Y_n$ has CKgP.
\end{lem}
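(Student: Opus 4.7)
The plan is to establish the two-factor case by applying the CKgP hypothesis for $Y_1$ and $Y_2$ in succession against an arbitrary test stack, and then to deduce the general statement by induction on $n$.

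Fix an algebraic stack $X$ of finite type admitting a stratification by global quotient stacks. I want to show the exterior product map $A_*(Y_1 \times Y_2) \otimes A_*(X) \to A_*(Y_1 \times Y_2 \times X)$ is surjective. First I would apply the CKgP for $Y_1$ to the test stack $Y_2 \times X$ (which is again of the allowed form, since this class of stacks is closed under products), obtaining a surjection
\[ A_*(Y_1) \otimes A_*(Y_2 \times X) \twoheadrightarrow A_*(Y_1 \times Y_2 \times X). \]
Next I would apply the CKgP for $Y_2$ to the test stack $X$ itself, getting a surjection $A_*(Y_2) \otimes A_*(X) \twoheadrightarrow A_*(Y_2 \times X)$. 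Tensoring on the left by $A_*(Y_1)$ preserves surjectivity (as tensoring with any $\qq$-vector space is exact), so composing the two surjections yields a surjection
\[ A_*(Y_1) \otimes A_*(Y_2) \otimes A_*(X) \twoheadrightarrow A_*(Y_1 \times Y_2 \times X). \]

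To finish the two-factor case, I observe that this composition factors through $A_*(Y_1 \times Y_2) \otimes A_*(X)$ via the exterior product $A_*(Y_1) \otimes A_*(Y_2) \to A_*(Y_1 \times Y_2)$, by the associativity of exterior product: a simple tensor $\alpha_1 \otimes \alpha_2 \otimes \beta$ maps to $\alpha_1 \times \alpha_2 \times \beta = (\alpha_1 \times \alpha_2) \times \beta$. Since the composition is surjective, the factor $A_*(Y_1 \times Y_2) \otimes A_*(X) \to A_*(Y_1 \times Y_2 \times X)$ must be surjective as well, which is precisely the CKgP for $Y_1 \times Y_2$.

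The general statement for $Y_1 \times \cdots \times Y_n$ then follows by induction on $n$: the base case $n=1$ is trivial, and the inductive step applies the two-factor case to $(Y_1 \times \cdots \times Y_{n-1})$ and $Y_n$, using the inductive hypothesis that the first factor has the CKgP. There is no real obstacle here; the only point requiring any care is checking that the class of test stacks (finite type, stratified by quotient stacks) is stable under forming products with $Y_2$, so that the CKgP for $Y_1$ can legitimately be applied to $Y_2 \times X$, and this stability is immediate.
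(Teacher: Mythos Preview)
Your proof is correct and is essentially the standard argument; the paper's own proof simply cites \cite[Lemma 2.8]{BaeSchmitt2} for the two-factor case and then invokes induction for the general statement, so you have just written out what the cited reference would contain. The one point you explicitly check that the paper leaves implicit is the closure of the class of test stacks under products, which is indeed immediate.
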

\begin{proof}
The first claim is \cite[Lemma 2.8]{BaeSchmitt2}, and the second claim follows by induction on $n$.
\end{proof}

\begin{lem}\label{open}
Suppose $Y$ has the CKgP and $U\subset Y$ is open. Then  $U$ has the CKgP.
\end{lem}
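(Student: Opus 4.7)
The plan is to deduce the CKgP for $U$ directly from the CKgP for $Y$ using the excision (localization) sequence in Chow, applied after crossing with an arbitrary test stack $X$.

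Fix an algebraic stack $X$ (of finite type, stratified by global quotients). Let $Z = Y \setminus U$ with its reduced substack structure, so $Z \times X \hookrightarrow Y \times X$ is a closed substack with open complement $U \times X$. The first step is to invoke the excision sequence
\[
A_*(Z \times X) \longrightarrow A_*(Y \times X) \longrightarrow A_*(U \times X) \longrightarrow 0,
\]
so that the restriction map $A_*(Y \times X) \to A_*(U \times X)$ is surjective. In particular, any class $\alpha \in A_*(U \times X)$ lifts to some $\tilde\alpha \in A_*(Y \times X)$.

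The second step is to apply the CKgP hypothesis for $Y$ to the lift $\tilde\alpha$, which gives an expression
\[
\tilde\alpha = \sum_i \beta_i \times \gamma_i, \qquad \beta_i \in A_*(Y),\ \gamma_i \in A_*(X).
\]
Since flat pullback (here, restriction to an open) commutes with exterior products, restricting to $U \times X$ yields
\[
\alpha = \sum_i (\beta_i|_U) \times \gamma_i,
\]
and each $\beta_i|_U$ lies in $A_*(U)$ by the ordinary excision surjection $A_*(Y) \twoheadrightarrow A_*(U)$. Thus $\alpha$ is in the image of $A_*(U) \otimes A_*(X) \to A_*(U \times X)$, which proves the CKgP for $U$.

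There is essentially no obstacle here beyond citing the correct form of excision; the only thing to be careful about is that excision for Chow groups of stacks (and its compatibility with products) must be available for the class of stacks we are considering, which is guaranteed by the running assumption that stacks are of finite type and admit a stratification by global quotient stacks. No further ingredients are needed.
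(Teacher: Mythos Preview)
Your proof is correct and follows essentially the same approach as the paper: both use the surjectivity of $A_*(Y\times X)\to A_*(U\times X)$ from excision together with the CKgP for $Y$ to conclude surjectivity of $A_*(U)\otimes A_*(X)\to A_*(U\times X)$. The paper phrases this as a diagram chase in a commuting square, while you spell out the element-wise lift and restriction, but the content is identical.
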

\begin{proof}
For any $X$, we have a commuting square
\begin{center}
\begin{tikzcd}
A_*(Y) \otimes A_*(X) \arrow{d} \arrow{r} & A_*(U) \otimes A_*(X) \arrow{d} \\
A_*(Y \times X) \arrow{r} & A_*(U \times X).
\end{tikzcd}
\end{center}
The composition 
\[
A_*(Y)\otimes A_*(X)\rightarrow A_*(Y\times X)\rightarrow A_*(U\times X)
\]
is surjective by the fact that $Y$ has the CKgP and excision. It follows that the right vertical arrow
is also surjective.
\end{proof}
\begin{lem}\label{Kstrat}
Suppose $Y$ admits a finite stratification $Y=\coprod_{S\in \mathcal{S}} S$ such that each $S\in \mathcal{S}$ has the CKgP. Then $Y$ has the CKgP.
\end{lem}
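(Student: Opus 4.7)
The plan is to induct on the number of strata $|\mathcal{S}|$. The base case $|\mathcal{S}|=1$ is trivial, since then $Y$ itself is a stratum and already has the CKgP by hypothesis.

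For the inductive step, the first task is to extract a closed stratum. Since the stratification is finite, and since the closure of any stratum must be a union of strata, there is at least one stratum $Z \in \mathcal{S}$ whose closure equals $Z$ itself (pick a minimal element in the specialization order). Let $U := Y \setminus Z$, which is open in $Y$ and inherits a stratification by $\mathcal{S} \setminus \{Z\}$, each of whose members still has the CKgP. By the inductive hypothesis, $U$ has the CKgP; and $Z$ has the CKgP by assumption.

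Next, for an arbitrary test stack $X$, I would chase elements through the excision sequence
\[
A_*(Z \times X) \xrightarrow{(\iota \times \id_X)_*} A_*(Y \times X) \xrightarrow{j^*} A_*(U \times X) \to 0,
\]
where $\iota : Z \hookrightarrow Y$ and $j : U \hookrightarrow Y$. Given $\alpha \in A_*(Y \times X)$, apply CKgP of $U$ to write $j^*\alpha = \sum_i u_i \otimes x_i$, lift each $u_i$ to some $\tilde u_i \in A_*(Y)$ using excision on $Y$ itself, and set $\alpha' := \alpha - \sum_i \tilde u_i \times x_i$. Then $j^*\alpha' = 0$, so by the excision sequence $\alpha' = (\iota \times \id_X)_* \beta$ for some $\beta \in A_*(Z \times X)$. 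Apply CKgP of $Z$ to write $\beta = \sum_j z_j \otimes y_j$, and use compatibility of pushforward with the exterior product to conclude
\[
\alpha = \sum_i \tilde u_i \times x_i + \sum_j (\iota_* z_j) \times y_j,
\]
which lies in the image of $A_*(Y) \otimes A_*(X) \to A_*(Y \times X)$.

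The only substantive ingredients are the right-exactness of excision for the product stacks $Z \times X \hookrightarrow Y \times X \hookleftarrow U \times X$ and the compatibility $(\iota \times \id_X)_*(z \otimes y) = (\iota_* z) \times y$; both are standard for stacks admitting a stratification by global quotients, so I do not anticipate a genuine obstacle. The proof is essentially a routine assembly of excision plus the inductive hypothesis, and the real content of the lemma lies in formalizing it cleanly.
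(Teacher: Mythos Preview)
Your proof is correct and follows essentially the same approach as the paper: induct on $|\mathcal{S}|$, peel off a minimal (closed) stratum $Z$, and use excision on $Y\times X$ together with the CKgP for $Z$ and the inductive hypothesis for $U=Y\setminus Z$. The paper phrases the final step as a diagram chase on the commutative square of excision sequences, whereas you spell out the chase explicitly, but the content is identical.
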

\begin{proof}
We induct on the size of the stratification, $\#\mathcal{S}$. The case $\#\mathcal{S}=1$ is clear. Let $T$ be a minimal element of $\mathcal{S}$, which exists because $\mathcal{S}$ is finite.  Set $U = Y \smallsetminus T$. Let $X$ be any finite type stack. The following commutative diagram consists of excision sequences, with vertical maps given by the exterior product.
\begin{center}
    \begin{tikzcd}
    A_*(T) \otimes A_*(X) \arrow{r} \arrow{d} & A_*(Y) \otimes A_*(X) \arrow{d} \arrow{r} & A_*(U) \otimes A_*(X) \arrow{d} \arrow{r} & 0\\
    A_*(T \times X) \arrow{r} & A_*(Y \times X) \arrow{r} & A_*(U \times X) \arrow{r} & 0
    \end{tikzcd}
\end{center}
By induction the right vertical map is surjective (using the stratification $\mathcal{S} - T$ of $U$). The left vertical map is surjective because $T$ has the CKgP. A diagram chase shows that the middle map is surjective as well, so $Y$ has the CKgP.
\end{proof}
\begin{lem}\label{affbunCKgP}
Let $\pi:V\rightarrow Y$ be an affine bundle. Then $V$ has the CKgP if and only if $Y$ does.
\end{lem}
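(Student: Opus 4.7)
The plan is to deduce the lemma from the homotopy invariance of Chow groups for affine bundles: for any affine bundle $\pi: V \to Y$ of rank $n$, flat pullback induces an isomorphism $\pi^*: A_k(Y) \to A_{k+n}(V)$. This is Fulton's Example 3.3.1 for schemes, and extends to the algebraic stacks considered here via their stratifications by global quotient stacks (together with the excision sequence and five-lemma). For any test stack $X$, the base change $\pi \times \id_X : V \times X \to Y \times X$ is again an affine bundle of rank $n$, so $(\pi \times \id_X)^*: A_*(Y \times X) \to A_{*+n}(V \times X)$ is also an isomorphism.

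I would then set up the commutative square
\[
\begin{tikzcd}
A_*(Y) \otimes A_*(X) \arrow{r} \arrow{d}{\pi^* \otimes \id} & A_*(Y \times X) \arrow{d}{(\pi \times \id_X)^*} \\
A_*(V) \otimes A_*(X) \arrow{r} & A_*(V \times X),
\end{tikzcd}
\]
whose horizontal arrows are the exterior products. Commutativity amounts to the identity $(\pi \times \id_X)^*(\alpha \boxtimes \beta) = (\pi^*\alpha) \boxtimes \beta$, a standard compatibility of flat pullback with exterior products. The right vertical arrow is an isomorphism by homotopy invariance applied to $\pi \times \id_X$, and the left vertical arrow is an isomorphism because tensoring the isomorphism $\pi^*$ with the $\mathbb{Q}$-vector space $A_*(X)$ preserves the property of being an isomorphism.

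A trivial diagram chase now finishes the argument: the top horizontal map is surjective if and only if the bottom one is. Since $X$ was arbitrary, this is precisely the statement that $Y$ has the CKgP if and only if $V$ does. There is no serious obstacle; the whole content of the proof is the homotopy invariance of affine-bundle pullback on rational Chow groups, which is available for the class of stacks under consideration.
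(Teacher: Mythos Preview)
Your proof is correct and takes essentially the same approach as the paper: set up the commutative square comparing exterior products for $Y$ and $V$, note that both vertical pullback maps are isomorphisms by homotopy invariance for affine bundles, and conclude that one horizontal map is surjective if and only if the other is. The paper's argument is terser but identical in content.
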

\begin{proof}
For any algebraic stack $X$, the map $V\times X\rightarrow Y\times X$ is an affine bundle. We have the following commutative diagram:
\[
\begin{tikzcd}
A_*(V)\otimes A_*(X) \arrow[r]           & A_*(V\times X)           \\
A_*(Y)\otimes A_*(X) \arrow[u] \arrow[r] & A_*(Y\times X) \arrow[u]
\end{tikzcd}
\]
The vertical maps are isomorphisms induced by pullback. Therefore, one horizontal map is surjective if and only if the other is.
\end{proof}

\begin{lem}\label{gerbe}
Let $\pi:\Y\rightarrow Y$ be a gerbe banded by a finite group. Then $Y$ has the CKgP if and only if $\Y$ does.
\end{lem}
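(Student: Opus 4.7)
The plan is to follow the same strategy as the proof of Lemma \ref{affbunCKgP}. The essential input needed is that for a gerbe $\pi:\Y\rightarrow Y$ banded by a finite group $G$, the flat pullback $\pi^*:A_*(Y)\rightarrow A_*(\Y)$ is an isomorphism with rational coefficients. This is a standard fact: $\pi$ is proper, flat, and of relative dimension zero, and the relation $\pi_*\pi^* = |G|\cdot\mathrm{id}$ together with the local triviality of $\pi$ (so that it becomes $Y\times BG\to Y$ on a suitable cover, for which $A_*(BG)_{\qq}=\qq$) gives that $\pi^*$ is an isomorphism after tensoring with $\qq$.

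Granted this, the argument is a diagram chase. For any algebraic stack $X$ of finite type admitting a stratification by global quotient stacks, the base change $\pi_X:\Y\times X\rightarrow Y\times X$ is again a gerbe banded by $G$, so $\pi_X^*$ is also a rational Chow isomorphism. This yields the commutative square
\[
\begin{tikzcd}
A_*(\Y)\otimes A_*(X) \arrow[r] & A_*(\Y\times X) \\
A_*(Y)\otimes A_*(X) \arrow[u] \arrow[r] & A_*(Y\times X) \arrow[u]
\end{tikzcd}
\]
in which both vertical maps are isomorphisms (pullback by $\pi$ on the left, induced by $\pi^*\otimes\mathrm{id}$; pullback by $\pi_X$ on the right). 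Hence the top exterior product is surjective for every such $X$ if and only if the bottom one is, which is precisely the claim that $\Y$ has the CKgP if and only if $Y$ does.

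The main obstacle is justifying the rational Chow isomorphism $\pi^*:A_*(Y)_{\qq}\xrightarrow{\sim}A_*(\Y)_{\qq}$ in the stated generality. One can reduce to the split case $\Y=Y\times BG$ by noetherian induction on $Y$ together with excision, using that a gerbe banded by a finite group is étale-locally trivial, and then invoke $A_*(BG)_{\qq}=\qq$ and the projection formula. Once this is in hand, the proof is otherwise formally identical to that of Lemma \ref{affbunCKgP}, so no further subtlety is expected.
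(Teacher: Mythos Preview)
Your proposal is correct and follows essentially the same approach as the paper: the paper's proof is a single sentence saying ``The proof is the same as Lemma \ref{affbunCKgP}, using that pullback induces an isomorphism on Chow rings for gerbes banded by finite groups,'' and you have simply unpacked this in detail, including the commutative square and a sketch of why the rational Chow isomorphism holds.
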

\begin{proof}
The proof is the same as Lemma \ref{affbunCKgP}, using that pullback induces an isomorphism on Chow rings for gerbes banded by finite groups.
\end{proof}

\begin{lem}\label{grassmann}
Suppose that $Y$ has the CKgP and that $G\rightarrow Y$ is a Grassmann bundle over $Y$. Then $G$ has the CKgP.
\end{lem}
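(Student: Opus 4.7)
The plan is to mimic the proof of Lemma \ref{affbunCKgP}, but since pullback along a Grassmann bundle is not an isomorphism on Chow (as it is for affine bundles), I will use instead the Grassmann (or projective) bundle formula, which expresses $A_*(G)$ as a free $A_*(Y)$-module of finite rank generated by universal Schubert classes in the Chern classes of the tautological sub- and quotient bundles.

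Concretely, let $X$ be any stack admitting a stratification by global quotients. Base change along $X \to \mathrm{pt}$ gives a Grassmann bundle $G \times X \to Y \times X$ of the same type, and the Grassmann bundle formula gives a finite collection of Schubert classes $\sigma_1, \ldots, \sigma_N \in A_*(G)$ (coming from Chern classes of tautological bundles on $G$) such that every element $\eta \in A_*(G \times X)$ can be written as
\[
\eta \;=\; \sum_{i=1}^N a_i \cdot p^*\sigma_i,
\]
where $a_i \in A_*(Y \times X)$ acts via flat pullback along $\pi \times \id_X : G \times X \to Y \times X$ and $p : G \times X \to G$ is the projection. Now the CKgP for $Y$ lets us write each $a_i = \sum_j \beta_{ij} \boxtimes \gamma_{ij}$ with $\beta_{ij} \in A_*(Y)$ and $\gamma_{ij} \in A_*(X)$. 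Combining with the projection formula and the fact that exterior product with $\gamma_{ij}$ commutes with pullback in the $Y$-direction,
\[
\eta \;=\; \sum_{i,j} \bigl(\pi^*\beta_{ij} \cdot \sigma_i\bigr) \boxtimes \gamma_{ij},
\]
where $\pi^*\beta_{ij} \cdot \sigma_i \in A_*(G)$. This exhibits $\eta$ in the image of $A_*(G) \otimes A_*(X) \to A_*(G \times X)$, so $G$ has the CKgP.

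The only real point to check is that the Grassmann bundle formula is valid in this generality. For schemes it is classical \textbf{á la} Fulton; for quotient stacks (and hence for stacks admitting a stratification by global quotients) it follows by equivariant intersection theory, and one can alternatively deduce it by iterating the projective bundle formula via a sequence of Grassmannian-to-partial-flag-to-$\mathbb{P}^{n-1}$ steps. Since this is a standard structural result and not a technical obstacle specific to our setting, I expect no real difficulty beyond citing the appropriate formulation; the substantive content of the lemma is the short diagram-chase above.
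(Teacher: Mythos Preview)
Your proposal is correct and takes essentially the same approach as the paper: both apply the Grassmann bundle formula to the base change $G \times X \to Y \times X$ and combine with the CKgP for $Y$. The paper packages the argument as a commutative square using the ring-theoretic statement that $A^*(G)$ is generated over $A^*(Y)$ by Chern classes of the tautological subbundle, whereas you write out the same reduction explicitly using the module-theoretic Schubert basis; these are equivalent formulations of the same short diagram chase.
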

\begin{proof}
The Chow ring of a Grassmann bundle is generated over the Chow ring of the base by the Chern classes $c_1, \ldots, c_n$ of the tautological subbundle. Consider the diagram
\begin{center}
\begin{tikzcd}
A^*(Y)[c_1, \ldots, c_n] \otimes A^*(X) \arrow{d} \arrow{r} & A^*(Y \times X)[c_1,\ldots, c_n] \arrow{d} \\
A^*(G) \otimes A^*(X) \arrow{r} & A^*(G \times X)
\end{tikzcd}
\end{center}
The top horizontal arrow is surjective because $Y$ has the CKgP. The right vertical arrow is surjective because $G \times X \to Y \times X$ is a Grassmann bundle whose tautological subbundle is the pullback of the tautological subbundle on $G$. It follows that the bottom horizontal arrow is also surjective.
\end{proof}

\begin{lem} \label{bgs}
The following stacks have the CKgP:
\begin{enumerate}
    \item The classifying stack $\BGL_n$,
    \item The classifying stack $\BSL_n$,
    \item The classifying stack $\BPGL_n$.
    
\end{enumerate}
\end{lem}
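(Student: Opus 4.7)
All three parts will reduce, via the lemmas of this section, to the case of $\BGL_n$, which is the only substantive step. I would proceed as follows.

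For $\BGL_n$, the plan is to apply the Edidin--Totaro approximation. Let $V_N = \Hom(\A^n, \A^N)$ and let $U_N \subset V_N$ be the open locus of injective linear maps, so that $\GL_n$ acts freely on $U_N$ with quotient the Grassmannian $\Gr(n, N)$. For any stack $X$ with trivial $\GL_n$-action, the quotient $(X \times U_N)/\GL_n$ is just $X \times \Gr(n, N)$, and because the codimension of $V_N \smallsetminus U_N$ in $V_N$ grows with $N$, pullback gives an isomorphism $A^k(\BGL_n \times X) \cong A^k(X \times \Gr(n, N))$ for $N \gg k$. The projection $X \times \Gr(n, N) \to X$ is a trivial Grassmann bundle, so by the Grassmann bundle formula, $A^*(X \times \Gr(n, N))$ is generated as an $A^*(X)$-module by the Chern classes $c_1, \ldots, c_n$ of the tautological subbundle; these are pullbacks of the universal Chern classes on $\BGL_n$, which in the limit generate $A^*(\BGL_n) = \qq[c_1, \ldots, c_n]$. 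Hence every class in $A^*(\BGL_n \times X)$ lies in the image of $A^*(\BGL_n) \otimes A^*(X) \to A^*(\BGL_n \times X)$.

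For $\BSL_n$, the plan is to realize it as an open substack of the line bundle $L := \det E \to \BGL_n$, where $E$ is the universal rank-$n$ bundle. An $\SL_n$-torsor on a test scheme $T$ is the same datum as a rank-$n$ vector bundle on $T$ together with a trivialization of its determinant, i.e.\ a nowhere-vanishing section of $L$. Thus $\BSL_n \cong L \smallsetminus \{0\}$ as stacks over $\BGL_n$. By Lemma \ref{affbunCKgP} applied to the line bundle $L \to \BGL_n$, the total space $L$ has the CKgP, and then Lemma \ref{open} transfers this to the open substack $\BSL_n$.

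For $\BPGL_n$, the plan is to use the central extension $1 \to \mu_n \to \SL_n \to \PGL_n \to 1$: since $\mu_n$ is the center of $\SL_n$, the induced map $\BSL_n \to \BPGL_n$ is a gerbe banded by the finite group $\mu_n$, so Lemma \ref{gerbe} transfers the CKgP from $\BSL_n$ to $\BPGL_n$. The main obstacle, insofar as there is one, sits in the first step: one must handle the implicit limit in the Edidin--Totaro approximation and make sure the generators coming from Grassmannians genuinely lift to $A^*(\BGL_n)$. This is, however, essentially Totaro's classical computation carried out in the presence of a trivial factor $X$, and once it is in place Steps 2 and 3 follow immediately from the lemmas already established.
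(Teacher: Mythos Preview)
Your proposal is correct and follows essentially the same approach as the paper. The paper's proof of (1) also uses the Edidin--Graham/Totaro approximation by Grassmannians and invokes Lemma \ref{grassmann} (which you effectively reprove); for (2) the paper works with the finite approximations $[U_k/\SL_n] \cong \det \mathcal{S} \smallsetminus \{0\}$ over $G(n,k)$ rather than with $\BSL_n \cong \det E \smallsetminus \{0\}$ over $\BGL_n$ directly, but this is the same idea at a different stage of the limit; and (3) is identical.
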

\begin{proof}
(1) Take $V_k$ be the representation of $\GL_n$ given by matrices of size $n\times k$.  Let $U_k\subset V_k$ be the open subset of full rank matrices. The complement of $U_k$ has codimension $k-n+1$. Then
\[
[U_k/\GL_n]\times X\cong G(n,k)\times X.
\]
Taking $k\to\infty$ and noting that $G(n,k)$ has the CKgP by Lemma \ref{grassmann}, we see that $\BGL_n$ has the CKgP.

(2) The case of $\BSL_n$ is similar, except that the quotient $[U_k/\SL_n]$ is isomorphic to the complement of the zero section of the line bundle $\det \mathcal{S}\rightarrow G(n,k)$, where $\mathcal{S}$ is the tautological subbundle. By Lemmas \ref{open} and \ref{affbunCKgP}, we see $[U_k/\SL_n]$ has CKgP too, and taking $k\to\infty$, we see that $\BSL_n$ has CKgP.

(3) The map $\BSL_n\rightarrow \BPGL_n$ is a gerbe banded by $\mu_n$. Thus, $\BPGL_n$ has the CKgP by (2) and Lemma \ref{gerbe}.
\end{proof}

\begin{lem}
Suppose $Y$ is a stack that admits a coarse moduli space $\pi:Y\rightarrow M$. Then $Y$ has the CKgP if and only if $M$ does.
\end{lem}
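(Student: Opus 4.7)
The plan is to imitate the proof of Lemma \ref{gerbe}, replacing pullback with proper pushforward. The key input is the theorem of Vistoli that a coarse moduli map $\pi\colon Y\to M$ induces an isomorphism $\pi_*\colon A_*(Y)\xrightarrow{\sim} A_*(M)$ on rational Chow groups.

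First I would extend this to a base-change statement: for any stack $X$ admitting a stratification by global quotients, the map $(\pi\times \id_X)_*\colon A_*(Y\times X)\to A_*(M\times X)$ is also an isomorphism. Using excision along the stratification of $X$, it suffices to treat the case $X=[V/G]$ for $V$ a scheme and $G$ a linear algebraic group; then $Y\times V\to M\times V$ is itself a coarse moduli map (flat base change preserves coarse moduli), and passing to $G$-equivariant Chow yields the isomorphism for $Y\times [V/G]\to M\times [V/G]$.

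With this in hand, one assembles the commutative square
\[
\begin{tikzcd}
A_*(Y)\otimes A_*(X) \arrow[r] \arrow[d,"\pi_*\otimes \id"',"\cong"] & A_*(Y\times X) \arrow[d,"(\pi\times\id_X)_*","\cong"'] \\
A_*(M)\otimes A_*(X) \arrow[r] & A_*(M\times X),
\end{tikzcd}
\]
whose commutativity is the identity $(\pi\times\id_X)_*(\alpha\boxtimes\beta)=\pi_*\alpha\boxtimes\beta$, a consequence of the projection formula and the definition of the exterior product. Since both vertical maps are isomorphisms, surjectivity of the top horizontal arrow for every $X$ is equivalent to surjectivity of the bottom one, which is precisely the equivalence of CKgP for $Y$ and $M$. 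The main obstacle is the base-change step: verifying $(\pi\times\id_X)_*$ is an isomorphism when $X$ is an arbitrary stack rather than a scheme. Once that is established, the rest is a formal diagram chase.
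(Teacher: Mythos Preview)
Your approach matches the paper's: both reduce to a global quotient test stack $X=[V/G]$ and then invoke Vistoli's isomorphism on a representable approximation. Your ``passing to $G$-equivariant Chow'' is precisely the paper's appeal to Kresch's result that a global quotient $X$ carries a vector bundle $E\to X$ with a representable open $U\subset E$ whose complement has arbitrarily high codimension; then $Y\times U\to M\times U$ is a coarse moduli map by flat base change, Vistoli applies, and one recovers $A_*(Y\times X)\cong A_*(M\times X)$ for such $X$.

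There is one wrinkle in your reduction step. You want excision along a stratification of $X$ to propagate the \emph{isomorphism} $(\pi\times\id_X)_*$ from the strata to $X$, but excision sequences are only right-exact: knowing the isomorphism on a closed stratum $Z$ and its open complement $U$ yields surjectivity of $(\pi\times\id_X)_*$ by a diagram chase, but not injectivity, since $A_*(M\times Z)\to A_*(M\times X)$ need not inject. The fix---implicit in the paper's terse ``by noetherian induction, we may reduce''---is not to seek the isomorphism for arbitrary $X$, but rather to observe (by the same diagram chase as in Lemma~\ref{Kstrat}, now stratifying the test stack $X$ instead of $Y$) that CKgP need only be checked against global quotient test stacks. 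Then the isomorphism for those $X$ suffices, and your argument is complete and essentially identical to the paper's.
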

\begin{proof}
Let $X$ be a stack that admits a stratification by global quotient stacks. By noetherian induction, we may reduce to the case that $X$ is a global quotient stack. By \cite[Proposition 4.5.6]{Kresch}, for any integer $N$, there exists a vector bundle $E \to X$ and a representable open substack $U \subset E$ whose complement has codimension at least $N$. Formation of coarse moduli spaces commutes with flat base change, so $M \times U$ is a coarse moduli space for $Y \times U$. By \cite[Proposition 6.1]{Vistoli}, there is an isomorphism $A^*(M \times U) \cong  A^*(Y \times U)$. Then, 
for $* < N$, we have
\[A^*(X \times Y) \cong A^*(E \times Y) \cong A^*(U \times Y) \cong A^*(U \times M) \cong A^*(E \times M) \cong A^*(X \times M). \]
Taking $N$ to be arbitrarily large, we have an isomorphism $A^*(X \times Y) \cong A^*(X \times M)$.

Now we have the following commutative diagram, where both horizontal maps are isomorphisms.
\begin{center}
\begin{tikzcd}
A_*(Y) \otimes A_*(X) \arrow{d} \arrow{r} & A_*(M) \otimes A_*(X) \arrow{d} \\
A_*(Y \times X) \arrow{r} & A_*(M \times X).
\end{tikzcd}
\end{center}
If one of the vertical maps is surjective, then so is the other one.
\end{proof}


\begin{lem} \label{surjCKgP}
Let $f:Y\rightarrow X$ be a surjective proper morphism that is representable by Deligne--Mumford stacks. (For example, any quotient by a finite group $Y \to Y/G$.) If $Y$ has the CKgP, then $X$ has the CKgP.
\end{lem}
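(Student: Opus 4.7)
The plan is to exploit the projection formula together with the fact that proper surjective representable morphisms of Deligne--Mumford stacks induce surjective pushforwards on rational Chow groups. The argument is a short diagram chase, so there is no real structural obstacle; the only point that requires a line of justification is the surjectivity of pushforward on rational Chow, which I address below.

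First I would fix an arbitrary test stack $Z$ (of finite type with a stratification by global quotients) and observe that $f \times \id_Z : Y \times Z \to X \times Z$ is again proper, surjective, and representable by DM stacks, since all three properties are stable under base change. Next, I would invoke the fact that for such a morphism the pushforward $(f \times \id_Z)_*$ is surjective on rational Chow groups. To justify this, given an integral closed substack $V \subset X \times Z$, one uses representability of $f \times \id_Z$ together with surjectivity to find a closed point in the (schematic, non-empty, finite type) fibre over the generic point of $V$; its closure $W \subset Y \times Z$ maps properly, surjectively, and generically finitely onto $V$, so $(f \times \id_Z)_*[W] = d[V]$ for some $d \in \qq_{>0}$, and $[V]$ lies in the rational image.

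With that in hand, consider the diagram
\begin{center}
\begin{tikzcd}
A_*(Y) \otimes A_*(Z) \arrow{r} \arrow{d}{f_* \otimes \id} & A_*(Y \times Z) \arrow{d}{(f \times \id_Z)_*} \\
A_*(X) \otimes A_*(Z) \arrow{r} & A_*(X \times Z),
\end{tikzcd}
\end{center}
where the horizontal arrows are the exterior product maps. Commutativity is the projection formula $(f \times \id_Z)_*(\alpha \times \beta) = f_*(\alpha) \times \beta$ for $\alpha \in A_*(Y)$, $\beta \in A_*(Z)$. The top horizontal arrow is surjective because $Y$ has the CKgP, and the right vertical arrow is surjective by the previous paragraph. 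Hence the composition down the left column followed by the bottom horizontal arrow is surjective, which forces the bottom horizontal arrow itself to be surjective. Since $Z$ was arbitrary, $X$ has the CKgP.
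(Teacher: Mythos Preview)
Your proof is correct. The paper itself does not give an argument but simply cites this as a special case of \cite[Lemma 2.9]{BaeSchmitt2}; your diagram chase using the projection formula and surjectivity of proper pushforward on rational Chow groups is exactly the standard proof underlying that reference.
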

\begin{proof}
This is a special case of \cite[Lemma 2.9]{BaeSchmitt2}.
\end{proof}


\subsection{The cycle class map}
One important consequence of the CKgP is that, if the space is smooth and proper, the cycle class map is an isomorphism. 
The proof of the following lemma was inspired by the argument in \cite[Theorem 3.6]{Jannsen}, which involves constructing an algebraic decomposition of the diagonal to prove surjectivity of the cycle class map. 

\begin{lem}\label{cycleclass}
Suppose $X$ is a smooth, proper Deligne--Mumford stack of dimension $d$.
If $X$ has the CKgP, then the cycle class map $\mathrm{cl}: A^*(X) \to H^*(X, \qq)$ is an isomorphism.
\end{lem}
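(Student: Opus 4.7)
The plan is to carry out the decomposition-of-the-diagonal argument in Jannsen's style, now made possible by the CKgP. First I would apply the CKgP to the product $X\times X$ (taking both the $Y$ and $X$ in Definition \ref{ksurj} to be our $X$) in order to express the class of the diagonal as
\[
[\Delta_X] \;=\; \sum_i \alpha_i \times \beta_i \quad\in\quad A^{*}(X\times X),
\]
with $\alpha_i \in A^{a_i}(X)$, $\beta_i\in A^{b_i}(X)$, and $a_i+b_i=d$. Because $X$ is smooth and proper, with $\mathbb{Q}$-coefficients all standard intersection-theoretic operations on the smooth proper DM stack $X\times X$ behave as expected, and the cycle class map commutes with intersection product, flat pullback, and proper pushforward.

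Next I would use the basic identity that for any class $\zeta$ on $X$ (either in $A^{*}$ or in $H^{*}$),
\[
\zeta \;=\; (\pi_2)_*\!\bigl(\pi_1^{*}\zeta \cdot [\Delta_X]\bigr),
\]
because $\Delta_X$ acts as the identity correspondence. Substituting the decomposition and using the projection formula gives
\[
\zeta \;=\; \sum_i \deg(\zeta\cdot \alpha_i)\;\beta_i,
\]
where $\deg$ denotes pushforward to a point (zero unless the codimensions add to $d$). This formula holds identically in Chow and in cohomology.

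For surjectivity of $\mathrm{cl}$, take any $\gamma \in H^{m}(X,\mathbb{Q})$ and apply the cohomological version of the formula: it writes $\gamma$ as a $\mathbb{Q}$-linear combination of the algebraic classes $\mathrm{cl}(\beta_i)$. When $m$ is odd, every coefficient $\deg(\gamma\cdot \mathrm{cl}(\alpha_i))$ vanishes by parity of total degree, so $H^{\mathrm{odd}}(X,\mathbb{Q}) = 0$ as a bonus; when $m$ is even, the formula exhibits $\gamma$ as an algebraic class. For injectivity, suppose $\xi \in A^{*}(X)$ satisfies $\mathrm{cl}(\xi) = 0$. The Chow degree of a zero-cycle on a proper DM stack agrees with its cohomological degree (both are pushforward to $\mathrm{Spec}\,k$ and $\mathrm{cl}$ is an isomorphism in top codimension via the trace), so each $\deg(\xi\cdot\alpha_i) = \deg(\mathrm{cl}(\xi)\cdot\mathrm{cl}(\alpha_i)) = 0$. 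Plugging into the Chow version of the formula yields $\xi = 0$.

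The main obstacle is purely expository: verifying that the correspondence identity, the projection formula, and the equality of Chow and cohomology degrees for zero-cycles all hold on smooth proper DM stacks with $\mathbb{Q}$-coefficients. These are standard from Vistoli and Kresch, and the real input of the argument is the existence of an \emph{algebraic} decomposition of the diagonal, which is exactly what the CKgP hypothesis gives.
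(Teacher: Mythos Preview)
Your proof is correct and follows essentially the same decomposition-of-the-diagonal argument as the paper: use the CKgP to write $[\Delta_X]$ as a sum of exterior products, then apply the identity $\zeta = (\pi_2)_*(\pi_1^*\zeta\cdot[\Delta_X])$ in cohomology for surjectivity and in Chow for injectivity, using that the degree of a zero-cycle is detected by the cycle class map.
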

\begin{proof} Let $p_1: X \times X \to X$ and $p_2 : X \times X \to X$ be the projection maps onto the first and second factors.
Because $X$ has the CKgP, the map $\bigoplus_{i=0}^d A^i(X) \otimes A^{d-i}(X) \to A^d(X \times X)$ is surjective. In particular, the class of the diagonal $[\Delta] \in A^d(X \times X)$ has the form
\begin{equation} \label{inchow} [\Delta] = \sum_{i=0}^d \sum_{j=0}^{a_i} n_{i,j} \cdot (p_1^*y_{i,j}) \cdot (p_2^*z_{i,j}) \in A^d(X \times X). 
\end{equation}
where $y_{i,j} \in A^i(X)$ and $z_{i,j} \in A^{d-i}(X)$. Applying the cycle class map gives a decomposition of the diagonal in cohomology:
\[[\Delta] = \sum_{i=0}^d n_{i,j} \cdot p_1^*\mathrm{cl}(y_{i,j}) \cdot p_2^*\mathrm{cl}(z_{i,j}) \in H^{2d}(X \times X). \]
We use this to show that all cohomology is algebraic. Given $\alpha \in H^n(X)$, we have
\begin{align*}\alpha &= p_{2*}(p_1^*\alpha \cdot [\Delta])  \\
&= p_{2*}\left(\sum_{i=0}^d \sum_{j=0}^{a_i} p_1^*\alpha \cdot p_1^*\mathrm{cl}(y_{i,j}) \cdot p_2^*\mathrm{cl}(z_{i,j})\right) \\
&=  \sum_{i=0}^d \sum_{j=0}^{a_i} p_{2*}p_1^*(\alpha \cdot \mathrm{cl}(y_{i,j}))
\cdot \mathrm{cl}(z_{i,j}).
\intertext{For dimension reasons, $\alpha \cdot \cl(y_{i,j}) = 0$ whenever $n + 2i > 2d$.
On the other hand,
$p_{2*}p_1^*(\alpha \cdot \mathrm{cl}(y_i))$ has codimension $n + 2i - 2d$, so it vanishes if $n + 2i < 2d$. It follows that the only non-zero terms in the sum above are those with $n + 2i = 2d$. In this case, $\alpha \cdot \mathrm{cl}(y_{i,j}) \in H^{2d}(X) = \qq$ is top degree, so we get}
&= \sum_{j=0}^{a_i} \deg(\alpha \cdot \mathrm{cl}(y_{i,j})) \cdot \mathrm{cl}(z_{i,j}),
\end{align*}
showing $\alpha$ is in the image of $\mathrm{cl}$.

For injectivity, let $\beta\in A^i(X)$ be in the kernel of $\mathrm{cl}$. Then for any class $\gamma\in A^{d-i}(X)$, we have
\[
0=\mathrm{cl}(\beta)\cup \mathrm{cl}(\gamma)=\mathrm{cl}(\beta\cdot \gamma)=\deg(\beta\cdot \gamma).
\]
Applying the diagonal correspondence in Chow \eqref{inchow}, and using the push--pull formula as in the proof of surjectivity, we then have
\[
\beta=\sum_{j=0}^{a_i} \deg(\beta \cdot y_{i,j}) \cdot z_{i,j}=0. \qedhere
\]
\end{proof}
\begin{rem}
In \cite[Theorem 4.1]{TotaroCKgP}, Totaro proves a stronger version of Lemma \ref{cycleclass} for Chow motives, and in particular, for smooth, proper schemes over a field. 
\end{rem}
\begin{rem}
Properness of $X$ is essential and was used in the proof  to ensure the pushforward map $p_{2*}: A_*(X \times X) \to A_*(X)$ was well-defined.
For a non-example if the properness assumption is dropped, $\mathbb{A}^1 \smallsetminus \{0\}$ has the CKgP, but $H^1(\mathbb{A}^1 \smallsetminus \{0\}) \neq 0$, so $\mathrm{cl}$ is not surjective.
\end{rem}

As an immediate consequence of Lemma \ref{cycleclass}, we see that the CKgP allows us to convert results in Chow to results in cohomology.
\begin{lem} \label{arh}
Suppose $\Mb_{g,n}$ has the CKgP and $A^*(\Mb_{g,n}) = R^*(\Mb_{g,n})$. Then we also have $H^*(\Mb_{g,n}) = RH^*(\Mb_{g,n})$.
\end{lem}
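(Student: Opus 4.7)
The plan is to observe that this lemma is essentially an immediate consequence of Lemma \ref{cycleclass} applied to $X = \Mb_{g,n}$. First I would note that $\Mb_{g,n}$ is a smooth, proper Deligne--Mumford stack (of dimension $3g-3+n$), so the hypotheses of Lemma \ref{cycleclass} are satisfied once we assume the CKgP. Hence the cycle class map
\[
\mathrm{cl}: A^*(\Mb_{g,n}) \longrightarrow H^*(\Mb_{g,n}, \qq)
\]
is an isomorphism.

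Next I would chase the definitions. By definition, $RH^*(\Mb_{g,n})$ is the image $\mathrm{cl}(R^*(\Mb_{g,n}))$. By hypothesis, $R^*(\Mb_{g,n}) = A^*(\Mb_{g,n})$, so
\[
RH^*(\Mb_{g,n}) = \mathrm{cl}(R^*(\Mb_{g,n})) = \mathrm{cl}(A^*(\Mb_{g,n})) = H^*(\Mb_{g,n}, \qq),
\]
where the last equality uses surjectivity of $\mathrm{cl}$ from Lemma \ref{cycleclass}.

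Since all the work has already been done in Lemma \ref{cycleclass}, there is essentially no obstacle here; the only point worth flagging is that Lemma \ref{cycleclass} requires properness (as the remark after its proof notes), which is crucial and is the reason this tidy implication holds for $\Mb_{g,n}$ but would not be automatic for open substacks like $\M_{g,n}$ or $\M_{g,n}^{\ct}$.
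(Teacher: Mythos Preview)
Your proof is correct and matches the paper's approach exactly: the paper simply states that this lemma is ``an immediate consequence of Lemma \ref{cycleclass}'' without writing out a separate proof, and your argument spells out precisely that implication.
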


\subsubsection{Tautological cohomology and polynomial point counts}\label{ppc}
By the Grothendieck--Lefschetz trace formula, the cohomology of $\Mb_{g,n}$ is intimately related to its point counts over finite fields. Given a Deligne--Mumford stack $X$ that is smooth and proper over $\zz$, we say $X$ has \emph{polynomial point count} if $\# X(\mathbb{F}_q) = P(q)$ for some polynomial $P$ with integer coefficients. Here, the count of points is weighted by size of the automorphism group of each point:
\[\# X(k) = \sum_{\xi \in X(k)} \frac{1}{|\mathrm{Aut}(\xi)|}.\]
If $X$ has polynomial point count, then van den Bogaart--Edixhoven \cite{BogaartEdixhoven} show that the polynomial $P$ determines the cohomology of $X$.
Conversely, information about the cohomology of $X$ 
can determine information about its point counts.
The following fact was explained to us by Sam Payne.

\begin{prop} \label{ptcounts}
If $H^*(\Mb_{g,n}) = RH^*(\Mb_{g,n})$, then $\Mb_{g,n}$ has polynomial point count.
\end{prop}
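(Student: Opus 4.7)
The plan is to apply the Grothendieck--Lefschetz trace formula (in Behrend's form for Deligne--Mumford stacks) to $\Mb_{g,n}$, which is smooth and proper over $\mathrm{Spec}\,\zz$, giving
\[
\#\Mb_{g,n}(\mathbb{F}_q) \;=\; \sum_i (-1)^i \, \mathrm{tr}\bigl(F \,\bigm|\, H^i(\Mb_{g,n,\bar{\mathbb{F}}_q}, \qq_\ell)\bigr),
\]
where the left side weights each point by $1/|\mathrm{Aut}|$ as in our definition of the point count. The hope is then to show that the hypothesis forces Frobenius to act on $H^{2i}$ by the scalar $q^i$, with odd cohomology vanishing, so that the right side becomes a polynomial in $q$ with nonnegative integer coefficients.

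First I would transport the hypothesis to $\ell$-adic cohomology in positive characteristic. The classes $\psi_i$, $\kappa_j$, and the gluing-image classes $\xi_{\Gamma *}(\alpha)$ are algebraic cycles on $\Mb_{g,n}$ defined over $\Spec\,\zz$ (or $\Spec\,\zz[1/N]$ for a suitable $N$). Since $\Mb_{g,n}$ is smooth and proper over $\Spec\,\zz[1/N]$, the smooth and proper base change theorems identify the $\qq_\ell$-cohomology across geometric fibers compatibly with the cycle class map, so the assumption $H^*(\Mb_{g,n}) = RH^*(\Mb_{g,n})$ (stated in characteristic $0$) transfers to the same statement for $\ell$-adic cohomology of $\Mb_{g,n,\bar{\mathbb{F}}_q}$.

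Next, I would use the fact that any algebraic cycle of codimension $i$ defined over $\mathbb{F}_q$ is a Tate class of weight $2i$: the geometric Frobenius acts on its $\ell$-adic cycle class as multiplication by $q^i$. Since $RH^*(\Mb_{g,n})$ is generated (as a $\qq$-algebra) by cycle classes of tautological generators, all of which are Frobenius-equivariant over the prime field, the hypothesis gives
\[
H^{2i+1}(\Mb_{g,n,\bar{\mathbb{F}}_q}, \qq_\ell) = 0 \qquad \text{and} \qquad F\bigm|_{H^{2i}(\Mb_{g,n,\bar{\mathbb{F}}_q}, \qq_\ell)} = q^i \cdot \mathrm{Id}.
\]

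Plugging this into the trace formula yields
\[
\#\Mb_{g,n}(\mathbb{F}_q) \;=\; \sum_{i \geq 0} \dim_{\qq_\ell} H^{2i}(\Mb_{g,n,\bar{\mathbb{F}}_q}, \qq_\ell)\cdot q^i,
\]
which is a polynomial in $q$ with nonnegative integer coefficients that is independent of $q$. The main conceptual step is the Tate-class assertion for tautological generators; the rest is bookkeeping with the trace formula. The one mild technicality is verifying that the generators are defined over the prime field (which holds because $\psi$, $\kappa$, and boundary strata are intrinsic to the universal curve) and handling the finite bad primes in the base change, but these pose no serious obstacle.
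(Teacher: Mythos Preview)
Your proof is correct and follows essentially the same approach as the paper: both invoke Behrend's trace formula, transport the tautological-generation hypothesis from singular cohomology to $\ell$-adic cohomology via smooth proper base change and the comparison theorem, and then use that Frobenius acts by $q^i$ on codimension-$i$ cycle classes defined over $\mathbb{F}_p$. One small remark: $\Mb_{g,n}$ is in fact smooth and proper over all of $\Spec\,\zz$, so no bad primes need to be excluded.
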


\begin{proof}
This result relies crucially on the fact that $X := \Mb_{g,n}$ is smooth and proper over $\zz$. 
The $\mathbb{F}_q$-points of $X$ are the fixed points of the action of Frobenius on $X_{\overline{\mathbb{F}}_q}$.
Using the Grothendieck--Lefschetz trace formula \cite{Behrend}, $X$ has polynomial point count if Frobenius acts by powers of $q$ on
$H^*_{\text{\'et}}(X_{\overline{\mathbb{F}}_q}, \qq_\ell)$ for $\ell$ a prime not equal to $p$.
Frobenius always acts by $q^k$ on the fundamental class of an algebraic subvariety of codimension $k$ defined over $\mathbb{F}_q$.
Thus, it suffices to show that
$H^*_{\text{\'et}}(X_{\overline{\mathbb{F}}_q}, \qq_\ell)$ is generated by algebraic classes defined over $\mathbb{F}_p$.

The tautological ring is generated by algebraic classes defined over $\zz$. Therefore, there is a surjection $R^*(X_{\zz}) \to RH^*(X_{\cc})$. Assuming $H^*(X_{\cc}) = RH^*(X_{\cc})$, we obtain the diagram below where the  arrow from the left to the far upper right is surjective.
\begin{equation} \label{pc}
\begin{tikzcd}
& A^*(X_{\cc}) \arrow{r} & H^*(X_{\cc}) \\
R^*(X_{\zz}) \ar[urr,bend right = 10, two heads] \arrow{ur} \arrow{dr} \\
& A^*(X_{\mathbb{F}_p}) \arrow{r} & H^*_{\text{\'et}}(X_{\overline{\mathbb{F}}_p}, \qq_\ell)
\end{tikzcd}
\end{equation}
Because $X$ is smooth over $\zz$, we have (see \cite[Proposition 3.1]{BogaartEdixhoven})
\[H^*_{\text{\'et}}(X_{\overline{\mathbb{F}}_p}, \qq_\ell) \cong
H^*_{\text{\'et}}(X_{\overline{\mathbb{Q}}_p}, \qq_\ell). 
\]
Meanwhile, choosing an isomorphism $\overline{\qq}_p \cong \mathbb{C}$, the comparison theorem gives an isomorphism
\[ H^*_{\text{\'et}}(X_{\overline{\mathbb{Q}}_p}, \qq_\ell) \cong H^*(X_{\mathbb{C}}, \qq_{\ell}). \]
Thus, upon tensoring the vector spaces in \eqref{pc} up to $\qq_\ell$, there is an isomorphism between the two vector spaces in the rightmost column. 
This implies that the image of $R^*(X_{\zz}) \to H^*_{\text{\'et}}(X_{\overline{\mathbb{F}}_p}, \qq_\ell)$ generates $H^*_{\text{\'et}}(X_{\overline{\mathbb{F}}_p}, \qq_\ell)$ over $\qq_\ell$.
Hence, the image of $A^*(X_{\mathbb{F}_p}) \to H^*_{\text{\'et}}(X_{\overline{\mathbb{F}}_p}, \qq_{\ell})$ generates 
$H^*_{\text{\'et}}(X_{\overline{\mathbb{F}}_p}, \qq_{\ell})$ over $\qq_\ell$. Frobenius acts on such elements by powers of $q$, so we obtain a polynomial point count.
\end{proof}

By Proposition \ref{ptcounts},
Corollary \ref{ptcor} follows immediately from Theorem \ref{main}. Thus, our work gives new proofs of all of the previously known cases establishing $\Mb_{g,n}$ has polynomial point count listed in Section \ref{previouswork}. Unlike previous work, we have not determined the polynomial, arguing instead just with our knowledge of generators for cohomology. However, the polynomial could in principle be determined by using Corollary \ref{gcor} and admcycles \cite{admcycles} to compute the ranks of $RH^*(\Mb_{g,n})$.

\section{Filling criteria} \label{mainstrategy}
Given any open subset $U \subseteq \Mb_{g,n}$ we say ``$U$ has $A^* = R^*$" when $A^*(U)$ is generated by restrictions of tautological classes on $\Mb_{g,n}$. We keep track of our progress proving $A^* = R^*$ results with circles on the $(g, n)$ grid as in Figure \ref{bigfig}:
\begin{itemize}
    \item[$\circ$] An open circle at coordinate $(g, n)$ means $\M_{g,n}$ has the CKgP and $A^* = R^*$.
    \item[ \put(.5,2.4){\circle*{5}}\put(.5,2.4){\color{white}\circle*{2}} \ ] A thick open circle at coordinate $(g, n)$ means $\M_{g,n}^{\ct}$ has the CKgP and $A^* = R^*$.
    \item A filled circle at coordinate $(g, n)$ means $\Mb_{g,n}$ has the CKgP and $A^* = R^*$.
\end{itemize} 
Filled circles are stronger than thick open circles are stronger than open circles: By Lemma \ref{open} and excision if one of these $\M$ has the CKgP and $A^* = R^*$, then any open subset of $\M$ has  the CKgP and $A^* = R^*$. By Lemma \ref{arh}, a filled circle $\bullet$ implies what it meant in Figure \ref{bigfig} of the introduction.

Considering the forgetful maps $\Mb_{g,n} \to \Mb_{g,n-1}$, which are surjective and proper, we see that 
filled circles always come in columns where all circles below a filled circle are also filled.
Indeed, Lemma \ref{surjCKgP} shows that if $\Mb_{g,n}$ has CKgP then $\Mb_{g,n-1}$ has CKgP. Meanwhile, the push forward map induces a surjection on Chow groups and sends tautological classes to tautological classes, so if $\Mb_{g,n}$ has $A^* = R^*$ then $\Mb_{g,n-1}$ has $A^* = R^*$.

The inductive nature of the boundary of $\Mb_{g,n}$ allows us to convert open circles to filled circles when certain circles to the left (i.e. for lower $g$) are already filled.

\begin{lem}[Filling criterion, version 1] \label{fc1}
Suppose that 
\begin{enumerate}
    \item $\M_{g,n'}$ has the CKgP and $A^* = R^*$ for all $n' \leq n$ (we have a column of $n$ open circles in genus $g$).
\item $\Mb_{g',n'}$ has the CKgP and $A^* = R^*$ for all $g' \leq g-1$ and $n'\leq n+1$ (all dots in the rectangular region to the left going one row higher are filled)
\item $\Mb_{g-1,n+2}$ has the CKgP and $A^* = R^*$ (the dot up two and one to the left is filled).
\end{enumerate}
Then $\Mb_{g,n'}$ has the CKgP and $A^* = R^*$ for all $n' \leq n$ (we get a column of $n$ filled circles in genus $g$).
\end{lem}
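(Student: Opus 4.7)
The plan is to prove the lemma by induction on $n'$ ranging from $0$ to $n$, establishing at each step that $\Mb_{g,n'}$ has the CKgP and $A^*(\Mb_{g,n'}) = R^*(\Mb_{g,n'})$. The induction is needed because the boundary analysis at level $n'$ will feed back on $\Mb_{g,n''}$ for certain $n'' < n'$. The hypotheses (1)--(3) remain valid when $n$ is replaced by any smaller $n'$: this is immediate for (1) and (2), while for (3) it follows from forgetful descent from $\Mb_{g-1,n+2}$ down to $\Mb_{g-1,n'+2}$.

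For the inductive step at $n'$, I would stratify $\Mb_{g,n'} = \M_{g,n'} \sqcup \partial\Mb_{g,n'}$. The open part has the CKgP and $A^* = R^*$ by hypothesis (1). For the boundary, I would enumerate the irreducible boundary divisors $D_1,\ldots,D_N$ (the locus of irreducible singular curves together with the reducible boundary divisors indexed by a genus partition $g = g_1+g_2$ and a subset $S$ of the markings), and stratify $\partial\Mb_{g,n'}$ by the differences $D_i \setminus \bigcup_{j<i} D_j$. Each such difference is open in $D_i$, so by Lemma \ref{open} it inherits the CKgP from $D_i$. Applying Lemma \ref{Kstrat} to this finite stratification reduces the question to establishing the CKgP on each individual divisor $D_i$.

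The heart of the argument is treating each divisor $D_i$ via its gluing map $\xi_\Gamma \colon \Mb_\Gamma \to \Mb_{g,n'}$, where $\Gamma$ is a one-edge stable graph and $\Mb_\Gamma$ is a product of spaces of the form $\Mb_{g(v),n(v)}$. This map is finite and surjective onto $D_i$, so Lemmas \ref{surjCKgP} and \ref{prod} together reduce the CKgP of $D_i$ to the CKgP of each vertex factor. For the irreducible boundary divisor, the sole factor is $\Mb_{g-1,\,n'+2}$, handled by hypothesis (3) after forgetful descent. For a reducible divisor with both $g_1,g_2 \leq g-1$, both factors are of the form $\Mb_{g',m}$ with $g' \leq g-1$ and $m \leq n+1$, handled by hypothesis (2). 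The only delicate case is when one part of the partition equals $g$ (and the other equals $0$): stability of the rational component forces $|S^c| \geq 2$, whence $|S|+1 \leq n'-1$, placing the genus-$g$ factor $\Mb_{g,|S|+1}$ under the inductive hypothesis on $n'$. I expect this rational-tails case to be the main bookkeeping pitfall, though the stability inequality $|S^c| \geq 2$ is exactly what makes the induction well-founded.

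To upgrade the CKgP to $A^* = R^*$ on $\Mb_{g,n'}$, I would observe that any class on $\Mb_\Gamma$ is, by the CKgP, a sum of exterior products of classes from its factors, which are tautological by the same case analysis as above. Since the tautological system is closed under exterior products on $\Mb_\Gamma$ and under pushforward along $\xi_\Gamma$, the image of $A^*(D_i) \to A^*(\Mb_{g,n'})$ lies in $R^*(\Mb_{g,n'})$. Combining this with hypothesis (1) for the open part and the excision sequence $A^*(\partial\Mb_{g,n'}) \to A^*(\Mb_{g,n'}) \to A^*(\M_{g,n'}) \to 0$ then yields $A^*(\Mb_{g,n'}) = R^*(\Mb_{g,n'})$, completing the inductive step.
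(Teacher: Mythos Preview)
Your proposal is correct and follows essentially the same approach as the paper: induct on $n'$, cover $\Mb_{g,n'}$ by the open locus $\M_{g,n'}$ and the boundary divisors, handle each boundary divisor via its gluing map using Lemmas~\ref{prod} and~\ref{surjCKgP}, and note (exactly as you do) that the only subtle case is $g_1 = g$, where stability of the rational tail forces $n_1+1 \leq n'-1$ so the inductive hypothesis applies. Your write-up is in fact a bit more explicit than the paper's about assembling the pieces via Lemmas~\ref{open} and~\ref{Kstrat}, but the content is the same.
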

\begin{proof}
Inducting on $n$, we may assume that $\Mb_{g,n'}$ has the CKgP for $n' \leq n-1$. 
Each component of the boundary of $\Mb_{g,n}$ is the image of
a gluing map
\[\Mb_{g_1,n_1+1} \times \Mb_{g_2,n_2+1} \to \Mb_{g,n} \qquad \text{or} \qquad \Mb_{g-1,n+2} \to \Mb_{g,n}.\]
where $g_1 + g_2 = g$ and $n_1 + n_2 = n$.
Note that if $g_1 = g$, then 
$g_2 = 0$, so $n_2 \geq 2$ and hence $n_1 + 1 \leq n-1$.
Thus, the assumptions (1) -- (3) and our inductive hypothesis ensure that all of the relevant $\Mb_{g',n'}$ used above have the CKgP and $A^* = R^*$. 
By Lemmas \ref{prod} and \ref{surjCKgP}, we see that every component of the boundary of $\Mb_{g,n}$ has CKgP. To see classes supported there are tautological,
 consider the diagram
\begin{center}
    \begin{tikzcd}
    R^*(\Mb_{g_1, n_1+1}) \otimes R^*(\Mb_{g_2,n_2+1}) \ar[r, two heads] & A^*(\Mb_{g_1, n_1+1}) \otimes A^*(\Mb_{g_2,n_2+1}) \ar[d, two heads] \\
    & A^*(\Mb_{g_1, n_1+1} \times \Mb_{g_2,n_2+1}) \arrow{r} & A^*(\Mb_{g,n}).
    \end{tikzcd}
\end{center}
The first horizontal arrow is surjective because the moduli spaces have $A^* = R^*$. The vertical arrow is surjective because they have the CKgP. 
The image of the last horizontal arrow is all cycles supported on this boundary component. Because the first two arrows are surjections, the image of the last map is the same as the image of the composition. Finally, the image of the composition consists of tautological classes by the definition of the tautological rings.

A similar argument holds for the self-glue boundary stratum. We have
\[R^*(\Mb_{g-1,n+2}) \twoheadrightarrow A^*(\Mb_{g-1,n+2}) \rightarrow A^*(\Mb_{g,n}). \]
The image of the second map is all cycles supported on this boundary stratum. The image of the composition consists of tautological classes by definition of the tautological rings.
\end{proof}

Iterated use of Lemma \ref{fc1} tells us that if we have all open circles in a
region bounded by the axes, a vertical line, and a line of slope $-2$, then they can all be converted to filled circles.

\begin{center}
\begin{tikzpicture}[scale = .4]

\node[scale=.5] at (1, -.7) {$1$};
\node[scale=.5] at (2, -.7) {$2$};
\node[scale=.5] at (3, -.7) {$3$};
\node[scale=.5] at (4, -.7) {$4$};

\node[scale=.5] at (-.7, 1) {$1$};
\node[scale=.5] at (-.7, 2) {$2$};
\node[scale=.5] at (-.7, 3) {$3$};
\node[scale=.5] at (-.7, 4) {$4$};
\node[scale=.5] at (-.7, 5) {$5$};
\node[scale=.5] at (-.7, 6) {$6$};
\node[scale=.5] at (-.7, 7) {$7$};
\node[scale=.5] at (-.7, 8) {$8$};
\node[scale=.5] at (-.7, 9) {$9$};

\draw[->] (0, 0) -- (5, 0);
\draw[->] (0, 0) -- (0, 10);
\node[scale=.9] at (5.4,0) {$g$};
\node[scale=.9] at (0, 10.4) {$n$};
\draw (-.1, 1) -- (.1, 1);
\draw (-.1, 2) -- (.1, 2);
\draw (1, -.1) -- (1, .1);
\draw (2, -.1) -- (2, .1);
\draw (3, -.1) -- (3, .1);
\draw (4, -.1) -- (4, .1);
\draw[color=gray] (0, 9) -- (3,3);
\draw[color=gray] (3,3) -- (3, 0);
\filldraw[color=white] (0, 3) circle (4pt);
\filldraw[color=white] (0, 4) circle (4pt);
\filldraw[color=white] (0, 5) circle (4pt);
\filldraw[color=white] (0, 6) circle (4pt);
\filldraw[color=white] (0, 7) circle (4pt);
\filldraw[color=white] (0, 8) circle (4pt);
\filldraw[color=white] (0, 9) circle (4pt);
\draw (0, 3) circle (4pt);
\draw (0, 4) circle (4pt);
\draw (0, 5) circle (4pt);
\draw (0, 6) circle (4pt);
\draw (0, 7) circle (4pt);
\draw (0, 8) circle (4pt);
\filldraw[color=white] (0, 9) circle (4pt);
\draw (0, 9) circle (4pt);
\draw (1, 1) circle (4pt);
\draw (1, 2) circle (4pt);
\draw (1, 3) circle (4pt);
\draw (1, 4) circle (4pt);
\draw (1, 5) circle (4pt);
\draw (1, 6) circle (4pt);
\filldraw[color=white] (1, 7) circle (4pt);
\draw (1, 7) circle (4pt);
\filldraw[color=white] (2, 0) circle (4pt);
\draw (2, 0) circle (4pt);
\draw (2, 1) circle (4pt);
\draw (2, 2) circle (4pt);
\draw (2, 3) circle (4pt);
\draw (2, 4) circle (4pt);
\filldraw[color=white] (2, 5) circle (4pt);
\draw (2, 5) circle (4pt);
\filldraw[color=white] (3, 0) circle (4pt);
\draw (3, 0) circle (4pt);
\filldraw[color=white] (3, 1) circle (4pt);
\draw (3, 1) circle (4pt);
\filldraw[color=white] (3, 2) circle (4pt);
\draw (3, 2) circle (4pt);
\filldraw[color=white] (3, 3) circle (4pt);
\draw (3, 3) circle (4pt);
\draw[->] (5, 5) -- (9, 5);
\node[scale=.8] at (7, 6) {Filling criterion};
\node[scale=.8] at (7, 4) {version 1};
\end{tikzpicture}
\hspace{.07in}
\begin{tikzpicture}[scale = .4]
\draw[color=gray] (0, 9) -- (3,3);
\draw[color=gray] (3,3) -- (3, 0);
\node[scale=.5] at (1, -.7) {$1$};
\node[scale=.5] at (2, -.7) {$2$};
\node[scale=.5] at (3, -.7) {$3$};
\node[scale=.5] at (4, -.7) {$4$};

\node[scale=.5] at (-.7, 1) {$1$};
\node[scale=.5] at (-.7, 2) {$2$};
\node[scale=.5] at (-.7, 3) {$3$};
\node[scale=.5] at (-.7, 4) {$4$};
\node[scale=.5] at (-.7, 5) {$5$};
\node[scale=.5] at (-.7, 6) {$6$};
\node[scale=.5] at (-.7, 7) {$7$};
\node[scale=.5] at (-.7, 8) {$8$};
\node[scale=.5] at (-.7, 9) {$9$};

\draw[->] (0, 0) -- (5, 0);
\draw[->] (0, 0) -- (0, 10);
\node[scale=.9] at (5.4,0) {$g$};
\node[scale=.9] at (0, 10.4) {$n$};
\draw (-.1, 1) -- (.1, 1);
\draw (-.1, 2) -- (.1, 2);
\draw (1, -.1) -- (1, .1);
\draw (2, -.1) -- (2, .1);
\draw (3, -.1) -- (3, .1);
\draw (4, -.1) -- (4, .1);

\filldraw (0, 3) circle (4pt);
\filldraw (0, 4) circle (4pt);
\filldraw (0, 5) circle (4pt);
\filldraw (0, 6) circle (4pt);
\filldraw (0, 7) circle (4pt);
\filldraw (0, 8) circle (4pt);
\filldraw (0, 9) circle (4pt);
\filldraw (1, 1) circle (4pt);
\filldraw (1, 2) circle (4pt);
\filldraw (1, 3) circle (4pt);
\filldraw (1, 4) circle (4pt);
\filldraw (1, 5) circle (4pt);
\filldraw (1, 6) circle (4pt);
\filldraw (1, 7) circle (4pt);
\filldraw (2, 0) circle (4pt);
\filldraw (2, 1) circle (4pt);
\filldraw (2, 2) circle (4pt);
\filldraw (2, 3) circle (4pt);
\filldraw (2, 4) circle (4pt);
\filldraw (2, 5) circle (4pt);
\filldraw (3, 0) circle (4pt);
\filldraw (3, 1) circle (4pt);
\filldraw (3, 2) circle (4pt);
\filldraw (3, 3) circle (4pt);
\end{tikzpicture}
\end{center}

Since $\Mb_{1,11}$ fails $A^* = R^*$, the largest region of this form is the triangle below to the right, 
bounded by the axes and the line of slope $-2$ through $\Mb_{1,10}$, i.e. $2g + n \leq 12$.

\noindent
\begin{minipage}{.5\textwidth}
\ \ \ We shall obtain all filled circles in this region. However, we also obtain some results \emph{above} this maximal line of slope $-2$, i.e. with $2g + n > 12$.

\ \ \ The basic motivation behind such improvements is that additional symmetry is introduced when we self-glue, so even if Lemma \ref{fc1}(3) fails, there is still hope that classes supported on the image of $\Mb_{g-1,n+1} \to \Mb_{g,n}$ are tautological.
Let $\Delta_{\mathrm{dn}}$ be the union of all boundary strata with a disconnecting node. 
An alternative criterion is as follows.
\end{minipage}
\begin{minipage}{.49\textwidth}
\centering
\begin{tikzpicture}[scale = .4]
\node[scale=.5] at (1, -.7) {$1$};
\node[scale=.5] at (2, -.7) {$2$};
\node[scale=.5] at (3, -.7) {$3$};
\node[scale=.5] at (4, -.7) {$4$};
\node[scale=.5] at (5, -.7) {$5$};
\node[scale=.5] at (6, -.7) {$6$};

\node[scale=.5] at (-.7, 1) {$1$};
\node[scale=.5] at (-.7, 2) {$2$};
\node[scale=.5] at (-.7, 3) {$3$};
\node[scale=.5] at (-.7, 4) {$4$};
\node[scale=.5] at (-.7, 5) {$5$};
\node[scale=.5] at (-.7, 6) {$6$};
\node[scale=.5] at (-.7, 7) {$7$};
\node[scale=.5] at (-.7, 8) {$8$};
\node[scale=.5] at (-.7, 9) {$9$};
\node[scale=.5] at (-.7, 10) {$10$};
\node[scale=.5] at (-.7, 11) {$11$};
\node[scale=.5] at (-.7, 12) {$12$};

\draw[color=gray] (0,12) -- (6, 0);

\draw[->] (0, 0) -- (7, 0);
\draw[->] (0, 0) -- (0, 13);
\node[scale=.9] at (7.4,0) {$g$};
\node[scale=.9] at (0, 13.4) {$n$};
\draw (-.1, 1) -- (.1, 1);
\draw (-.1, 2) -- (.1, 2);
\draw (1, -.1) -- (1, .1);
\draw (2, -.1) -- (2, .1);
\draw (3, -.1) -- (3, .1);
\draw (4, -.1) -- (4, .1);
\filldraw (0, 3) circle (4pt);
\filldraw (0, 4) circle (4pt);
\filldraw (0, 5) circle (4pt);
\filldraw (0, 6) circle (4pt);
\filldraw (0, 7) circle (4pt);
\filldraw (0, 8) circle (4pt);
\filldraw (0, 9) circle (4pt);
\filldraw (0, 10) circle (4pt);
\filldraw (0, 11) circle (4pt);
\filldraw (0, 12) circle (4pt);
\filldraw (1, 1) circle (4pt);
\filldraw (1, 2) circle (4pt);
\filldraw (1, 3) circle (4pt);
\filldraw (1, 4) circle (4pt);
\filldraw (1, 5) circle (4pt);
\filldraw (1, 6) circle (4pt);
\filldraw (1, 7) circle (4pt);
\filldraw (1, 8) circle (4pt);
\filldraw (1, 9) circle (4pt);
\filldraw (1, 10) circle (4pt);
\node[scale = .6, color = red] at (1, 11) {$\mathbf{\times}$};
\filldraw (2, 0) circle (4pt);
\filldraw (2, 1) circle (4pt);
\filldraw (2, 2) circle (4pt);
\filldraw (2, 3) circle (4pt);
\filldraw (2, 4) circle (4pt);
\filldraw (2, 5) circle (4pt);
\filldraw (2, 6) circle (4pt);
\filldraw (2, 7) circle (4pt);
\filldraw (2, 8) circle (4pt);

\filldraw (3, 0) circle (4pt);
\filldraw (3, 1) circle (4pt);
\filldraw (3, 2) circle (4pt);
\filldraw (3, 3) circle (4pt);
\filldraw (3, 4) circle (4pt);
\filldraw (3, 5) circle (4pt);
\filldraw (3, 6) circle (4pt);

\filldraw (4, 0) circle (4pt);
\filldraw (4, 1) circle (4pt);
\filldraw (4, 2) circle (4pt);
\filldraw (4, 3) circle (4pt);
\filldraw (4, 4) circle (4pt);

\filldraw (5, 0) circle (4pt);
\filldraw (5, 1) circle (4pt);
\filldraw (5, 2) circle (4pt);

\filldraw (6, 0) circle (4pt);
\end{tikzpicture}
\label{trifig}
\end{minipage}

\begin{lem}[Filling criterion, version 2] \label{fc2}
Suppose that 
\begin{enumerate}
    \item $\M_{g,n'}$ has the CKgP and $A^* = R^*$ for all $n' \leq n$ (we have a column of $n$ open circles in genus $g$).
\item $\Mb_{g',n'}$ has the CKgP and $A^* = R^*$ for all $g' \leq g-1$ and $n'\leq n+1$ (all dots in the rectangular region to the left going one row higher are filled)
\item $\Mb_{g,n} \smallsetminus \Delta_{\mathrm{dn}}$ has the CKgP and $A^* = R^*$.
\end{enumerate}
Then $\Mb_{g,n'}$ has the CKgP and $A^* = R^*$ for all $n' \leq n$ (we get a column of $n$ filled circles in genus $g$).
\end{lem}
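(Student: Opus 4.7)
The plan is to adapt the proof of Lemma \ref{fc1}, replacing the direct use of $\Mb_{g-1,n+2}$ with an excision argument that only sees the open locus in hypothesis (3). As in Lemma \ref{fc1}, I would induct on $n$, so we may assume $\Mb_{g,n'}$ has the CKgP and $A^* = R^*$ for all $n' \leq n - 1$.

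First I would handle the separating boundary $\Delta_{\mathrm{dn}}$, exactly as in Lemma \ref{fc1}. Its components are the images of gluing maps
\[\xi_\Gamma : \Mb_{g_1, n_1 + 1} \times \Mb_{g_2, n_2 + 1} \to \Mb_{g,n}\]
with $g_1 + g_2 = g$ and $n_1 + n_2 = n$. When both $g_i < g$, hypothesis (2) provides the CKgP and $A^* = R^*$ for each factor. When $g_1 = g$ (so $g_2 = 0$), stability forces $n_2 + 1 \geq 3$, hence $n_1 + 1 \leq n - 1$, and the inductive hypothesis gives $\Mb_{g,n_1+1}$ has the CKgP and $A^* = R^*$, while hypothesis (2) handles $\Mb_{0, n_2+1}$. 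Lemmas \ref{prod} and \ref{surjCKgP} then give the CKgP for each such boundary component, and the product-diagram argument from Lemma \ref{fc1} shows that any class supported on $\Delta_{\mathrm{dn}}$ is tautological. Applying Lemma \ref{Kstrat} to the induced stratification of $\Delta_{\mathrm{dn}}$ shows $\Delta_{\mathrm{dn}}$ itself has the CKgP.

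Next I would use the open-closed decomposition $\Mb_{g,n} = U \sqcup \Delta_{\mathrm{dn}}$, where $U := \Mb_{g,n} \smallsetminus \Delta_{\mathrm{dn}}$ has the CKgP and $A^* = R^*$ by hypothesis (3). A second application of Lemma \ref{Kstrat} yields the CKgP for $\Mb_{g,n}$. For $A^* = R^*$, I would argue by excision: given $\alpha \in A^*(\Mb_{g,n})$, its restriction $\alpha|_U$ lies in $R^*(U)$ by hypothesis (3), so there is a tautological $\tilde\alpha \in R^*(\Mb_{g,n})$ with $(\alpha - \tilde\alpha)|_U = 0$; then $\alpha - \tilde\alpha$ is pushed forward from $A^*(\Delta_{\mathrm{dn}})$, and is tautological by the previous step.

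The conceptual point---and the reason this criterion can succeed where Lemma \ref{fc1}(3) fails---is that hypothesis (3) only requires information about the open locus of curves with no disconnecting nodes, not about the entire self-gluing image $\Mb_{g-1, n+2}$. The structural argument above is essentially a formal consequence of the results in Section \ref{CKgP}; I expect the main obstacle in applications will be verifying hypothesis (3) itself, which requires a concrete understanding of the locus of smooth or irreducibly-nodal curves (for instance, via explicit quotient stack presentations as developed in later sections of the paper).
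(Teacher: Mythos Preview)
Your proof is correct and follows essentially the same approach as the paper's own proof, which simply says ``Arguing as in Lemma \ref{fc1}, assumptions (1) and (2) ensure that all classes supported on boundary components for curves with a disconnecting node are tautological and such boundary components have the CKgP. Using excision, part (3) completes the result.'' Your write-up is a faithful expansion of this: the inductive handling of $\Delta_{\mathrm{dn}}$ via hypotheses (1), (2), and the already-established $\Mb_{g,n'}$ for $n' \leq n-1$ (which, as you implicitly use, follow from Lemma \ref{fc1} since $\Mb_{g-1,n'+2}$ is covered by (2) when $n' \leq n-1$), followed by the excision argument using (3), is exactly what the paper intends.
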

\begin{proof}
Arguing as in Lemma \ref{fc1}, assumptions (1) and (2) ensure that all classes supported on boundary components for curves with a disconnecting node are tautological and such boundary components have the CKgP. Using excision, part (3) completes the result.
\end{proof}

Our results with $2g + n > 12$ start with $(g, n) = (2, 9)$ and rely on version 2 of the filling criterion (see Lemma \ref{2end}). We also access $\Mb_{3,8}$ using version 2 of our filling criterion. In both of these cases, the key idea is to realize that our construction of $\M_{g,n}$ naturally extends to cover a slightly larger open subset $U$ of $\Mb_{g,n}$; then we make adhoc arguments to cover anything we have missed in $\Mb_{g,n} \smallsetminus (\Delta_{\mathrm{dn}} \cup U)$.
Once we obtain these extra filled circles, Lemma \ref{fc1} has a ``ripple effect" which allows us to fill in other open circles to the right along the line of slope $-2$ through $(3,8)$, eventually resulting in a filled circle for $\Mb_{7}$. 

As the proof of version 2 of the filling criterion suggests, if we focus on the curves where all nodes are disconnecting --- in other words the open locus $\M_{g,n}^{\ct}$ --- then we can convert open circles to thick circles along lines of higher slope.

\begin{lem}[Thickening criterion, for compact type] \label{tc}
Suppose that
\begin{enumerate}
    \item $\M_{g,n'}$ has the CKgP and $A^* = R^*$ for all $n' \leq n$ (we have a column of $n$ open circles in genus $g$).
\item $\M_{g',n'}^{\ct}$ has the CKgP and $A^* = R^*$ for all $g' \leq g-1$ and $n'\leq n+1$ (all dots in the rectangular region to the left going one row higher are thick circles)
\end{enumerate}
Then $\M_{g,n'}^{\ct}$ has the CKgP and $A^* = R^*$ for all $n' \leq n$ (we get a column of $n$ thick circles in genus $g$).
\end{lem}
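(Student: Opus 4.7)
The plan is to mimic the proof of Lemma \ref{fc1} (version 1 of the filling criterion), the key simplification being that a compact type curve has only disconnecting nodes, so $\M^{\ct}_{g,n'}$ contains no self-gluing boundary divisor. This is precisely why the thickening criterion omits any analogue of hypothesis (3) of Lemma \ref{fc1}.

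I would proceed by induction on $n'$, fixing $g$. Assuming $\M^{\ct}_{g,n''}$ has the CKgP and $A^* = R^*$ for all $n'' < n'$, the open substack $\M_{g,n'} \subset \M^{\ct}_{g,n'}$ has the CKgP and $A^* = R^*$ by hypothesis (1). The complementary closed substack is the union of boundary divisors $\delta^{\ct}_{g_1,S}$, one for each stable splitting $g_1 + g_2 = g$ and $S \sqcup S^c = \{1, \ldots, n'\}$; each such divisor is a finite quotient of the product $\M^{\ct}_{g_1, |S|+1} \times \M^{\ct}_{g_2, |S^c|+1}$ by the automorphism group of the two-vertex tree.

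The main step is to verify that each factor of each boundary divisor has the CKgP and $A^* = R^*$, so that Lemmas \ref{prod} and \ref{surjCKgP} yield the CKgP of every $\delta^{\ct}_{g_1, S}$, while closure of the tautological rings under gluing pushforwards shows that classes supported on each divisor are tautological in $\M^{\ct}_{g,n'}$. Three cases arise: (a) if $g_i = 0$, then $\M^{\ct}_{0, m} = \Mb_{0,m}$ is handled by Keel; (b) if $0 < g_i < g$, then $|S|+1 \leq n' + 1 \leq n+1$, so hypothesis (2) applies directly; (c) if $g_i = g$, then $g_{3-i} = 0$ and stability of the genus $0$ side forces $|S^c|+1 \geq 3$, hence $|S|+1 \leq n'-1 < n'$, so the inductive hypothesis applies.

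To conclude, I would add the boundary divisors one at a time: each new piece $\delta^{\ct}_{g_1, S}$ minus the previously added divisors is open in $\delta^{\ct}_{g_1, S}$, so Lemma \ref{open} gives it the CKgP, and Lemma \ref{Kstrat} combines it with the previously constructed union. One further application of Lemma \ref{Kstrat} joins the open stratum $\M_{g,n'}$ with the boundary, giving the CKgP for $\M^{\ct}_{g,n'}$, and the same bookkeeping together with excision yields $A^* = R^*$. The only point requiring genuine care is the marking-count bound in case (b), and because we only work at the level of codimension-one boundary strata (rather than deeper strata, where the analogous bound can fail), hypothesis (2) suffices exactly; I do not anticipate a further obstacle.
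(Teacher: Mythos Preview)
Your proposal is correct and follows essentially the same approach as the paper: the paper's proof is a one-sentence reference to Lemmas~\ref{fc1} and~\ref{fc2}, noting that the compact type boundary has only disconnecting-node divisors, so hypotheses (1) and (2) suffice. You have simply spelled out the details that the paper leaves implicit---the induction on $n'$, the three cases (a)--(c) for the factors of each boundary divisor, and the stratification bookkeeping via Lemmas~\ref{open} and~\ref{Kstrat}.
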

\begin{proof}
As in Lemmas \ref{fc1} and \ref{fc2}, the assumptions (1) and (2) guarantee that all boundary components of $\M_{g,n}^{\ct}$ are images under gluing maps of products of moduli spaces that satisfy the CKgP and $A^* = R^*$.
\end{proof}

\begin{rem}[Column of open circles also gives rational tails]
We shall soon see that $\Mb_{0, n}$ has the CKgP and $A^* = R^*$ for all $n$ (Section \ref{g0}). Each component of the boundary of $\M_{g,n}^{\rt}$ is the image of $\M_{g,n_1+1}^{\rt} \times \Mb_{0,n_2+1}$
for some $n_1 + n_2 = n$ with $n_2 \geq 2$.
Inducting on $n$, it follows that any time we have a column of open circles for $\M_{g,n'}$ with $n' \leq n$, we also get $A^* = R^*$ and CKgP for $\M_{g,n'}^{\rt}$ with $n' \leq n$.
\end{rem}


\section{Genus $0$ and $1$}\label{zeroone}

In this section, we give a brief overview of past results that have completed the genus $0$ and $1$ columns of our chart. 

\subsection{Genus $0$} \label{g0}
In genus $0$,  we always have $n\geq3$. By an automorphism of $\p^1$, we can fix the first three marked points at $0, 1, \infty$. Then, $\M_{0,n}$ is isomorphic to a complement of hyperplanes in $\mathbb{A}^{n-3}$. Therefore, $A^*(\M_{0,n})$ is generated by the fundamental class $[\M_{0,n}]$, so $A^*(\M_{0,n})=R^*(\M_{0,n})$. We also see that $\M_{0,n}$ has the CKgP by Lemma \ref{open}. The filling criteria of Lemma \ref{fc1} are satisfied (we just have a column of open circles and nothing to the left). Inducting on $n$, we see that $\Mb_{0,n}$ has CKgP and $A^* = R^*$ for all $n\geq 3$. The structure of $A^*(\Mb_{0,n})$ has been determined (integrally, in fact) by Keel \cite{Keel}.

\subsection{Genus $1$} \label{g1}
In order to set up inductive arguments using the strategy of Section \ref{mainstrategy},
we want to show for each space $\M_{1,n}$ with $1\leq n\leq 10$ that the following two conditions are satisfied:
\begin{enumerate}
    \item The Chow ring of $\M_{1,n}$ is generated by tautological classes.
    \item $\M_{1,n}$ has the CKgP.
\end{enumerate}
Applying Lemma \ref{fc1} and the genus $0$ case above, these conditions imply that $\Mb_{1,n}$ has the CKgP and $A^* = R^*$ for $1 \leq n \leq 10$.

To start, Belorousski has computed the Chow ring of $\M_{1,n}$, $1\leq n \leq 10$.
\begin{thm}[Belorousski \cite{Belorousski}]\label{Belorousski}
For $1\leq n \leq 10$, we have $A^*(\M_{1,n})=R^*(\M_{1,n}) =\qq$. 
\end{thm}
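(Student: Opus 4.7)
The goal is to show $A^*(\M_{1,n}) = R^*(\M_{1,n}) = \qq$ for $1 \leq n \leq 10$. Since $R^*(\M_{1,n}) \subseteq A^*(\M_{1,n})$ always contains the fundamental class, it suffices to prove $A^*(\M_{1,n}, \qq) = \qq$.

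I would begin with the base case $n = 1$: the coarse moduli space of $\M_{1,1}$ is the $j$-line $\mathbb{A}^1$, so $A^*(\M_{1,1}, \qq) \cong A^*(\mathbb{A}^1, \qq) = \qq$. The case $n = 2$ is handled by identifying $\M_{1,2}$ with the universal elliptic curve over $\M_{1,1}$ with its zero section removed, and using the group structure on fibers together with excision: since all sections of an elliptic fibration are rationally equivalent after tensoring with $\qq$, every class is pulled back from the base, giving $A^*(\M_{1,2}) = \qq$.

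For $3 \leq n \leq 10$, my plan is to follow Belorousski's strategy of presenting $\M_{1,n}$ as a dense open substack of a quotient by $\PGL_3$ of an open subvariety of a rational variety. The main geometric input is that for $n \geq 3$, the line bundle $\mathcal{O}_E(p_1 + p_2 + p_3)$ on a generic pointed elliptic curve $(E, p_1, \ldots, p_n)$ is very ample and embeds $E$ as a smooth plane cubic $C \subset \mathbb{P}^2$ with $p_1, p_2, p_3$ collinear. After this rigidification, $\M_{1,n}$ identifies with an open substack of $[U_n / \PGL_3]$, where $U_n$ parametrizes tuples $(C, L, p_1, \ldots, p_n)$ with $C$ a smooth plane cubic, $L \subset \mathbb{P}^2$ a line, $p_1, p_2, p_3 \in L \cap C$, and $p_4, \ldots, p_n$ additional distinct points on $C$. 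I would show $U_n$ fibers naturally over the Grassmannian of lines in $\mathbb{P}^2$ as an iterated tower of projective and affine bundles, with rational Chow ring generated by pulled-back hyperplane classes. Combined with $A^*(\BPGL_3, \qq) = \qq[c_2, c_3]$ and repeated excision of the discriminant loci (points colliding, inflectional configurations, singular cubics), this would yield $A^*(\M_{1,n}) = \qq$.

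The main obstacle will be the careful bookkeeping of the excision in the quotient stack, ensuring that every positive-degree class introduced by the projective bundle formula or from $\BPGL_3$ either gets killed by a geometric relation on $U_n$ or becomes trivial upon restriction to the appropriate open substack. The bound $n \leq 10$ is intrinsic to this approach: for $n \geq 11$, the weight $12$ cusp form $\Delta$ produces a nonzero holomorphic form on $\Mb_{1,11}$ as discussed in the introduction, which by Ro\u{\i}tman's theorem forces the group of zero-cycles in $A^*(\M_{1,11})$ to be infinite-dimensional and hence not equal to $\qq$.
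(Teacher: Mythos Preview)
The paper does not prove this theorem itself; it cites Belorousski and then summarizes the structure of his argument (in order to extract the CKgP). That summary is the relevant point of comparison, and your sketch diverges from it in two places.

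For $n=2$, the paper records Mumford's description of the coarse space of $\M_{1,2}$ as $U/S_3$ with $U$ open in $\A^2$. Your argument via ``all sections of an elliptic fibration are rationally equivalent after tensoring with $\qq$'' is not correct as stated: over a general base the Mordell--Weil group need not be torsion, so sections are not automatically $\qq$-rationally equivalent. Over $\M_{1,1}$ this can be salvaged, but it requires an actual argument, not a general principle.

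For $3 \le n \le 10$, Belorousski's proof (as summarized in the paper) is \emph{inductive}: he exhibits divisors $D_i \subset \M_{1,n}$, each isomorphic to an open in $\M_{1,n-1}$, such that $\M_{1,n}\smallsetminus\bigcup D_i$ is the image of an open subset of a product of projective spaces under a proper map. This is not the direct $\PGL_3$-quotient construction you propose. Your direct approach is in the right spirit (plane cubics do underlie Belorousski's geometry), but your sketch has a genuine gap: you never say where the bound $n \le 10$ enters \emph{your} argument. Saying that the cusp form obstructs $n=11$ explains why the statement fails there, not why your construction succeeds for $n \le 10$. Concretely, you claim $U_n$ is an ``iterated tower of projective and affine bundles'' with Chow ring trivializing after excision, but the additional marked points $p_4,\ldots,p_n$ lie on the \emph{varying} cubic $C$, not on a fixed $\pp^2$, so the fibers of your tower are curves of genus $1$, not projective or affine spaces. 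Turning this into a computation that terminates with $A^*=\qq$ exactly through $n=10$ is the entire content of Belorousski's thesis, and it is achieved through the inductive stratification, not through a direct bundle description.
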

What remains is to show that $\M_{1,n}$ has the CKgP. Although not stated explicitly in \cite{Belorousski}, the CKgP actually follows from Belorousski's proof of Theorem \ref{Belorousski}, as we now summarize.

\vspace{.1in}
\noindent
\begin{minipage}{.65\textwidth}
\subsection*{$n=1$}
The coarse moduli space of $\M_{1,1}$ is isomorphic to $\mathbb{A}^1$. Hence, it has the CKgP by Lemma \ref{affbunCKgP}.

\vspace{.1in}
\subsection*{$n=2$}
Mumford \cite{Mumford} proved that the coarse space of $\M_{1,2}$ is isomorphic to a quotient $U/S_3$ where $U\subset \A^2$ is open. It follows from Lemma \ref{surjCKgP} that $\M_{1,2}$ has the CKgP.

\vspace{.1in}
\subsection*{$3 \leq n\leq 10$}
Belorousski finds divisors $D_i\subset \M_{1,n}$ for each such $n$ so that 
\begin{enumerate}
    \item Each $D_i$ is isomorphic to an open subset of $\M_{1,n-1}$.
    \item The complement $\M_{1,n}\smallsetminus \bigcup D_i$ is the image under a proper map of an open subset of (a product of) projective spaces.
\end{enumerate}
It thus follows by induction and Lemmas \ref{prod}, \ref{open}, \ref{Kstrat}, and \ref{grassmann} that each $\M_{1,n}$ has the CKgP for $n\leq 10$.
\end{minipage}
\begin{minipage}{.33\textwidth}
\begin{center}
\begin{tikzpicture}[scale = .4]

\node[scale=.5] at (1, -.7) {$1$};
\node[scale=.5] at (2, -.7) {$2$};
\node[scale=.5] at (3, -.7) {$3$};
\node[scale=.5] at (4, -.7) {$4$};

\node[scale=.5] at (-.7, 1) {$1$};
\node[scale=.5] at (-.7, 2) {$2$};
\node[scale=.5] at (-.7, 3) {$3$};
\node[scale=.5] at (-.7, 4) {$4$};
\node[scale=.5] at (-.7, 5) {$5$};
\node[scale=.5] at (-.7, 6) {$6$};
\node[scale=.5] at (-.7, 7) {$7$};
\node[scale=.5] at (-.7, 8) {$8$};
\node[scale=.5] at (-.7, 9) {$9$};
\node[scale=.5] at (-.7, 10) {$10$};
\node[scale=.5] at (-.7, 11) {$11$};
\node[scale=.5] at (-.7, 12) {$12$};
\node[scale=.5] at (-.7, 13) {$13$};

\draw[->] (0, 0) -- (5, 0);
\draw[->] (0, 0) -- (0, 14);
\node[scale=.9] at (5.4,0) {$g$};
\node[scale=.9] at (0, 14.4) {$n$};
\draw (-.1, 1) -- (.1, 1);
\draw (-.1, 2) -- (.1, 2);
\draw (1, -.1) -- (1, .1);
\draw (2, -.1) -- (2, .1);
\draw (3, -.1) -- (3, .1);
\draw (4, -.1) -- (4, .1);
\filldraw (0, 3) circle (4pt);
\filldraw (0, 4) circle (4pt);
\filldraw (0, 5) circle (4pt);
\filldraw (0, 6) circle (4pt);
\filldraw (0, 7) circle (4pt);
\filldraw (0, 8) circle (4pt);
\filldraw (0, 9) circle (4pt);
\filldraw (0, 10) circle (4pt);
\filldraw (0, 11) circle (4pt);
\filldraw (0, 12) circle (4pt);
\filldraw (0, 13) circle (4pt);
\filldraw (1, 1) circle (4pt);
\filldraw (1, 2) circle (4pt);
\filldraw (1, 3) circle (4pt);
\filldraw (1, 4) circle (4pt);
\filldraw (1, 5) circle (4pt);
\filldraw (1, 6) circle (4pt);
\filldraw (1, 7) circle (4pt);
\filldraw (1, 8) circle (4pt);
\filldraw (1, 9) circle (4pt);
\filldraw (1, 10) circle (4pt);
\node[scale = .6, color = red] at (1, 11) {$\times$};
\node[scale = .6, color = red] at (1, 12) {$\times$};
\node[scale = .6, color = red] at (1, 13) {$\times$};
\end{tikzpicture}

Our starting point 
\end{center}
\end{minipage}

\subsection*{$n\geq 11$}
The strategy fails for $n\geq 11$ because $\Mb_{1,11}$ has a holomorphic differential form (see \cite[Section 2.3]{FPhandbook}). By a Theorem of Ro\u{\i}tman \cite{Roitman}, this implies that $A_0(\Mb_{1,n})$ is uncountable for $n\geq 11$, and hence $A_0(\Mb_{1,n})\neq R_0(\Mb_{1,n})$ for $n\geq 11$.

\section{Genus 2 and hyperelliptic curves in general} \label{hypsec}
In previous work \cite{Hyperelliptic}, we computed the Chow rings of moduli stacks $\mathrm{Hyp}_{g,n}$ of smooth pointed hyperelliptic curves of genus $g\geq 2$ with $n\leq 2g+6$ marked points. 
\begin{thm}[Corollary 1.1 of \cite{Hyperelliptic}]\label{hyperelliptic}
For $n\leq 2g+6$, we have $A^*(\mathrm{Hyp}_{g,n})$ is generated by the $\psi$ classes.
\end{thm}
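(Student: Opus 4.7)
\medskip
\noindent\textbf{Proof proposal.} The plan is to realize $\Hyp_{g,n}$ explicitly as a quotient stack and then compute its Chow ring by equivariant intersection theory. I would begin from the classical presentation of $\Hyp_g$ as a quotient $[U_{2g+2}/G]$, where $U_{2g+2}$ is the open subscheme of the affine space of binary forms of degree $2g+2$ consisting of forms with distinct roots, and $G$ is built from $\GL_2$ (acting by change of coordinates on $\mathbb{P}^1$ together with scaling of the form) and the hyperelliptic involution. The universal hyperelliptic curve $\mathcal{C}\to \Hyp_g$ is the double cover of the associated $\mathbb{P}^1$-bundle branched over the universal degree-$(2g+2)$ divisor, and $\Hyp_{g,n}$ is the open locus in $\mathcal{C}^n_{\Hyp_g}$ on which the $n$ sections are pairwise distinct. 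Equivalently, one can present $\Hyp_{g,n}$ as a $G$-quotient of a locally closed subscheme of an affine bundle over $U_{2g+2}\times (\mathbb{P}^1)^n$, where the $i$-th $\mathbb{P}^1$-factor records the image of $p_i$ under the hyperelliptic double cover (with an extra sheet choice recorded by the affine bundle when $p_i$ is not Weierstrass).

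From this presentation, equivariant excision gives a surjection
\[A^*_G(\mathrm{pt})\otimes A^*((\mathbb{P}^1)^n)\twoheadrightarrow A^*(\Hyp_{g,n}),\]
so $A^*(\Hyp_{g,n})$ is generated by the Chern classes pulled back from $\BGL_2$ together with the hyperplane classes $h_1,\ldots,h_n$ from the $\mathbb{P}^1$-factors. I would next identify the $\psi$ classes in this presentation: by differentiating the double cover and using the standard short exact sequence relating $\omega_\mathcal{C}$ to the cotangent bundle of the $\mathbb{P}^1$-bundle and the branch divisor, one computes $\psi_i$ as $h_i$ plus a correction in the Chern classes of the standard representation (the correction picks up an extra contribution along the Weierstrass strata).

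The main step is then to show that the Chern class generators pulled back from $\BGL_2$, as well as each $h_i$, can be eliminated in terms of $\psi_1,\ldots,\psi_n$ using the relations coming from excising the discriminant of $U_{2g+2}$ and the diagonal and hyperelliptic-conjugate loci $\{p_i = p_j\}$ and $\{p_i = \iota(p_j)\}$. Since $U_{2g+2}$ is open in affine space and $(\mathbb{P}^1)^n$ is a product of projective lines, all the relations are explicit polynomials in the generators, and the elimination reduces to a linear algebra problem in a polynomial ring. The numerical bound $n\le 2g+6$ should be exactly the range in which these relations provide enough independent linear combinations to solve for the ambient Chern classes in terms of the $\psi$'s.

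The principal obstacle is the last, combinatorial step. Writing down the presentation and the generators is routine, but one must carefully track how the Weierstrass-divisor relations and marked-point collision relations, expressed equivariantly, interact with the discriminant relation on $U_{2g+2}$. Showing that these suffice for all $n\le 2g+6$, and identifying where the argument breaks down for $n>2g+6$, is where the real work lies.
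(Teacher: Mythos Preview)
This theorem is quoted from the authors' companion paper \cite{Hyperelliptic} rather than proved here, so the comparison is against the method of that paper, about which the present text gives only hints (the stratification mentioned around Theorem~\ref{nodalhyperelliptic} and the appearance of $\mathrm{B}(\gg_m\ltimes\gg_a)$).

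Your overall plan---quotient presentation, equivariant Chow, identify the $\psi_i$ in terms of hyperplane classes, then eliminate the extra generators---is the right shape. The gap is in the second paragraph. You write that $\Hyp_{g,n}$ is a \emph{locally closed} subscheme of an affine bundle over $U_{2g+2}\times(\pp^1)^n$ and then invoke ``equivariant excision'' to obtain a surjection from $A^*_G(\mathrm{pt})\otimes A^*((\pp^1)^n)$. Excision produces surjections onto \emph{open} substacks, not closed or locally closed ones. Concretely, the universal hyperelliptic curve is the divisor $\{y^2=f\}$ inside the total space of a line bundle over the universal $\pp^1$; equivalently, the map to $\pp^1$ is a branched double cover. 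A ``sheet choice'' is a point of a $\mu_2$-torsor (two discrete points over each non-branch point), not a fibre of an affine bundle. So the $n$-fold fibre product $\C^n$ is a finite branched cover of the $(\pp^1)^n$-bundle, and there is no direct excision argument giving your surjection.

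What actually works---and what the hints in this paper point to---is a \emph{stratification} of $\Hyp_{g,n}$ by the pattern of which $p_i$ are Weierstrass (and, more finely, which pairs are hyperelliptic conjugates). On the open stratum where $p_i$ is not Weierstrass, the map $p_i\mapsto \alpha(p_i)$ is \'etale of degree $2$, so with rational coefficients that factor contributes nothing beyond the corresponding $\pp^1$; on the Weierstrass stratum, $p_i$ is identified with a root of $f$, i.e.\ a point of the branch divisor. Each stratum then \emph{is} an open substack of a vector or affine bundle over a product of $\pp^1$'s (this is where the group $\gg_m\ltimes\gg_a$, the stabilizer of a point in $\PGL_2$, enters), and now excision and the projective bundle formula do give the generators you want. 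Once you have the generators stratum by stratum, your last step---expressing the ambient $\GL_2$-Chern classes and the $h_i$ in terms of $\psi_1,\dots,\psi_n$ using the discriminant and collision relations---proceeds essentially as you outline, and the bound $n\le 2g+6$ does emerge from the combinatorics of that elimination.
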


\noindent
\begin{minipage}{.45\textwidth}
\ \ \ In particular, we see that $\M_{2,n}$ has the CKgP and $A^* = R^*$ for all $n \leq 10$. This gives the new blue column of open circles in the first chart to the right. Lemma \ref{fc1} and the results of Section \ref{zeroone} immediately allow us to fill in the circles with $g = 2$ and $n \leq 8$. 
However,
because $\Mb_{1,n}$ does \emph{not} have $A^* = R^*$ for $n \geq 11$,  Lemma \ref{fc1}(3) fails for filling in $\Mb_{2,n}$ with $n \geq 9$. To fill $\Mb_{2,9}$, we require an extension of Theorem \ref{hyperelliptic} to a partial compactification of $\mathrm{Hyp}_{g,n}$, which was also proved in \cite{Hyperelliptic}. 

\hspace{.1in} Let $\I_{g,n} \subseteq \Mb_{g,n}$ be the moduli stack parametrizing irreducible nodal hyperelliptic curves of genus $g$ with $n$ marked points. The argument in \cite{Hyperelliptic} provided a 
\end{minipage}
\begin{minipage}{.54\textwidth}
\begin{center}
\begin{tikzpicture}[scale = .4]

\node[scale=.5] at (1, -.7) {$1$};
\node[scale=.5] at (2, -.7) {$2$};
\node[scale=.5] at (3, -.7) {$3$};
\node[scale=.5] at (4, -.7) {$4$};

\node[scale=.5] at (-.7, 1) {$1$};
\node[scale=.5] at (-.7, 2) {$2$};
\node[scale=.5] at (-.7, 3) {$3$};
\node[scale=.5] at (-.7, 4) {$4$};
\node[scale=.5] at (-.7, 5) {$5$};
\node[scale=.5] at (-.7, 6) {$6$};
\node[scale=.5] at (-.7, 7) {$7$};
\node[scale=.5] at (-.7, 8) {$8$};
\node[scale=.5] at (-.7, 9) {$9$};
\node[scale=.5] at (-.7, 10) {$10$};
\node[scale=.5] at (-.7, 11) {$11$};
\node[scale=.5] at (-.7, 12) {$12$};
\node[scale=.5] at (-.7, 13) {$13$};

\draw[->] (0, 0) -- (5, 0);
\draw[->] (0, 0) -- (0, 14);
\node[scale=.9] at (5.4,0) {$g$};
\node[scale=.9] at (0, 14.4) {$n$};
\draw (-.1, 1) -- (.1, 1);
\draw (-.1, 2) -- (.1, 2);
\draw (1, -.1) -- (1, .1);
\draw (2, -.1) -- (2, .1);
\draw (3, -.1) -- (3, .1);
\draw (4, -.1) -- (4, .1);
\filldraw (0, 3) circle (4pt);
\filldraw (0, 4) circle (4pt);
\filldraw (0, 5) circle (4pt);
\filldraw (0, 6) circle (4pt);
\filldraw (0, 7) circle (4pt);
\filldraw (0, 8) circle (4pt);
\filldraw (0, 9) circle (4pt);
\filldraw (0, 10) circle (4pt);
\filldraw (0, 11) circle (4pt);
\filldraw (0, 12) circle (4pt);
\filldraw (0, 13) circle (4pt);
\filldraw (1, 1) circle (4pt);
\filldraw (1, 2) circle (4pt);
\filldraw (1, 3) circle (4pt);
\filldraw (1, 4) circle (4pt);
\filldraw (1, 5) circle (4pt);
\filldraw (1, 6) circle (4pt);
\filldraw (1, 7) circle (4pt);
\filldraw (1, 8) circle (4pt);
\filldraw (1, 9) circle (4pt);
\filldraw (1, 10) circle (4pt);
\node[scale = .6, color = red] at (1, 11) {$\times$};
\node[scale = .6, color = red] at (1, 12) {$\times$};
\node[scale = .6, color = red] at (1, 13) {$\times$};
\filldraw[color=white] (2,0) circle (4pt);
\draw[color=blue] (2, 0) circle (4pt);
\draw[color=blue] (2, 1) circle (4pt);
\draw[color=blue] (2, 2) circle (4pt);
\draw[color=blue] (2, 3) circle (4pt);
\draw[color=blue] (2, 4) circle (4pt);
\draw[color=blue] (2, 5) circle (4pt);
\draw[color=blue] (2, 6) circle (4pt);
\draw[color=blue] (2, 7) circle (4pt);
\draw[color=blue] (2, 8) circle (4pt);
\draw[color=blue] (2, 9) circle (4pt);
\draw[color=blue] (2, 10) circle (4pt);
\node[color=blue, scale = .8] at (3,-2) {Theorem 6.1};
\end{tikzpicture}
\hspace{.25in}
\begin{tikzpicture}[scale = .4]

\node[scale=.5] at (1, -.7) {$1$};
\node[scale=.5] at (2, -.7) {$2$};
\node[scale=.5] at (3, -.7) {$3$};
\node[scale=.5] at (4, -.7) {$4$};

\node[scale=.5] at (-.7, 1) {$1$};
\node[scale=.5] at (-.7, 2) {$2$};
\node[scale=.5] at (-.7, 3) {$3$};
\node[scale=.5] at (-.7, 4) {$4$};
\node[scale=.5] at (-.7, 5) {$5$};
\node[scale=.5] at (-.7, 6) {$6$};
\node[scale=.5] at (-.7, 7) {$7$};
\node[scale=.5] at (-.7, 8) {$8$};
\node[scale=.5] at (-.7, 9) {$9$};
\node[scale=.5] at (-.7, 10) {$10$};
\node[scale=.5] at (-.7, 11) {$11$};
\node[scale=.5] at (-.7, 12) {$12$};
\node[scale=.5] at (-.7, 13) {$13$};
\node[scale=.5] at (-.7, 14) {$14$};

\draw[->] (0, 0) -- (5, 0);
\draw[->] (0, 0) -- (0, 14);
\node[scale=.9] at (5.4,0) {$g$};
\node[scale=.9] at (0, 14.4) {$n$};
\draw (-.1, 1) -- (.1, 1);
\draw (-.1, 2) -- (.1, 2);
\draw (1, -.1) -- (1, .1);
\draw (2, -.1) -- (2, .1);
\draw (3, -.1) -- (3, .1);
\draw (4, -.1) -- (4, .1);
\filldraw (0, 3) circle (4pt);
\filldraw (0, 4) circle (4pt);
\filldraw (0, 5) circle (4pt);
\filldraw (0, 6) circle (4pt);
\filldraw (0, 7) circle (4pt);
\filldraw (0, 8) circle (4pt);
\filldraw (0, 9) circle (4pt);
\filldraw (0, 10) circle (4pt);
\filldraw (0, 11) circle (4pt);
\filldraw (0, 12) circle (4pt);
\filldraw (0, 13) circle (4pt);
\filldraw (1, 1) circle (4pt);
\filldraw (1, 2) circle (4pt);
\filldraw (1, 3) circle (4pt);
\filldraw (1, 4) circle (4pt);
\filldraw (1, 5) circle (4pt);
\filldraw (1, 6) circle (4pt);
\filldraw (1, 7) circle (4pt);
\filldraw (1, 8) circle (4pt);
\filldraw (1, 9) circle (4pt);
\filldraw (1, 10) circle (4pt);
\node[scale = .6, color = red] at (1, 11) {$\times$};
\node[scale = .6, color = red] at (1, 12) {$\times$};
\node[scale = .6, color = red] at (1, 13) {$\times$};
\filldraw[color=blue] (2, 0) circle (4pt);
\filldraw[color=blue] (2, 1) circle (4pt);
\filldraw[color=blue] (2, 2) circle (4pt);
\filldraw[color=blue] (2, 3) circle (4pt);
\filldraw[color=blue] (2, 4) circle (4pt);
\filldraw[color=blue] (2, 5) circle (4pt);
\filldraw[color=blue] (2, 6) circle (4pt);
\filldraw[color=blue] (2, 7) circle (4pt);
\filldraw[color=blue] (2, 8) circle (4pt);
\filldraw[color=blue] (2, 9) circle (4pt);
\draw (2, 10) circle (4pt);
\node[color=blue, scale =.8] at (3,-2) {Lemma 6.3};
\end{tikzpicture}
\end{center}
\end{minipage}

\vspace{.03in}
\noindent
stratification of $\I_{g,n}$ into spaces that we claim satisfy the CKgP.
This is clear from considering \cite[Equations 4.10 and 4.12]{Hyperelliptic}
once we know that $\mathrm{BPU}\cong \mathrm{B}(\gg_m\ltimes \gg_a)$ has the CKgP. To see this, note that $\mathrm{B}\gg_m \to \mathrm{BPU}$ is an affine bundle (see proof of \cite[Lemma 4.6]{Hyperelliptic}). Then apply Lemmas \ref{affbunCKgP} and \ref{bgs}.

\begin{thm}[Theorem 1.1 of \cite{Hyperelliptic}]\label{nodalhyperelliptic}
For $n\leq 2g+6$, we have $A^*(\I_{g,n})$ is generated by the $\psi$ classes and the boundary divisor parametrizing irreducible, nodal hyperelliptic curves. Moreover, $\I_{g,n}$ has the CKgP.
\end{thm}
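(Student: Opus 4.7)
The plan is to combine the generation statement, which is essentially already established in \cite{Hyperelliptic}, with a verification that the strata in that paper's construction all satisfy the CKgP. The two claims then assemble via excision and Lemma \ref{Kstrat}.

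First I would write $\I_{g,n} = \mathrm{Hyp}_{g,n} \sqcup \Delta$, where $\Delta$ is the divisor of irreducible nodal hyperelliptic curves. For generation by the $\psi$ classes and $[\Delta]$: Theorem \ref{hyperelliptic} gives generators for $A^*(\mathrm{Hyp}_{g,n})$ by $\psi$ classes, and these lift to $\I_{g,n}$. The excision sequence $A_*(\Delta) \to A_*(\I_{g,n}) \to A_*(\mathrm{Hyp}_{g,n}) \to 0$ then reduces the problem to showing that $i_* A^*(\Delta)$ lies in the $\qq$-span of $\psi$ classes times $[\Delta]$. Normalizing along the node identifies $\Delta$ (up to an $S_2$ quotient swapping the two branches) with a piece of a hyperelliptic moduli space with two additional marked points, so $A^*(\Delta)$ is also controlled by $\psi$ classes via Theorem \ref{hyperelliptic}; this is the argument carried out in \cite{Hyperelliptic}, and by the projection formula pushforward introduces only factors of $[\Delta]$.

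For the CKgP, I would apply Lemma \ref{Kstrat} to the stratification of $\I_{g,n}$ given in \cite[Equations 4.10 and 4.12]{Hyperelliptic}. Each stratum there is presented as a finite-group quotient of an open subset of a vector bundle over a classifying stack, where the classifying stack is built from $\mathrm{BPGL}_2$ and $\mathrm{BPU}$. By Lemma \ref{bgs}, $\mathrm{BPGL}_2$ has the CKgP. For $\mathrm{BPU} \cong \mathrm{B}(\gg_m \ltimes \gg_a)$: as noted in the proof of \cite[Lemma 4.6]{Hyperelliptic}, the map $\mathrm{B}\gg_m \to \mathrm{BPU}$ is an affine bundle, and since $\mathrm{B}\gg_m = \mathrm{BGL}_1$ has the CKgP by Lemma \ref{bgs}, so does $\mathrm{BPU}$ by Lemma \ref{affbunCKgP}. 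Propagating through the construction with Lemmas \ref{prod}, \ref{open}, \ref{affbunCKgP}, \ref{grassmann}, and \ref{surjCKgP} gives the CKgP for each stratum. Lemma \ref{Kstrat} then yields the CKgP for $\I_{g,n}$ itself.

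The main obstacle, already resolved in \cite{Hyperelliptic}, is the explicit construction of the stratification whose strata have such clean quotient-stack presentations — in particular, handling the range $n \leq 2g+6$ directly rather than trying to bootstrap $\Delta$ from lower-genus hyperelliptic results (which would force the weaker bound $n \leq 2g+2$ coming from needing Theorem \ref{hyperelliptic} on $\mathrm{Hyp}_{g-1,n+2}$). Once that stratification is in hand, the CKgP propagates formally, and the entire argument is essentially an annotation of the original proof using the lemmas of Section \ref{CKgP}.
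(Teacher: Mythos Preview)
Your proposal is correct and matches the paper's approach essentially verbatim: the generation statement is imported from \cite{Hyperelliptic}, and the CKgP is obtained by running the stratification of \cite[Equations 4.10 and 4.12]{Hyperelliptic} through Lemma \ref{Kstrat}, with the only nontrivial input being that $\mathrm{BPU}$ has the CKgP via the affine bundle $\mathrm{B}\gg_m \to \mathrm{BPU}$ and Lemmas \ref{affbunCKgP} and \ref{bgs}. Your aside about the na\"ive normalization approach forcing the weaker bound $n \leq 2g+2$ is a nice observation, but as you note, the direct construction in \cite{Hyperelliptic} is what actually delivers $n \leq 2g+6$.
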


We now proceed using version 2 of the Filling criteria.

\begin{lem}\label{2end}
For $n \leq 9$, we have $\Mb_{2,n} \smallsetminus \Delta_{\mathrm{dn}}$ has $A^* = R^*$ and the CKgP. Hence, $\Mb_{2,n}$ has $A^* = R^*$ and the CKgP for $n \leq 9$.
\end{lem}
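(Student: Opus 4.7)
The plan is to verify the three hypotheses of Lemma \ref{fc2} (the second filling criterion) with $g = 2$ and $n = 9$; its conclusion then gives the lemma for all $n' \leq 9$. Hypothesis (1) is Theorem \ref{hyperelliptic} applied with $g = 2$, using $\M_{2,n'} = \mathrm{Hyp}_{2,n'}$ and $n' \leq 9 < 2g+6 = 10$. Hypothesis (2) is Section \ref{zeroone}. So the content of the proof is to establish hypothesis (3), namely that $\Mb_{2,9} \smallsetminus \Delta_{\mathrm{dn}}$ has the CKgP and $A^* = R^*$, and I would do this by stratifying $\Mb_{2,9} \smallsetminus \Delta_{\mathrm{dn}}$ by dual graph $\Gamma$: its strata are indexed by the bridgeless (equivalently, $2$-edge-connected) stable dual graphs of genus $2$ with $9$ markings.

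The one-vertex bridgeless strata form the open substack $\I_{2,9} \subset \Mb_{2,9} \smallsetminus \Delta_{\mathrm{dn}}$. Theorem \ref{nodalhyperelliptic} applied with $g=2$ (since $9 \leq 2g+6$) says $\I_{2,9}$ has the CKgP and that $A^*(\I_{2,9})$ is generated by the $\psi$-classes together with the class of the irreducible nodal boundary divisor; both of these are restrictions of tautological classes from $\Mb_{2,9}$. This is the ``extension of $\M_{g,n}$ to a larger open $U$'' foreshadowed in the discussion before Lemma \ref{fc2}.

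For a bridgeless dual graph $\Gamma$ with $V(\Gamma) \geq 2$, the gluing morphism $\xi_\Gamma \colon \Mb_\Gamma := \prod_v \Mb_{g_v,\, n_v + \mathrm{val}(v)} \to \Mb_{2,9}$ is proper with image $\overline{\M}_\Gamma$. Bridgelessness with $V(\Gamma) \geq 2$ forces $h^1(\Gamma) \geq 1$, so $\sum_v g_v \leq 1$ and every component genus is $0$ or $1$. If $\Gamma$ contains a genus-$1$ vertex, then $h^1(\Gamma) = 1$; a bridgeless connected graph with $h^1 = 1$ is a single cycle, so every vertex has valence exactly $2$. Stability at any of the (necessarily existing) genus-$0$, valence-$2$ vertices then forces at least one marked point off the genus-$1$ vertex $v$, giving $n_v \leq 9-1 = 8$ and hence $n_v + \mathrm{val}(v) \leq 10$. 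Every factor $\Mb_{g_v,\, n_v + \mathrm{val}(v)}$ of $\Mb_\Gamma$ is therefore either $\Mb_{0,*}$ (any $*$) or $\Mb_{1,*}$ with $* \leq 10$, and in either case has the CKgP and $A^* = R^*$ by Section \ref{zeroone}. Lemma \ref{prod} then gives the CKgP and $A^* = R^*$ for the product $\Mb_\Gamma$; Lemma \ref{surjCKgP} transfers the CKgP to $\overline{\M}_\Gamma$; and pushforward along $\xi_\Gamma$ sends tautological classes to tautological classes on $\Mb_{2,9}$ by the very definition of $R^*$.

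Assembling these finitely many pieces---the open $\I_{2,9}$ together with the closed $\overline{\M}_\Gamma$ for reducible bridgeless $\Gamma$---via the stratification Lemma \ref{Kstrat} for the CKgP, and via the excision/pushforward diagram in the proof of Lemma \ref{fc1} for $A^* = R^*$, verifies hypothesis (3) of Lemma \ref{fc2} and completes the proof. The crux of the argument is the bound $n_v + \mathrm{val}(v) \leq 10$ at every genus-$1$ vertex of a reducible bridgeless $\Gamma$: this is what forces $n = 9$ as the threshold, because at $n = 10$ a dumbbell with all markings on the genus-$1$ side would already demand $\Mb_{1,11}$, which fails $A^* = R^*$ and whose self-glue into $\Mb_{2,9}$ is precisely what prevents Lemma \ref{fc1} (the first filling criterion) from applying in the first place.
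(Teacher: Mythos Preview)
Your proof is correct and follows essentially the same approach as the paper: both decompose $\Mb_{2,n}\smallsetminus\Delta_{\dn}$ into the open irreducible locus $\I_{2,n}$ (handled by Theorem \ref{nodalhyperelliptic}) and the multi-vertex bridgeless strata (handled via gluing maps from products of $\Mb_{0,*}$ and $\Mb_{1,*}$ with $*\leq 10$), then invoke Lemma \ref{fc2}. Your graph-theoretic analysis establishing $n_v+\mathrm{val}(v)\leq 10$ at every genus-$1$ vertex is in fact more explicit than the paper's terse four-case classification (which treats longer cycles only implicitly via the image of the two-vertex gluing map); one small cosmetic point is that Lemma \ref{Kstrat} wants the disjoint locally closed strata $\M_\Gamma$ rather than the overlapping closures $\overline{\M}_\Gamma$, but this is a trivial rephrasing.
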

\begin{proof}
Each of the graphs in the stable graph stratification of $\Mb_{2,n} \smallsetminus \Delta_{\dn}$ fits into one of the following four categories:
\begin{enumerate}
\item Graphs where all vertices have genus $0$. All classes supported here are tautological and these strata have CKgP by Section \ref{g0}.
\item One genus $0$ vertex and one genus $1$ vertex with two edges between them.
There are $n_0 \geq 1$ marked points coming out of the genus $0$ vertex and $n_1 = n - n_0 \leq 8$ marked points coming out of the genus $1$ vertex.
This is the graph for (gluing twice)
\[\Mb_{0,n_0+2} \times \Mb_{1,n_1+2} \to \Mb_{2,n}.\]
Note that $n_1 +2 \leq 10$, so these strata have CKgP and contribute only tautological classes by Sections \ref{g0} and \ref{g1}.
\item One vertex of genus $1$ with self edge and $n$ marked points.
\item One vertex of genus $2$ with $n$ marked points.
\end{enumerate}
\begin{center}
    \includegraphics[width=5in]{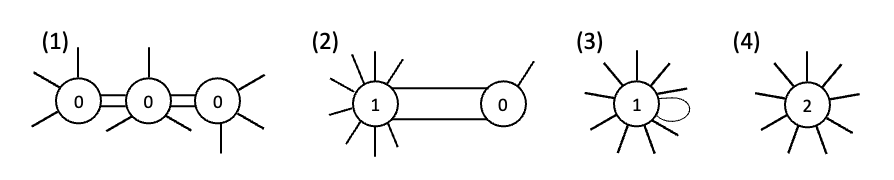} \\
    
   Examples of stable graphs of types (1)--(4).
    \end{center}
Finally, the union of strata of type (3) and (4) is $\I_{2,n} \subset \Mb_{2,n}$. Hence, we are done by Theorem \ref{nodalhyperelliptic}.
\end{proof}

\begin{rem}
When $g = 2$, Theorem \ref{hyperelliptic}
tells us $A^*(\M_{2,10}) = R^*(\M_{2,10})$. However, we have \emph{not} proved that $A^*(\Mb_{2,10}) = R^*(\Mb_{2,10})$ because we have not been able to prove that cycles supported on the boundary stratum for
$\Mb_{1,11} \times \Mb_{1,1}$ are tautological. We think it likely that many non-tautological $1$-cycles are supported on this stratum. 
\end{rem}


\section{Plane curves} \label{planesec}
In this section, we give a quotient stack presentation for stacks of smooth pointed plane curves of degree $d$. We use the presentation to show that the Chow rings of these stacks have the CKgP and are generated by restrictions of tautological classes from $\M_{g,n}$. In particular, we will finish the $g=3$ cases of Theorem \ref{main} because nonhyperelliptic curves of genus $3$ are plane quartics.

\subsection{Construction of the stack} \label{pcsetup}
First, we define the stack of pointed plane curves. Let $d\geq 3$ be a positive integer and set $g:={d-1 \choose 2}$. Let $\F_{d,n}$ denote the stack whose objects over a scheme $S$ are given by commutative diagrams 
\[
\begin{tikzcd}
C \arrow[r, "j", hook] \arrow[d, "f"]               & P \arrow[ld, "\pi"] \\
S \arrow[u, "{\sigma_1,\dots,\sigma_n}", bend left] &                    
\end{tikzcd}
\]
where $f:C\rightarrow S$ is a smooth proper relative curve with $n$ pairwise disjoint sections $\sigma_1,\dots,\sigma_n:S\rightarrow C$; $\pi:P\rightarrow S$ is a $\pp^2$ fibration; and $j:C\hookrightarrow P$ is a closed embedding such that for every geometric point $s\in S$, $C_s\hookrightarrow \p^2_{\kappa(s)}$ is of degree $d$. The morphisms in $\F_{d,n}(S)$ between objects $(C\rightarrow P\rightarrow S, \sigma_1,\dots,\sigma_n:S\rightarrow C)$ and
$(C'\rightarrow P'\rightarrow S, \sigma_1',\dots,\sigma_n':S\rightarrow C')$ are isomorphisms $P\rightarrow P'$ inducing isomorphisms $C\rightarrow C'$ sending the sections $\sigma_i$ to $\sigma_i'$. In \cite{Aaron}, Landesman shows that the natural map $\F_{d, 0} \to \M_{g}$ is a locally closed embedding of stacks when $d \geq 4$.
Taking the base change along $\M_{g,n} \to \M_{g}$,
it follows that $\F_{d, n} \to \M_{g,n}$ is a locally closed embedding for any $n$ when $d \geq 4$.

The stack $\F_{d,n}$ admits a natural morphism to $\BPGL_3$, sending the family of plane curves to its associated $\p^2$-fibration. We define the stack $\G_{d,n}$ by the following Cartesian diagram:
\[
\begin{tikzcd}
{\G_{d,n}} \arrow[r] \arrow[d] & {\F_{d,n}} \arrow[d] \\
\BSL_3 \arrow[r]               & \BPGL_3             
\end{tikzcd}
\]
The stack $\G_{d,n}$ is a $\mu_3$-banded gerbe over $\F_{d,n}$. Thus, $A^*(\F_{d,n})\cong A^*(\G_{d,n})$, as we always work with rational coefficients, and $\F_{d,n}$ satisfies CKgP if and only if $\G_{d,n}$ does (Lemma \ref{gerbe}). The points of $\G_{d,n}$ over a scheme $S$ are given by diagrams
\[
\begin{tikzcd}
C \arrow[r, "j", hook] \arrow[d, "f"]               & \pp V \arrow[ld, "\pi"] \\
S \arrow[u, "{\sigma_1,\dots,\sigma_n}", bend left] &                    
\end{tikzcd}
\]
where $V$ is a rank $3$ vector bundle on $S$ with trivial first Chern class. The universal bundle $\V$ is pulled back from the universal bundle over $\BSL_3$. 
We construct $\G_{d,n}$, for certain values of $n$ depending on $d$, explicitly, and use the explicit presentation to describe its Chow ring. 

Consider $\gamma:\p \V\rightarrow \BSL_3$, the universal $\pp^2$-bundle. Let $(\p \V)^n$ be the fiber product of $n$ copies of $\gamma:\p \V\rightarrow \BSL_3$. There are projection maps $\eta_i: (\p \V)^n\rightarrow \p \V$. Set $\W_d :=\gamma^*\gamma_*\O_{\p \V}(d) = \gamma^* \Sym^d \V^\vee$. We have an evaluation map
\[
\W_d\rightarrow \O_{\p \V}(d). 
\]
Note that $\eta_i^*\W_d\cong \eta_j^* \W_d$ for all $i,j$ because $\W_d$ is by definition a pullback from $\BSL_3$. We will denote this pullback by $\W_d\otimes \O_{(\p \V)^n}$. Therefore, we have a direct sum evaluation map
\begin{equation}\label{planecurveeval}
\W_d\otimes \O_{(\p V)^n}\rightarrow \bigoplus_{i=1}^n \eta_i^*\O_{\p \V}(d).
\end{equation}
Set $Y_{d,n}$ to be the preimage of the zero section under \eqref{planecurveeval}. The stack $Y_{d,n}$ parametrizes tuples $(f,p_1,\dots,p_n)$ where $f$ is a degree $d$ form in $\p V$, where $V$ is a rank $3$ vector bundle with trivial first Chern class, and $p_1,\dots,p_n$ are (not necessarily distinct) points such that $f(p_i)=0$. The stack $Y_{d,n}$ is not a vector bundle over $(\p \V)^n$ because the fiber dimension jumps when the points $p_i$ fail to impose independent conditions on degree $d$ curves. 
In order to study the Chow ring of $\G_{d,n}$, we will need to understand when \eqref{planecurveeval} is surjective. By cohomology and base change, we can check surjectivity on the fibers.

\begin{lem}\label{planecurvesurj}
Let $\Gamma\subset \p^2$ be a collection of $n\leq 3d-1$ distinct points. Suppose there is an irreducible nodal plane curve $C\subset \p^2$ of degree $d$ so that $\Gamma$ is contained in the smooth locus of $C$. Then the evaluation map
\[
H^0(\p^2,\O_{\p^2}(d))\rightarrow H^0(\Gamma,\O_{\p^2}(d)|_{\Gamma})
\]
is surjective. 
\end{lem}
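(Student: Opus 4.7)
My plan is to reformulate the surjectivity claim as a vanishing statement for $H^1$ of the ideal sheaf $\mathcal{I}_\Gamma$ twisted by $d$, and then use the auxiliary curve $C$ to reduce to a Serre duality computation on $C$. Concretely, from the exact sequence $0 \to \mathcal{I}_\Gamma(d) \to \mathcal{O}_{\p^2}(d) \to \mathcal{O}_\Gamma(d) \to 0$, surjectivity of the evaluation map is equivalent to $H^1(\p^2, \mathcal{I}_\Gamma(d)) = 0$, since $H^1(\p^2, \mathcal{O}(d)) = 0$.

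Next I would compare $\mathcal{I}_\Gamma$ with $\mathcal{I}_C$ via the exact sequence
\[
0 \to \mathcal{I}_C(d) \to \mathcal{I}_\Gamma(d) \to \mathcal{I}_{\Gamma/C}(d) \to 0.
\]
Because $C$ has degree $d$, $\mathcal{I}_C(d) \cong \mathcal{O}_{\p^2}$, so $H^1(\p^2, \mathcal{I}_C(d)) = H^2(\p^2, \mathcal{I}_C(d)) = 0$. The long exact sequence then forces $H^1(\p^2, \mathcal{I}_\Gamma(d)) \hookrightarrow H^1(\p^2, \mathcal{I}_{\Gamma/C}(d)) = H^1(C, \mathcal{O}_C(d)(-\Gamma))$, where the last identification uses that $\Gamma$ lies in the smooth locus of $C$, making $\mathcal{I}_{\Gamma/C}$ an honest invertible sheaf on $C$ equal to $\mathcal{O}_C(-\Gamma)$.

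Now I would apply Serre duality on the (possibly nodal but irreducible, hence Gorenstein) curve $C$. Since $C \subset \p^2$ is a plane curve of degree $d$, adjunction gives $\omega_C \cong \mathcal{O}_C(d-3)$. Serre duality identifies $H^1(C, \mathcal{O}_C(d)(-\Gamma))$ with the dual of $H^0(C, \omega_C \otimes \mathcal{O}_C(-d)(\Gamma)) = H^0(C, \mathcal{O}_C(\Gamma - 3H))$, where $H$ is the hyperplane class on $C$. This line bundle has degree $n - 3d \leq -1 < 0$; on the integral curve $C$ any line bundle of negative degree has no global sections (a nonzero section would give an injection $\mathcal{O}_C \hookrightarrow L$ with torsion cokernel, contradicting $\deg L < 0$). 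This yields the required vanishing.

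The only subtle point — and the step I would double-check most carefully — is the passage to Serre duality on the singular curve $C$: I need that $C$ is Gorenstein (true for nodal curves, and in particular for local complete intersections in $\p^2$), that $\omega_C \cong \mathcal{O}_C(d-3)$ still holds by adjunction, and that the negative degree vanishing on an integral singular curve is used correctly. Everything else (the two exact sequences, the identification $\mathcal{I}_{\Gamma/C} = \mathcal{O}_C(-\Gamma)$ using that $\Gamma$ avoids the nodes, and the arithmetic $n \leq 3d-1$) is routine once this is in place.
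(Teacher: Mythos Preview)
Your proof is correct and follows essentially the same route as the paper: both reduce to showing $H^1(C,\O_C(d)(-\Gamma))=0$ via Serre duality on the irreducible (hence integral, Gorenstein) curve $C$, together with the degree count $n-3d<0$. The only cosmetic difference is that the paper factors the evaluation map through $H^0(C,\O_C(d))$, whereas you phrase things in terms of ideal sheaves on $\p^2$; the substantive computation is identical.
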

\begin{proof}
The evaluation map factors as
\begin{equation} \label{factor}H^0(\pp^2, \O_{\pp^2}(d)) \to H^0(C, \O_{\pp^2}(d)|_C) \to H^0(\Gamma, \O_{\pp^2}(d)|_{\Gamma}). 
\end{equation}
The first map in \eqref{factor} is surjective because $H^1(\pp^2, \O_{\pp^2}) = 0$. So it remains to show that the second map is surjective. To do so, consider the exact sequence
\[
0\rightarrow \O_{\p^2}(d)|_{C}(-\Gamma)\rightarrow \O_{\p^2}(d)|_{C}\rightarrow \O_{\p^2}(d)|_{\Gamma}\rightarrow 0.
\]
After taking global sections, it suffices to show that $H^1(\O_{\p^2}(d)|_{C}(-\Gamma))=0$. By Serre duality, this is equivalent to showing that $H^0(\O_{\p^2}(-d)|_{C}\otimes \omega_C(\Gamma))=0$ where $\omega_C$ is the dualizing sheaf, which is a line bundle of degree $2g - 2$. The bundle $\O_{\p^2}(-d)|_{C}\otimes \omega_C(\Gamma)$ is of degree 
\[-d^2+2g-2+n=-d^2+(d-1)(d-2)-2+n=-3d+n.\]
On an irreducible curve, a line bundle of negative degree has no global sections. Hence, this line bundle has no global sections when $n\leq 3d-1$.
\end{proof}
Define $U\subset (\p V)^n$ to be the locus over which \eqref{planecurveeval} is surjective.
Note that $U$ is necessarily contained in the complement of the diagonals in $(\pp V)^n$.
We define $K_{d,n}:=Y_{d,n}|_{U}$ to be the kernel of \eqref{planecurveeval} restricted to $U$. Thus, $K_{d,n}$ is a vector bundle over $U$. The projectivization $\p K_{d,n}$ parametrizes tuples $(C,V, p_1,\dots,p_n)$ where $C$ is a degree $d$ curve in $\p V$ containing the points $p_1,\dots,p_n$ and this collection of points imposes independent conditions on polynomials of degree $d$. Lemma \ref{planecurvesurj} says that the image of $\G_{d,n}$ in $(\p \V)^n$ is contained in $U$ when $n\leq 3d-1$. 
\begin{lem} \label{gck}
The stack $\G_{d,n}$ has the CKgP for $n\leq 3d-1$.
\end{lem}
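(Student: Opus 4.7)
The plan is to realize $\G_{d,n}$ as an open substack of the projective bundle $\pp K_{d,n} \to U$ and then apply the basic lemmas of Section \ref{CKgP} to transport the CKgP down from $\BSL_3$.

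First I would observe that, for $n \leq 3d-1$, the natural map $\G_{d,n} \to (\pp \V)^n$ sending $(C \hookrightarrow \pp V, \sigma_1,\dots,\sigma_n)$ to $(\sigma_1,\dots,\sigma_n)$ factors through the open locus $U$ where the evaluation map \eqref{planecurveeval} is surjective. This follows immediately from Lemma \ref{planecurvesurj}: a smooth plane curve is in particular an irreducible nodal plane curve whose smooth locus contains all of its points. Given such a factorization, the universal equation of the plane curve in $\G_{d,n}$ produces a line in the fiber of $K_{d,n}$, so we obtain a map $\G_{d,n} \to \pp K_{d,n}$. By construction $\pp K_{d,n}$ parametrizes tuples $(V,p_1,\dots,p_n,[f])$ with $f$ a nonzero degree $d$ form on $\pp V$ vanishing at distinct points $p_1,\dots,p_n$ that impose independent conditions; $\G_{d,n}$ is the open locus where the vanishing locus $V(f)$ is smooth, so the map $\G_{d,n} \hookrightarrow \pp K_{d,n}$ is an open immersion.

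Next I would chain the CKgP through the tower
\[
\BSL_3 \leftarrow (\pp \V)^n \supset U \leftarrow \pp K_{d,n} \supset \G_{d,n}.
\]
The classifying stack $\BSL_3$ has the CKgP by Lemma \ref{bgs}. The stack $(\pp \V)^n$ is built from $\BSL_3$ by iterated projective (in particular Grassmann) bundles: $(\pp \V)^k = (\pp \V)^{k-1} \times_{\BSL_3} \pp \V$ is a $\pp^2$-bundle over $(\pp \V)^{k-1}$. Applying Lemma \ref{grassmann} $n$ times shows $(\pp \V)^n$ has the CKgP. The open substack $U$ inherits the CKgP by Lemma \ref{open}. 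Since $K_{d,n} \to U$ is a vector bundle, its projectivization $\pp K_{d,n} \to U$ is a Grassmann (in fact projective) bundle, so $\pp K_{d,n}$ has the CKgP by Lemma \ref{grassmann}. Finally, Lemma \ref{open} applied to the open immersion $\G_{d,n} \hookrightarrow \pp K_{d,n}$ gives that $\G_{d,n}$ has the CKgP.

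The only real content beyond the formal CKgP lemmas is the identification of $\G_{d,n}$ as an open substack of $\pp K_{d,n}$, and even that rests directly on Lemma \ref{planecurvesurj} together with the universal property of the stacks involved; I do not anticipate a genuine obstacle, just careful bookkeeping of the moduli interpretations. Once in hand, the same presentation will be the natural starting point for computing the Chow ring of $\G_{d,n}$ and showing that the generators pull back from tautological classes on $\M_{g,n}$, which is what is needed to carry out the genus $3$ cases of Theorem \ref{main}.
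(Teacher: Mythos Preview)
Your proposal is correct and follows essentially the same approach as the paper: realize $\G_{d,n}$ as an open substack of $\pp K_{d,n}$ and propagate the CKgP through the tower $\BSL_3 \leftarrow (\pp \V)^n \supset U \leftarrow \pp K_{d,n} \supset \G_{d,n}$ via Lemmas \ref{open} and \ref{grassmann}. Your version is slightly more explicit in invoking Lemma \ref{bgs} for $\BSL_3$ and in spelling out why $\G_{d,n} \hookrightarrow \pp K_{d,n}$ is open, but the argument is the same.
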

\begin{proof}
We have a sequence of morphisms
\[
\G_{d,n}\subset \p K_{d,n}\rightarrow U \subset (\p V)^n \rightarrow \BSL_3,
\]
where each $\subset$ is an open embedding and each $\rightarrow$ is a  (product of) projective bundles. Thus, $\G_{d,n}$ has the CKgP by Lemmas \ref{open} and \ref{grassmann}.
\end{proof}

\subsection{Generators for the Chow ring}
The Chow ring of $\BSL_3$ is generated by 
the Chern classes of the tautological rank $3$ bundle $c_2(\V), c_3(\V) \in A^*(\BSL_3)$.
By the projective bundle theorem, the Chow ring of $(\pp \V)^n$ is generated over $A^*(\BSL_3)$ by the classes of the relative $\O(1)$'s.
Let $\eta_i: (\pp \V)^n \to \pp \V$ be the $i^{\mathrm{th}}$ projection and write $z_i = \eta_i^*c_1(\O_{\pp \V}(1)) \in A^1((\pp \V)^n)$. 
By the projective bundle theorem again, the Chow ring of $\pp K_{d,n}$ is generated over $A^*(U)$ by $\zeta := c_1(\O_{\pp K_{d,n}}(1))$. In summary, we have:
\begin{lem} \label{planegens}
The ring $A^*(\pp K_{d,n})$, and therefore $A^*(\G_{d,n})$, is generated by (the pullbacks of)
\[c_2(V), \qquad c_3(V), \qquad z_1, \ldots, z_n, \qquad \text{and} \qquad \zeta.  \]
\end{lem}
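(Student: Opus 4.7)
The proof is essentially a chain of applications of the projective bundle theorem and excision, building up through the tower
\[ \G_{d,n} \ \subset \ \pp K_{d,n} \ \to \ U \ \subset \ (\pp \V)^n \ \to \ \BSL_3. \]
The plan is to track generators at each stage.

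First, by \cite[Theorem 1]{Totaro} (or directly from the construction in Lemma \ref{bgs}), the Chow ring $A^*(\BSL_3)$ is the polynomial ring $\qq[c_2(\V), c_3(\V)]$. Applying the projective bundle theorem to $\pp \V \to \BSL_3$ and then iteratively to each factor of the fiber product, we conclude that $A^*((\pp \V)^n)$ is generated as a ring by the pullbacks of $c_2(\V), c_3(\V)$ together with $z_1, \ldots, z_n$. Since $U \subset (\pp \V)^n$ is open, excision gives a surjection $A^*((\pp \V)^n) \twoheadrightarrow A^*(U)$, so the same classes generate $A^*(U)$.

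Next, since $K_{d,n} \to U$ is a vector bundle by construction, the map $\pp K_{d,n} \to U$ is a projective bundle. The projective bundle theorem then tells us that $A^*(\pp K_{d,n})$ is generated as an $A^*(U)$-algebra by $\zeta = c_1(\O_{\pp K_{d,n}}(1))$. Combining with the previous step, $A^*(\pp K_{d,n})$ is generated by the pullbacks of $c_2(\V), c_3(\V), z_1, \ldots, z_n$, and $\zeta$.

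Finally, for the assertion about $\G_{d,n}$: smoothness of the plane curve $C \subset \pp V$ is an open condition, so (once $n \leq 3d-1$ ensures that $\G_{d,n}$ maps into $U$) the inclusion $\G_{d,n} \hookrightarrow \pp K_{d,n}$ is an open embedding. Excision again gives a surjection $A^*(\pp K_{d,n}) \twoheadrightarrow A^*(\G_{d,n})$, so $A^*(\G_{d,n})$ is generated by the restrictions of the listed classes. There is no real obstacle here; the only things to verify are the compatibility of the tautological bundles used (so that the Chern classes and relative $\O(1)$'s used at each stage agree with those in the statement) and the openness of the smoothness locus, both of which are immediate from the construction in Section \ref{pcsetup}.
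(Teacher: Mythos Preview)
Your proof is correct and follows essentially the same approach as the paper: the paper also chains together the fact that $A^*(\BSL_3)$ is generated by $c_2(\V), c_3(\V)$, the projective bundle theorem for $(\pp \V)^n \to \BSL_3$, excision for $U \subset (\pp \V)^n$, and the projective bundle theorem for $\pp K_{d,n} \to U$. Your write-up is slightly more explicit about the final excision step for $\G_{d,n} \subset \pp K_{d,n}$, but the content is identical.
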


In order to give a geometric interpretation of these generators, we first construct the universal pointed plane curve over $\pp K_{d,n}$. By an abuse of notation, we will continue to denote by $\V$ the pullback of $\V$ from $\BSL_3$ to $\p K_{d,n}$, and we consider the diagram
\begin{center}
\begin{tikzcd}
\mathcal{P} := \pp K_{d,n} \times_{\BSL_3} \p \V  \arrow{r}{\pi_2} \arrow{d}{\pi_1} & \p \V \\
\p K_{d,n}.
\end{tikzcd}
\end{center}
We have a morphism of bundles on $\P$:
\[
\pi_1^*\O_{\p K_{d,n}}(-1)\otimes \pi_2^*\O_{\p \V}(-d)\rightarrow K_{d,n}\otimes \pi_2^*\O_{\p \V}(-d)\rightarrow \O_{\P}
\]
where the first map arises from the tautological sequence on $\p K_{d,n}$ and the second is obtained from multiplying forms. This morphism defines a sheaf of ideals in $\O_{\P}$, and we will denote by $\C\subset \P$ the corresponding divisor. By construction, the restriction of $\pi_1$ to $\C$,  is a relative plane curve $f:\C\rightarrow \p K_{d,n}$ of degree $d$. Next, we show that it has $n$ sections.
The product of the identity map together with the composition
\[
\p K_{d,n}\rightarrow U \hookrightarrow (\pp V)^n \xrightarrow{\eta_i} \p V,
\]
induces a section of $\pi_1$,
\[\sigma_i : \pp K_{d,n} \to \pp K_{d,n} \times_{\BSL_3} \pp \V = \P. \]
These sections factor through $\C$ by the definition of $\C$ and $\p K_{d,n}$. Indeed, the fiber of $\pp K_{d,n}$ over $(p_1, \ldots, p_n) \in U$ is the space of degree $d$ forms $G$ vanishing at the $p_i$ and the fiber of $\C \to \pp K_{d,n}$ over a degree $d$ form $G$ is the vanishing of $G$. Note that $\G_{d,n} \subset \pp K_{d,n}$ is the locus over which $\C \to \pp K_{d,n}$ is smooth. Let $X \subset \pp K_{d,n}$ be the open locus over which $\C \to \pp K_{d,n}$ is stable (so, of course, $\G_{d,n} \subseteq X$).

\begin{lem} \label{gdn}
Suppose $n\leq 3d-1$ and $d\geq 4$. 
The generators $c_2(\V), c_3(\V), z_1, \ldots, z_n$ and $\zeta$ of $A^*(\pp K_{d,n})$ restrict to polynomials in the $\lambda$ and $\psi$ classes on $X \subset \pp K_{d,n}$. In particular, $A^*(X)$ and $A^*(\G_{d,n})$ are generated by pullbacks of tautological classes.
\end{lem}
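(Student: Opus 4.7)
The plan is to express each generator of $A^*(\pp K_{d,n})$ from Lemma \ref{planegens} as a polynomial in $\psi$ and $\lambda$ classes on the open locus $X$, using the universal family $f : \C \to X$ constructed above. Since the locally closed embedding $\F_{d,n} \hookrightarrow \M_{g,n}$ pulls tautological classes back to tautological classes on $X$, and excision gives surjections $A^*(\pp K_{d,n}) \twoheadrightarrow A^*(X) \twoheadrightarrow A^*(\G_{d,n})$, this will yield both generation statements. The CKgP for $X$ and $\G_{d,n}$ is immediate from Lemma \ref{gck} combined with Lemma \ref{open}, so only the generation claim requires argument.

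First, I would compute $\omega_f$ via adjunction on the ambient $\pp^2$-bundle $\pi_1: \P \to \pp K_{d,n}$. Using $\det\V \cong \O$ on $\BSL_3$ gives $\omega_{\P/\pp K_{d,n}} = \pi_2^*\O_{\pp\V}(-3)$, and by construction $\O_\P(\C) = \pi_1^*\O_{\pp K_{d,n}}(1) \otimes \pi_2^*\O_{\pp\V}(d)$, so
\[
\omega_f \;=\; \bigl(\pi_1^*\O_{\pp K_{d,n}}(1) \otimes \pi_2^*\O_{\pp\V}(d-3)\bigr)\big|_\C.
\]
Pulling back along $\sigma_i$ and noting that $\pi_2 \circ \sigma_i = \eta_i$ yields $\psi_i = \zeta + (d-3)\,z_i$, so $z_i$ is expressible in terms of $\psi_i$ and $\zeta$ whenever $d \geq 4$. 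Pushing the Koszul sequence $0 \to \O_\P(-\C) \to \O_\P \to \O_\C \to 0$ forward by $\pi_1$ after twisting by $\pi_1^*\O(1) \otimes \pi_2^*\O(d-3)$, and applying $\gamma_*\O_{\pp\V}(d-3) = \Sym^{d-3}\V^\vee$ together with the vanishings $H^i(\pp^2, \O(-3)) = 0$ for $i \in \{0,1\}$ and $H^1(\pp^2, \O(d-3)) = 0$, identifies the Hodge bundle as
\[
f_*\omega_f \;\cong\; \O_{\pp K_{d,n}}(1) \otimes \Sym^{d-3}\V^\vee.
\]

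Expanding $\lambda_k = c_k(f_*\omega_f)$ by the standard tensor-by-line-bundle formula, and using $c_1(\V) = 0$ (so $c_1(\Sym^{d-3}\V^\vee) = 0$), I obtain the triangular system
\begin{align*}
\lambda_1 &= g\,\zeta,\\
\lambda_2 &= \tbinom{g}{2}\zeta^2 + a\,c_2(\V),\\
\lambda_3 &= \tbinom{g}{3}\zeta^3 + (g-2)\,a\,\zeta\,c_2(\V) + b\,c_3(\V),
\end{align*}
where $A^*(\BSL_3) = \qq[c_2(\V),c_3(\V)]$ forces $c_2(\Sym^{d-3}\V^\vee) = a\,c_2(\V)$ and $c_3(\Sym^{d-3}\V^\vee) = b\,c_3(\V)$ for some rationals $a = a(d)$, $b = b(d)$. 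The main obstacle is verifying that $a$ and $b$ are both nonzero for all $d \geq 4$. Applying the splitting principle, with Chern roots of $\V$ satisfying $\alpha_1+\alpha_2+\alpha_3 = 0$ and Chern roots of $\Sym^{d-3}\V^\vee$ given by $-\sum_m i_m\alpha_m$ indexed by $I = (i_1,i_2,i_3) \in \zz_{\geq 0}^3$ with $|I| = d-3$, a direct power-sum computation packaged by the rational generating functions $\sum_k (M_2 - M_{11}) t^k = t/(1-t)^5$ and $\sum_k (M_3 - 3M_{21} + 2M_{111}) t^k = t(1+t)/(1-t)^6$ yields the closed forms
\[
a \;=\; \tbinom{d}{4}, \qquad b \;=\; -\tbinom{d+1}{5} - \tbinom{d}{5},
\]
both strictly nonzero for $d \geq 4$. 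Inverting the triangular system then expresses $\zeta$, $c_2(\V)$, $c_3(\V)$, and hence each $z_i$, as polynomials in the $\psi_i$ and $\lambda_k$, completing the proof.
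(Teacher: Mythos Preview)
Your proof is correct and follows essentially the same route as the paper: compute $\omega_f$ by adjunction, identify the Hodge bundle as $\O_X(1)\otimes\Sym^{d-3}\V^\vee$, and then invert the resulting relations to express $\zeta$, $c_2(\V)$, $c_3(\V)$, and $z_i$ in terms of $\lambda$ and $\psi$ classes. Your version is more explicit than the paper's, which simply asserts that ``taking higher Chern classes and using the splitting principle'' recovers $c_2(\V)$ and $c_3(\V)$; you actually carry out the splitting-principle computation and verify the nonvanishing of the coefficients $a=\binom{d}{4}$ and $b$ for $d\geq 4$, which is the substance behind that assertion. One small note: your formula $\lambda_1 = g\zeta$ is the correct one (the Hodge bundle has rank $g=\binom{d-1}{2}$), whereas the paper writes $\lambda_1=\zeta$; either way $g\neq 0$ for $d\geq 4$, so the inversion goes through.
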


\begin{proof}
We have the universal diagram restricted to $X \subset \pp K_{d,n}$:
\[
\begin{tikzcd}
\C \arrow[r, "j", hook] \arrow[d, "f"]               & \P \arrow[ld, "\pi"] \\
X \arrow[u, "{\sigma_1,\dots,\sigma_n}", bend left] & 
\end{tikzcd}
\]
Set $\O_X(1)=\O_{\p K_{d,n}}(1)|_X$.
By construction, $\C$ is the zero locus of a section of $\pi^*\O_X(1) \otimes \O_{\pp \V}(d)$. 
By adjunction and the fact that $\V$ is a rank $3$ vector bundle with trivial first Chern class,
\[
\omega_f=j^*(\omega_{\pi}\otimes \O_{\p \V}(d)\otimes\pi^*\O_X(1))=f^*\O_X(1)\otimes j^*\O_{\p \V}(d-3).
\]
Pushing forward by $f$, we have
\[
f_*\omega_f=\O_{X}(1)\otimes \pi_*j_*j^*\O_{\p V}(d-3)=\O_{X}(1)\otimes \Sym^{d-3} \V^{\vee}.
\]
By taking first Chern classes, we see that $\lambda_1=\zeta$ (recall the $\lambda$ classes defined in \eqref{ldef}).  Taking higher Chern classes and using the splitting principle shows that $c_2(\V)$ and $c_3(\V)$ are also polynomials in the $\lambda$ classes.
Meanwhile, pulling back by $\sigma_i$, we have
\[
\sigma_i^*\omega_f=\eta_i^*\O_{\p \V}(d-3)\otimes \O_{X}(1).
\]
 Taking first Chern classes, we see that
 $\psi_i = (d-3)z_i + \zeta$ so $z_i = \frac{1}{d-3}(\psi_i - \lambda_1)$. (Note that we are using $d - 3 \neq 0$.) The result now follows by Lemma \ref{planegens}.
\end{proof}

\begin{cor}\label{3open}
For $n\leq 11$, $A^*(\M_{3,n})=R^*(\M_{3,n})$ and $\M_{3,n}$ has the CKgP.
\end{cor}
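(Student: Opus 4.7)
The plan is to stratify $\M_{3,n}$ via the hyperelliptic/nonhyperelliptic dichotomy, which in genus $3$ is particularly clean: every nonhyperelliptic genus $3$ curve is canonically a smooth plane quartic. Thus $\M_{3,n}$ decomposes as the open substack $\F_{4,n}\subset \M_{3,n}$ (open by Landesman's embedding result \cite{Aaron} cited in Section \ref{pcsetup}) together with its closed complement, the hyperelliptic locus $\Hyp_{3,n}$.

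For the open stratum I would apply Section \ref{planesec} with $d=4$. The bound $n\leq 11$ is exactly $n\leq 3d-1$, so both Lemma \ref{gck} and Lemma \ref{gdn} apply. The $\mu_3$-gerbe $\G_{4,n}\to\F_{4,n}$ combined with Lemma \ref{gerbe} transports the CKgP from $\G_{4,n}$ to $\F_{4,n}$, and Lemma \ref{gdn} gives that $A^*(\F_{4,n})$ is generated by restrictions of tautological classes from $\M_{3,n}$ (note that $d=4\geq 4$, so the hypothesis of Lemma \ref{gdn} is satisfied).

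For the closed stratum $\Hyp_{3,n}$, since $n\leq 11 < 2g+6=12$, Theorem \ref{hyperelliptic} gives that $A^*(\Hyp_{3,n})$ is generated by the $\psi$-classes, which are restrictions of tautological classes from $\M_{3,n}$. The CKgP for $\Hyp_{3,n}$ follows from Theorem \ref{nodalhyperelliptic} applied to the ambient stack $\I_{3,n}$ of irreducible nodal hyperelliptic curves, combined with Lemma \ref{open}, since $\Hyp_{3,n}$ is open in $\I_{3,n}$.

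To conclude, Lemma \ref{Kstrat} applied to the stratification $\M_{3,n}=\F_{4,n}\sqcup \Hyp_{3,n}$ yields the CKgP for $\M_{3,n}$. For the generation statement, the excision sequence
\[A_*(\Hyp_{3,n})\xrightarrow{\iota_*} A_*(\M_{3,n})\to A_*(\F_{4,n})\to 0\]
reduces the problem to showing that $\iota_*$ sends tautological classes on $\Hyp_{3,n}$ to tautological classes on $\M_{3,n}$. By the projection formula, $\iota_* P(\psi_1,\dots,\psi_n)=P(\psi_1,\dots,\psi_n)\cdot [\Hyp_{3,n}]$, so the only non-routine step is verifying $[\Hyp_{3,n}]\in R^*(\M_{3,n})$. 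This is where I expect the main subtlety to lie, but it is resolved by writing $[\Hyp_{3,n}]=\pi^{*}[\Hyp_3]$ for the forgetful map $\pi:\M_{3,n}\to\M_3$ and invoking Mumford's classical result that $[\Hyp_3]$ is tautological on $\M_3$; tautologicality is preserved under $\pi^{*}$.
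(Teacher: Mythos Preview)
Your proof is correct and follows essentially the same route as the paper: stratify into $\F_{4,n}$ and $\Hyp_{3,n}$, handle the open plane-quartic locus via Lemmas \ref{gck} and \ref{gdn} through the $\mu_3$-gerbe, handle the hyperelliptic locus via Theorem \ref{hyperelliptic} and the CKgP from Section \ref{hypsec}, and combine with excision and the push-pull formula. The only cosmetic difference is that the paper cites \cite[Proposition 1]{FaberPandharipande} for the tautologicality of $[\Hyp_{3,n}]$, while you invoke Mumford's computation of $[\Hyp_3]$ and pull back; both are valid.
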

\begin{proof}
By Theorem \ref{hyperelliptic}, we know all classes supported on $\Hyp_{3,n}$ are tautological (the fundamental class is tautological by \cite[Proposition 1]{FaberPandharipande}; then use the push-pull formula.)
Moreover, $\Hyp_{3,n}$ has the CKgP, as discussed in Section \ref{hypsec}. By excision, it remains to show that $\M_{3,n} \smallsetminus \Hyp_{3,n}$ has the CKgP and $A^* = R^*$.

Every curve in $\M_{3,n} \smallsetminus \Hyp_{3,n}$ is an $n$-pointed plane quartic, so $\M_{3,n} \smallsetminus \Hyp_{3,n} \cong \F_{4,n}$. Since $\G_{4,n}\rightarrow \F_{4,n}$ is a $\mu_3$-banded gerbe, we see that by combining Lemma \ref{gerbe} with Lemma \ref{gck}, $\M_{3,n} \smallsetminus \Hyp_{3,n}$ has the CKgP. Moreover, by Lemma \ref{gdn}, it has $A^* = R^*$.
\end{proof}

\subsection{Conclusion for $g = 3$}
Corollary \ref{3open} gives us a new column of open circles in genus $3$, pictured in the left chart below. By the first version of the filling criterion, Lemma \ref{fc1}, we can immediately fill in the circles with $n\leq 7$, pictured in the middle chart below. With some extra work, we show in Lemma \ref{38} that we can also fill in the additional circle where $n=8$, pictured on the right.
\begin{center}
\begin{tikzpicture}[scale = .4]
\node[scale=.5] at (1, -.7) {$1$};
\node[scale=.5] at (2, -.7) {$2$};
\node[scale=.5] at (3, -.7) {$3$};
\node[scale=.5] at (4, -.7) {$4$};

\node[scale=.5] at (-.7, 1) {$1$};
\node[scale=.5] at (-.7, 2) {$2$};
\node[scale=.5] at (-.7, 3) {$3$};
\node[scale=.5] at (-.7, 4) {$4$};
\node[scale=.5] at (-.7, 5) {$5$};
\node[scale=.5] at (-.7, 6) {$6$};
\node[scale=.5] at (-.7, 7) {$7$};
\node[scale=.5] at (-.7, 8) {$8$};
\node[scale=.5] at (-.7, 9) {$9$};
\node[scale=.5] at (-.7, 10) {$10$};
\node[scale=.5] at (-.7, 11) {$11$};
\node[scale=.5] at (-.7, 12) {$12$};
\node[scale=.5] at (-.7, 13) {$13$};

\draw[->] (0, 0) -- (5, 0);
\draw[->] (0, 0) -- (0, 14);
\node[scale=.9] at (5.4,0) {$g$};
\node[scale=.9] at (0, 14.4) {$n$};
\draw (-.1, 1) -- (.1, 1);
\draw (-.1, 2) -- (.1, 2);
\draw (1, -.1) -- (1, .1);
\draw (2, -.1) -- (2, .1);
\draw (3, -.1) -- (3, .1);
\draw (4, -.1) -- (4, .1);
\filldraw (0, 3) circle (4pt);
\filldraw (0, 4) circle (4pt);
\filldraw (0, 5) circle (4pt);
\filldraw (0, 6) circle (4pt);
\filldraw (0, 7) circle (4pt);
\filldraw (0, 8) circle (4pt);
\filldraw (0, 9) circle (4pt);
\filldraw (0, 10) circle (4pt);
\filldraw (0, 11) circle (4pt);
\filldraw (0, 12) circle (4pt);
\filldraw (0, 13) circle (4pt);
\filldraw (1, 1) circle (4pt);
\filldraw (1, 2) circle (4pt);
\filldraw (1, 3) circle (4pt);
\filldraw (1, 4) circle (4pt);
\filldraw (1, 5) circle (4pt);
\filldraw (1, 6) circle (4pt);
\filldraw (1, 7) circle (4pt);
\filldraw (1, 8) circle (4pt);
\filldraw (1, 9) circle (4pt);
\filldraw (1, 10) circle (4pt);
\node[scale = .6, color = red] at (1, 11) {$\times$};
\node[scale = .6, color = red] at (1, 12) {$\times$};
\node[scale = .6, color = red] at (1, 13) {$\times$};
\filldraw (2, 0) circle (4pt);
\filldraw (2, 1) circle (4pt);
\filldraw (2, 2) circle (4pt);
\filldraw (2, 3) circle (4pt);
\filldraw (2, 4) circle (4pt);
\filldraw (2, 5) circle (4pt);
\filldraw (2, 6) circle (4pt);
\filldraw (2, 7) circle (4pt);
\filldraw (2, 8) circle (4pt);
\filldraw (2, 9) circle (4pt);
\draw (2, 10) circle (4pt);
\filldraw[color=white] (3, 0) circle (4pt);
\draw[color=blue] (3, 0) circle (4pt);
\draw[color=blue] (3, 1) circle (4pt);
\draw[color=blue] (3, 2) circle (4pt);
\draw[color=blue] (3, 3) circle (4pt);
\draw[color=blue] (3, 4) circle (4pt);
\draw[color=blue] (3, 5) circle (4pt);
\draw[color=blue] (3, 6) circle (4pt);
\draw[color=blue] (3, 7) circle (4pt);
\draw[color=blue] (3, 8) circle (4pt);
\draw[color=blue] (3, 9) circle (4pt);
\draw[color=blue] (3, 10) circle (4pt);
\draw[color=blue] (3, 11) circle (4pt);

\node[color=blue, scale=.8] at (2.5, -2)  {Corollary 7.5};

\node[scale=.8] at (8, 8) {Filling criterion};
\draw[->] (5.5, 7) -- (10.5, 7);
\node[scale=.8] at (8, 6) {version 1};

\end{tikzpicture}
\hspace{.15in}
\begin{tikzpicture}[scale = .4]

\node[scale=.5] at (1, -.7) {$1$};
\node[scale=.5] at (2, -.7) {$2$};
\node[scale=.5] at (3, -.7) {$3$};
\node[scale=.5] at (4, -.7) {$4$};

\node[scale=.5] at (-.7, 1) {$1$};
\node[scale=.5] at (-.7, 2) {$2$};
\node[scale=.5] at (-.7, 3) {$3$};
\node[scale=.5] at (-.7, 4) {$4$};
\node[scale=.5] at (-.7, 5) {$5$};
\node[scale=.5] at (-.7, 6) {$6$};
\node[scale=.5] at (-.7, 7) {$7$};
\node[scale=.5] at (-.7, 8) {$8$};
\node[scale=.5] at (-.7, 9) {$9$};
\node[scale=.5] at (-.7, 10) {$10$};
\node[scale=.5] at (-.7, 11) {$11$};
\node[scale=.5] at (-.7, 12) {$12$};
\node[scale=.5] at (-.7, 13) {$13$};

\draw[->] (0, 0) -- (5, 0);
\draw[->] (0, 0) -- (0, 14);
\node[scale=.9] at (5.4,0) {$g$};
\node[scale=.9] at (0, 14.4) {$n$};
\draw (-.1, 1) -- (.1, 1);
\draw (-.1, 2) -- (.1, 2);
\draw (1, -.1) -- (1, .1);
\draw (2, -.1) -- (2, .1);
\draw (3, -.1) -- (3, .1);
\draw (4, -.1) -- (4, .1);
\filldraw (0, 3) circle (4pt);
\filldraw (0, 4) circle (4pt);
\filldraw (0, 5) circle (4pt);
\filldraw (0, 6) circle (4pt);
\filldraw (0, 7) circle (4pt);
\filldraw (0, 8) circle (4pt);
\filldraw (0, 9) circle (4pt);
\filldraw (0, 10) circle (4pt);
\filldraw (0, 11) circle (4pt);
\filldraw (0, 12) circle (4pt);
\filldraw (0, 13) circle (4pt);
\filldraw (1, 1) circle (4pt);
\filldraw (1, 2) circle (4pt);
\filldraw (1, 3) circle (4pt);
\filldraw (1, 4) circle (4pt);
\filldraw (1, 5) circle (4pt);
\filldraw (1, 6) circle (4pt);
\filldraw (1, 7) circle (4pt);
\filldraw (1, 8) circle (4pt);
\filldraw (1, 9) circle (4pt);
\filldraw (1, 10) circle (4pt);
\node[scale = .6, color = red] at (1, 11) {$\times$};
\node[scale = .6, color = red] at (1, 12) {$\times$};
\node[scale = .6, color = red] at (1, 13) {$\times$};
\filldraw (2, 0) circle (4pt);
\filldraw (2, 1) circle (4pt);
\filldraw (2, 2) circle (4pt);
\filldraw (2, 3) circle (4pt);
\filldraw (2, 4) circle (4pt);
\filldraw (2, 5) circle (4pt);
\filldraw (2, 6) circle (4pt);
\filldraw (2, 7) circle (4pt);
\filldraw (2, 8) circle (4pt);
\filldraw (2, 9) circle (4pt);
\draw (2, 10) circle (4pt);
\filldraw[color=blue] (3, 0) circle (4pt);
\filldraw[color=blue] (3, 1) circle (4pt);
\filldraw[color=blue] (3, 2) circle (4pt);
\filldraw[color=blue] (3, 3) circle (4pt);
\filldraw[color=blue] (3, 4) circle (4pt);
\filldraw[color=blue] (3, 5) circle (4pt);
\filldraw[color=blue] (3, 6) circle (4pt);
\filldraw[color=blue] (3, 7) circle (4pt);
\draw (3, 8) circle (4pt);
\draw (3, 9) circle (4pt);
\draw (3, 10) circle (4pt);
\draw (3, 11) circle (4pt);
\node[color = white] at (2.5, -2) {hi};
\end{tikzpicture}
\hspace{1.25in}
\begin{tikzpicture}[scale = .4]

\node[scale=.5] at (1, -.7) {$1$};
\node[scale=.5] at (2, -.7) {$2$};
\node[scale=.5] at (3, -.7) {$3$};
\node[scale=.5] at (4, -.7) {$4$};

\node[scale=.5] at (-.7, 1) {$1$};
\node[scale=.5] at (-.7, 2) {$2$};
\node[scale=.5] at (-.7, 3) {$3$};
\node[scale=.5] at (-.7, 4) {$4$};
\node[scale=.5] at (-.7, 5) {$5$};
\node[scale=.5] at (-.7, 6) {$6$};
\node[scale=.5] at (-.7, 7) {$7$};
\node[scale=.5] at (-.7, 8) {$8$};
\node[scale=.5] at (-.7, 9) {$9$};
\node[scale=.5] at (-.7, 10) {$10$};
\node[scale=.5] at (-.7, 11) {$11$};
\node[scale=.5] at (-.7, 12) {$12$};
\node[scale=.5] at (-.7, 13) {$13$};

\draw[->] (0, 0) -- (5, 0);
\draw[->] (0, 0) -- (0, 14);
\node[scale=.9] at (5.4,0) {$g$};
\node[scale=.9] at (0, 14.4) {$n$};
\draw (-.1, 1) -- (.1, 1);
\draw (-.1, 2) -- (.1, 2);
\draw (1, -.1) -- (1, .1);
\draw (2, -.1) -- (2, .1);
\draw (3, -.1) -- (3, .1);
\draw (4, -.1) -- (4, .1);
\filldraw (0, 3) circle (4pt);
\filldraw (0, 4) circle (4pt);
\filldraw (0, 5) circle (4pt);
\filldraw (0, 6) circle (4pt);
\filldraw (0, 7) circle (4pt);
\filldraw (0, 8) circle (4pt);
\filldraw (0, 9) circle (4pt);
\filldraw (0, 10) circle (4pt);
\filldraw (0, 11) circle (4pt);
\filldraw (0, 12) circle (4pt);
\filldraw (0, 13) circle (4pt);
\filldraw (1, 1) circle (4pt);
\filldraw (1, 2) circle (4pt);
\filldraw (1, 3) circle (4pt);
\filldraw (1, 4) circle (4pt);
\filldraw (1, 5) circle (4pt);
\filldraw (1, 6) circle (4pt);
\filldraw (1, 7) circle (4pt);
\filldraw (1, 8) circle (4pt);
\filldraw (1, 9) circle (4pt);
\filldraw (1, 10) circle (4pt);
\node[scale = .6, color = red] at (1, 11) {$\times$};
\node[scale = .6, color = red] at (1, 12) {$\times$};
\node[scale = .6, color = red] at (1, 13) {$\times$};
\filldraw (2, 0) circle (4pt);
\filldraw (2, 1) circle (4pt);
\filldraw (2, 2) circle (4pt);
\filldraw (2, 3) circle (4pt);
\filldraw (2, 4) circle (4pt);
\filldraw (2, 5) circle (4pt);
\filldraw (2, 6) circle (4pt);
\filldraw (2, 7) circle (4pt);
\filldraw (2, 8) circle (4pt);
\filldraw (2, 9) circle (4pt);
\draw (2, 10) circle (4pt);
\filldraw[color=black] (3, 0) circle (4pt);
\filldraw[color=black] (3, 1) circle (4pt);
\filldraw[color=black] (3, 2) circle (4pt);
\filldraw[color=black] (3, 3) circle (4pt);
\filldraw[color=black] (3, 4) circle (4pt);
\filldraw[color=black] (3, 5) circle (4pt);
\filldraw[color=black] (3, 6) circle (4pt);
\filldraw[color=black] (3, 7) circle (4pt);
\filldraw[color=blue] (3, 8) circle (4pt);
\draw (3, 9) circle (4pt);
\draw (3, 10) circle (4pt);
\draw (3, 11) circle (4pt);
\node[color = blue, scale=.8] at (2.5, -2) {Lemma 7.8};
\end{tikzpicture}
\end{center}

The reason $(g, n) = (3, 8)$ is not already filled is because
\begin{enumerate}
    \item  Lemma \ref{fc1}(3) is not satisfied because the circle for $(g-1,n+2) = (2,10)$ is not filled. In turn, the reason that the circle at $(2,10)$ is not filled is that:
    \begin{enumerate}
        \item Lemma \ref{fc1}(2) fails for  $(g-2,n+1)=(1,11)$,
        \item Lemma \ref{fc1}(3) fails for $(g-1, n+2) = (1,12)$.
    \end{enumerate}
\end{enumerate}

Consider the following four stable graphs
\begin{equation} \label{sg38}\M_{1,11} \times \M_{0,3} \qquad \qquad \quad \M_{1,12} \qquad \qquad \M_{2,10} \qquad \qquad \M_{3,8}. 
\end{equation}

    \begin{center}
    \includegraphics[width=5in]{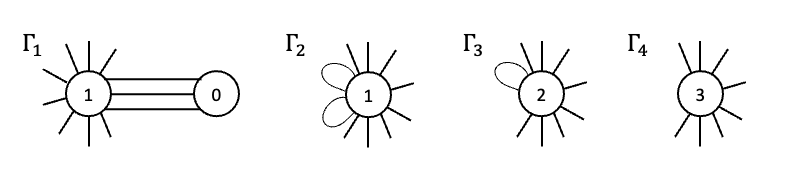} \\
    The graphs corresponding to the ``unfilled" strata in $\Mb_{3,8}$.
    \end{center}

We claim that all other strata of $\Mb_{3,8}$
lie in the image of some $\prod \Mb_{g_i,n_i} \to \Mb_{3,8}$ where the vertices for $(g_i, n_i)$ are already filled (i.e. $g_i = 0$; $g_i = 1$ and $n_i \leq 10$; $g_i = 2$ and $n_i \leq 9$; or $g_i = 3$ and $n_i \leq 7$.) 
Indeed, if $\Gamma$ is in the stable graph stratification for $\Mb_{3,8}$, then all vertices of $\Gamma$ satisfy
$2g(v) + n(v) \leq 14$.
Moreover, the only $(g, n)$ with $2g + n \leq 14$ which are not already filled
are
\[(1, 11) \qquad (1, 12) \qquad (2, 10) \qquad (3, 8). \]
For the latter three, there is only one stable graph that uses such a vertex, which is the one pictured in \eqref{sg38}. For $(1, 11)$ there are a few stable graphs:

\begin{center}
    \includegraphics[width=5in]{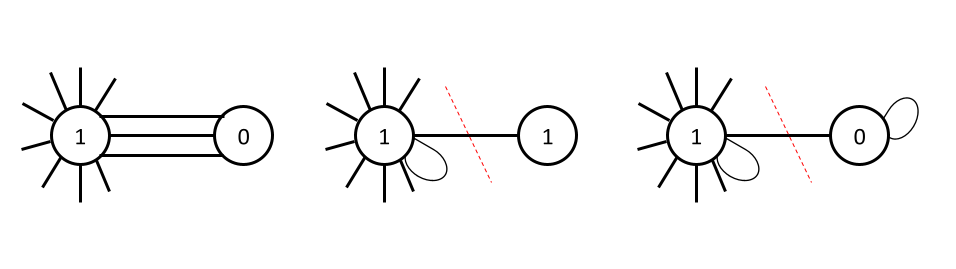}
    \end{center}
\noindent
However, all but the first one appear in the image of $\Mb_{2,9} \times \Mb_{1,1} \to \Mb_{3,8}$ because they have a disconnecting node (corresponding to the edge with a red dashed line through it above).

Let $\M^\circ = \M_{\Gamma_1} \cup \M_{\Gamma_2} \cup \M_{\Gamma_3} \cup \M_{\Gamma_4}$ be the union of the four strata in \eqref{sg38}. Note that $\M^\circ \subset \Mb_{3,8}$ is open as the set of graphs is closed under taking edge contractions. (In fact, each stratum above lies in the closure of the one to its right: $\Mb_{\Gamma_1} \subset \Mb_{\Gamma_2} \subset \Mb_{\Gamma_3} \subset \Mb_{\Gamma_4} = \M^\circ$.) It remains to show that $\M^\circ$ has the CKgP and $A^* = R^*$.

We do this by cutting $\M^\circ$ into two pieces.
Let $Y \subset \M^\circ$ be the open locus where the dualizing sheaf is very ample and let $Z \subset \M^\circ$ be the closed complement of $Y$.

\begin{lem}
We have $Z = \I_{3,8} \cap \M^\circ$. In particular, $Z$ has the CKgP and all classes supported on $Z \subset \M^\circ$ are tautological in $A^*(\M^\circ)$.
\end{lem}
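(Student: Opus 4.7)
The plan is to verify $Z = \I_{3,8} \cap \M^{\circ}$ stratum by stratum on $\M_{\Gamma_i}$ for $i=1,2,3,4$, then deduce the remaining assertions from Theorem \ref{nodalhyperelliptic}.

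For the three strata whose curves are irreducible, I would invoke the standard criterion that for an irreducible nodal (or smooth) curve $C$ of arithmetic genus at least $3$, the dualizing sheaf $\omega_C$ is very ample if and only if $C$ carries no $g^1_2$, equivalently, if and only if $C$ is not hyperelliptic. (Serre duality together with the base-point-free pencil trick, adapted to nodal curves via the residue conditions at the preimages of the nodes, carries this through.) On $\M_{\Gamma_4} = \M_{3,8}$ this is the classical criterion, giving $Z \cap \M_{\Gamma_4} = \Hyp_{3,8}$. On $\M_{\Gamma_3}$ (an irreducible curve with one self-node whose normalization has genus $2$) a $g^1_2$ is the hyperelliptic pencil on the normalization, which descends to $C$ exactly when the two preimages of the node are conjugate under the hyperelliptic involution; on $\M_{\Gamma_2}$ (one self-glued genus $1$ curve with two self-nodes) a $g^1_2$ is a degree-$2$ line bundle $L$ on the normalization $E$ with $\mathcal{O}_E(p_1+q_1) \cong L \cong \mathcal{O}_E(p_2+q_2)$. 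In both cases the resulting hyperelliptic sublocus is precisely $\I_{3,8}$ restricted to that stratum.

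On the reducible stratum $\M_{\Gamma_1}$, whose curves have the form $C = E \cup \p^1$ with $E$ smooth of genus $1$ meeting $\p^1$ transversally at three nodes $p_1,p_2,p_3 \in E$ and $q_1,q_2,q_3 \in \p^1$, I would show directly that $\omega_C$ is always very ample, giving $Z \cap \M_{\Gamma_1} = \emptyset = \I_{3,8} \cap \M_{\Gamma_1}$ (since $\I_{3,8}$ parametrizes only irreducible curves). The restrictions of $\omega_C$ are $\omega_E(p_1+p_2+p_3)$ of degree $3$ and $\mathcal{O}_{\p^1}(1)$; the global section count $h^0(\omega_C)=3$ follows from residue theory, restriction to $E$ is an isomorphism onto the degree-$3$ linear system, and restriction to $\p^1$ is surjective onto $H^0(\mathcal{O}(1))$. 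Thus the canonical map $\phi\colon C \to \p^2$ embeds $E$ as a plane cubic (any degree-$3$ line bundle on a genus-$1$ curve is very ample) and sends $\p^1$ to a line $\ell$. A residue calculation identifies $\ell$ as the line cut out by a nonzero global $1$-form on $E$, whose zero locus inside the degree-$3$ linear system is exactly $\{p_1,p_2,p_3\}$, so $\ell$ meets the cubic at the three distinct points $\phi(p_i) = \phi(q_i)$; B\'ezout's theorem then forces this intersection to be transverse, and $\phi$ is a closed immersion.

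For the ``in particular'' assertions, since $\M^\circ \subseteq \Mb_{3,8}$ is open, the equality $Z = \I_{3,8} \cap \M^{\circ}$ shows $Z$ is open in $\I_{3,8}$. Theorem \ref{nodalhyperelliptic} gives that $\I_{3,8}$ has the CKgP and $A^*(\I_{3,8}) = R^*(\I_{3,8})$, so Lemma \ref{open} yields that $Z$ has the CKgP and $A^*(Z)$ is generated by restrictions of tautological classes from $\Mb_{3,8}$. Pushing forward along the closed immersion $Z \hookrightarrow \M^{\circ}$ and applying the projection formula then shows every class supported on $Z$ is tautological in $A^*(\M^{\circ})$. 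The main obstacle in this argument is the analysis on $\M_{\Gamma_1}$: one must verify not only that the canonical map factors as a cubic together with a line, but also that the intersection is transverse so that $\phi$ is genuinely a closed embedding rather than merely the normalization of its image.
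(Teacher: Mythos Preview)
Your stratum-by-stratum verification is essentially the paper's approach, and your treatment of $\M_{\Gamma_1}$ is in fact more careful than the paper's (you check transversality of the cubic and the line, whereas the paper simply asserts very ampleness from the restrictions). For $\M_{\Gamma_2}$ and $\M_{\Gamma_3}$ you invoke the general criterion ``$\omega_C$ very ample $\Leftrightarrow$ no $g^1_2$'' directly on each stratum, while the paper instead treats both together by partially normalizing at one node to reduce to an irreducible genus~$2$ curve $\tilde C$ and the criterion that $p,q$ are conjugate under the hyperelliptic involution of $\tilde C$. Both routes are fine.

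There is one genuine omission in the ``in particular'' part. The projection formula gives $\iota_*(\iota^*\alpha) = \alpha \cdot [Z]$, so to conclude that push-forwards from $Z$ land in $R^*(\M^\circ)$ you must know that the fundamental class $[Z] \in A^*(\M^\circ)$ is itself tautological. You do not address this. The paper fills this gap by observing that $[Z]$ is the restriction to $\M^\circ$ of the pullback along $\Mb_{3,8} \to \Mb_3$ of the closure of the hyperelliptic locus in $\Mb_3$, which is tautological by \cite[Proposition~1]{FaberPandharipande}. Without this step your projection-formula argument is incomplete.
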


\begin{proof}
We shall show that $Z \cap \M_{\Gamma_i} = \I_{3,8} \cap \M_{\Gamma_i}$ for each $i = 1, \ldots, 4$.

\medskip
($i=1$) Curves in $\M_{\Gamma_1}$ are reducible, so $\I_{3,8} \cap \M_{\Gamma_1} = \varnothing$.
We must show $Z \cap \M_{\Gamma_1}= \varnothing$ too.
Suppose $C = E \cup L$ with $E$ genus $1$ and $L$ genus $0$ is in $\M_{\Gamma_1}$. Let $p_1, p_2, p_3 \in E$ and $q_1, q_2, q_3 \in L$ be the points above the nodes (see Figure \ref{normalizations}, left).
Then 
\[\omega_C|_E = \omega_E(p_1+p_2+p_3) = \O_E(p_1 + p_2 + p_3),\] which is very ample on $E$. Meanwhile, \[\omega_C|_L = \omega_L(q_1+q_2+q_3) = \O_{\pp^1}(-2) \otimes \O_{\pp^1}(3) = \O_{\pp^1}(1)\]
is also very ample.
Hence, $|\omega_C|$ is very ample and sends $C$ to the union of a smooth cubic and a line.

\begin{figure}[h!]
    \includegraphics[width=4.5in]{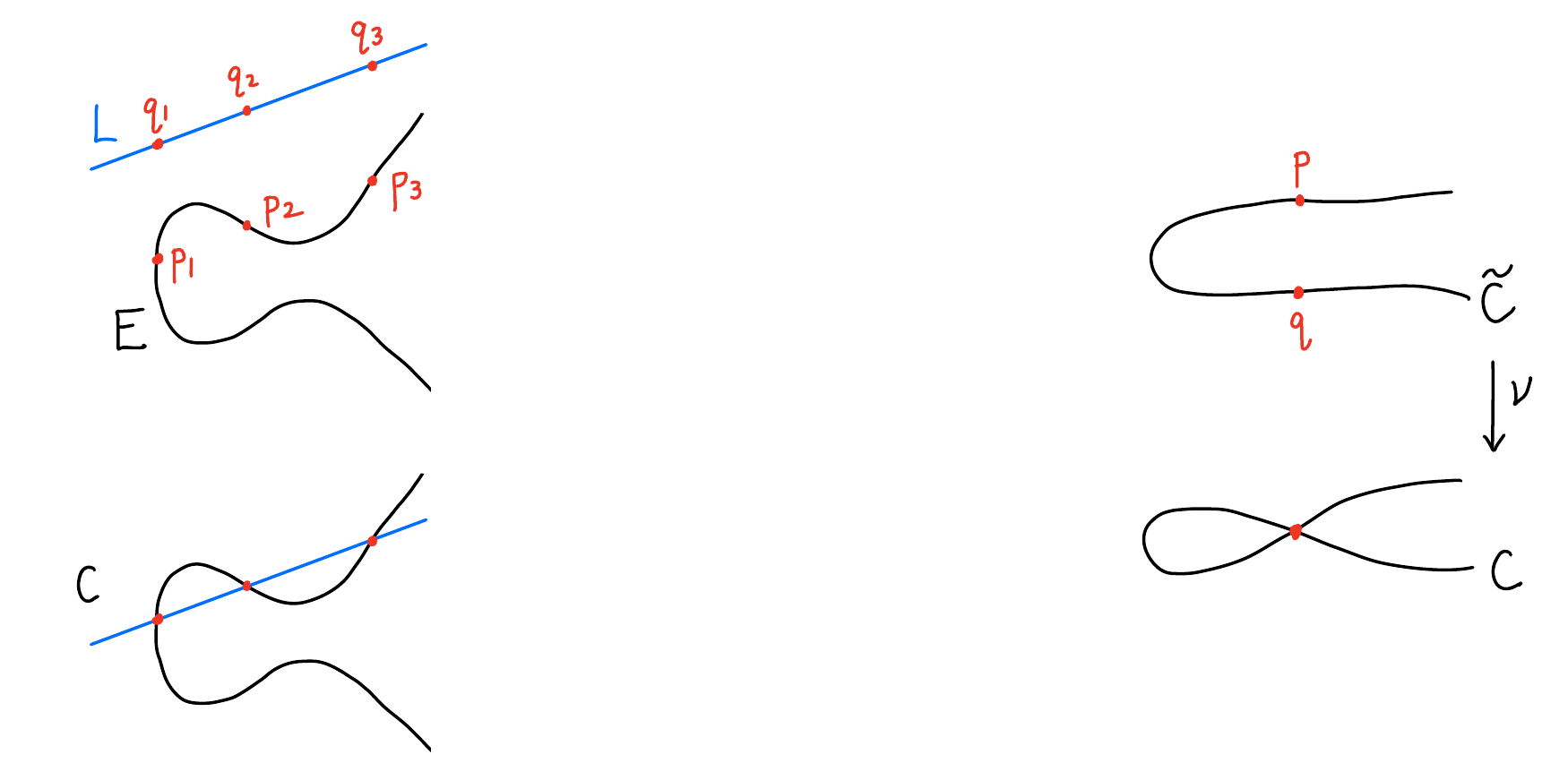}
    \caption{Left: a curve in $\M_{\Gamma_1}$ and its normalization. Right: a curve in $\M_{\Gamma_2} \cup \M_{\Gamma_3}$ and its partial normalization}
    \label{normalizations}
\end{figure}

($i=2,3$) Next, let us describe the intersection of $Z$ with $\M_{\Gamma_2} \cup \M_{\Gamma_3}$.
Let $C$ be a curve in $\M_{\Gamma_2} \cup \M_{\Gamma_3}$.
Consider the partial normalization $\nu: \tilde{C} \to C$ at one node.
Let $p,q$ be the points lying over the node (see Figure \ref{normalizations}, right). Now, $\tilde{C}$ is an irreducible genus $2$ curve.
We have $\nu^* \omega_C = \omega_{\tilde{C}}(p+q)$.
Using this, it is not hard to see that $\omega_C$ fails to be very ample if and only if $p, q$ are conjugate under the involution defined by $|\omega_{\tilde{C}}|$. This is equivalent to $C$ being in $\I_{3,8}$ (see \cite[Section 2.1]{Hyperelliptic}).

($i = 4$) The final stratum $\M_{\Gamma_4}$ consists of smooth genus $3$ curves. It is well-known that the canonical line bundle fails to be very ample if and only if the curve is hyperelliptic, so $\M_{\Gamma_4} \cap Z = \Hyp_{4,8} = \M_{\Gamma_4} \cap \I_{3,8}$.

\medskip
To finish the proof, note that $\I_{3,8} \cap \M^\circ$ is open inside $\I_{3,8}$. Applying Theorem \ref{nodalhyperelliptic}, we see that $\I_{3,8} \cap \M^\circ$ has the CKgP and is generated by restrictions of tautological classes. The fundamental class 
of $\I_{3,8} \cap \M^\circ \subset \M^\circ$
is tautological since it is the pullback under $\M^\circ \to \Mb_{3,8} \to \Mb_3$ of the closure of the hyperelliptic locus inside $\Mb_3$ (which is tautological by \cite[Proposition 1]{FaberPandharipande}). Thus, we are done using the push-pull formula. 
\end{proof}

Next, we study the open complement $Y \subset \M^\circ$. Let $X \subset \pp K_{4,8}$ be as in the previous subsection.
We consider the base change
\begin{center}
\begin{tikzcd}
\tilde{Y}  \arrow{d} \arrow{r} & Y \arrow{d} \\
X \arrow{r} & \Mb_{3,8}.
\end{tikzcd}
\end{center}

\begin{lem}
The substack $\tilde{Y}  \subset \pp K_{4,8}$ is open and $\tilde{Y} \to Y$ is a $\mu_3$-banded gerbe.
Hence, $Y$ has the CKgP and $A^* = R^*$.
\end{lem}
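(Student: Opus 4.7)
\medskip
\noindent \emph{Proof plan.} The plan has two steps: (i) show $\tilde{Y} \subset \pp K_{4,8}$ is open by a base change argument, and (ii) identify $\tilde{Y} \to Y$ as a $\mu_3$-banded gerbe coming from the canonical embedding of genus~$3$ curves.

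For (i), note that the open substack $X \subset \pp K_{4,8}$ carries a universal stable $8$-pointed curve, hence a moduli map $X \to \Mb_{3,8}$. By the defining Cartesian square, $\tilde Y = X \times_{\Mb_{3,8}} Y$. Openness of $Y$ in $\Mb_{3,8}$ (since $Y$ is open in the open substack $\M^\circ$, very-ampleness being an open condition) then gives that $\tilde Y$ is open in $X$, and hence in $\pp K_{4,8}$.

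For (ii), the key geometric input is that any $(C,\sigma_i) \in Y$ has very ample $\omega_C$ of degree $2g-2=4$ with $h^0(\omega_C)=g=3$, so $C$ embeds canonically as a plane quartic in $\pp H^0(\omega_C)^\vee \cong \pp \E^\vee$, where $\E = f_*\omega_f$ is the Hodge bundle on $Y$. Since $\omega_C$ is the unique $g^2_4$ on each $C \in Y$ (including the reducible and nodal cases in $\M_{\Gamma_1},\M_{\Gamma_2},\M_{\Gamma_3}$), any plane-quartic realization of $C$ must coincide with the canonical one, so the projective plane containing $C$ is canonically identified with $\pp \E^\vee$. Choosing a rank-$3$ bundle $V$ with trivial $c_1$ and $\pp V \cong \pp \E^\vee$ amounts to choosing a cube root $L$ of $\det \E$ and setting $V = \E^\vee \otimes L$; such $L$ exist fppf-locally and the set of choices is a torsor under the $3$-torsion of Picard, while the scalar automorphisms of any such $V$ preserving $\det V$ form $\mu_3$. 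One also has to check that the $8$ marked points impose independent conditions on plane quartics (so that $(C,V,\sigma_i)$ lands in $\pp K_{4,8}$), which follows from a Riemann--Roch calculation using that $\omega_C$ has strictly positive degree on every component of every curve in $Y$. Together, these give that $\tilde Y \to Y$ is a $\mu_3$-banded gerbe.

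The conclusion is then formal. Since $\pp K_{4,8}$ has the CKgP (see the proof of Lemma \ref{gck}), its open substack $\tilde Y$ does too (Lemma \ref{open}); applying Lemma \ref{gerbe} to the gerbe $\tilde Y \to Y$ shows that $Y$ has the CKgP. By Lemma \ref{gdn}, the generators of $A^*(X)$ are restrictions of tautological classes from $\Mb_{3,8}$; restricting further to the open $\tilde Y \subset X$ and using the isomorphism $A^*(\tilde Y) \cong A^*(Y)$ coming from the finite-group-banded gerbe, we conclude $A^*(Y) = R^*(Y)$. The main obstacle is step (ii): verifying uniqueness of the $g^2_4$ and the independent-conditions Riemann--Roch bound uniformly across the four stable graph strata $\M_{\Gamma_1},\ldots,\M_{\Gamma_4}$ in $\M^\circ$, especially in the reducible stratum $\M_{\Gamma_1}$ (smooth cubic plus line); once these are in hand the gerbe statement and conclusions follow mechanically.
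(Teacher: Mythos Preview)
Your proposal is essentially correct and follows the same overall structure as the paper, but it is more elaborate than necessary in step~(ii) and somewhat less precise on the independent-conditions check.

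For the gerbe statement, the paper does not rebuild anything from scratch: it simply observes that, by construction, $X \to \Mb_{3,8}$ is already a $\mu_3$-banded gerbe over its image, because $X$ was defined by base-changing the $\PGL_3$-version along $\BSL_3 \to \BPGL_3$ (this is the content of Section~\ref{pcsetup}). Your reconstruction via cube roots of $\det \E$ is exactly what underlies that base change, so you are rederiving something already in hand. With that in place, the only thing left is to check that $Y$ lies in the image of $X$, i.e., that for every curve in $Y$ the canonical model is a plane quartic whose eight marked points impose independent conditions on quartics. This is where the paper is sharper than your sketch: rather than a vague Riemann--Roch appeal, it invokes Lemma~\ref{planecurvesurj} directly. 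In the irreducible (possibly nodal) case this is Lemma~\ref{planecurvesurj} with $d=4$ and $n=8 \leq 3d-1$. In the reducible case $\M_{\Gamma_1}$ (cubic union line), the crucial point---which your ``$\omega_C$ has positive degree on every component'' does not capture---is that all eight markings lie on the \emph{cubic} component; so one applies Lemma~\ref{planecurvesurj} with $d=3$ and $n=8 \leq 3d-1$ to conclude the points impose independent conditions on cubics, hence on quartics. Your discussion of uniqueness of the $g^2_4$ is a legitimate underlying ingredient (it is what makes the $\PGL_3$-version a monomorphism onto its image), but the paper leaves it implicit in the phrase ``gerbe over its image.''

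The conclusion is drawn identically in both: Lemma~\ref{open} gives CKgP for $\tilde Y$, Lemma~\ref{gerbe} transfers it to $Y$, and Lemma~\ref{gdn} gives $A^*=R^*$ after passing through the gerbe isomorphism on Chow.
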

\begin{proof}
The map $Y \to \Mb_{3,8}$ is open, so $\tilde{Y} \subset X$ is open, and hence $\tilde{Y} \subset \pp K_{4,8}$ is open.
The map $X \to \Mb_{3,8}$ is a $\mu_3$-banded gerbe over its image (induced from our base change $\BSL_3 \to \BPGL_3$), so it will suffice to show that $Y$ is contained in the image of $X$.

By definition, curves in $Y$ have very ample canonical.
To see that the image of $Y$ is contained in $X$ we need to know that the following additional property holds: The images of the marked points under the canonical model impose independent conditions on quartics. In the irreducible nodal case,
the configuration of markings imposes independent conditions by Lemma \ref{planecurvesurj} with $d = 4$, since $8 \leq 3d - 1$. In the case of a cubic union a line with all markings on the cubic, the configuration of markings imposes independent conditions on cubics by Lemma \ref{planecurvesurj} with $d = 3$, since $8 \leq 3d - 1$. Then, these points also imposes independent conditions on quartics.

Now, Lemma \ref{gdn} implies $A^*(Y) = R^*(Y)$. An argument as in Lemma \ref{gck} shows that $Y$ has the CKgP.
\end{proof}

Using excision and Lemma \ref{Kstrat}, 
the previous two lemmas combine to give the following.
\begin{lem} \label{38}
The union $\M^\circ = Y \cup Z$ has the CKgP and $A^* = R^*$. Consequently, $\Mb_{3,8}$ has the CKgP and $A^* = R^*$. 
\end{lem}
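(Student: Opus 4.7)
The first assertion follows from combining the two preceding lemmas via excision. The decomposition $\M^\circ = Y \sqcup Z$ into the open substack $Y$ and its closed complement $Z = \I_{3,8}\cap\M^\circ$ is a (two-piece) stratification, so Lemma \ref{Kstrat} gives the CKgP for $\M^\circ$ once we know it for $Y$ and $Z$, which is exactly the content of the two preceding lemmas. For $A^*(\M^\circ) = R^*(\M^\circ)$, I would use the excision sequence
\[A^*(Z) \to A^*(\M^\circ) \to A^*(Y) \to 0.\]
By the previous lemma $A^*(Y) = R^*(Y)$, and all classes pushed forward from $Z$ are tautological in $A^*(\M^\circ)$, so surjectivity forces $A^*(\M^\circ) = R^*(\M^\circ)$.

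For the second assertion, the idea is to extend from $\M^\circ$ to the full $\Mb_{3,8}$ by analyzing the remaining strata, mirroring the proofs of the filling criteria (Lemmas \ref{fc1} and \ref{fc2}). The combinatorial input, spelled out in the discussion preceding the lemma, is that every stratum $\M_\Gamma \subset \Mb_{3,8}\smallsetminus \M^\circ$ corresponds to a stable graph $\Gamma$ all of whose vertices $(g(v),n(v))$ already carry a filled circle (the only potentially unfilled vertices with $2g(v)+n(v)\leq 14$ are $(1,11)$, $(1,12)$, $(2,10)$, $(3,8)$, and the strata in which any of these appear have either been absorbed into $\M^\circ$ or contain a disconnecting node factoring through an already-filled product such as $\Mb_{2,9}\times\Mb_{1,1}$). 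Consequently, for each such $\Gamma$ the product $\Mb_\Gamma=\prod_v \Mb_{g(v),n(v)}$ has the CKgP (Lemma \ref{prod}) and $A^*(\Mb_\Gamma)=R^*(\Mb_\Gamma)$; pushing forward along the proper gluing map $\xi_\Gamma:\Mb_\Gamma \to \Mb_{3,8}$ shows that its image has the CKgP (Lemma \ref{surjCKgP}) and that all cycles supported there are tautological, by the same surjectivity/diagram chase used in the proof of Lemma \ref{fc1}.

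Together with $\M^\circ$, this gives a finite stratification of $\Mb_{3,8}$ by pieces with the CKgP, so Lemma \ref{Kstrat} yields the CKgP for $\Mb_{3,8}$. Iteratively peeling off strata via excision (in any order refining the closure partial order) shows that every class on $\Mb_{3,8}$ is tautological, completing the argument. The real obstacle of the whole section is establishing $A^*=R^*$ and the CKgP for $Y$, which required the explicit quotient-stack presentation via $\p K_{4,8}$ in the previous lemma together with the independent-conditions input from Lemma \ref{planecurvesurj}; once $Y$ and $Z$ are in hand, the passage from $\M^\circ$ to $\Mb_{3,8}$ is bookkeeping with the stable graph stratification.
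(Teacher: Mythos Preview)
Your proof is correct and follows essentially the same approach as the paper. The paper's own proof is a one-line reference to excision and Lemma~\ref{Kstrat} together with the two preceding lemmas, having placed all the stable-graph bookkeeping you reproduce here in the discussion leading up to the statement; your version simply folds that discussion back into the proof itself.
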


\section{Faber--Pandharipande Hurwitz (FPH) cycles} \label{fphsec}
To access pointed curves of genus $g \geq 4$, we use Hurwitz spaces with marked points. We will study degree three covers in Section \ref{trigsec} and degree four covers in Section \ref{tetsec}. This section contains some preliminary results about push forwards of certain geometrically defined cycles, which hold for covers of any degree $k$. In this section, we assume the characteristic of the ground field is $> k$. In our main theorems, we shall only need results for covers of degree $\leq 5$, so this only excludes characteristic $2, 3$ and $5$.

In \cite{FaberPandharipande}, Faber--Pandharipande study pushforwards to $\overline{\M}_{g,n}$ of cycles from
Hurwitz spaces of covers with specified ramification behavior. This gives rise to many examples of effective, geometrically defined cycles on $\overline{\M}_{g,n}$ that are tautological.

\subsection{Hurwitz spaces with marked specified ramification}
Fix a genus $g$ and a degree $k$. Let
$\mu^1,\dots,\mu^m$
be a collection of partitions of $k$ satisfying 
\begin{equation} \label{rh}
2g-2 + 2k =\sum_{i=1}^{m} (k-\ell(\mu^i))
\end{equation}
where $\ell$ is the length of the partition.
Faber--Pandharipande study the space  $H_g(\mu^1, \ldots, \mu^m)$, which parametrizes degree $k$, genus $g$ covers $C\rightarrow \p^1$, where $C$ has marked ramification profile $\mu^1,\dots,\mu^m$ in $m$ marked fibers (and no other ramification).
For example, in the case all $\mu^i=(2,1,\ldots, 1)$, the space $H_g((2,1,\ldots,1)^{2g-2+2k})$ parametrizes simply branched covers together with a marking of all of the points in the ramified fibers. 

The admissible covers compactification of this space, which they call $\overline{H}_g(\mu^1, \ldots, \mu^m)$, has a natural map $\rho:\overline{H}_g(\mu^1, \ldots, \mu^m)\rightarrow \Mb_{g,\sum \ell(\mu^i)}$ by taking the stabilization of the marked source curve. Faber--Pandharipande prove that the pushforward of the fundamental class of $\overline{H}_g(\mu^1, \ldots, \mu^m)$ is tautological. 
We might then consider the image $Z$ of this cycle in $\Mb_{g,n}$, after forgetting some subset of the marked points. When the composition 
\[\overline{H}_g(\mu^1, \ldots, \mu^m)\rightarrow \Mb_{g,\sum \ell(\mu^i)} \to \Mb_{g,n}\]
is generically finite onto $Z$, it follows that $[Z]\in R^*(\Mb_{g,n})$. Restricting to the locus of smooth curves we find:

\begin{lem} \label{fpc}
If the map $H_g(\mu^1, \ldots, \mu^m) \to \M_{g,\sum \ell(\mu^i)} \to \M_{g,n}$ has generically finite fibers, then
the closure of the image of $H_g(\mu^1, \ldots, \mu^m)$ in $\M_{g,n}$ has tautological fundamental class.
\end{lem}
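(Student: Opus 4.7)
The plan is to combine Faber--Pandharipande's tautological-pushforward result with closure of the tautological ring under forgetful pushforward, and then restrict to the smooth locus. Set $N := \sum_{i=1}^{m} \ell(\mu^i)$ and let $\rho \colon \overline{H}_g(\mu^1,\ldots,\mu^m) \to \Mb_{g,N}$ denote the stabilization map. By the Faber--Pandharipande result recalled just above, $\rho_*[\overline{H}_g(\mu^1,\ldots,\mu^m)] \in R^*(\Mb_{g,N})$. Let $\pi \colon \Mb_{g,N} \to \Mb_{g,n}$ be the forgetful morphism dropping the $N - n$ markings that are omitted in the composition $H_g(\mu^1,\ldots,\mu^m) \to \M_{g,N} \to \M_{g,n}$. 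Since $R^*$ is closed under pushforward along the forgetful morphisms (Definition \ref{tdef}), the class $\pi_* \rho_* [\overline{H}_g(\mu^1,\ldots,\mu^m)]$ lies in $R^*(\Mb_{g,n})$.

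Next, restrict to the open substack $\M_{g,n} \subseteq \Mb_{g,n}$, which gives a class in $R^*(\M_{g,n})$ by Definition \ref{tdef}(2). Flat base change along the open immersion $\M_{g,n} \hookrightarrow \Mb_{g,n}$, combined with the properness of $\pi \circ \rho$, identifies this restriction with the pushforward along $(\pi \circ \rho)|_U$ of the fundamental class of the open substack $U := (\pi \circ \rho)^{-1}(\M_{g,n}) \subseteq \overline{H}_g(\mu^1,\ldots,\mu^m)$. The substack $U$ contains $H_g(\mu^1,\ldots,\mu^m)$ and has the same image in $\M_{g,n}$, since on the smooth locus a stabilized source curve is already smooth. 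By the generic finiteness hypothesis, $(\pi \circ \rho)|_U \colon U \to \M_{g,n}$ is generically finite of some degree $d > 0$ onto its image $Z$, so its pushforward equals $d \cdot [Z]$. Clearing the factor $d$ shows $[Z] \in R^*(\M_{g,n})$, as desired.

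The main (and only minor) obstacle is the possibility that $H_g(\mu^1,\ldots,\mu^m)$ has several irreducible components that dominate distinct components of $Z$ with a priori different degrees. In that situation the pushforward produces a positive $\mathbb{Q}$-linear combination of the individual components' fundamental classes rather than a uniform multiple of $[Z]$. This is handled by applying the argument componentwise: the Faber--Pandharipande pushforward formula produces a tautological class for each irreducible component of $\overline{H}_g(\mu^1,\ldots,\mu^m)$ separately, so each $[Z_i]$ is individually tautological and summing yields $[Z] \in R^*(\M_{g,n})$. In the applications appearing in Sections \ref{trigsec}--\ref{tetsec} this refinement is typically unnecessary, as the relevant Hurwitz spaces are already irreducible (or symmetric enough to force equal multiplicities).
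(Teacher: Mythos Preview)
Your proposal is correct and follows essentially the same approach as the paper. The paper does not give a separate proof of this lemma; it is stated as an immediate consequence of the preceding discussion (Faber--Pandharipande's result on $\Mb_{g,N}$, closure of $R^*$ under forgetful pushforward, then restriction to the smooth locus), and you have accurately fleshed out those steps, including the minor component-wise caveat that the paper leaves implicit.
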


\begin{definition}
When $H_g(\mu^1, \ldots, \mu^m) \to \M_{g,\sum \ell(\mu^i)} \to \M_{g,n}$  has generically finite fibers, we will call the closure of the image a \emph{Faber--Pandharipande Hurwitz cycle}, or \emph{FPH cycle}, for short.  Lemma \ref{fpc} says FPH cycles are tautological. 
\end{definition}

\begin{rem}
Although Faber--Pandharipande state their results over $\mathbb{C}$,
the main tool they use is equivariant localization for torus actions, which holds over arbitrary algebraically closed fields (see \cite{EdidinGrahamLocalization}).
Their results thus remain valid over any algebraically closed field of characteristic $> k$ (taking the characteristic $> k$ is needed to ensure that there will be no wild ramification so the Riemann--Hurwitz formula \eqref{rh} continues to hold).
\end{rem}

\subsection{Our Hurwitz spaces}
We find it more convenient to work with the pointed Hurwitz spaces $\H_{k,g,n}$ parametrizing degree $k$, genus $g$ covers $C \to \pp^1$ with $n$ distinct marked points $p_1, \ldots, p_n$ and no restrictions on the ramification behavior of the cover.
(The marked points need not be ramification points, though they are not prohibited from being so.)
There is a basic fiber diagram
\begin{center}
\begin{tikzcd}
\H_{k,g,n} \arrow{d}[swap]{\beta_n} \arrow{r}{\epsilon} &\H_{k,g} \arrow{d}{\beta} \\
\M_{g,n} \arrow{r} & \M_{g}
\end{tikzcd}
\end{center}
We define $\M_{g,n}^k$ to be the closure of the image $\beta_n$. That is, $\M_{g,n}^k$ is the locus of curves of gonality $\leq k$. There are proper maps
\begin{equation} \label{bnp} \beta_n': \H_{k,g,n}\smallsetminus \beta_n^{-1}(\M_{g,n}^{k-1}) \to \M_{g,n} \smallsetminus \M_{g,n}^{k-1}
\end{equation}
along which we can push forward cycles. The image of $\beta_n'$ is the stratum $\M_{g,n}^k \smallsetminus \M_{g,n}^{k-1}$ of curves of gonality exactly $k$.

We now translate the work of Faber--Pandharipande to show that push forwards of certain cycles along $\beta_n'$ are tautological. First, on $\H_{k,g}$, we define the cycle
\begin{align} \label{tadef}
T^a &= \{C \to \pp^1 : \text{$C \to \pp^1$ has a point of ramification order $\geq a+2$}\}.
\end{align}
The subvariety $T^a \subset \H_{k,g}$ has codimension $a$.
Note that $T^0 = \H_{k,g}$ since every genus $g \geq 1$ cover of $\pp^1$ has a point of ramification.
Then, on $\H_{k,g,n}$, we define
\begin{align}
R_i &=\{(C \to \pp^1, p_1, \ldots, p_n) : p_i \text{ is a ramification point of $C \to \pp^1$}\} \label{ridef}
\end{align}
Each $R_i$ has codimension $1$. Let
\begin{center}
\begin{tikzcd}
\C \arrow{rr}{\alpha} \arrow{dr}{f} && \P \arrow{dl}{\pi} \\
& \H_{k,g,n} \arrow[bend left = 30, "\sigma_i"]{ul}
\end{tikzcd}
\end{center}
be the universal diagram over $\H_{k,g,n}$. If $R \subset \C$ is the ramification divisor of $\alpha$, then
\begin{equation} \label{ri}
[R_i] = \sigma_i^*[R] = \sigma_i^*c_1(\omega_{\alpha}) = \sigma_i^*c_1(\omega_f) - \sigma_i^*c_1(\omega_\pi)
\end{equation}
By slight abuse of notation, we shall also write $[R_i]$ and $\epsilon^*[T^a]$ for the restrictions of these classes to $\H_{k,g,n} \smallsetminus \beta_n^{-1}(\M_{g,n}^{k-1})$.

The main goal of this subsection is to prove the following.
\begin{prop} \label{ourfp}
Let $i_1, \ldots, i_j$ be a subset of distinct indices in $1, \ldots, n$ and let $a+2 \leq k$. Then the push forward $\beta_{n'*}([R_{i_1}] \cdots [R_{i_j}] \cdot \epsilon^*[T^a])$ is tautological.
\end{prop}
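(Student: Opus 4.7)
The plan is to decompose $[R_{i_1}] \cdots [R_{i_j}] \cdot \epsilon^*[T^a]$ in $A^*(\H_{k,g,n} \smallsetminus \beta_n^{-1}(\M_{g,n}^{k-1}))$ as a non-negative rational combination of fundamental classes of ``extended'' Faber--Pandharipande Hurwitz spaces (FP spaces paired with extra free markings), and then to conclude tautologicality of the pushforward via Lemma \ref{fpc} together with the fact that the tautological rings are closed under pushforward and pullback along forgetful morphisms.

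Geometrically, the intersection cycle is supported on the closed sublocus $Y \subset \H_{k,g,n}$ consisting of covers $(\alpha: C \to \p^1, p_1, \ldots, p_n)$ such that $\alpha$ has a point of ramification order $\geq a+2$ and each $p_{i_\ell}$ is a ramification point. I would stratify $Y$ by the ramification profile $\mu = (\mu^1, \ldots, \mu^m)$ (satisfying \eqref{rh} and having at least one part $\geq a+2$) together with an assignment $\tau$ of each $p_{i_\ell}$ to a distinct ramified preimage of a specified branch point. Each stratum $S_{\mu, \tau}$ is, up to a finite \'etale quotient (permuting the unmarked ramified preimages within each fiber), the image in $\H_{k,g,n}$ of the fiber product $H_g(\mu) \times_{\M_g} \M_{g, n-j}$ under the map that identifies the $j$ ramified preimages selected by $\tau$ with the corresponding sections $p_{i_\ell}$, while supplying the remaining $n-j$ markings as free points on $C$.

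The multiplicities then follow from a dimension count: the codimensions $a$ of $\epsilon^*[T^a]$ and $1$ of each $[R_{i_\ell}]$ sum to $a+j$, matching the codimension of the open stratum of $Y$, so on that stratum the intersection is transverse and contributes a positive rational multiple of the fundamental class. Coincidence substrata --- where two $p_{i_\ell}$'s share a fiber, some $p_{i_\ell}$ coincides with the $(a+2)$-ramification, or some $p_{i_\ell}$ sits at a ramification of order $>2$ --- correspond to FP Hurwitz spaces with further-refined partitions $\mu'$ still satisfying our hypotheses (in particular having some part $\geq a+2$), so excess contributions decompose into such FPH cycles by an induction on codimension that closes within this same class of spaces.

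Finally, to conclude tautologicality, one applies Faber--Pandharipande: for each $\mu$ appearing, $\rho_*[H_g(\mu)]$ is tautological on $\M_{g, \sum \ell(\mu^s)}$. Since the forgetful map $\M_{g, N} \to \M_{g, \sum \ell(\mu^s)}$ is smooth (hence flat) for $N := \sum \ell(\mu^s) + n - j$, flat base change identifies the pushforward to $\M_{g, N}$ of $[H_g(\mu) \times_{\M_g} \M_{g, n-j}]$ (restricted to the open locus where the two sets of markings are disjoint) with the pullback of $\rho_*[H_g(\mu)]$, hence it is tautological. Further pushforward by the forgetful map $\M_{g, N} \to \M_{g, n}$ (which keeps the $j$ ramified markings matched to the $p_{i_\ell}$'s and the $n-j$ free markings, forgetting the rest) preserves tautologicality. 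The main obstacle I anticipate is the combinatorial book-keeping of the stratification and multiplicities, together with the nested induction handling excess contributions from coincidence substrata; a careful alternative would be to check transversality on each open stratum directly and then verify by descent on codimension that coincidences contribute strictly lower-codimension FPH cycles of the same form.
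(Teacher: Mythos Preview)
Your strategy matches the paper's: express the intersection product as a sum of fundamental classes of loci coming from Faber--Pandharipande Hurwitz spaces, then push forward using Lemma \ref{fpc}. But you miss two simplifications that eliminate exactly the ``combinatorial book-keeping'' and ``nested induction'' you anticipate as obstacles.

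First, the paper reduces at once to $\{i_1,\ldots,i_j\}=\{1,\ldots,n\}$ via the Cartesian square of forgetful maps $\H_{k,g,n}\to\H_{k,g,j}$ over $\M_{g,n}\to\M_{g,j}$ and compatibility of proper pushforward with flat pullback; this replaces your treatment of the $n-j$ free markings by a single pullback along a forgetful map (which preserves tautological classes).

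Second, and crucially, there is no excess intersection to handle: the projection $R_1\cap\cdots\cap R_n\cap\epsilon^{-1}(T^a)\to T^a$ has finite fibers (there are only finitely many ways to label $n$ distinct ramification points of a fixed cover), so every irreducible component has codimension exactly $n+a$ and dominates $T^a$ (Lemma \ref{silly}). Hence the product of classes is literally the class of the scheme-theoretic intersection. For $a=0$ this intersection is irreducible; for $a>0$ there are exactly $n+1$ components $Y_0,\ldots,Y_n$, according to whether the point of order $a+2$ is unmarked or equals some $p_i$, and the closure of each $\beta_n(Y_i)$ is realized as a single FPH cycle by taking $\mu^0=(a+2,1,\ldots,1)$, all other $\mu^i$ simple, and composing with the appropriate forgetful map $f_i:\H_{k,g,n+1}\to\H_{k,g,n}$. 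Your stratification by full ramification profile is far finer than this component decomposition, and your claim that ``excess contributions decompose into FPH cycles by an induction on codimension'' is both unjustified (excess-intersection terms involve Chern classes of excess bundles, not merely fundamental classes of deeper strata) and, once global transversality is established, unnecessary.
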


First let us reduce to the case $\{i_1, \ldots, i_j\} = \{1, \ldots, n\}.$
Consider the fibered square
\begin{center}
    \begin{tikzcd}
    \H_{k,g,n} \smallsetminus \beta_n^{-1}(\M_{g,n}^{k-1}) \arrow{d}[swap]{\beta_n'} \ar[bend left = 20, rr, "\epsilon_n"] \arrow{r}{ q} & \H_{k,g,j} \smallsetminus \beta_j^{-1}(\M_{g,j}^{k-1}) \arrow{d}{\beta_j'} \arrow{r}[swap]{\epsilon_j} & \H_{k,g} \\
    \M_{g,n} \smallsetminus \M_{g,n}^{k-1} \arrow{r}[swap]{p} & \M_{g,j} \smallsetminus \M_{g,j}^{k-1},
    \end{tikzcd}
\end{center}
where $p$ and $q$ forget the markings for indices not in $i_1, \ldots, i_j$.
From this, we see 
\[\beta_{n'*}([R_{i_1}] \cdots [R_{i_j}]\cdot \epsilon_n^*[T^a]) = 
\beta_{n'*}q^* ([R_1] \cdots [R_j] \cdot \epsilon_j^*[T^a]) =
p^*\beta_{j'*}([R_1] \cdots [R_j] \cdot \epsilon_j^*[T^a]).\]
Because $p^*$ preserves tautological classes, it suffices to treat the case $j = n$.

Next, we wish to show that the intersection $R_1 \cap \cdots \cap R_n \cap \epsilon^{-1}(T^a)$ is dimensionally transverse.

\begin{lem} \label{silly}
The map $R_1 \cap \cdots \cap R_n \to \H_{k,g}$ has finite fibers. In particular, every component of $R_1 \cap \cdots \cap R_n$ has codimension $n$ inside $\H_{k,g,n}$ and dominates $\H_{k,g}$.

More generally, $R_1 \cap \cdots \cap R_n \cap \epsilon^{-1}(T^a) \to T^a$ has finite fibers. Every component of $R_1 \cap \cdots \cap R_n \cap \epsilon^{-1}(T^a)$ has codimension $n+a$ inside $\H_{k,g,n}$ and dominates $T^a$.
\end{lem}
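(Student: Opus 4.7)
My plan is to reduce both claims to a standard dimension count, using the crucial fact that any degree-$k$ separable cover $f: C \to \pp^1$ has only finitely many ramification points.

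The forgetful map $\epsilon: \H_{k,g,n} \to \H_{k,g}$ is smooth of relative dimension $n$, with fiber over $[f: C \to \pp^1]$ equal to the ordered configuration space of $n$ distinct points on $C$. Inside this fiber, $R_i$ cuts out the locus where the $i$-th marked point lies on the ramification divisor $R_f \subset C$, which is a finite subscheme of degree $2g - 2 + 2k$. Consequently the fiber of $R_1 \cap \cdots \cap R_n \to \H_{k,g}$ over $[f]$ is the finite set of ordered $n$-tuples of distinct ramification points of $f$, establishing the finite-fibers claim.

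The codimension statement then follows from two opposing bounds. On one hand, finiteness of fibers forces $\dim X \leq \dim \H_{k,g}$ for any component $X$, giving $\codim_{\H_{k,g,n}} X \geq n$. On the other hand, $\H_{k,g,n}$ is smooth and $R_1 \cap \cdots \cap R_n$ is cut out by $n$ effective Cartier divisors (by \eqref{ri}, each $R_i = \sigma_i^*(R)$ is the pullback of the universal ramification divisor along a section), so Krull's Hauptidealsatz gives $\codim X \leq n$. Combining, $\codim X = n$, and the finite-to-one morphism $X \to \H_{k,g}$ has equidimensional image, hence dominates its target component.

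For the generalized statement I would invoke the standard fact that $T^a \subset \H_{k,g}$ has pure codimension $a$: the condition of having a ramification point of order $\geq a+2$ imposes $a$ independent conditions, which one verifies by a local deformation-theoretic computation near a generic cover with a single ramification point of order exactly $a+2$ (the $a$ normal directions correspond to separating that point into a cluster of simple ramifications). Because $\epsilon$ is smooth of relative dimension $n$, $\epsilon^{-1}(T^a)$ then has pure codimension $a$ in $\H_{k,g,n}$ and its components dominate components of $T^a$. Intersecting further with the $R_i$ and repeating the finite-fibers-plus-Hauptidealsatz argument relative to $T^a$ yields codimension exactly $n+a$ and dominance of a component of $T^a$. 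The only nontrivial ingredient throughout is the purity of $T^a$; everything else is straightforward bookkeeping of dimensions.
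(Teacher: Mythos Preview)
Your proposal is correct and follows essentially the same argument as the paper: finite fibers from the finiteness of ramification points, a lower bound on dimension from intersecting $n$ divisors (the paper phrases this as ``dimension at least $\dim \H_{k,g,n} - n$'' rather than invoking Hauptidealsatz by name), and the combination yielding exact codimension and dominance. The paper handles the second claim in one line (``The same argument applies over any subvariety $T^a \subset \H_{k,g}$''), taking the codimension-$a$ statement for $T^a$ as already asserted just before the lemma, whereas you sketch a justification for it; otherwise the two proofs are the same.
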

\begin{proof}
The fiber of $R_1 \cap \cdots \cap R_n$ over a degree $k$ cover $C \to \pp^1$ corresponds to the finitely many ways to choose and label $n$ distinct ramification points. 
The map $\H_{k,g,n} \to \H_{k,g}$ has relative dimension $n$.
Being an intersection of $n$ divisors, every component of $R_1 \cap \cdots \cap R_n$ has dimension at least $\dim \H_{k,g,n} - n = \dim \H_{k,g}$. Because all fibers of $R_1 \cap \cdots \cap R_n \to \H_{k,g}$ have dimension $0$, it follows that every component of $R_1 \cap \cdots \cap R_n$ has codimension exactly $n$ and dominates $\H_{k,g}$. The same argument applies over any subvariety $T^a \subset \H_{k,g}$.
\end{proof}

\begin{proof}[Proof of Proposition \ref{ourfp}]
By Lemma \ref{silly}, the product of classes is represented by the class of the scheme-theoretic intersection
\[[R_1] \cdots [R_n]\cdot \epsilon^*[T^a] = [R_1 \cap \cdots \cap R_n \cap \epsilon^{-1}(T^a)].\]
If all fibers of $R_1 \cap \cdots \cap R_n \cap \epsilon^{-1}(T^a) \to \M_{g,n}$ are positive-dimensional, then the push forward is zero, hence tautological. Let us therefore assume that $R_1 \cap \cdots \cap R_n \cap \epsilon^{-1}(T^m) \to \M_{g,n}$ is generically finite.
Then the push forward $\beta_{n'*}([R_1 \cap \cdots \cap R_n \cap \epsilon^{-1}(T^a)])$ is some multiple of $[\beta_n'(R_1 \cap \cdots \cap R_n \cap \epsilon^{-1}(T^a))]$, so it suffices to show  $[\beta_n'(R_1 \cap \cdots \cap R_n \cap \epsilon^{-1}(T^a))]$ is tautological on $\M_{g,n} \smallsetminus \M_{g,n}^{k-1}$. We do this by realizing the closure in $\M_{g,n}$ of $\beta_n'(R_1 \cap \cdots \cap R_n \cap \epsilon^{-1}(T^a))$ as an FPH cycle.

Let us first treat the case $a = 0$, as it is simpler. Let $\mu^i = (2, 1, \ldots, 1)$ for $i = 1, \ldots, m := 2g - 2 + 2k$. Let $N = \sum \ell(\mu^i) = m(k-1)$.
Consider the diagram
\begin{center}
\begin{tikzcd}
& & R_1 \cap \cdots \cap R_n \arrow{d} \\
H_g(\mu^1, \ldots, \mu^m) \arrow{dr}[swap]{\rho^\circ} \arrow{r} & \H_{k,g,N} \arrow{d} \arrow{r} & \H_{k,g,n} \arrow{d}{\beta} \arrow{r}{\epsilon} & \H_{k,g} \arrow{d} \\
& \M_{g,N} \arrow{r} & \M_{g,n} \arrow{r} & \M_g
\end{tikzcd}
\end{center}
where the map $\M_{g,N} \to \M_{g,n}$ remembers the
$n$ points whose indices were dedicated to marked ramification points in $H_g(\mu^1, \ldots, \mu^m)$.
Let $\H_{k,g}^s \subset \H_{k,g}$ denote the open locus of simply branched covers. By construction, the image of
 \[H_g(\mu^1, \ldots, \mu^m) \to \H_{k,g,N} \to \H_{k,g,n}\]
is $\epsilon^{-1}(\H_{k,g}^s) \cap R_1 \cap \cdots \cap R_n$.
By Lemma \ref{silly}, we see that $\epsilon^{-1}(\H_{k,g}^s) \cap R_1 \cap \cdots \cap R_n$ is dense in $R_1 \cap \cdots \cap R_n$. It follows that the closure of $\beta_n(R_1 \cap \cdots \cap R_n)$ is equal to the closure of the image of $H_g(\mu^1, \ldots, \mu^m) \to \M_{g,N} \to \M_{g,n}$. Now, $H_g(\mu^1, \ldots, \mu^m) \to \H_{k,g}$ has finite fibers (corresponding to the ways of labeling all points in fibers with ramification) so the map $H_g(\mu^1, \ldots, \mu^m) \to \epsilon^{-1}(\H_{k,g}^s) \cap R_1 \cap \cdots \cap R_n$ also has finite fibers. We assumed the map $R_1 \cap \cdots \cap R_n \to \M_{g,n}$ has generically finite fibers, so $H_g(\mu^1, \ldots, \mu^m) \to \M_{g,n}$ must have generically finite fibers. Therefore, 
the closure of $\beta_n(R_1 \cap \cdots \cap R_n)$ is an FPH cycle, so it is tautological (see Lemma \ref{fpc}).

Next we treat the case $a > 0$. Now the intersection $R_1 \cap \cdots \cap R_n \cap \epsilon^{-1}(T^a)$ has $n+1$ components: For each $1 \leq i\leq n$, there is a component $Y_i$ where $p_i$ is ramified to order $a+2$. There is also a component $Y_0$ in which the general cover has simple ramification at all $p_i$ (and another, unmarked point has ramification order $a+2$).

\begin{center}
    \includegraphics[width=5.9in]{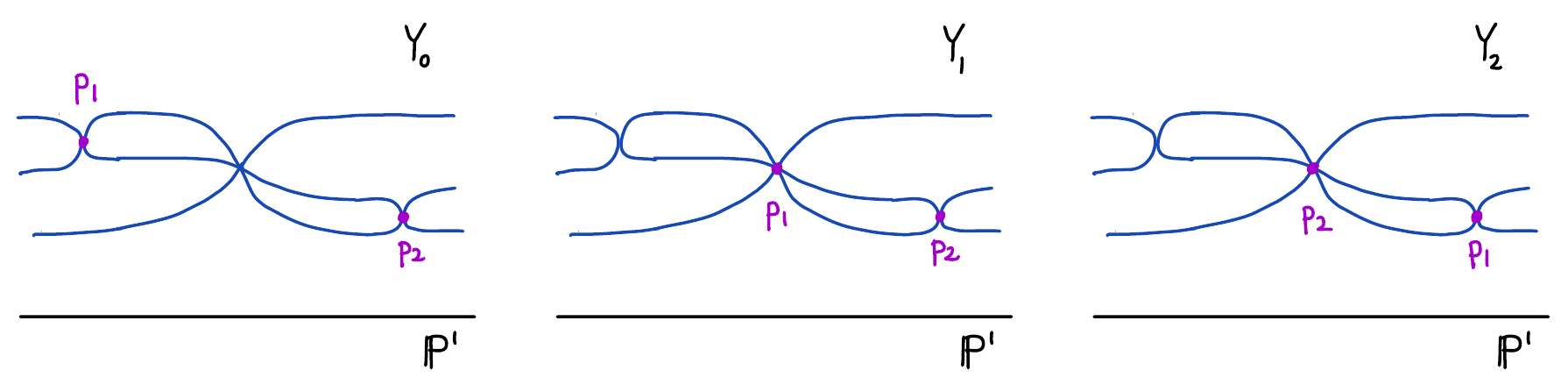}
    \end{center}
    \label{Y0Y1Y2}
\noindent
For each component $Y_i$, 
we want to find a map $H_g(\mu^1, \ldots, \mu^m)$ to $\H_{k,g,n}$ whose image is dense in that component. Then, the rest of the proof will proceed as before.

Let $\mu^0 = (a+2,1, \ldots, 1)$ and $\mu^i = (2, 1, \ldots, 1)$ for $i = 1, \ldots, m := 2g - 2 + 2k - a-1$. In addition, let $N := \sum \ell(\mu^i) = m(k-1) - a$.
We form $H_g(\mu^0, \mu^1, \ldots, \mu^m)$ which has a natural map to $\H_{k,g,N}$. Suppose that our markings on $\H_{k,g,N}$ are such that $p_i$ is the unique ramification point in the $\mu^i$-ramified fiber. Let $\H_{k,g,N} \to \H_{k,g,n+1}$ be the map that forgets all markings besides $p_0, \ldots, p_n$.

\begin{center}    \includegraphics[width=5in]{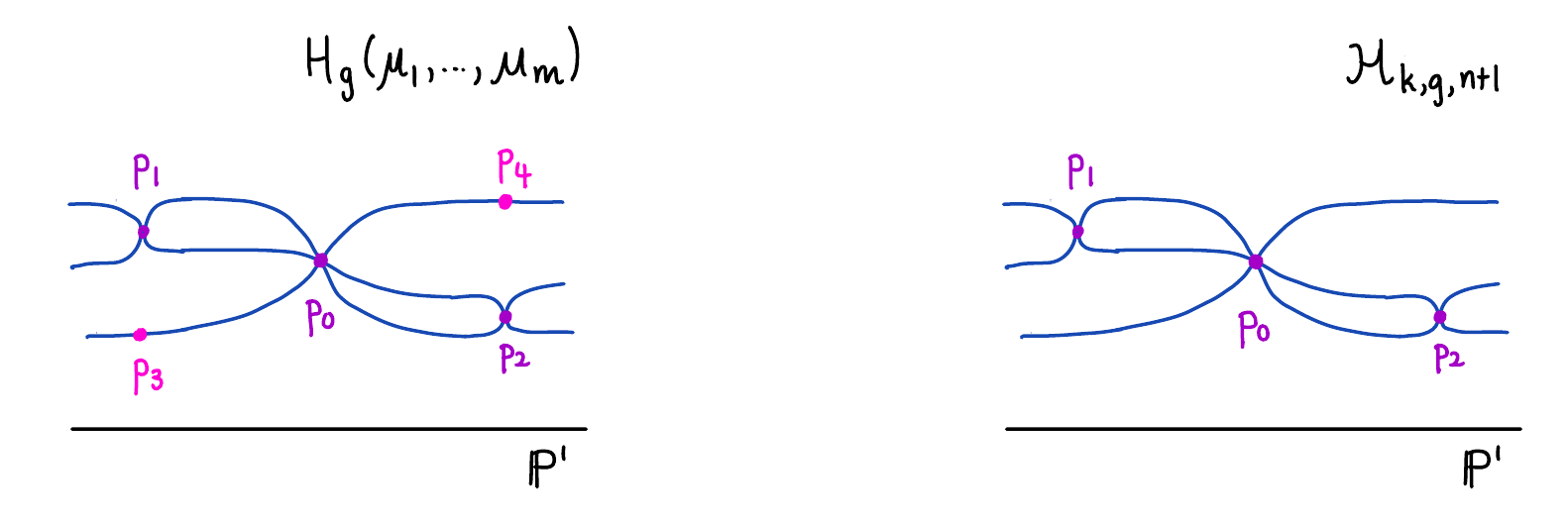}
\end{center}
\noindent
We realize the different components $Y_i$ as the image of 
\begin{equation} \label{compo} 
H_g(\mu^0, \mu^1, \ldots, \mu^m)\to \H_{k,g,N} \to \H_{k,g,n+1} \xrightarrow{f_i} \H_{k,g,n}
\end{equation}
under composition with different forgetful maps $f_i: \H_{k,g,n+1} \to \H_{k,g,n}$.
To obtain $Y_0$, we should let $f_0$ forget the marking $p_0$. To obtain $Y_i$, we use $f_i$ which forgets the marking $p_i$ and relabels $p_0$ to be the new $i^{\mathrm{th}}$ marking.

Let $T^{a,\circ} \subset T^a$ be the open locus of covers with one point of ramification order exactly $a+2$ and no other points of higher order ramification (this will replace the role of $\H_{k,g}^s$ in the $a = 0$ case).
By construction, the image of \eqref{compo} is $\epsilon^{-1}(T^{a,\circ}) \cap Y_i$. By Lemma \ref{silly}, this is dense in $Y_i$. Now the proof finishes as before, realizing the closure of each $\beta_n(Y_i)$ as an FPH cycle. In particular, the closure of $\beta_n(R_1 \cap \cdots \cap R_n \cap \epsilon^{-1}(T^a)) = \beta_n(Y_0) \cup \cdots \cup \beta_n(Y_n)$ is a union of tautological cycles, hence tautological.
\end{proof}

\section{Trigonal curves} \label{trigsec}
Let $\M_{g,n}^3 \subseteq \M_{g,n}$ be the locus of curves of gonality at most $3$.
Our goal in this section is to show that $\M_{g,n}^3$ has the CKgP and
that all classes supported on the locus $\M_{g,n}^3 \subseteq \M_{g,n}$ are tautological when $n$ is sufficiently small compared to $g$.
To do so, we work with the Hurwitz space $\H_{3,g,n}$ parametrizing degree $3$ covers $C\rightarrow \p^1$ with $n$ distinct marked points. 

\subsection{Strategy}
We can of course pullback tautological classes from $\M_{g,n}$ to $\H_{3,g,n}$ along 
\begin{equation} \label{pullback} R^*(\M_{g,n}) \to A^*(\M_{g,n}) \to A^*(\H_{3,g,n}).
\end{equation}
If \eqref{pullback} were surjective, then by the push-pull formula and the fact that the fundamental class of $\M_{g,n}^3$ is tautological (by \cite[Proposition 1]{FaberPandharipande}), we would be done.
When $n = 0$, our work in \cite[Theorem 1.1]{part2} shows \eqref{pullback} is surjective for all $g \geq 4$.
However, we shall see soon that \eqref{pullback} is \emph{not} surjective for any $n \geq 1$!
Nevertheless, the push-pull formula allows us to reduce to understanding generators for $A^*(\H_{3,g,n})$ as a \emph{module} over the image of \eqref{pullback}.

In the next subsection, we give a construction of $\H_{3,g,n}$ when $n$ is sufficiently small. This gives rise to generators for 
$A^*(\H_{3,g,n})$ that come from the algebra of structure theorems for degree $3$ covers. Then we relate these generators to FPH cycles, studied in the previous section.

\subsection{Construction of the stack}
The basic idea is that, using structure theorems for degree $3$ covers, the moduli space of smooth, genus $g$ triple covers of $\pp^1$ is the same as the moduli space of smooth curves of an appropriate class on a Hirzebruch surface. We constructed the latter in our earlier work \cite{part1} as an open substack of a vector bundle over a moduli space of vector bundles on $\pp^1$. To briefly review this construction, let $\B$ be the moduli stack of rank $2$, degree $g+2$ globally generated vector bundles on $\pp^1$-bundles. (This $\B$ corresponds to the choice of Hirzebruch surface). 
The stack $\B$ comes equipped with a universal $\pp^1$ bundle $\pi: \P \to \B$ and a universal rank $2$ bundle $\E$ on $\P$. The projectivization $\gamma: \pp \E^\vee \to \P$ is our universal Hirzebruch surface.
Our trigonal curves live inside $\pp \E^\vee$ with class
\[\L := \gamma^*\det \E^\vee \otimes \O_{\pp \E^\vee}(3).\]
We then define the vector bundle $\U := \gamma_*(\gamma^*\L)= \det \E^\vee \otimes \Sym^3 \E$ on $\P$, and set $\B' \subset \B$ to be the open substack where $\U$ is globally generated on fibers of $\P \to \B$.
By cohomology and base change, $\mathcal{X} := \pi_* \U|_{\B'}$ is a vector bundle over $\B'$ whose fibers are equations of curves in the appropriate class on the corresponding Hirzebruch surface.
We prove in \cite[Lemma 5.1]{part1} that $\H_{3,g}$ is equivalent to the open substack of $\mathcal{X}$ where the vanishing of the equation is a smooth curve in each fiber over $\B'$.

In a similar fashion, the moduli space $\H_{3,g,n}$ of smooth, genus $g$ triple covers of $\pp^1$ with $n$ marked points is the same as  the moduli space of smooth curves of an appropriate class on a Hirzebruch surface with marked points. 
To mark $n$ points on Hirzebruch surfaces, we consider $(\pp \E^\vee)^n = \pp \E^\vee \times_{\B} \cdots \times_{\B} \pp \E^\vee$.
We then need to consider curves in the correct class that pass through the specificed points.
The key point is that when $n$ is sufficiently small, $n$ \emph{distinct} points on a smooth curve of our desired class \emph{impose independent conditions} on the linear system for $\L$. This allows us to construct $\H_{3,g,n}$ in a similar manner, as an open substack of a vector bundle over an open substack of $(\pp \E^\vee)^n$.

Let $\eta_i:(\p \E^{\vee})^n\rightarrow \p\E^{\vee}$ be the projection onto the $i^{\text{th}}$ factor.
Let us continue the same notation as before, 
so we have a digram
\begin{center}
\begin{tikzcd}
\epsilon^* \mathcal{X} \arrow{d} \arrow{r} & \gamma^*\pi^* \mathcal{X} \arrow{d} \arrow{rr} & & \mathcal{X} \arrow{d} \\
(\pp \E^\vee)^n \ar[bend right = 20, rrr, swap, "\epsilon"] \arrow{r}{\eta_i} & \pp \E^\vee \arrow{r}{\gamma} & \P \arrow{r}{\pi} \arrow{r} & \B'.
\end{tikzcd}
\end{center}
From the definition of $\mathcal{X}$, there is a natural evaluation map on $\pp \E^\vee$:
\[
\gamma^*\pi^*\X = \gamma^*\pi^*\pi_*\gamma_* \L \rightarrow \L.\]
 We can then take the direct sum of the pullbacks of these evaluation maps
\begin{equation}\label{trigonalevaluation}
\epsilon^*\X \rightarrow \bigoplus_{i=1}^{n}\eta_i^*\L.
\end{equation}
Define $\Y \subset \epsilon^*\mathcal{X}$ to be the preimage of the zero section under \eqref{trigonalevaluation}.
The stack $\Y$ parametrizes tuples $(E, C, p_1, \ldots, p_n)$ where $E$ is a rank $2$ vector bundle on $\pp^1$; $C \subset \pp E^\vee$ is the vanishing of a section of $\O_{\pp E^\vee}(3) \otimes \det E^\vee$; and $p_1, \ldots, p_n \in C$ is a collection of $n$ (not necessarily distinct) points.
The Hurwitz space $\H_{3,g,n}$ parametrizes the same kinds of tuples but where $C$ is smooth of dimension $1$ and $p_1, \ldots, p_n$ are distinct.
As such, there is a natural open inclusion of $\H_{3,g,n}$ in $\Y$.
Note that
the stack $\Y$ is \emph{not} a vector bundle over $(\pp \E^\vee)^n$ because its fiber dimension jumps when the collection of points does not impose independent conditions.

In order to gain a better understanding of $\Y$, and from it $\H_{3,g,n}$, we want to know when the map \eqref{trigonalevaluation} is surjective. By cohomology and base change, we can reduce to checking surjectivity on the fibers. 
\begin{lem} \label{tlem}
Let $E$ be a rank $2$ degree $g+2$ vector bundle on $\pp^1$ and let $\Gamma\subset 
\p E^{\vee}$ be a collection of $n\leq g+7$ distinct points. Suppose there exists a smooth curve $C$ in class $L := \O_{\p E^{\vee}}(3)\otimes \gamma^*\det E^{\vee}$ such that $\Gamma\subset C$. Then the evaluation map
\[
H^0(\p E^{\vee},L)\rightarrow H^0(\Gamma,L|_{\Gamma})
\]
is surjective. 
\end{lem}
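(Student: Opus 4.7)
The plan is to mimic the argument for Lemma \ref{planecurvesurj} very closely, factoring the evaluation map through restriction to the curve $C$:
\[
H^0(\pp E^\vee, L) \xrightarrow{\alpha} H^0(C, L|_C) \xrightarrow{\beta} H^0(\Gamma, L|_\Gamma).
\]
It suffices to show both $\alpha$ and $\beta$ are surjective.

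For surjectivity of $\alpha$, I would use that $C$ lies in the linear system $|L|$, so the ideal sheaf is $\O_{\pp E^\vee}(-C) \cong L^{-1}$. Twisting the structure sequence for $C$ by $L$ gives
\[
0 \to \O_{\pp E^\vee} \to L \to L|_C \to 0,
\]
and surjectivity of $\alpha$ follows from $H^1(\pp E^\vee, \O_{\pp E^\vee}) = 0$, which is standard for a $\pp^1$-bundle over $\pp^1$ via the Leray spectral sequence for $\gamma: \pp E^\vee \to \pp^1$.

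For surjectivity of $\beta$, tensor the ideal sheaf sequence $0 \to \O_C(-\Gamma) \to \O_C \to \O_\Gamma \to 0$ by $L|_C$ and take global sections; surjectivity reduces to $H^1(C, L|_C(-\Gamma)) = 0$. Since $C$ is smooth and irreducible, Serre duality turns this into the vanishing $H^0(C, L^{-1}|_C \otimes \omega_C(\Gamma)) = 0$, which will follow if the line bundle has negative degree. The key computation is the degree of $L|_C$, which I would obtain from adjunction: since $\omega_{\pp E^\vee} = \O_{\pp E^\vee}(-2) \otimes \gamma^*(\det E \otimes \omega_{\pp^1})$, adjunction gives $\omega_C = \O_{\pp E^\vee}(1) \otimes \gamma^* \O_{\pp^1}(-2)|_C$. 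Because $\gamma|_C$ has degree $3$ and $C$ has genus $g$, comparing with $\deg \omega_C = 2g-2$ yields $\deg \O_{\pp E^\vee}(1)|_C = 2g + 4$, and hence $\deg L|_C = 3(2g+4) - 3(g+2) = 3g+6$.

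Putting this together, $\deg(L^{-1}|_C \otimes \omega_C(\Gamma)) = -(3g+6) + (2g-2) + n = n - g - 8$, which is at most $-1$ under the hypothesis $n \leq g+7$. A line bundle of negative degree on a smooth irreducible curve has no global sections, completing the proof. The only nontrivial step is keeping track of the adjunction/degree bookkeeping; everything else parallels Lemma \ref{planecurvesurj} verbatim, and the numerical bound $n \leq g+7$ emerges naturally from the degree constraint, matching the stated hypothesis.
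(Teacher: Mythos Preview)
Your proof is correct and follows essentially the same approach as the paper's: factor through $H^0(C,L|_C)$, use $H^1(\pp E^\vee,\O)=0$ for the first map, and Serre duality plus a degree count for the second. The only cosmetic difference is that you derive $\omega_C \cong \O_{\pp E^\vee}(1)\otimes \gamma^*\O_{\pp^1}(-2)|_C$ directly from adjunction, whereas the paper cites the Casnati--Ekedahl structure theorem for the same identity; the resulting computation $\deg L|_C = 3g+6$ and the bound $n\le g+7$ are identical.
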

\begin{proof}
The evaluation map factors as
\begin{align*}
H^0(\p E^{\vee},L) &\rightarrow H^0(C,L|_{C})\rightarrow H^0(\Gamma, L|_{\Gamma}).
\end{align*}
The first map is surjective because $H^1(\p E^{\vee},\O)=0$.
In the construction of $C \subset \p E^\vee$, the $\O_{\pp E^\vee}(1)$ restricts to the relative canonical of $C \to \pp^1$, that is
$\O_{\pp E^\vee}(1)|_C \cong \omega_C \otimes \gamma^* \omega_{\pp^1}^\vee$ (see \cite[Theorem 2.1(2)]{CasnatiEkedahl} or \cite[Example 3.12]{part1}).
In particular,
\[\deg L = \deg (\O_{\p E^{\vee}}(3)|_{C}) + \deg(\gamma^*\det E^{\vee})|_{C} = 3(2g-2 +2\cdot 3) - (g+2) \cdot 3 = 3g + 6.\]
Hence, $h^0(C, L)=\chi(C, L) = 2g+7$. Now consider the exact sequence on $C$
\[
0\rightarrow L(-\Gamma)\rightarrow L\rightarrow L|_{\Gamma}\rightarrow 0.
\]
The map $H^0(C, L) \to H^0(\Gamma, L|_{\Gamma})$ will be surjective if $H^1(C, L(-\Gamma)) = 0$.
By Serre duality,
$H^1(C, L(-\Gamma))=H^0(C, \omega_C \otimes L^\vee (\Gamma))$, which vanishes so long as
\[0 > \deg( \omega_C \otimes L^\vee (\Gamma)) = 2g - 2 - (3g+6) + n,\]
or equivalently, so long as $n \leq g+7$.
\end{proof}

Now, define $U \subset (\pp \E^\vee)^n$ be the locus over which the evaluation map \eqref{trigonalevaluation}  is surjective. We know that $U$ is open, but a priori it could be empty, and in fact $U$ will be empty when $n$ is too large.
However, when $n \leq g + 7$,
Lemma \ref{tlem} shows that the image of $\H_{3,g,n}$ inside $(\pp \E^\vee)^n$ is contained in $U$.
Hence, we find that the inclusion $\H_{3,g,n} \subset \Y$ factors through $\H_{3,g,n} \subset \Y|_U$.
Moreover, by definition of $\Y$ and $U$, we have that $\Y|_{U}$ is the kernel of the restriction of \eqref{trigonalevaluation} to $U$.
In particular, $\Y|_U$ is a vector bundle over $U$. This implies the following.

\begin{lem} \label{3ckgp}
$\H_{3,g,n}$ has the CKgP for $n \leq g+7$.
\end{lem}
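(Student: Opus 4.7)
The plan is to trace through the explicit construction of $\H_{3,g,n}$ given just above the lemma and apply the basic CKgP inheritance lemmas from Section~\ref{CKgP} at each step.

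First, I would establish that the base stack $\B'$ has the CKgP. Recall $\B'\subset \B$ is open, and $\B$ parametrizes rank $2$ globally generated vector bundles of degree $g+2$ on $\pp^1$-bundles. Stratifying $\B$ by splitting type $\O(a)\oplus \O(g+2-a)$ of the rank $2$ bundle, each stratum is a quotient of a point (or an affine space of extensions, when $a \neq (g+2)/2$) by an algebraic group which is an extension of $\mathrm{PGL}_2$ by a linear algebraic group whose unipotent radical is filtered by $\gg_a$'s with reductive quotient a product of $\gg_m$'s and a $\gg_m$ or $\GL_2$-factor coming from $\Aut(E)$. Applying Lemmas~\ref{affbunCKgP}, \ref{bgs}, \ref{gerbe}, and \ref{Kstrat} (together with the elementary fact that $\mathrm{B}\gg_m$ and $\mathrm{B}\gg_a$ have the CKgP, which follows from Lemma~\ref{bgs} and the affine-bundle argument $\mathrm{B}\gg_a\to \mathrm{pt}$), each stratum has the CKgP; hence $\B$ and then $\B'$ do, by Lemmas~\ref{Kstrat} and~\ref{open}. (Alternatively, one can cite the explicit quotient presentation of $\B$ from \cite{part1}.)

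Next, I would climb the chain of geometric constructions. The projection $\pi:\P\to\B'$ is a $\pp^1$-bundle and $\gamma:\pp\E^\vee\to\P$ is a $\pp^1$-bundle, so applying Lemma~\ref{grassmann} twice shows $\pp\E^\vee$ has the CKgP; then $(\pp\E^\vee)^n = \pp\E^\vee\times_{\B'}\cdots\times_{\B'}\pp\E^\vee$ has the CKgP by $n$ further applications of Lemma~\ref{grassmann} (viewing the $i$-th fiber product step as a $\pp^1$-bundle over the previous one). The open substack $U\subset (\pp\E^\vee)^n$ where the evaluation map \eqref{trigonalevaluation} is surjective then has the CKgP by Lemma~\ref{open}.

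Finally, by the definition of $U$ together with cohomology and base change, the restriction $\Y|_U$ is the kernel bundle of the surjective map \eqref{trigonalevaluation} and hence is a vector bundle over $U$. It therefore has the CKgP by Lemma~\ref{affbunCKgP}. Because $n \leq g+7$, Lemma~\ref{tlem} shows that the image of $\H_{3,g,n}$ in $(\pp\E^\vee)^n$ lands inside $U$, so $\H_{3,g,n}$ embeds as an open substack of $\Y|_U$. One more application of Lemma~\ref{open} yields the CKgP for $\H_{3,g,n}$. The main obstacle is Step~1, establishing the CKgP of $\B'$; but once a stratification of $\B$ into quotients of affine spaces by (extensions of) reductive groups is in hand, it is a routine application of the lemmas in Section~\ref{CKgP}.
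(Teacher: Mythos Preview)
Your proposal is correct and follows the same chain of reductions as the paper: $\H_{3,g,n}\subset \Y|_U\to U\subset (\pp\E^\vee)^n\to\P\to\B'$, with the CKgP passed along via Lemmas~\ref{open}, \ref{affbunCKgP}, and~\ref{grassmann}. The only difference is in the treatment of $\B'$: the paper goes directly to the presentation from \cite[Proposition~4.2]{part1}, realizing $\B'$ as an open substack of a vector bundle over $\BGL_{g+2}\times\BGL_{g+4}\times\BSL_2$ and invoking Lemma~\ref{bgs}, whereas your primary argument stratifies $\B$ by splitting type (you do mention the citation route as an alternative). The paper's route is shorter and sidesteps the need to verify CKgP for the classifying stacks of the non-reductive automorphism groups $\Aut(\O(a)\oplus\O(b))$ arising in your stratification.
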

\begin{proof}
We have maps \[\H_{3,g,n} \subset \Y|_U \to U \subset (\pp \E^\vee)^n \to \P \to \B'\]
where each $\subset$ is an open inclusion and each arrow is a vector bundle or product of projective bundles. Using Lemmas \ref{open}, \ref{affbunCKgP} and \ref{grassmann}, it thus suffices to show that $\B'$ has the CKgP. But $\B'$ is a quotient of an open subset of affine space by $\GL_{g+2} \times \GL_{g+4} \times \BSL_2$ (see \cite[Proposition 4.2]{part1}). Equivalently $\B'$ is an open inside a vector bundle over $\BGL_{g+2} \times \BGL_{g+4} \times \BSL_2$. Now we are done because $\BGL_d$ and $\BSL_2$ have the CKgP (Lemma \ref{bgs}).
\end{proof}

\subsection{Generators for the Chow ring}
Our construction of $\H_{3,g,n}$ gives rise to generators for its Chow ring.
\begin{lem}
\label{trigens}
For $n \leq g+7$, 
there is a surjection $A^*((\pp \E^\vee)^n) \to A^*(\H_{3,g,n})$.
\end{lem}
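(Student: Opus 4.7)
The plan is to chain together three elementary surjections that arise directly from the construction of $\H_{3,g,n}$ given above.

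Recall that we have an open inclusion $\H_{3,g,n}\hookrightarrow \Y|_U$, the structure morphism $\Y|_U\to U$ is a vector bundle (since $\Y|_U$ is by definition the kernel of the restriction of the evaluation map \eqref{trigonalevaluation} to the locus $U\subset (\pp \E^\vee)^n$ where that map is surjective), and $U\hookrightarrow (\pp \E^\vee)^n$ is an open inclusion. First I would take the pullback along the open inclusion $U\subset(\pp\E^\vee)^n$, which is surjective by excision. Next, pullback along the vector bundle $\Y|_U\to U$ is an isomorphism by the homotopy invariance of Chow groups (and this is where the hypothesis $n\leq g+7$ enters, via Lemma \ref{tlem}, which guarantees that the image of $\H_{3,g,n}$ is actually contained in $U$ so that $\Y|_U$ has the correct structure). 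Finally, pullback along the open inclusion $\H_{3,g,n}\subset \Y|_U$ is again surjective by excision.

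Composing, the resulting map
\[A^*((\pp \E^\vee)^n)\twoheadrightarrow A^*(U)\xrightarrow{\sim} A^*(\Y|_U)\twoheadrightarrow A^*(\H_{3,g,n})\]
is a surjection. Since each step is induced by flat pullback, and the vector bundle pullback agrees with the composition of the open restriction with any choice of splitting at the level of cycles, the composition coincides with the restriction along $\H_{3,g,n}\hookrightarrow \Y|_U\to U\hookrightarrow (\pp\E^\vee)^n$, as desired.

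There is no real obstacle here: once one has the quotient stack presentation as a vector bundle over an open in $(\pp \E^\vee)^n$, the conclusion is automatic from excision and homotopy invariance. The content of the lemma is really absorbed into Lemma \ref{tlem}, which was already used to set up the construction. I would keep the proof to a few lines, and note that the same argument simultaneously shows $\H_{3,g,n}$ has the CKgP (as was observed in Lemma \ref{3ckgp}), so the surjection above lets us study $A^*(\H_{3,g,n})$ as a module over classes pulled back from $(\pp \E^\vee)^n$, which by the projective bundle theorem is generated over $A^*(\B')$ by the relative hyperplane classes $\eta_i^* c_1(\O_{\pp \E^\vee}(1))$.
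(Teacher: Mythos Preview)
Your proof is correct and follows exactly the same route as the paper: excision for the open inclusion $U\subset(\pp\E^\vee)^n$, homotopy invariance for the vector bundle $\Y|_U\to U$, and excision again for $\H_{3,g,n}\subset\Y|_U$. One small clarification: $\Y|_U$ is a vector bundle over $U$ by the very definition of $U$, independent of $n$; the hypothesis $n\le g+7$ is used only to ensure (via Lemma~\ref{tlem}) that $\H_{3,g,n}$ actually lands in $\Y|_U$, so that the final restriction map makes sense.
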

\begin{proof}
By excision, we have a series of surjections
\[A^*((\pp \E^\vee)^n) \rightarrow A^*(U) \cong A^*(\Y|_U) \rightarrow A^*(\H_{3,g,n}).\]
The middle map is an isomorphism because $\Y|_{U}$ is a vector bundle over $U$.
\end{proof}

Let $z_i := \eta_i^*\gamma^* c_1(\O_{\P}(1))$ and $\zeta_i := \eta_i^*c_1(\O_{\pp \E^\vee}(1))$.
The classes
$z_1, \ldots, z_n, \zeta_1, \ldots, z_n$ generate $A^*((\pp \E^\vee)^n)$ as an \emph{algebra} over $A^*(\B')$. 
The projective bundle theorem gives us relations
\begin{equation} \label{pgrels}
\zeta_i^2 + c_1(\E^\vee) \zeta_i + c_2(\E^\vee) = 0
\qquad \text{and} \qquad 
z_i^2 + c_2(\pi_*\O_{\P}(1)) = 0.
\end{equation}
Taking into account these relations, we see $A^*((\pp \E^\vee)^n)$ is generated 
as a \emph{module} over $A^*(\B')$ by monomials of the form
\begin{equation} \label{monos}
z_1^{a_1} z_2^{a_2} \cdots z_n^{a_n} \zeta_1^{b_1}\zeta_2^{b_2} \cdots \zeta_n^{b_n} \qquad a_i, b_i \leq 1.
\end{equation}
Combining this with Lemma \ref{trigens}, we see that the classes in \eqref{monos} generate $A^*(\H_{3,g,n})$ as a module over the image of $A^*(\B') \to A^*(\H_{3,g,n})$.

Meanwhile,
the map $\H_{3,g,n} \to \B'$ factors through $\H_{3,g,n} \to \H_{3,g} \to \B'$. We proved in \cite[Theorem 1.1(1)]{part2} that $A^*(\B') \to A^*(\H_{3,g})$ is surjective and that for $g \geq 4$, the image is generated by the pullback of $\kappa_1$ along $\H_{3,g} \to \M_g$. 
Thus, the image of $A^*(\B') \to A^*(\H_{3,g,n})$ is just the subring generated by $\tilde{\kappa}_1$,  defined as the pullback of $\kappa_1$ along $\H_{3,g,n} \to \M_{g,n} \to \M_g$. (Note that the tautological rings of $\M_{g,n}$ are closed under pullbacks, so $\tilde{\kappa}_1$ on $\H_{3,g,n}$ is the pullback of a tautological class on $\M_{g,n}$.)
Hence, we obtain:

\begin{lem} \label{mgens}
Suppose $g \geq 4$ and $n \leq g + 7$.
The classes in \eqref{monos} generate
$A^*(\H_{3,g,n})$ as a module over the subring generated by $\tilde{\kappa}_1$.
\end{lem}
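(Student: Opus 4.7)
\medskip
\noindent\textbf{Proof proposal.} The plan is essentially to chain together the observations already set up in the surrounding paragraphs, so the main work is bookkeeping rather than a new idea.

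First I would invoke Lemma \ref{trigens} to get the surjection $A^*((\pp \E^\vee)^n) \twoheadrightarrow A^*(\H_{3,g,n})$, valid for $n \leq g+7$. This already reduces the question to understanding $A^*((\pp\E^\vee)^n)$ as a module over $A^*(\B')$. Then, applying the projective bundle theorem iteratively to the tower
\[
(\pp\E^\vee)^n \to (\pp\E^\vee)^{n-1} \to \cdots \to \pp\E^\vee \to \P \to \B',
\]
together with the relations \eqref{pgrels}, shows that $A^*((\pp\E^\vee)^n)$ is generated as a module over $A^*(\B')$ by the monomials $z_1^{a_1}\cdots z_n^{a_n}\zeta_1^{b_1}\cdots\zeta_n^{b_n}$ with $a_i,b_i \leq 1$ in \eqref{monos}. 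Pushing this through the surjection yields that the images of these monomials generate $A^*(\H_{3,g,n})$ as a module over the image of $A^*(\B') \to A^*(\H_{3,g,n})$.

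Second, I would identify that image. The map $\H_{3,g,n}\to\B'$ factors as $\H_{3,g,n}\to \H_{3,g}\to\B'$, and by \cite[Theorem 1.1(1)]{part2}, for $g\geq 4$ the map $A^*(\B')\to A^*(\H_{3,g})$ is surjective with image generated by the pullback of $\kappa_1$ along $\H_{3,g}\to\M_g$. Hence the image of $A^*(\B')\to A^*(\H_{3,g,n})$ coincides with the subring generated by the pullback of $\kappa_1$ along $\H_{3,g,n}\to\H_{3,g}\to\M_g$. To conclude, I would observe that the diagram
\begin{center}
\begin{tikzcd}
\H_{3,g,n} \arrow{d} \arrow{r} & \H_{3,g} \arrow{d} \\
\M_{g,n} \arrow{r} & \M_g
\end{tikzcd}
\end{center}
commutes (both compositions forget the degree three map and all marked points), so the pullback of $\kappa_1$ via either route is $\tilde\kappa_1$. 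Combining the two steps gives the claim.

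There is no real obstacle here: the content has been done elsewhere. The only place one has to be mildly careful is in checking that the class called $\tilde\kappa_1$ (defined via $\H_{3,g,n}\to\M_{g,n}\to\M_g$) genuinely agrees with the class one naturally obtains from \cite[Theorem 1.1(1)]{part2} (defined via $\H_{3,g,n}\to\H_{3,g}\to\M_g$), which is immediate from the commutativity above.
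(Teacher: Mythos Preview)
Your proposal is correct and follows essentially the same route as the paper: invoke Lemma~\ref{trigens} and the projective bundle relations \eqref{pgrels} to get the monomials \eqref{monos} as module generators over the image of $A^*(\B')$, then factor $\H_{3,g,n}\to\B'$ through $\H_{3,g}$ and use \cite[Theorem 1.1(1)]{part2} to identify that image with the subring generated by $\tilde\kappa_1$. Your added remark on the commutativity of the square identifying the two pullbacks of $\kappa_1$ is a reasonable clarification of a point the paper leaves implicit.
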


Our task now is to relate $z_i$ and $\zeta_i$ to some more geometrically defined classes.

\begin{lem} \label{psilem}
We have $\psi_i = \zeta_i -2z_i$.
\end{lem}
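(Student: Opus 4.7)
The plan is to factor the universal curve map $f:\C \to \H_{3,g,n}$ as $f = \pi \circ \alpha$, where $\alpha:\C \to \P$ is the universal degree-$3$ cover (the composition of $\C \hookrightarrow \pp\E^\vee$ with $\gamma$) and $\pi:\P \to \H_{3,g,n}$ is the pullback of the universal $\pp^1$-bundle. Riemann--Hurwitz then gives $\omega_f = \omega_\alpha \otimes \alpha^*\omega_\pi$, so
$$\psi_i \;=\; \sigma_i^* c_1(\omega_\alpha) \;+\; (\alpha\circ\sigma_i)^*c_1(\omega_\pi),$$
and I will compute the two summands separately.

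For the first, $\C$ is cut out in $\pp\E^\vee$ by a section of $\L = \O_{\pp\E^\vee}(3) \otimes \gamma^*\det\E^\vee$, so adjunction gives $\omega_\alpha = (\omega_\gamma \otimes \L)|_\C$. The Euler sequence for the rank-$2$ projective bundle $\gamma$ yields $\omega_\gamma = \O_{\pp\E^\vee}(-2) \otimes \gamma^*\det\E$, so the $\det\E$ and $\det\E^\vee$ contributions cancel, leaving $\omega_\alpha = \O_{\pp\E^\vee}(1)|_\C$. The composition $\tau_i := \alpha\circ\sigma_i:\H_{3,g,n}\to\pp\E^\vee$ is, by construction of the Hurwitz stack inside $\Y|_U$, the map selecting the $i$-th marked point in its Hirzebruch fiber, and $\tau_i^* c_1(\O_{\pp\E^\vee}(1)) = \zeta_i$ by definition of $\zeta_i$. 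Hence $\sigma_i^*c_1(\omega_\alpha) = \zeta_i$.

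For the second, write $\P = \pp W$ for the rank-$2$ bundle $W$ on $\B'$ with $W^\vee = \pi_*\O_\P(1)$. The Euler sequence for $\pi$ gives $\omega_\pi = \O_\P(-2) \otimes \pi^*\det W^\vee$. The key point is that $c_1(W) = 0$ in rational Chow: indeed, the projective bundle relation $z_i^2 + c_2(\pi_*\O_\P(1)) = 0$ from \eqref{pgrels} carries no linear term, which forces $c_1(\pi_*\O_\P(1)) = -c_1(W) = 0$. (This is consistent with the $\SL_2$-factor appearing in the presentation of $\B'$ recalled in the proof of Lemma \ref{3ckgp}.) Consequently $c_1(\omega_\pi) = -2\, c_1(\O_\P(1))$, and pulling back along $\gamma\circ\tau_i$ yields $-2z_i$. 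Summing the two contributions gives $\psi_i = \zeta_i - 2z_i$, as claimed. The only delicate point is the vanishing of $c_1(W)$ just mentioned; everything else is a routine adjunction/Riemann--Hurwitz computation.
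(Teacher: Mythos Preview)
Your proof is correct and is essentially the computation that the paper defers to its citation of \cite[Example 3.12]{part1}: you factor $\omega_f = \omega_\alpha \otimes \alpha^*\omega_\pi$, compute $\omega_\alpha = \O_{\pp\E^\vee}(1)|_\C$ via adjunction, and $c_1(\omega_\pi) = -2c_1(\O_\P(1))$ via the Euler sequence. Two minor points: (i) your $\tau_i := \alpha\circ\sigma_i$ lands in $\P$, not $\pp\E^\vee$, so the sentence ``$\tau_i^*c_1(\O_{\pp\E^\vee}(1)) = \zeta_i$'' is mislabeled --- you mean pullback along $\sigma_i$ followed by the inclusion $\C\hookrightarrow\pp\E^\vee$; (ii) inferring $c_1(W)=0$ from the \emph{form} of \eqref{pgrels} is circular (the paper wrote it that way because the vanishing is already known), but your parenthetical about the $\SL_2$-factor in $\B'$ is the correct independent reason.
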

\begin{proof}
It suffices to treat the one pointed case.
Because $\H_{3,g,1}$ is the universal curve $f: \C \to \H_{3,g}$, we see that $\psi = c_1(\omega_f)$. The result now follows from the first equation of \cite[Example 3.12]{part1}.
\end{proof}

Using Lemmas \ref{pgrels} and \ref{psilem} together with the relations in \eqref{pgrels}, we obtain:
\begin{lem} \label{mmgens} Suppose $g \geq 4$ and $n \leq g + 7$. As a module over the subring generated by $\tilde{\kappa}_1, \psi_1, \ldots \psi_n$, we have $A^*(\H_{3,g,n})$ is generated by the monomials $\zeta_1^{b_1} \zeta_2^{b_2} \cdots \zeta_n^{b_n}$ with $b_i \leq 1$.
\end{lem}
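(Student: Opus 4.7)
The strategy is to bootstrap from Lemma \ref{mgens}, using Lemma \ref{psilem} and the projective bundle relations \eqref{pgrels} to trade the generators $z_i$ for $\zeta_i$ and $\psi_i$. Set $S := \qq[\tilde\kappa_1, \psi_1, \ldots, \psi_n] \subseteq A^*(\H_{3,g,n})$. Since $z_i = (\zeta_i - \psi_i)/2$ by Lemma \ref{psilem}, substituting into the generators from Lemma \ref{mgens} immediately shows that $A^*(\H_{3,g,n})$ is generated, as an $S$-module, by arbitrary monomials $\zeta_1^{c_1} \cdots \zeta_n^{c_n}$ (with no constraint on the $c_i$).

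To reduce the exponents to $c_i \leq 1$, I would use the projective bundle relation on the $i^{\text{th}}$ factor, which rearranges to $\zeta_i^2 = -\eta_i^* \gamma^* c_1(\E^\vee) \cdot \zeta_i - \eta_i^* \gamma^* c_2(\E^\vee)$. The crux is to show that the coefficient classes $\eta_i^*\gamma^* c_j(\E^\vee)$ lie in $S + S \cdot \zeta_i$. For this, first apply the projective bundle theorem to $\pi \colon \P \to \B$ to write $c_j(\E^\vee) = A_j + B_j \cdot c_1(\O_\P(1))$ with $A_j, B_j \in A^*(\B)$. After pulling back to $\H_{3,g,n}$, the coefficients $A_j, B_j$ land in the image of $A^*(\B') \to A^*(\H_{3,g,n})$, which is the subring generated by $\tilde\kappa_1$ by the discussion preceding Lemma \ref{mgens} (this is where the hypothesis $g \geq 4$ enters), while $c_1(\O_\P(1))$ pulls back to $z_i = (\zeta_i - \psi_i)/2$, giving an element of $S + S\cdot \zeta_i$.

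Substituting produces a relation of the form $(1 + \beta) \zeta_i^2 = \gamma_1 + \gamma_2 \zeta_i$, with $\beta, \gamma_1, \gamma_2 \in S$ and $\beta$ of strictly positive degree. The only delicate point I anticipate is inverting $1 + \beta$ inside $S$. This works because $\H_{3,g,n}$ is a smooth Deligne--Mumford stack of finite type and dimension $2g + 1 + n$, so its Chow ring vanishes above this dimension; in particular any positive-degree element of $S$ is nilpotent, whence $1 + \beta$ is a unit with inverse given by a finite geometric series in $S$. One concludes that $\zeta_i^2 \in S + S \cdot \zeta_i$, and then a straightforward induction on the $\zeta_i$-degree (applied factor by factor in $i$) reduces any monomial $\zeta_1^{c_1} \cdots \zeta_n^{c_n}$ to an $S$-linear combination of monomials $\zeta_1^{b_1} \cdots \zeta_n^{b_n}$ with $b_i \leq 1$, giving the desired module generating set.
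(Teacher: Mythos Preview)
Your approach is essentially correct and close to the paper's (very terse) argument, but there is one small misstep and a simpler route you are missing.

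\textbf{The misstep.} You assert that the coefficient $\beta$ in $(1+\beta)\zeta_i^2 = \gamma_1 + \gamma_2\zeta_i$ has strictly positive degree. It does not. Writing $c_1(\E^\vee) = A_1 + B_1 z$ in $A^1(\P) = A^1(\B') \oplus A^0(\B')\cdot z$, the coefficient $B_1$ lies in $A^0(\B') = \qq$; restricting to a fiber of $\pi$ one computes $B_1 = -(g+2)$. Thus $\beta = B_1/2$ is the scalar $-(g+2)/2$, and $1+\beta = -g/2$ is invertible simply because $g \geq 4$. Your nilpotence argument is unnecessary (and, as stated, inapplicable since $\beta$ is in degree $0$). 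Fortunately the conclusion survives.

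\textbf{The simpler route.} Rather than substituting $z_i = (\zeta_i - \psi_i)/2$ and then wrestling with the $\zeta_i^2$ relation (whose coefficients $c_j(\E^\vee)$ live on $\P$, not $\B'$), go the other way: substitute $\zeta_i = \psi_i + 2z_i$ into the generators of Lemma~\ref{mgens}. Over $S$ this gives module generators $\prod_i z_i^{c_i}$ with $c_i \leq 2$. Now the \emph{second} relation in \eqref{pgrels}, namely $z_i^2 = -c_2(\pi_*\O_\P(1))$, has its right-hand side pulled back from $\B'$, hence lying in $\qq[\tilde\kappa_1] \subset S$. This immediately reduces to $c_i \leq 1$. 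Since $z_i$ and $\zeta_i$ differ by an element of $S$, the monomials $\prod z_i^{b_i}$ and $\prod \zeta_i^{b_i}$ with $b_i \leq 1$ span the same $S$-module, giving the result. This is almost certainly what the paper's one-line reference to Lemmas~\ref{mgens}, \ref{psilem} and \eqref{pgrels} intends, and it avoids any invertibility check.
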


Thus, our goal is to show that these monomials $\zeta_1^{b_1} \zeta_2^{b_2} \cdots \zeta_n^{b_n}$ push forward to tautological classes on $\M_{g,n}$. To do so, we use some geometrically defined cycles that represent these monomials.
Recall the divisors
\[R_i = \{(C, p_1, \ldots, p_n): p_i \text{ is a ramification point of $C \to \pp^1$}\}\]
introduced in Section \ref{fphsec}.
The first step is to identify the fundamental class of $R_i$ in terms of our generators.
\begin{lem} \label{rclass}
We have $[R_i] = \zeta_i$.
\end{lem}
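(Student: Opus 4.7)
The plan is to use the Riemann--Hurwitz identity on the universal degree-three cover, reduce the problem to computing $c_1(\omega_\pi)$, and then combine with Lemma \ref{psilem}.

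First, I would apply $\sigma_i^*$ to the Riemann--Hurwitz relation $\O_\C(R) = \omega_f \otimes (\alpha^*\omega_\pi)^{\vee}$ on the universal curve. As recorded in \eqref{ri}, this gives
\[ [R_i] = \psi_i - (\alpha\circ\sigma_i)^* c_1(\omega_\pi). \]
Next, I would identify the composition $\alpha\circ\sigma_i : \H_{3,g,n} \to \P$. By construction, the section $\sigma_i$ picks out the $i$-th marked point on $C \subset \pp\E^\vee$, and $\alpha$ is the restriction (after base change to $\H_{3,g,n}$) of $\gamma:\pp\E^\vee\to\P$ to $\C$. Hence $\alpha\circ\sigma_i$ agrees with
\[ \H_{3,g,n}\longrightarrow (\pp\E^\vee)^n \xrightarrow{\;\eta_i\;}\pp\E^\vee \xrightarrow{\;\gamma\;}\P, \]
so $(\alpha\circ\sigma_i)^* c_1(\O_\P(1))$ pulls back to $z_i$.

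Then the plan is to compute $c_1(\omega_\pi)$ explicitly. As noted in the proof of Lemma \ref{3ckgp}, $\B'$ admits a presentation in which the universal $\pp^1$-bundle is governed by a $\BSL_2$ factor; that is, $\P\to\B$ is the projectivization of a rank-two bundle with trivial determinant structure. Under the subspace convention, this yields $\omega_\pi \cong \O_\P(-2)$ with no determinant twist, hence $c_1(\omega_\pi) = -2\,c_1(\O_\P(1))$ and $(\alpha\circ\sigma_i)^* c_1(\omega_\pi) = -2z_i$. Combining these computations with Lemma \ref{psilem} ($\psi_i = \zeta_i - 2z_i$),
\[ [R_i] = \psi_i - (-2z_i) = (\zeta_i - 2z_i) + 2z_i = \zeta_i. \]

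The only delicate step is ruling out a twist of $\omega_\pi$ by a line bundle pulled back from $\B$. This is the main obstacle, but it is resolved by tracking the $\SL_2$-structure in the construction of $\B'$ from \cite{part1}; once this normalization is confirmed, the rest of the argument is a direct manipulation of the formulas $c_1(\omega_f) = \zeta - 2z$ from \cite[Example 3.12]{part1} and the Riemann--Hurwitz identity.
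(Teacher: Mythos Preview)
Your argument is correct. Both proofs ultimately rest on the same Casnati--Ekedahl identity, but the paper takes a shorter route: it observes directly that the universal embedding $\iota:\C\hookrightarrow\pp\E^\vee$ satisfies $\iota^*\O_{\pp\E^\vee}(1)\cong\omega_\alpha$ (this is \cite[Theorem 2.1]{CasnatiEkedahl}), whence $\zeta_i=\eta_i^*c_1(\O_{\pp\E^\vee}(1))=\sigma_i^*c_1(\omega_\alpha)$, and then \eqref{ri} finishes in one stroke. You instead expand $[R_i]$ via Riemann--Hurwitz into $\psi_i$ and the $\omega_\pi$-term, compute $c_1(\omega_\pi)=-2c_1(\O_\P(1))$ using the $\SL_2$-normalization of the $\pp^1$-bundle, and then reassemble using Lemma~\ref{psilem}. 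This works, but note that Lemma~\ref{psilem} was itself proved from \cite[Example 3.12]{part1}, which is the same Casnati--Ekedahl input; so your detour through $\psi_i$ and $z_i$, together with the separate verification that $\omega_\pi$ carries no twist from $\B$, is redundant once one invokes $\iota^*\O_{\pp\E^\vee}(1)\cong\omega_\alpha$ directly. The paper's argument avoids the need to track the $\SL_2$-structure at all.
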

\begin{proof}
The map $\eta_i: \H_{3,g,n} \to  \pp \E^\vee$ is the composition of the $i$th section $\sigma_i: \H_{3,g,n} \to \C$ with the universal embedding $\iota: \C \hookrightarrow \pp \E^\vee$. By construction of the embedding, $\iota^*\O_{\pp \E^\vee}(1) \cong \omega_{\alpha}$, where $\alpha: \C \to \P$ is the universal degree $k$ map \cite[Theorem 2.1]{CasnatiEkedahl}.
Thus, $\zeta_i = \eta_i^*c_1(\O_{\pp \E^\vee}(1)) = \sigma_i^*c_1(\omega_\alpha)$, so the result follows from \eqref{ri}.
\end{proof}

Let $\beta_n: \H_{3,g,n} \to \M_{g,n} \smallsetminus \M_{g,n}^2$ be the forgetful map. We now show that our module generators for $A^*(\H_{3,g,n})$ over $R^*(\M_{g,n})$ push forward to tautological classes.

\begin{lem} \label{thepoint}
Let $i_1, \ldots, i_j$ be a subset of distinct indices in $1, \ldots, n$. Then $\beta_{n*}(\zeta_{i_1} \cdots \zeta_{i_j})$ is tautological in $\M_{g,n} \smallsetminus \M_{g,n}^2$.
\end{lem}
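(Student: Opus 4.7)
The plan is to deduce Lemma \ref{thepoint} essentially immediately from Lemma \ref{rclass} and Proposition \ref{ourfp}. By Lemma \ref{rclass}, we have $\zeta_{i_\ell} = [R_{i_\ell}]$ for each $\ell$, and by Lemma \ref{silly} (applied with $a = 0$, since $T^0 = \H_{3,g}$) the intersection $R_{i_1} \cap \cdots \cap R_{i_j}$ is dimensionally transverse, so the cup product
\[
\zeta_{i_1} \cdots \zeta_{i_j} = [R_{i_1}] \cdots [R_{i_j}]
\]
in $A^*(\H_{3,g,n})$ agrees with the class of the scheme-theoretic intersection. After restricting to the open substack $\H_{3,g,n} \smallsetminus \beta_n^{-1}(\M_{g,n}^2)$, this identity persists.

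Next, observe that since $T^0 = \H_{3,g}$, we have $\epsilon^*[T^0] = 1$, and the hypothesis $a+2 \leq k$ in Proposition \ref{ourfp} is satisfied for $a = 0$ and $k = 3$. Applying that proposition directly with $a = 0$ therefore yields that
\[
\beta_{n*}'([R_{i_1}] \cdots [R_{i_j}]) \;=\; \beta_{n*}'(\zeta_{i_1}\cdots \zeta_{i_j})
\]
is tautological in $A^*(\M_{g,n} \smallsetminus \M_{g,n}^2)$, where $\beta_n'$ is as in \eqref{bnp}. Since pushforward to $\M_{g,n} \smallsetminus \M_{g,n}^2$ from $\H_{3,g,n}$ factors through $\beta_n'$ (any contribution supported on $\beta_n^{-1}(\M_{g,n}^2)$ is killed by excision), this completes the proof.

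There is no real obstacle here: the substantive work was already done in Proposition \ref{ourfp}, where FPH cycles were used to handle intersections of ramification divisors, and in Lemma \ref{rclass}, where the projective bundle class $\zeta_i$ was identified geometrically with the ramification locus $R_i$. The present lemma is the combinatorial payoff needed to feed into Lemma \ref{mmgens}, producing a tautological set of module generators on $\M_{g,n} \smallsetminus \M_{g,n}^2$.
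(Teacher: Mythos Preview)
Your proposal is correct and follows essentially the same approach as the paper: identify $\zeta_{i_\ell} = [R_{i_\ell}]$ via Lemma~\ref{rclass} and then apply Proposition~\ref{ourfp} with $a = 0$. The paper's proof is just the two-line version of what you wrote; your additional remarks about dimensional transversality (Lemma~\ref{silly}) and the passage from $\beta_n$ to $\beta_n'$ are accurate but already absorbed into the proof of Proposition~\ref{ourfp}, so they need not be repeated here.
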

\begin{proof}
By Lemma \ref{rclass}, we have 
$\zeta_{i_1} \cdots \zeta_{i_j} = [R_{i_1}] \cdots [R_{i_j}]$, so the result follows from Proposition \ref{ourfp}.
\end{proof}

We now conclude that the trigonal locus has our desired properties, when the number of marked points is sufficiently small.
\begin{lem} \label{trig-thm}
Suppose $g \geq 4$ and $n \leq g + 7$. Then $\M_{g,n}^3$ has the CKgP and all classes supported on $\M_{g,n}^3$ are tautological.
\end{lem}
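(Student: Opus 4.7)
The plan is to deduce both conclusions by stratifying $\M_{g,n}^3$ into the hyperelliptic locus $\M_{g,n}^2 = \Hyp_{g,n}$ and the complementary purely trigonal locus, handling the complement via the Hurwitz-space generators already assembled.

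For the CKgP, I would apply Lemma \ref{Kstrat} to the stratification $\M_{g,n}^3 = \M_{g,n}^2 \sqcup (\M_{g,n}^3 \smallsetminus \M_{g,n}^2)$. The closed stratum $\Hyp_{g,n}$ has the CKgP by Theorem \ref{hyperelliptic} (the hypothesis $n \leq 2g+6$ is implied by $n \leq g+7$ since $g \geq 4$). The open stratum $\M_{g,n}^3 \smallsetminus \M_{g,n}^2$ is the image of the proper, surjective, representable map $\beta_n'$ emanating from the open substack $\H_{3,g,n} \smallsetminus \beta_n^{-1}(\M_{g,n}^2)$ of $\H_{3,g,n}$; this source has the CKgP by Lemmas \ref{3ckgp} and \ref{open}, so Lemma \ref{surjCKgP} transports the CKgP to the image.

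For the statement on tautological classes, I would peel off the two strata in turn. Let $\alpha \in A^*(\M_{g,n})$ be a class supported on $\M_{g,n}^3$ and set $U := \M_{g,n} \smallsetminus \M_{g,n}^2$. By excision, $\alpha|_U$ is supported on $\M_{g,n}^3 \smallsetminus \M_{g,n}^2$, and is therefore the image under $\beta_{n*}'$ of some class on $\H_{3,g,n} \smallsetminus \beta_n^{-1}(\M_{g,n}^2)$, using that a proper, surjective map representable by Deligne--Mumford stacks induces a rationally surjective pushforward onto classes supported on its image. By Lemma \ref{mmgens}, combined with the surjection $A^*(\H_{3,g,n}) \twoheadrightarrow A^*(\H_{3,g,n} \smallsetminus \beta_n^{-1}(\M_{g,n}^2))$ from excision, any such class can be written as $\sum_k (\beta_n^* \eta_k) \cdot \zeta_{i_1^{(k)}} \cdots \zeta_{i_{j_k}^{(k)}}$ with each $\eta_k$ pulled back from a tautological class on $\M_{g,n}$. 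The projection formula then writes $\alpha|_U = \sum_k \eta_k|_U \cdot \beta_{n*}'(\zeta_{i_1^{(k)}} \cdots \zeta_{i_{j_k}^{(k)}})$, and Lemma \ref{thepoint} identifies each pushed-forward monomial as the restriction of a tautological class on $\M_{g,n}$. Hence $\alpha|_U = \eta|_U$ for some $\eta \in R^*(\M_{g,n})$, so $\alpha - \eta$ vanishes on $U$ and is therefore supported on $\M_{g,n}^2$.

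To close, any class supported on $\M_{g,n}^2$ is tautological: the fundamental class $[\M_{g,n}^2]$ is the pullback of $[\Hyp_g]$ along the forgetful map $\M_{g,n} \to \M_g$, and $[\Hyp_g]$ is tautological by \cite[Proposition 1]{FaberPandharipande}; since Theorem \ref{hyperelliptic} gives $A^*(\Hyp_{g,n}) = R^*(\Hyp_{g,n})$, the push-pull formula shows every class supported on $\M_{g,n}^2$ is tautological in $A^*(\M_{g,n})$. Thus $\alpha - \eta$, and therefore $\alpha$, is tautological. The heart of the argument is the interplay between the module-generator decomposition of $A^*(\H_{3,g,n})$ from Lemma \ref{mmgens} and the Faber--Pandharipande identification of $\beta_{n*}'(\prod \zeta_i)$ as tautological in Lemma \ref{thepoint}; the remaining work is bookkeeping via excision and the push-pull formula.
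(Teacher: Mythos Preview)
Your proof is correct and follows essentially the same approach as the paper's own proof: stratify into the hyperelliptic locus and its complement, use Lemmas \ref{3ckgp} and \ref{surjCKgP} together with Lemma \ref{Kstrat} for the CKgP, and combine Lemma \ref{mmgens} with the push-pull formula and Lemma \ref{thepoint} for the tautological claim. Your write-up is in fact slightly more careful than the paper's in restricting the source of $\beta_n'$ to $\H_{3,g,n} \smallsetminus \beta_n^{-1}(\M_{g,n}^2)$ and in noting that $n \leq g+7 \leq 2g+6$ when $g \geq 4$.
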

\begin{proof}
By Theorem \ref{hyperelliptic}, we know that when $n \leq 2g+6$ all classes supported on $\M_{g,n}^2$ are tautological and $\M_{g,n}^2$ has the CKgP, so it remains to show the same for $\M_{g,n}^3 \smallsetminus \M_{g,n}^2$.
The map $\H_{3,g,n} \to \M_{g,n}^3 \smallsetminus \M_{g,n}^2$ is proper and surjective.
Combining Lemma \ref{3ckgp} and \ref{surjCKgP}, we see that $\M_{g,n}^3 \smallsetminus \M_{g,n}^2$ has the CKgP. Applying Lemma \ref{Kstrat}, the union $\M_{g,n}^3 = (\M_{g,n}^3 \smallsetminus \M_{g,n}^2) \cup \M_{g,n}^2$ has the CKgP.

Meanwhile, the map $\H_{3,g,n} \to \M_{g,n}^3 \smallsetminus \M_{g,n}^2$ 
induces a surjection on Chow groups with rational coefficients. In particular, every class supported on  $\M_{g,n}^3 \smallsetminus \M_{g,n}^2 \subset \M_{g,n} \smallsetminus \M_{g,n}^2$ lies in the image of $(\beta_n)_*: A^*(\H_{3,g,n}) \to A^*(\M_{g,n} \smallsetminus \M_{g,n}^2)$. Therefore, it suffices to show that the image of $(\beta_n)_*$ is tautological.
By Lemma \ref{mmgens} and the push-pull formula, we are reduced to showing that $(\beta_n)_*(\zeta_{i_1} \cdots \zeta_{i_j})$ is tautological for any subset of distinct indices $i_1, \ldots, i_j$. This is proved in Lemma \ref{thepoint}.
\end{proof}

\begin{rem}
One can adapt the arguments of this section to work in the case $g=3$.
We would then have two separate proofs showing that classes supported on $\M_{3,n}^3\smallsetminus \M_{3,n}^2$ are tautological, but they seem to work for different values of $n$. The first, Corollary \ref{3open} using plane curves, gives $n\leq 11$, and the second, Lemma \ref{trig-thm} using the Hurwitz space, gives $n\leq 10$. This discrepancy is actually a mirage: every $g^1_3$ on a non-hyperelliptic curve $C$ of genus $3$ comes from projecting from a point $p\in C$ in the canonical embedding. Therefore, the Hurwitz space $\H_{3,3}$ really corresponds to $\M^3_{3,1}\smallsetminus \M^2_{3,1}$, which explains why the plane model appears to give one more marked point than the Hurwitz space model.
\end{rem}

In genus $4$, we have $\M_{4,n} = \M_{4,n}^3$, so Lemma \ref{trig-thm} gives the following.

\begin{lem} \label{4end}
If $n \leq 11$, then $\M_{4,n}$ has the CKgP and $A^* = R^*$.
\end{lem}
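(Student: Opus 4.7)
The plan is to deduce this lemma as an essentially immediate corollary of Lemma \ref{trig-thm}. The key classical fact is that every smooth projective curve of genus $4$ has gonality at most $3$: by Brill--Noether theory, the expected dimension of the space of $g^1_3$'s on a genus $4$ curve is $\rho(4,1,3) = 4 - 2\cdot 2 = 0$, so every genus $4$ curve carries a degree-$3$ pencil. Therefore, as stated just before the lemma, $\M_{4,n} = \M_{4,n}^3$ as substacks of $\Mb_{4,n}$.

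Given this identification, I would simply apply Lemma \ref{trig-thm} with $g = 4$. The hypotheses there require $g \geq 4$ (satisfied) and $n \leq g + 7 = 11$ (exactly the range in our statement). The conclusion of Lemma \ref{trig-thm} gives both desired properties of $\M_{4,n}^3 = \M_{4,n}$: that it has the CKgP and that all classes supported on it are tautological on $\M_{g,n}$, which is to say $A^*(\M_{4,n}) = R^*(\M_{4,n})$.

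There is essentially no obstacle here, since all the real work has already been carried out in Section \ref{trigsec}. The only subtle point worth flagging in the write-up is verifying that the ``tautological'' in Lemma \ref{trig-thm} (meaning: restriction of a tautological class from $\M_{g,n}$) matches the $R^*(\M_{4,n})$ defined via Definition \ref{tdef}, but this is built into the definition of the tautological ring of an open substack.
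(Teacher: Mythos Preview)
Your proposal is correct and follows exactly the same approach as the paper: the paper simply notes that $\M_{4,n} = \M_{4,n}^3$ and invokes Lemma \ref{trig-thm} with $g=4$, $n \leq g+7 = 11$.
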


\section{Tetragonal curves} \label{tetsec}
In this section, we will attempt to show that classes supported on $\M_{g,n}^4$ are tautological. We do not succeed to show this in full for arbitrary $g$. In fact, it is not true in general that all classes supported on $\M_{g,n}^4$ are tautological \cite{vanZelm}. But the argument will suffice for the low genus cases treated this paper.

\subsection{Strategy}
The strategy is quite similar to the trigonal case, but with a few extra complications. We first define a subring of $A^*(\H_{4,g,n})$ that deserves to be called the tautological ring. By relating the generators to geometrically defined classes we show that these tautological classes on $\H_{4,g,n}$ push forward to tautological classes on $\M_{g,n}$.
Although $A^*(\H_{4,g,n})$ is not generated by tautological classes in general, 
we find a large open substack $\H'_{4,g,n}\subset \H_{4,g,n}$ such that $A^*(\H_{4,g,n}')$ is generated by tautological classes. The next task is to understand the complement of $\H'_{4,g,n}\subset \H_{4,g,n}$. In general, there may be \emph{non-tautological} classes coming from this complement. In particular, the bielliptic locus in genus $12$ lies in the complement. For the genera of interest in this paper, however, we will be able to give a complete description of the complement and show that any Chow classes coming from it are indeed tautological. For example, in genus $6$, the complement consists of pointed hyperelliptic, trigonal, and plane quintic curves, but we know from previous sections that any classes supported on these loci are tautological.

\subsection{Tautological classes} \label{ts}
Using structure theorems for degree $4$ covers \cite{CasnatiEkedahl,part1}, the moduli space of degree $4$ covers of $\p^1$ is the same as the moduli space of codimension $2$ complete intersections in a certain class on a $\p^2$-bundle over $\p^1$. Let $\B$ denote the moduli stack of pairs of globally generated vector bundles of rank $3$ and degree $g+3$ and rank $2$ and degree $g+3$ on $\p^1$ together with an isomorphism $\det \E\cong \det \F$, as constructed in \cite[Definition 5.2]{part1}. There is a universal $\p^1$- bundle $\pi:\P\rightarrow \B$, a universal rank $3$ degree $g+3$ bundle $\E$, and a universal rank $2$ degree $g+3$ bundle $\F$ on $\P$. Let $\gamma:\p \E^{\vee}\rightarrow \P$ be the universal $\p^2$ bundle. 

 There is a natural morphism $\H_{4,g} \to \B$ that sends a degree $4$ cover to its associated vector bundles on $\pp^1$. 
Moreover, the Casnati--Ekedahl structure theorem defines an embedding of the universal curve $\C$ over $\H_{4,g}$ into the projectivization of the pullback of $\E$ \cite{CasnatiEkedahl}. Hence, we obtain a diagram
\begin{equation} \label{CEemb}
\begin{tikzcd}
\C  \arrow{d}[swap]{f} \arrow{r}{a} & \pp \E^\vee \arrow{d} \\
\H_{4,g} \arrow{r} & \B.
\end{tikzcd}
\end{equation}

For $n \geq 1$, we get a diagram
\begin{equation} \label{abd}
\begin{tikzcd}
\C_n \arrow{r}{b} \arrow{d} & \C  \arrow{d} \arrow{r}{a} & \pp \E^\vee \arrow{d} \\
\arrow[bend left = 50, "\sigma_i"]{u} \H_{4,g,n} \arrow{r} & \H_{4,g} \arrow{r} & \B.
\end{tikzcd}
\end{equation} 
For each $i$, the composition $a \circ b \circ \sigma_i$ defines a map $\H_{4,g,n} \to \pp \E^\vee$. Informally, this is defined by sending a pointed curve to the image of the $i$th marking under the Casnati--Ekedahl embedding. Taking the product of these maps for $i = 1, \ldots, n$ we obtain a diagram
\begin{equation} \label{HB}
\begin{tikzcd}
\H_{4,g,n} \arrow{r} \arrow{d} & (\pp \E^\vee)^n \arrow{d} \\
\H_{4,g} \arrow{r} & \B.
\end{tikzcd}
\end{equation}
In \cite{part1, part2}, we studied the map $A^*(\B) \to A^*(\H_{4,g})$; we called the image the tautological subring for $\H_{4,g}$ and related it to the tautological ring on $\M_g$. The pointed version of this is to study the image of $A^*((\pp \E^\vee)^n) \to A^*(\H_{4,g,n})$, which similarly deserves to be called the tautological subring for $\H_{4,g,n}$. The key result we need is Lemma \ref{pushlem} below, which relates tautological classes on $\H_{4,g,n}$ to tautological classes on $\M_{g,n}$.

The first step is to describe the image of $A^*(\B) \to A^*(\H_{4,g,n})$ in terms the classes of ramification loci $T^a$ defined in \eqref{tadef}.
\begin{lem} \label{m1lem}
The image of $A^*(\B) \to A^*(\H_{4,g,n})$ is generated as a module over $\qq[\kappa_1, \kappa_2]$ by $1, [T^1]$ and $[T^2]$. 
\end{lem}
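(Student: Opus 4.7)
The plan is to first reduce to the unpointed case. Because the morphism $\H_{4,g,n} \to \B$ factors through the forgetful map $\H_{4,g,n} \to \H_{4,g}$, the image of $A^*(\B) \to A^*(\H_{4,g,n})$ is precisely the pullback of the image of $A^*(\B) \to A^*(\H_{4,g})$ along $A^*(\H_{4,g}) \to A^*(\H_{4,g,n})$. The classes $[T^a]$ on $\H_{4,g,n}$ are pullbacks of their namesakes on $\H_{4,g}$ by definition, and the $\kappa_i$ pulled back from $\M_g$ to $\H_{4,g,n}$ factor through $\H_{4,g}$. So it suffices to prove the analogous statement with $\H_{4,g,n}$ replaced by $\H_{4,g}$.

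Next, I would invoke the explicit description from our earlier work \cite{part1, part2} of the image of $A^*(\B) \to A^*(\H_{4,g})$ in terms of (push-forwards from $\P$ of) Chern classes of the universal bundles $\E$ (rank $3$) and $\F$ (rank $2$) on $\P$. The task is to express each such generator as a $\qq[\kappa_1,\kappa_2]$-linear combination of $1, [T^1], [T^2]$.

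For the generators built from $\E$ alone, the strategy is to use the Tschirnhausen splitting $f_*\O_\C = \O_\P \oplus \E^\vee$, which identifies the Hodge bundle on $\H_{4,g}$ (up to a bundle pulled back from $\B'$ that is a polynomial in the Chern classes of $\E$) with a push-forward involving $\E$. Applying Mumford's Grothendieck--Riemann--Roch computation to $\C \to \H_{4,g}$ then expresses such Chern classes as polynomials in the $\kappa_i$. To handle the generators involving $\F$, I would use the geometric interpretation of the Casnati--Ekedahl resolution of $\I_{\C/\pp\E^\vee}$ (which features $\F$) to compute $[T^1]$ and $[T^2]$ via a Porteous-type calculation applied to the natural evaluation map on relative principal parts of $\C \to \pp^1$. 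This yields formulas for $[T^1]$ and $[T^2]$ as combinations of Chern classes of $\E$ and $\F$, from which one can solve for the remaining $\F$-generators in terms of $\kappa_1, \kappa_2, [T^1], [T^2]$.

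The main obstacle I anticipate is the bookkeeping in the final step: namely, verifying that exactly the two classes $[T^1]$ and $[T^2]$ (and not, for instance, $[T^3]$) suffice as module generators, and that the Porteous expressions combine with the GRR output to hit every generator produced by \cite{part2}. This amounts to a finite and explicit Chern class manipulation on the universal degree $4$ cover, made tractable by the fact that $\B$ is essentially a product of classifying stacks whose Chow ring is a polynomial ring on the Chern classes of $\E$ and $\F$ modulo the single relation $\det \E = \det \F$.
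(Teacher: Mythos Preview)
Your reduction to the unpointed case is correct and matches the paper exactly: since $\H_{4,g,n}\to\B$ factors through $\H_{4,g}$, and since $[T^a]$ and the $\kappa_i$ on $\H_{4,g,n}$ are pullbacks from $\H_{4,g}$, it suffices to prove the statement for $\H_{4,g}$.

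After that, the paper takes a different and much shorter route. Rather than re-deriving relations via GRR and Porteous, it cites the structure of $R^*(\H_{4,g})$ already established in \cite[Lemma 7.9]{part2}: there a spanning set for each graded piece $R^i(\H_{4,g})$ is given in terms of the ``Casnati--Ekedahl'' classes $a_1, a_2', a_3'$, and in particular $R^i(\H_{4,g})$ is known to have dimension at most $2,4,3$ for $i=1,2,3$ and to be generated over $\qq[\kappa_1]$ beyond that. The lemma is then reduced to a finite linear-algebra check: using \cite[Equation 7.5, Lemmas 7.6--7.7]{part2} to write $\kappa_1,\kappa_2,[T^1],[T^2]$ in the $a$-basis, one computes the change-of-basis matrix in $R^2$ and observes its determinant
\[
\frac{1658880\,g - 2985984}{g^2+4g+3}
\]
is nonzero for all integers $g$. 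The higher degrees then follow from parts (3)--(5) of \cite[Lemma 7.9]{part2}.

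Your plan would rederive much of \cite{part2} from scratch. It is plausible in outline, but the proposal as written has a genuine gap exactly where you flag it: showing that two classes $[T^1],[T^2]$ suffice (and that $[T^3]$ is not needed) cannot be done by formal manipulation alone---it requires knowing the dimensions of $R^i(\H_{4,g})$ and then verifying that your candidate generators actually span. In the paper this is precisely the role of the determinant computation above, which is the nontrivial content of the proof. Your GRR/Porteous steps would at best produce the same input data as \cite[Lemmas 7.6--7.7]{part2}; you would still need to invoke the dimension bounds from \cite[Corollary 5.6 and Lemma 7.9]{part2} and carry out the change-of-basis check.
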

\begin{proof}
The pullback map $A^*(\B) \to A^*(\H_{4,g,n})$ factors through 
\[A^*(\B) \to A^*(\H_{4,g}) \to A^*(\H_{4,g,n}).\] 
The image of $A^*(\B) \to A^*(\H_{4,g})$ is the tautological subring $R^*(\H_{4,g})$ by \cite[Theorem 3.10]{part1}.
Next, we claim that $R^*(\H_{4,g})$ is generated as a module over $\qq[\kappa_1, \kappa_2]$ by the classes of the ramification loci $1,T^1$ and $T^2$.
We proved something similar to this in \cite[Lemma 7.9]{part2}, where $T^1$ and $T^2$ are denoted $T$ and $U$ respectively.
Here, we want to use a slightly different basis. By \cite[Lemma 7.9(1)]{part2}, we know $R^1(\H_{4,g})$ is spanned by $\kappa_1, [T^1]$. We claim $R^2(\H_{4,g})$ is spanned by $\kappa_1[T^1], \kappa_1^2, \kappa_2,$ and $[T^2]$. Using \cite[Equation 7.5
and Lemmas 7.6 and 7.7]{part2}, we can write each of these classes in terms of the usual spanning set $a_1^2, a_2'a_1, a_2'^2, a_3'$ of \cite[Corollary 5.6(2)]{part2}. One then checks that the change of basis matrix is full rank for all $g$. It has determinant
\[
\frac{1658880g - 2985984}{g^2+4g+3},
\]
which never vanishes for $g$ an integer. From \cite[Lemma 7.9(3)]{part2} it is clear $\kappa_1^2[T^1], \kappa_1^3, \kappa_1[T^2]$ also form a spanning set of $R^3(\H_{4,g})$. Finally, \cite[Lemma 7.9(4) and (5)]{part2} complete the proof that $1, [T^1], [T^2]$ generate $R^*(\H_{4,g})$ as a module over $\qq[\kappa_1, \kappa_2]$.
\end{proof}

The next step is to study the generators for $A^*((\pp \E^\vee))$ over $A^*(\B)$.
Let $\eta_i : (\pp \E^\vee)^n \to \pp \E^\vee$ be the $i$th projection map.
Let $z_i := \eta_i^*\gamma^* c_1(\O_{\P}(1))$ and $\zeta_i := \eta_i^*c_1(\O_{\pp \E^\vee}(1))$.
The classes
$z_1, \ldots, z_n, \zeta_1, \ldots, \zeta_n$ generate $A^*((\pp \E^\vee)^n)$ as an \emph{algebra} over $A^*(\B)$. Let $c_2 = c_2(\pi_*\O_{\P}(1)) \in A^*(\B)$.
The projective bundle theorem gives us relations
\begin{equation} \label{pgrels2}
\zeta_i^3 + c_1(\E^\vee) \zeta_i^2 + c_2(\E^{\vee})\zeta_i + c_3(\E^\vee) = 0
\qquad \text{and} \qquad 
z_i^2 + c_2 = 0.
\end{equation}
Taking into account these relations, we see $A^*((\pp \E^\vee)^n)$ is generated 
as a \emph{module} over $A^*(\B)$ by monomials of the form
\begin{equation} \label{monostet}
z_1^{a_1} z_2^{a_2} \cdots z_n^{a_n} \zeta_1^{b_1}\zeta_2^{b_2} \cdots \zeta_n^{b_n} \qquad a_i\leq 1 \qquad b_i\leq 2.
\end{equation}

Now we relate $z_i$ and $\zeta_i$ to the $\psi$ and ramification classes.

\begin{lem} \label{psilem4}
We have 
\begin{enumerate}
    \item $\zeta_i = \psi_i +2z_i$,
    \item $\zeta_i^2 = \psi_i^2 +2\zeta_i\psi_i-4c_2$
\end{enumerate}
\end{lem}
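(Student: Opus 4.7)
Part (1) is the direct analog of Lemma \ref{psilem} in the trigonal case, adapted to the Casnati--Ekedahl embedding for degree four covers. It suffices to treat the case $n=1$, so $\H_{4,g,1}$ is the universal curve $\C$ over $\H_{4,g}$, the section is the identity, and $\psi = c_1(\omega_f)$ where $f: \C \to \H_{4,g}$. Factor $f = \pi \circ \alpha$ with $\alpha = \gamma \circ a : \C \to \P$ the universal degree four cover, giving $\omega_f = \omega_\alpha \otimes \alpha^*\omega_\pi$. The general-$n$ statement then follows by pulling back along $b \circ \sigma_i$ and recognizing $a \circ b \circ \sigma_i = \eta_i$ and $\gamma \circ a \circ b \circ \sigma_i = \gamma \circ \eta_i$.

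The first term is the content of the CE structure. The curve $\C$ sits in $\pp\E^\vee$ as the zero locus of a section of the rank $2$ bundle $F := \O_{\pp\E^\vee}(2) \otimes \gamma^*\F^\vee$ (this is the codimension-two complete intersection of two ``relative conics'' in the Casnati--Ekedahl resolution). Its determinant is $\det F = \O(4) \otimes \gamma^*\det \F^\vee$. Combining adjunction for the regular embedding $a$ with $\omega_{\pp\E^\vee/\P} = \O(-3) \otimes \gamma^*\det \E$ gives
\[
\omega_\alpha \;=\; \omega_{\pp\E^\vee/\P}\big|_\C \otimes \det F\big|_\C \;=\; a^*\O_{\pp\E^\vee}(1) \otimes \alpha^*(\det\E \otimes \det\F^\vee).
\]
The defining isomorphism $\det \E \cong \det \F$ on $\B$ makes the second factor trivial, so $\omega_\alpha = a^*\O_{\pp\E^\vee}(1)$ and $c_1(\omega_\alpha)$ pulls back to $\zeta_i$. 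For the second term, the $\pp^1$-bundle $\pi: \P \to \B$ arises (up to a gerbe) from a rank $2$ bundle with vanishing rational first Chern class, so $\omega_\pi = \O_{\P}(-2)$ rationally, and $\alpha^*c_1(\omega_\pi)$ pulls back to $-2z_i$. Adding, $\psi_i = \zeta_i - 2z_i$, which is (1).

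Part (2) is then purely formal: rewrite (1) as $2z_i = \zeta_i - \psi_i$, square, and apply the projective bundle relation $z_i^2 + c_2 = 0$ from \eqref{pgrels2}, which gives $4z_i^2 = -4c_2$. Rearranging the resulting identity $\zeta_i^2 - 2\zeta_i\psi_i + \psi_i^2 = -4c_2$ produces the formula in (2).

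The main obstacle is verifying the identification $\omega_\alpha = a^*\O_{\pp\E^\vee}(1)$ in part (1); this is the degree-four analog of the trigonal computation (cf.\ \cite{CasnatiEkedahl} and \cite[Example~3.12]{part1}), and the cancellation hinges on the condition $\det \E \cong \det \F$ imposed in the definition of $\B$. Everything after this is bookkeeping with the projective bundle relations.
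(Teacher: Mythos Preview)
Your proposal is correct and follows exactly the paper's approach: the paper proves (1) by pointing to the trigonal analog (Lemma~\ref{psilem}) and the Casnati--Ekedahl identification $a^*\O_{\pp\E^\vee}(1)\cong\omega_\alpha$ (cited from \cite[Example~3.12]{part1}), whereas you unpack that adjunction computation explicitly and correctly isolate the role of $\det\E\cong\det\F$; for (2) both you and the paper simply square (1) and invoke $z_i^2+c_2=0$.

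One small observation: your rearrangement of $\zeta_i^2-2\zeta_i\psi_i+\psi_i^2=-4c_2$ actually yields $\zeta_i^2=-\psi_i^2+2\zeta_i\psi_i-4c_2$, which differs from the formula printed in the lemma by the sign on $\psi_i^2$. This is a typo in the paper's statement rather than an error on your part, and the sign is immaterial for the only place (2) is used (Lemma~\ref{modgens}), where one merely needs $\zeta_i^2$ expressed in the $S^*$-span of $1$ and $\zeta_i$.
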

\begin{proof}
The first statement is proved exactly the same way as Lemma \ref{psilem}. 
For the second, we simply square the equality in (1).
\end{proof}

\begin{lem} \label{r4lem}
We have $[R_i]=\zeta_i$.
\end{lem}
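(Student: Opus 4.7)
The plan is to mimic the proof of Lemma \ref{rclass} in the trigonal case, as the structural setup is identical for degree $4$ covers. The Casnati--Ekedahl embedding $a : \C \hookrightarrow \pp \E^\vee$ from \eqref{CEemb} satisfies the property that the restriction of the tautological line bundle to $\C$ is the relative dualizing sheaf of the universal degree $4$ map, i.e.\ $a^*\O_{\pp \E^\vee}(1) \cong \omega_\alpha$, where $\alpha : \C \to \P$ is the degree $4$ map. (This is the same property used in the trigonal case, cf.\ \cite[Theorem 2.1(2)]{CasnatiEkedahl}.)

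Now, unwinding the construction of the map $\H_{4,g,n} \to (\pp \E^\vee)^n$ from \eqref{abd}--\eqref{HB}, the $i$th component $\eta_i \circ (\text{this map})$ is by definition the composition $a \circ b \circ \sigma_i$. Using $b \circ \sigma_i = \sigma_i$ (thinking of $\sigma_i$ as a section of the universal curve in either $\H_{4,g,n}$ or $\H_{4,g}$), we have
\[
\zeta_i = \sigma_i^* a^* c_1(\O_{\pp \E^\vee}(1)) = \sigma_i^* c_1(\omega_\alpha).
\]
By \eqref{ri}, this equals $[R_i]$, completing the proof.

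The only potentially subtle point is verifying that $a^*\O_{\pp \E^\vee}(1) \cong \omega_\alpha$ in the Casnati--Ekedahl embedding; but this is standard and was already invoked in the trigonal setting, so no new difficulty arises.
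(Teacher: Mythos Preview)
Your proof is correct and follows essentially the same route as the paper's: both identify $\zeta_i$ with $\sigma_i^* c_1(\omega_\alpha)$ via the Casnati--Ekedahl property $a^*\O_{\pp \E^\vee}(1) \cong \omega_\alpha$, and then invoke \eqref{ri}. The paper's version is terser, but the content is the same.
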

\begin{proof}
Letting $a$ be the map in \eqref{abd}, we have $a^*\O_{\pp \E^\vee}(1) = \omega_\alpha$ where $\alpha: \C \to \P$ is the universal degree $k$ map. Hence, $\eta_i^*c_1(\O_{\pp \E^\vee}(1)) = \sigma_i^*c_1(\omega_{\alpha})$ so the result follows from \eqref{ri}.
\end{proof}

Now we are ready to give generators for tautological classes on $\H_{4,g,n}$.
\begin{lem} \label{modgens}
The following classes generate the image of $A^*((\pp \E^\vee)^n) \to A^*(\H_{4,g,n})$ as a module over $R^*(\M_{g,n})$:
\begin{equation} [R_{i_1}] \cdots [R_{i_j}] \cdot [T^c] \qquad i_1, \ldots, i_j \text{ distinct}, \qquad c \leq 2. \end{equation}
\end{lem}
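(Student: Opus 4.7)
The plan combines Lemmas \ref{m1lem}, \ref{psilem4}, and \ref{r4lem}. Recall from the projective bundle relations \eqref{pgrels2} that $A^*((\pp\E^\vee)^n)$ is generated as a module over $A^*(\B)$ by the monomials
\[ z_1^{a_1} \cdots z_n^{a_n} \zeta_1^{b_1} \cdots \zeta_n^{b_n}, \qquad a_i \leq 1, \ b_i \leq 2. \]
First I will reduce these monomials, modulo multiplication by classes in $R^*(\M_{g,n})$ pulled back to $\H_{4,g,n}$, to squarefree products $\zeta_{i_1} \cdots \zeta_{i_j} = [R_{i_1}] \cdots [R_{i_j}]$. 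Then I will apply Lemma \ref{m1lem} to reduce the $A^*(\B)$-coefficients to an $R^*(\M_{g,n})$-combination of $1$, $[T^1]$, and $[T^2]$.

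For the first reduction, Lemma \ref{psilem4}(1) gives $2z_i = \zeta_i - \psi_i$. Since $\psi_i \in R^*(\M_{g,n})$ pulls back to $\H_{4,g,n}$, this lets us eliminate each $z_i$ at the cost of multiplication by classes in $R^*(\M_{g,n})$. We are reduced to monomials $\zeta_1^{b_1} \cdots \zeta_n^{b_n}$ with $b_i \leq 2$. Next, Lemma \ref{psilem4}(2) gives the relation
\[ \zeta_i^2 = \psi_i^2 + 2\zeta_i \psi_i - 4c_2, \]
where $c_2 \in A^*(\B)$ and $\psi_i \in R^*(\M_{g,n})$. Multiplying both sides by $\prod_{j \neq i} \zeta_j^{b_j}$ expresses $\zeta_i^2 \prod_{j \neq i} \zeta_j^{b_j}$ as a combination of monomials of strictly lower $\zeta_i$-degree with coefficients in $A^*(\B) \cdot R^*(\M_{g,n})$. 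Iterating over $i$ reduces to the squarefree case $b_i \leq 1$, and by Lemma \ref{r4lem} these monomials are exactly the products $[R_{i_1}] \cdots [R_{i_j}]$ over subsets of distinct indices.

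For the second reduction, Lemma \ref{m1lem} states that the image of $A^*(\B) \to A^*(\H_{4,g,n})$ is generated as a $\qq[\kappa_1, \kappa_2]$-module by $1, [T^1], [T^2]$. Since $\kappa_1, \kappa_2 \in R^*(\M_{g,n})$, this image is generated a fortiori as an $R^*(\M_{g,n})$-module by the same three classes. Combining the two reductions yields the desired generators $[R_{i_1}] \cdots [R_{i_j}] \cdot [T^c]$ with $c \leq 2$.

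The substantive input is Lemma \ref{m1lem}, which was handled by the change-of-basis computation building on \cite{part2}; the remaining manipulations here are formal applications of the projective bundle relations together with the explicit formulas of Lemmas \ref{psilem4} and \ref{r4lem} identifying $\zeta_i$ and $z_i$ geometrically. No further obstacle is expected.
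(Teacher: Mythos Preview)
Your approach is essentially the same as the paper's, and correct in substance, but there is one arithmetic slip you should fix. After substituting $z_i = \tfrac{1}{2}(\zeta_i - \psi_i)$ into a monomial $z_i^{a_i}\zeta_i^{b_i}$ with $a_i \le 1$ and $b_i \le 2$, the resulting exponent of $\zeta_i$ can be as large as $a_i + b_i \le 3$, not $2$ as you wrote. The paper deals with this by substituting $z_i$ into the cubic projective-bundle relation \eqref{pgrels2} for $\zeta_i$ and checking that the resulting leading coefficient $1 + \tfrac{1}{2}(g+3)$ of $\zeta_i^3$ is nonzero, so that $\zeta_i^3$ can be solved for in terms of lower powers with coefficients in $S^*$. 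Your iterated use of Lemma~\ref{psilem4}(2) handles this equally well: multiplying $\zeta_i^2 = \psi_i^2 + 2\psi_i\zeta_i - 4c_2$ through by $\zeta_i$ and substituting once more expresses $\zeta_i^3$ in terms of $\zeta_i$ and $1$ with coefficients in $\psi_i$ and $c_2 \in A^*(\B)$, and in fact avoids the paper's leading-coefficient check. Once you correct the intermediate bound to $b_i \le 3$ and make explicit that the reduction via Lemma~\ref{psilem4}(2) is applied repeatedly in $\zeta_i$-degree (not merely once per index $i$), the argument is complete and matches the paper's.
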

\begin{proof}
Let $S^* \subset A^*(\H_{4,g,n})$ be the graded subring generated by $\kappa_1, \kappa_2, \psi_1, \ldots, \psi_n$ and the image of $A^*(\B) \to A^*(\H_{4,g,n})$.
By Lemma \ref{m1lem}, the ring $S^*$ is generated as a module over $\qq[\kappa_1, \kappa_2, \psi_1, \ldots, \psi_n]$ by $[T^c]$ for $c \leq 2$. By Lemma \ref{r4lem}, we have $[R_i] = \zeta_i$.
It therefore suffices to show that the monomials $\zeta_1^{b_1} \cdots \zeta_n^{b_n}$ with $b_i \leq 1$ generate $A^*(\H_{4,g,n})$ as a module over $S^*$.
To accomplish this, we show that each monomial of the form in \eqref{monostet} lies in the span of elements of $S^*$ times monomials of the form $\zeta_1^{b_1} \cdots \zeta_n^{b_n}$ with $b_i \leq 1$.
First note that by \eqref{pgrels2}, we have
\begin{align}
0 &= \zeta_i^3 - (a_1 + (g+3)a_2'z_i) \zeta_i^2 + (a_2 + a_2'z_i)\zeta - (a_3 + a_3'z_i),\notag \\
\intertext{where $a_i, a_i' \in A^*(\B)$, and so will also be viewed as elements of $S^*$. By Lemma \ref{psilem4}, we can substitute in $z_i = \frac{1}{2}(\zeta_i - \psi_i)$ to get}
0 &= \zeta_i^3 - (a_1 + \tfrac{1}{2}(g+3)(\zeta_i - \psi_i)) \zeta_i^2 + \tfrac{1}{2}(a_2 + a_2'(\zeta_i - \psi_i))\zeta_i - (a_3 + a_3'(\zeta_i - \psi_i)). \notag
\intertext{Because the coefficient $1 + \frac{1}{2}(g+3)$ of $\zeta_i^3$ above is non-zero, we may solve to find}
\zeta_i^3 &\in S^1 \cdot \zeta_i^2 + S^2 \cdot \zeta_i + S^3. \label{zetaisub}
\end{align}

Now,
suppose we are given a monomial of the form in \eqref{monostet}. Using Lemma \ref{psilem4}(1), we can replace each $z_i$ with $\frac{1}{2}(\zeta_i - \psi_i)$. This will give us a sum of terms of the form $S^* \cdot \zeta_1^{b_1'} \cdots \zeta_n^{b_n'}$ where $b_i' \leq b_i + a_i \leq 3$. We can then use \eqref{zetaisub} to eliminate any terms with a $\zeta_i^3$ and obtain a sum of terms of the form $S^* \cdot \zeta_1^{b_1''} \cdots \zeta_n^{b_n''}$ where $b_i'' \leq 2$. Finally, we can use Lemma \ref{psilem4}(2) to eliminate each $\zeta_i^2$
that appears, thus leaving a sum of terms of the form $ S^* \cdot \zeta_1^{b_1'''} \cdots \zeta_n^{b_n'''}$ where $b_i''' \leq 1$.
\end{proof}

We can now prove our desired result, which roughly says tautological classes on $\H_{4,g,n}$ push forward to tautological classes on $\M_{g,n}$.

\begin{lem} \label{pushlem}
Let 
$\beta_n': \H_{4,g,n} \smallsetminus \beta_n^{-1}(\M_{g,n}^3) \to \M_{g,n} \smallsetminus \M_{g,n}^{3}$ be as in \eqref{bnp}. Suppose that $x \in 
A^*( \H_{4,g,n}\smallsetminus \beta_n^{-1}(\M_{g,n}^{3}))$ lies in the image of 
\[A^*(\pp \E^\vee)^n) \to A^*(\H_{4,g,n}) \to A^*( \H_{4,g,n}\smallsetminus \beta_n^{-1}(\M_{g,n}^{3})).\]
Then $\beta_{n*}'x$ is tautological on $\M_{g,n} \smallsetminus \M_{g,n}^3$.
\end{lem}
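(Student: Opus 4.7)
The plan is to combine three earlier ingredients: Lemma \ref{modgens} (module generators for the image of $A^*((\pp \E^\vee)^n) \to A^*(\H_{4,g,n})$ over $R^*(\M_{g,n})$), the projection formula for $\beta_n'$, and Proposition \ref{ourfp} (tautology of precisely the relevant push-forwards).

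First I would lift $x$ to a class $\tilde{x} \in A^*(\H_{4,g,n})$ in the image of $A^*((\pp \E^\vee)^n) \to A^*(\H_{4,g,n})$; this is possible by hypothesis, since restriction to the open substack $\H_{4,g,n} \smallsetminus \beta_n^{-1}(\M_{g,n}^3)$ is surjective. By Lemma \ref{modgens}, I may write
\[
\tilde{x} = \sum_\alpha (\beta_n^* y_\alpha) \cdot [R_{i_1^\alpha}] \cdots [R_{i_{j_\alpha}^\alpha}] \cdot \epsilon^*[T^{c_\alpha}],
\]
with $y_\alpha \in R^*(\M_{g,n})$, each tuple $(i_1^\alpha, \dots, i_{j_\alpha}^\alpha)$ consisting of distinct indices in $\{1, \dots, n\}$, and each $c_\alpha \leq 2$. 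Restricting this identity to $\H_{4,g,n} \smallsetminus \beta_n^{-1}(\M_{g,n}^3)$ and applying the projection formula for $\beta_n'$, I get
\[
\beta_{n*}' x = \sum_\alpha \bar{y}_\alpha \cdot \beta_{n*}'\!\left([R_{i_1^\alpha}] \cdots [R_{i_{j_\alpha}^\alpha}] \cdot \epsilon^*[T^{c_\alpha}]\right),
\]
where $\bar{y}_\alpha$ denotes the restriction of $y_\alpha$ to $\M_{g,n} \smallsetminus \M_{g,n}^3$.

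Since $k = 4$ and each $c_\alpha \leq 2 = k - 2$, Proposition \ref{ourfp} applies to every summand and shows each push-forward factor is tautological on $\M_{g,n} \smallsetminus \M_{g,n}^3$. As tautological classes are closed under restriction and multiplication, $\beta_{n*}' x$ is tautological, as desired. The argument is really an assembly; there is no substantial obstacle, and the single delicate point is the numerical match $c_\alpha \leq 2 \Longleftrightarrow c_\alpha + 2 \leq k$, which is exactly the hypothesis under which Proposition \ref{ourfp} produces tautological push-forwards. Were Lemma \ref{modgens} to require generators with $c = 3$, this approach would break down, and the proof genuinely uses the coincidence with $k = 4$.
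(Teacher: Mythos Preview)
Your proof is correct and follows essentially the same approach as the paper: both reduce via Lemma \ref{modgens} and the push-pull (projection) formula to showing the generators $[R_{i_1}]\cdots[R_{i_j}]\cdot[T^c]$ with $c\le 2$ push forward to tautological classes, then invoke Proposition \ref{ourfp}. Your write-up is simply a more explicit unpacking of the paper's two-sentence proof, including the observation that $c\le 2$ matches the hypothesis $a+2\le k$ of Proposition \ref{ourfp} when $k=4$.
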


\begin{proof}
Using the push-pull formula, it suffices to show that each of the generators in \eqref{modgens} pushes forward to a tautological class on $\M_{g,n} \smallsetminus \M_{g,n}^3$. This is Proposition \ref{ourfp}.
\end{proof}

Our next task is to find a large open substack $\H'_{4,g,n} \subset \H_{4,g,n}$ whose Chow ring is generated by tautological classes.

\subsection{Construction of a certain substack}
The tetragonal curves live inside $\p \E^{\vee}$ as the vanishing loci of sections of the rank $2$ bundle $\W:=\O_{\p \E^{\vee}}(2)\otimes \gamma^*\F^{\vee}$.
Set $\U=\gamma_*(\W)=\Sym^2 \E\otimes \F^{\vee}$. Let $\B'\subset \B$ be the complement of the support of $R^1\pi_*\U$. By cohomology and base change, $\X:=\pi_*\U|_{\B'}$ is a vector bundle over $\B'$ whose fibers correspond to equations of tetragonal curves. Define $\H'_{4,g} \subset \mathcal{X}$ to be the open substack representing smooth tetragonal curves. 

We construct the moduli space $\H'_{4,g,n}$ similarly. Throughout this section, we shall work over $\B'$, so when we write $\P$ and $\pp \E^\vee$, we mean $\P|_{\B'}$ and $\pp \E^\vee|_{\pi^{-1}\B'}$. Let $\eta_i:(\p \E^{\vee})^n\rightarrow \p \E^{\vee}$ be the projection to the $i^{\text{th}}$ factor, so we have a diagram:
\begin{center}
\begin{tikzcd}
\epsilon^* \mathcal{X} \arrow{r} \arrow{d} & \gamma^*\pi^* \mathcal{X} \arrow{d} \arrow{rr} & & \mathcal{X} \arrow{d} \\
(\pp \E^\vee)^n \arrow{r}{\eta_i} \ar[bend right = 20, "\epsilon", swap, rrr] & \pp \E^\vee \arrow{r}{\gamma} & \P \arrow{r}{\pi} \arrow{r} & \B'.
\end{tikzcd}
\end{center}
We have evaluation maps
in a rank $2$ bundle \begin{equation} \label{regulareval}
\gamma^*\pi^*\X\rightarrow \W
\end{equation}
on $\pp \E^\vee$.
Pulling back to $(\pp \E^\vee)^n$, and taking a sum over the factors, we obtain an evaluation map on $(\pp \E^\vee)^n$
\begin{equation}\label{evaltet}
\epsilon^*\X\rightarrow \bigoplus_{i=1}^n \eta_i^*\W.
\end{equation}
We define $\Y\subset \epsilon^*\X$ to be the preimage of the zero section under \eqref{evaltet}. The stack $\Y$ parametrizes tuples $(E,F,C,p_1,\dots,p_n)$ such that $C \subset \pp E^\vee$ is the vanishing locus of a section of $F^\vee \otimes \O_{\pp E^\vee}(2)$ which contains $p_1, \ldots, p_n$. There is an open inclusion $\H_{4,g,n} \subset \Y$ corresponding to the open conditions that $C$ is smooth and $p_1, \ldots, p_n$ are distinct.

We want to know when \eqref{evaltet} is surjective. By cohomology and base change, we reduce to the case of a single curve $C$, which is a complete intersection on $\p E^{\vee}$ in class $\O_{\p E^{\vee}}(2)\otimes \gamma^*F^{\vee}$. 

\begin{lem}
Let $E=\O(e_1)\oplus \O(e_2)\oplus \O(e_3)$ and $F=\O(f_1)\oplus \O(f_2)$ with $f_1 \leq f_2$ be vector bundles of degree $g+3$ on $\pp^1$.
Let $\Gamma\subset \p E^{\vee}$ be a collection of $n\leq 4f_1-2g+1$ distinct points. Suppose that there exists a smooth, irreducible curve $C$ that is the zero locus of a section of $W:=\O_{\p E^{\vee}}(2)\otimes \gamma^*F^{\vee}$ such that $\Gamma\subset C$. Then the evaluation map
\[
H^0(\p E^{\vee}, W)\rightarrow H^0(\Gamma, W|_{\Gamma})
\]
is surjective.
\end{lem}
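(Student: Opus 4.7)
The plan is to follow the pattern established for the analogous statements in genera $0$ (Lemma 7.2) and the trigonal case (Lemma 8.2): factor the evaluation map as
\[ H^0(\pp E^\vee, W) \longrightarrow H^0(C, W|_C) \longrightarrow H^0(\Gamma, W|_\Gamma) \]
and verify surjectivity of each piece.

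For the first map, the new wrinkle is that $C$ now has codimension $2$ in $\pp E^\vee$, so I would use the Koszul resolution of $I_C$ coming from the defining section $s\in H^0(W)$:
\[ 0 \to \det W^\vee \to W^\vee \to I_C \to 0. \]
Tensoring with $W$ and using the rank $2$ identity $W \otimes \det W^\vee \cong W^\vee$, this becomes
\[ 0 \to W^\vee \to \mathrm{End}(W) \to I_C \otimes W \to 0. \]
Because $W^\vee = \O_{\pp E^\vee}(-2) \otimes \gamma^*F$ and $R^i\gamma_*\O_{\pp E^\vee}(-2) = 0$ for every $i$, all cohomology of $W^\vee$ vanishes. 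Hence $H^1(\pp E^\vee, I_C \otimes W) \cong H^1(\pp E^\vee, \mathrm{End}(W))$. Now $\mathrm{End}(W) = \gamma^* \mathrm{End}(F)$, so Leray reduces this to $H^1(\pp^1, \mathrm{End}(F))$, which vanishes provided $F$ is balanced enough (concretely, $f_2 - f_1 \leq 1$); this is the mild additional hypothesis needed for the first map and should be automatic on the open locus $\B'$.

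For the second map the essential observation is that $W|_C$ splits as a direct sum of line bundles. Indeed, the Casnati--Ekedahl embedding identifies $\O_{\pp E^\vee}(1)|_C \cong \omega_f$, so
\[ W|_C = \omega_f^{\otimes 2} \otimes f^*F^\vee = L_1 \oplus L_2, \qquad L_i := \omega_f^{\otimes 2} \otimes f^*\O(-f_i). \]
From the sequence $0 \to W|_C(-\Gamma) \to W|_C \to W|_\Gamma \to 0$ on $C$, surjectivity reduces to $H^1(C, L_i(-\Gamma)) = 0$ for $i = 1, 2$. Since $C$ is irreducible, by Serre duality this holds whenever $\deg(\omega_C \otimes L_i^{-1}(\Gamma)) < 0$. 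A direct calculation with $\deg \omega_f = 2g+6$ and $\deg f^*\O(1) = 4$ gives
\[ \deg(\omega_C \otimes L_i^{-1}(\Gamma)) = (2g - 2) - (4g + 12 - 4f_i) + n = 4f_i - 2g - 14 + n. \]
The restrictive case is $i = 2$, and negativity forces $n \leq 2g + 13 - 4f_2$. Using the constraint $f_1 + f_2 = g+3$, this is exactly $n \leq 4f_1 - 2g + 1$, matching the hypothesis.

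The main obstacle I anticipate is really the bookkeeping for the first map: one must be confident that the balanced-type condition on $F$ implicit in the hypotheses (the existence of a smooth $C$, together with being on $\B'$) gives the vanishing of $H^1(\pp^1, \mathrm{End}(F))$. Once that is in hand, the second map is almost automatic because the splitting of $W|_C$ coming from the split form of $F$ converts a potentially delicate higher-rank vanishing into a pair of elementary line-bundle degree checks on an irreducible curve.
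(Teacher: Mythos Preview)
Your approach is essentially the paper's: factor through $H^0(C,W|_C)$, and your treatment of the second map (splitting $W|_C$ as $L_1\oplus L_2$, Serre duality, the degree count leading to $n\le 4f_1-2g+1$) is line-for-line what the paper does.

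Where you diverge is the first map, and here you are \emph{more} careful than the paper. The paper simply writes ``the first map is surjective because $H^1(\pp E^\vee,\O)=0$,'' which is the codimension-one reasoning copied from the trigonal case and does not literally apply once $C$ has codimension~$2$. Your Koszul argument is the honest replacement, and your conclusion that one needs $H^1(\pp^1,\mathrm{End}(F))=0$, i.e.\ $f_2-f_1\le 1$, is correct. One small correction to your caveat: this balancedness is not a consequence of lying over $\B'$ (which only controls $H^1(\Sym^2 E\otimes F^\vee)$). Rather, in the paper's actual applications the lemma is invoked only on $\H^4_{4,g,n}$ for $g=5,6$, where $(f_1,f_2)=(4,4)$ or $(4,5)$ and the balancedness holds automatically. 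So your extra hypothesis is genuinely needed for the first map, and it is satisfied precisely where the lemma is used.
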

\begin{proof}
We factor the evaluation map through
\[
H^0(\p E^{\vee},W)\rightarrow H^0(C,W|_{C})\rightarrow H^0(\Gamma,W|_{\Gamma}).
\]
The first map is surjective because $H^1(\p E^{\vee},\O)=0$.
By \cite[Theorem 2.1(2)]{CasnatiEkedahl} or \cite[Example 3.12]{part2}, $\O_{\p E^{\vee}}(1)$ restricts to $\omega_C\otimes \gamma^*\omega_{\p^1}^{\vee}$. 
Consider the exact sequence on $C$
\[
0\rightarrow W|_{C}(-\Gamma)\rightarrow W\rightarrow W|_{\Gamma}\rightarrow 0.
\]
The map $H^0(C,W)\rightarrow H^0(\Gamma,W|_{\Gamma})$ is surjective if $H^1(C,W|_{C}(-\Gamma))=0$. By Serre duality, 
\[
H^1(C,W|_{C}(-\Gamma))=H^0(C,\omega_C\otimes W|_{C}^{\vee}(\Gamma))^{\vee}.
\]
For $i=1,2$, set $W_{f_i}=\O_{\p E^{\vee}}(2)\otimes \gamma^* \O(f_i)$, so that $W=W_{f_1}\oplus W_{f_2}$.
We have 
\[
\deg W_{f_i}|_C=\deg (\O_{\p E^{\vee}}(2)|_C\otimes \gamma^*\O(-f_i)|_{C})=2(2g-2+4\cdot 2)-4f_i=4g+12-4f_i.
\]
Then 
\[
H^0(C,\omega_C\otimes W|_{C}^{\vee}(\Gamma))=H^0(C,\omega_C\otimes W_{f_1}|_{C}^{\vee}(\Gamma))\oplus H^0(C,\omega_C\otimes W_{f_2}|_{C}^{\vee}(\Gamma)).
\]
Thus, the required vanishing will occur if
\[
0>\deg \omega_C\otimes W_{f_i}|_{C}^{\vee}(\Gamma)=2g-2-(4g+12-4f_i)+n=4f_i+n-2g-14,
\]
equivalently if, $n\leq 2g+13-4f_i$.
Since $f_1+f_2=g+3$ and $f_1 \leq f_2$, we can rewrite this as
$n\leq 2g+13-4(g+3-f_1)=4f_1-2g+1$. 
\end{proof}

\begin{rem}
In the degree $3$ case, our curve was defined by the vanishing of a section of a line bundle. In degree $4$, we have the zero locus of a section of a rank $2$ vector bundle $W = F^\vee \otimes \O_{\pp E^\vee}(2)$, and the splitting type of the rank $2$ bundle $F$ enters into the calculation of how many points we can mark and still know we are imposing independent conditions.
Observe also that $f_1 \leq \frac{g+3}{2}$, so the number of marked points we can hope to get with this technique is bounded (independent of $g$) via $n \leq 4f_1 - 2g + 1 \leq 7$.
\end{rem}

It turns out that the locus of covers for which $f_1$ is small often corresponds curves with special geometry properties (see Lemmas \ref{h4} and \ref{h44} below). One might hope to access these special curves by other means and then focus on their complement, the locus of covers with $f_1 \geq f$ for some $f$. We therefore make the following definition.
\begin{definition}
Let $\H_{4,g,n}^f \subset \H_{4,g,n}'$ be the union of Casnati--Ekedahl strata where $f_1 \geq f$. Equivalently, $\H_{4,g,n}^f = \H_{4,g,n}' \smallsetminus \Supp R^1\pi_*\F(f-1)$.
\end{definition}

Recall that we write $\beta: \H_{4,g,n} \to \M_{g,n}$ for the map to the moduli space of curves.
\begin{lem} \label{h4}
We have $\H_{4,5,n}^4 = \H_{4,5,n} \smallsetminus \beta^{-1}(\M_{5,n}^3)$.
\end{lem}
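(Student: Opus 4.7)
The plan is to verify an equality of two open substacks of $\H_{4,5,n}$: the stack $\H^4_{4,5,n}$ where the Casnati--Ekedahl bundle $F$ has balanced splitting $\O(4)\oplus\O(4)$, versus the open locus $\H_{4,5,n}\smallsetminus\beta^{-1}(\M^3_{5,n})$ where the source curve $C$ is not trigonal (equivalently, by the Brill--Noether bound, has gonality exactly $4$, which is the generic gonality in genus $5$). Since the marked points play no role here (they add no constraints to $F$), it suffices to work with $\H_{4,5}$ and $\H^4_{4,5}$.

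The first step is to give $f_1$ an intrinsic geometric meaning. For a degree $4$ cover $\alpha\colon C\to\pp^1$, the Casnati--Ekedahl resolution
\[0\to\pi^*\det F(-4)\to\pi^*F(-2)\to\O_{\pp E^\vee}\to\O_C\to 0\]
can be twisted and pushed forward to identify $F$ with the cokernel of the multiplication map $\Sym^2(\alpha_*\omega_{C/\pp^1}^\vee)\to\alpha_*(\omega_{C/\pp^1}^{\vee\otimes 2})$ (or a variant thereof). From this cohomological description one extracts an invariant $f_1(C,\alpha)=-\min\{t:H^0(\pp^1,F(t))\neq 0\}$ that is encoded by the relative quadrics containing $C\subset\pp E^\vee$. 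Concretely, a nonzero section of $F(-f_1)$ corresponds to a relative quadric $Q\subset\pp E^\vee$ of the smallest possible vertical degree $f_1$ containing $C$.

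Next I would prove direction $(\supseteq)$: if $C$ is non-trigonal of genus $5$, then $f_1=4$. The non-trigonal, non-hyperelliptic genus $5$ curve $C$ canonically embeds in $\pp^4$ as a complete intersection of three quadrics; no quadric containing $C$ factors through a scroll of degree less than $4$. Translated via the Casnati--Ekedahl identification relating relative quadrics in $\pp E^\vee$ to pencils of quadrics cut out on $C$ by the $g^1_4$, this says precisely that there is no section of $F(-m)$ for $m\geq 5$, forcing $f_1\geq 4$; combined with $f_1\leq(f_1+f_2)/2=4$ we get $f_1=4$. At the same time, one checks $\U=\Sym^2E\otimes F^\vee$ is globally generated on $\pp^1$-fibers: with $F$ balanced and $E$ satisfying the usual Maroni bound (which is automatic for genus $5$ since $E$ has degree $8$ and rank $3$), $\U$ has only nonnegative summands, giving $R^1\pi_*\U=0$, so the cover lies in $\H'_{4,5}$.

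For direction $(\subseteq)$, suppose $f_1\leq 3$; I would construct a $g^1_3$ on $C$. The ``small'' relative quadric $Q\subset\pp E^\vee$ corresponding to a section of $F(-f_1)$ has low relative degree and therefore, generically over $\pp^1$, must be a singular (rank $\leq 2$) quadric in the $\pp^2$-fiber; equivalently, $Q$ is swept out by a family of lines. Intersecting $C$ with this ruling produces an effective divisor class on $C$ of degree strictly less than $4$ that moves in a pencil, namely a $g^1_3$ (the precise degree count uses $\deg \O_{\pp E^\vee}(1)|_C=2g-2+2k=14$ and the degrees of the rulings on $Q$, controlled by $f_1\leq 3$). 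Hence $C\in\M^3_{5,n}$. The main obstacle here is making the ``degenerate quadric gives rise to a ruling'' argument rigorous: one must show that for $f_1\leq 3$ the unique section of $F(-f_1)$ defines a quadric whose generic fiber is necessarily of rank $\leq 2$, which is a codimension count inside the space of quadrics in the $\pp^2$-bundle --- this is a direct numerical check once $f_1$ is related to the possible ranks of relative quadrics in $\pp E^\vee$.
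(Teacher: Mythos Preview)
Your overall shape (two containments, interpret $f_1$ via the relative quadrics cutting out $C\subset\pp E^\vee$) is reasonable, but the argument has a genuine gap at the key step you yourself flag as ``the main obstacle.''

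In the $(\subseteq)$ direction you assert that when $f_1\leq 3$ the relative quadric $Q$ corresponding to the $\O(-f_1)$ summand of $F^\vee$ must be singular (rank $\leq 2$) in the generic fiber. But the fiberwise rank of $Q$ is governed by the discriminant of a quadratic form with coefficients in $\Sym^2 E\otimes\O(-f_1)$, and this depends on the splitting type of $E$, which you never determine. Low $f_1$ by itself does not force fiberwise degeneracy; one needs to combine it with constraints on $(e_1,e_2,e_3)$ coming from smoothness of $C$. Your $(\supseteq)$ direction is similarly soft: the phrase ``no quadric containing $C$ factors through a scroll of degree less than $4$'' is not standard, and the translation between sections of $F(-m)$ and quadrics through the canonical model in $\pp^4$ is not as direct as you suggest.

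The paper's argument is quite different in execution. It quotes \cite{789} to identify the complement of $\H'_{4,5}$ with the hyperelliptic locus (where $f_1=2$), and then for $f_1=3$ it first pins down the splitting type of $E$: smoothness forces $f_2\leq 2e_2$ (otherwise a certain coefficient $p_{11}$ vanishes and the curve is singular), which rules out $E=(2,2,4)$ and leaves $E=(2,3,3)$. With both splitting types explicit, the image of $\pp E^\vee$ in $\pp^4$ under the relative $\O(1)$ is a cone over $\pp^1\times\pp^1$, and one checks from the equations that $C$ passes through the cone point; projection from that point lands $C$ on $\pp^1\times\pp^1$ with bidegree $(4,3)$, exhibiting the $g^1_3$ directly. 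So rather than arguing abstractly about ranks of relative quadrics, the paper reduces to one concrete configuration and reads off the trigonal structure from classical projective geometry.
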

\begin{proof}
By \cite[Section 4.3]{789}, the complement of $\H_{4,5}'$ is $\beta^{-1}(\M_{5}^2)$, which is also the locus where $f_1 = 2$. Meanwhile, if $f_1 = 3$, then we claim $C$ is trigonal. First, note that $\Psi_2$ in  \cite[Section 4.3]{789} is actually empty because $f_2 > 2e_2$, which forces such curves to be singular by \cite[Equation 4.7]{789}. Therefore, if $F$ has 
splitting type $(3, 5)$ then $E$ has splitting type $(2, 3, 3)$. In this case,
the image of $\p E^{\vee}$ in $\p^4$ is a cone over $\pp^1 \times \pp^1$ and $C$ passes through the cone point because $p_{11} = 0$ and $\deg q_{11} = 1$. Projecting from the cone point sends $C$ to a curve of bidegree $(4, 3)$ on $\pp^1 \times \pp^1$, so we see that $C$ is trigonal.
\end{proof}

There is also a nice geometric explanation of the locus where $f_1$ is small in genus $6$. Let $PQ_n \subset \M_{6,n}$ be the preimage of the locus $PQ \subset \M_6$ of smooth plane quintics.
\begin{lem} \label{h44}
We have $\H_{4,6,n}^4 = \H_{4,6,n} \smallsetminus \beta^{-1}(\M_{6,n}^3 \cup PQ_n)$.
\end{lem}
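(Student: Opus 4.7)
The plan is to stratify $\H_{4,6,n}$ by the splitting types of the Casnati--Ekedahl bundles $E$ (rank $3$, degree $g+3 = 9$) and $F$ (rank $2$, degree $9$), and then identify the geometric locus in $\M_{6,n}$ corresponding to each stratum with $f_1 \leq 3$. This mirrors the strategy of Lemma \ref{h4} for genus $5$, but with the added feature that smooth plane quintics --- which exist precisely in genus $6$ --- enter the picture alongside trigonal curves.

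First, I would enumerate the possible splittings $F = \O(f_1) \oplus \O(f_2)$ with $f_1 + f_2 = 9$ and $f_1 \leq f_2$: namely $(4,5), (3,6), (2,7), (1,8)$. By definition $\H_{4,6,n}^4$ selects $(4,5)$, so the task is to show that the other three splittings cut out precisely $\beta^{-1}(\M_{6,n}^3 \cup PQ_n)$. For each $f_1 \leq 3$, I would list the compatible splittings $E = \O(e_1) \oplus \O(e_2) \oplus \O(e_3)$ with $e_1 + e_2 + e_3 = 9$ that are consistent with smoothness of the curve $C \subset \pp E^\vee$ cut out by a section of $F^\vee \otimes \O_{\pp E^\vee}(2)$, following the techniques of \cite[Section 4.3]{789}.

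Second, for $f_1 \in \{1,2\}$, I would argue as in the genus $5$ case: when the splitting of $F$ is too unbalanced, one of the quadrics cutting out $C$ has so small a degree that its matrix entries relative to the $E$-splitting must vanish, forcing $C$ to pass through a singular point of (the image of) $\pp E^\vee$. Projecting from that singular point yields a degree $3$ map, so $C \in \M_6^3$. For the new stratum $f_1 = 3$ with $F = \O(3) \oplus \O(6)$, the expected balanced choice $E = \O(3)^{\oplus 3}$ gives $\pp E^\vee \cong \pp^1 \times \pp^2$. Here the $\O(3)$-summand of $F^\vee \otimes \O_{\pp E^\vee}(2)$ corresponds to an equation of bidegree $(-1, 2)$, which, together with the $\O(6)$-summand, forces $C$ to be realized as a smooth plane quintic in the $\pp^2$-factor (the $g^1_4$ being projection from a point). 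The few other possibilities for $E$ with $f_1=3$ would be analyzed similarly and either ruled out or also identified with plane quintics.

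Third, I would establish the reverse inclusion $\beta^{-1}(\M_{6,n}^3 \cup PQ_n) \subseteq \H_{4,6,n} \smallsetminus \H_{4,6,n}^4$, i.e.\ every $g^1_4$ on a trigonal curve or plane quintic has $f_1 \leq 3$. For trigonal $C$, every $g^1_4$ is of the form $g^1_3 + \text{point}$, and a direct Tschirnhausen computation (the Tschirnhausen of $C \to \pp^1$ via $g^1_3$ plus a base point contributes a $\O(1)$ or $\O(2)$ summand to $F$) shows $f_1 \leq 2$. For plane quintics, $W^1_4(C)$ is isomorphic to $C$ itself via point projection, so every $g^1_4$ gives $f_1 = 3$ by the previous paragraph. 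The main obstacle will be the case analysis at $f_1 = 3$: enumerating all admissible $E$-splittings, verifying that each produces a plane quintic model, and conversely that no trigonal-but-not-plane-quintic (or plane-quintic-but-not-trigonal) curve is misplaced across strata. Once done, combining these inclusions with excision (and the definitional fact that $\H_{4,6,n} \smallsetminus \H_{4,6,n}^4$ consists of the $f_1 \leq 3$ strata together with the complement of $\H'_{4,6,n}$, which by Riemann--Roch on $\pp^1$ also lies over $\M_6^3 \cup PQ$) yields the claimed equality of substacks.
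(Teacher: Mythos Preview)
Your plan is the same as the paper's: the paper's proof simply cites \cite[Lemma 4.5 and Remark 4.7]{789}, which carries out exactly the $(E,F)$--splitting-type analysis you outline. So the strategy is correct and there is nothing genuinely different to compare.

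However, the execution at the key stratum $f_1=3$ is wrong. With $E=\O(3)^{\oplus 3}$ you have $\pp E^\vee\cong\pp^1\times\pp^2$ and, since $\gamma_*\O_{\pp E^\vee}(1)=E$, one finds $\O_{\pp E^\vee}(1)=\O(3,1)$. Hence the two components of a section of $F^\vee\otimes\O_{\pp E^\vee}(2)$ live in $\O(3,2)$ and $\O(0,2)$, not in anything of bidegree $(-1,2)$. The $\O(0,2)$ equation is a fixed conic $Q\subset\pp^2$; when $Q$ is smooth, $C\subset\pp^1\times Q\cong\pp^1\times\pp^1$ is a curve of bidegree $(3,4)$ and is therefore \emph{trigonal}, not a plane quintic. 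A direct computation of the Casnati--Ekedahl bundle for a plane quintic projected from a point (using $h^0(E(-m))=h^0(C,\omega_{C/\pp^1}\otimes\alpha^*\O(-m))$ for $m=2,3,4$) gives $h^0(E(-4))=h^0(\O_C(2p))=1$, which rules out $E=\O(3)^{\oplus 3}$ and forces $E=\O(2)\oplus\O(3)\oplus\O(4)$ instead.

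So your coarse dictionary ``$f_1\in\{1,2\}\Rightarrow$ trigonal, $f_1=3\Rightarrow$ plane quintic'' is not how the strata actually sort themselves; both trigonal curves and plane quintics contribute at $f_1=3$, distinguished by the $E$--splitting. This is precisely what the cited lemmas in \cite{789} untangle. Your reverse-inclusion sketch and the handling of $f_1\le 2$ look fine in outline, but the $f_1=3$ case needs to be redone with the correct bundles before the argument closes.
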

\begin{proof}
The allowed pairs of splitting types were determined in \cite[Section 4.3]{789}.
The claim follows immediately from Lemma 4.5 and Remark 4.7 of \cite{789}.
\end{proof}

Thus, for our purposes in genus $5$ and $6$ at least, it will be enough to know information about $\H_{4,g,n}^f$ for $f=4$.
To gain a better understanding of $\H_{4,g,n}^f$, define $U \subset (\pp \E^\vee)^n$ to be the locus over which the evaluation map \eqref{evaltet}  is surjective. We know that $U$ is open, but a priori it could be empty, and in fact $U$ will be empty when $n$ is too large.
However, when $n \leq 4f-2g+1$,
Lemma \ref{tlem} shows that the image of $\H^f_{4,g,n}$ inside $(\pp \E^\vee)^n$ is contained in $U$.
Hence, we find that the inclusion $\H^f_{4,g,n} \subset \Y$ factors through $\H^f_{4,g,n} \subset \Y|_U$.
Moreover, by definition of $\Y$ and $U$, we have that $\Y|_{U}$ is the kernel of the restriction of \eqref{evaltet} to $U$.
In particular, $\Y|_U$ is a vector bundle over $U$. In summary, there are maps
\begin{equation}  \label{opse} \H_{4,g,n}^f \subset \Y|_U \rightarrow U \subset (\pp \E^\vee)^n \rightarrow \P \rightarrow \B' \subset \B
\end{equation}
where the $\subset$'s are open inclusions and the arrows are vector bundles or projective bundles. This leads to the following.

\begin{lem} \label{h4ckgp}
If $n \leq 4f - 2g + 1$, then $\H_{4,g,n}^f$ has the CKgP
\end{lem}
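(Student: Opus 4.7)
The plan is to propagate the CKgP upward along the chain of open embeddings and bundles recorded in \eqref{opse}, starting from the base stack $\B$ at the far right. The whole argument is a formal application of the basic CKgP lemmas from Section~\ref{CKgP}, once one knows $\B$ has the CKgP.

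First, I claim $\B$ has the CKgP by the same recipe used for the trigonal base in Lemma~\ref{3ckgp}. Using the presentation of $\B$ from \cite{part1} (as a moduli of pairs of globally generated bundles on $\pp^1$-bundles with a fixed determinant identification), $\B$ can be realized as an open substack of a vector bundle over a product of classifying stacks of the form $\BGL_{r}\times \BSL_{s}$. These classifying stacks have the CKgP by Lemma~\ref{bgs}, a vector bundle over them has the CKgP by Lemma~\ref{affbunCKgP}, and the open substack inherits the CKgP by Lemma~\ref{open}. Hence $\B$, and therefore also the open substack $\B'\subset \B$, has the CKgP.

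Now walk back along \eqref{opse}. The projection $\P\to \B'$ is a $\pp^1$-bundle, and $\pp\E^\vee\to \P$ is a $\pp^2$-bundle, so Lemma~\ref{grassmann} successively gives the CKgP for $\P$ and then for $\pp\E^\vee$. Iterating this $n$ times (each factor is again a $\pp^2$-bundle over the previous one), we obtain that $(\pp\E^\vee)^n$ has the CKgP. The locus $U\subset(\pp\E^\vee)^n$ is open, so by Lemma~\ref{open} it too has the CKgP. Over $U$ the stack $\Y|_U$ is, by construction, the kernel of the surjective bundle map \eqref{evaltet}, hence a vector bundle over $U$; thus $\Y|_U$ has the CKgP by Lemma~\ref{affbunCKgP}. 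Finally, $\H_{4,g,n}^f\subset \Y|_U$ is open, and one last application of Lemma~\ref{open} yields that $\H_{4,g,n}^f$ has the CKgP.

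The only place the numerical hypothesis $n\leq 4f-2g+1$ enters is in guaranteeing that the image of $\H_{4,g,n}^f$ inside $(\pp\E^\vee)^n$ actually lies in $U$ (via the preceding independent-conditions lemma), which is precisely what makes $\Y|_U$ a vector bundle along the relevant locus. Once this is in place there is no real obstacle: the result is a mechanical combination of Lemmas~\ref{open}, \ref{affbunCKgP}, \ref{grassmann}, and~\ref{bgs}, exactly parallel in structure to Lemma~\ref{3ckgp} for the trigonal locus.
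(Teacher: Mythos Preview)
Your proof is correct and follows essentially the same approach as the paper: reduce via \eqref{opse} and the standard CKgP lemmas to showing $\B$ has the CKgP, then use the quotient presentation of $\B$ from \cite{part1} together with Lemmas~\ref{open}, \ref{affbunCKgP}, and \ref{bgs}. The paper is slightly more specific about the structure of $\B$ (an open in a line bundle over $\V_{3,g+3}\times_{\BSL_2}\V_{2,g+2}$, itself a quotient of an open in affine space by a product of $\GL_d$'s and $\SL_2$), but your sketch captures the same content.
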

\begin{proof}
Considering \eqref{opse} and our standard lemmas about the CKgP, it suffices to show that $\B$ has the CKgP. By definition, $\B$ is an open inside a line bundle over $\V_{3,g+3} \times_{\BSL_2} \V_{2,g+2}$
(defined in \cite[Equation 4.1]{part1}), so by Lemmas \ref{open} and \ref{affbunCKgP} it suffices to show 
$\V_{3,g+3} \times_{\BSL_2} \V_{2,g+2}$ has the CKgP. Then, \cite[Equation 4.1]{part1} realizes this stack as a quotient of an open subset of affine space by a product of $\GL_d$'s and $\SL_2$. Applying Lemmas \ref{open} and \ref{affbunCKgP} once more, together with Lemma \ref{bgs}
completes the proof.
\end{proof}

\subsection{Generators for the Chow ring}
Just like in the degree $3$ case, our demonstration that $\H_{4,g,n}^f$ has the CKgP also gives rise to natural generators for its Chow ring. In particular, they are all tautological in the sense discussed in Section \ref{ts}

\begin{lem} \label{tetgens}
For $n \leq 4f-2g+1$, the map $A^*((\pp \E^\vee)^n) \to A^*(\H^f_{4,g,n})$ is surjective.
\end{lem}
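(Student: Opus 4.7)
The plan is to mirror the argument used for the trigonal analogue, namely Lemma \ref{trigens}. Concretely, under the hypothesis $n \leq 4f - 2g + 1$, I have already observed in the paragraph preceding the lemma that the image of $\H_{4,g,n}^f$ under the map $\H_{4,g,n}^f \to (\pp \E^\vee)^n$ is contained in the open locus $U$ over which the evaluation map \eqref{evaltet} is surjective. Thus the inclusion $\H_{4,g,n}^f \subset \Y$ factors through an open inclusion $\H_{4,g,n}^f \subset \Y|_U$.

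With that in hand, the proof consists of factoring the map in the statement as a composition
\[
A^*((\pp \E^\vee)^n) \twoheadrightarrow A^*(U) \xrightarrow{\sim} A^*(\Y|_U) \twoheadrightarrow A^*(\H_{4,g,n}^f),
\]
and checking that each arrow is surjective. The first arrow is surjective by excision applied to the open inclusion $U \subset (\pp \E^\vee)^n$. For the middle arrow, the definition of $U$ together with cohomology and base change ensures that $\Y|_U$ is the kernel of a surjection of vector bundles on $U$, hence a vector bundle on $U$; then pullback gives an isomorphism on Chow rings by the homotopy property. The third arrow is again surjective by excision applied to the open inclusion $\H_{4,g,n}^f \subset \Y|_U$.

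The only substantive input is the bookkeeping that guarantees $\H_{4,g,n}^f \subset \Y|_U$, which relies on the numerical hypothesis $n \leq 4f - 2g + 1$ through the preceding surjectivity lemma for the evaluation map on configurations of distinct points of a smooth curve in class $\O_{\pp E^\vee}(2) \otimes \gamma^*F^\vee$ when $f_1 \geq f$. Everything else is a formal consequence of excision and the homotopy invariance of Chow groups under vector bundles, so there is no real obstacle once the open containment has been established.
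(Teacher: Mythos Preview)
Your proposal is correct and follows essentially the same approach as the paper's proof: both factor the map as $A^*((\pp \E^\vee)^n) \twoheadrightarrow A^*(U) \cong A^*(\Y|_U) \twoheadrightarrow A^*(\H^f_{4,g,n})$, using excision for the outer surjections and the vector bundle property of $\Y|_U \to U$ for the middle isomorphism.
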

\begin{proof}
By excision, we have a series of surjections
\[A^*((\pp \E^\vee)^n \rightarrow A^*(U) \cong A^*(\Y|_U) \rightarrow A^*(\H^f_{4,g,n}).\]
The middle map is an isomorphism because $\Y|_{U}$ is a vector bundle over $U$.
\end{proof}


\subsection{Conclusion when $g = 5$} \label{g5}
In genus $5$, we 
use push forward along the proper map 
\[\beta'_n: \H_{4,5,n}^4 = \H_{4,5,n} \smallsetminus \beta^{-1}(\M_{5,n}^3) \to \M_{5,n} \smallsetminus \M_{5,n}^3.\]
(The equality of domains above is Lemma \ref{h4}.)
When $f = 4$, we have $4f - 2g +1 = 7$.

\begin{lem} \label{5end}
If $n \leq 7$, then $\M_{5,n}$ has the CKgP and $A^* = R^*$.
\end{lem}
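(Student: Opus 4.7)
The plan is to stratify $\M_{5,n}$ by gonality and handle each stratum with the tools just developed. Since every smooth genus $5$ curve has gonality at most $4$ (by Brill--Noether), we have $\M_{5,n}=\M_{5,n}^4$, so it suffices to treat $\M_{5,n}^3$ and its open complement $\M_{5,n}^4\smallsetminus\M_{5,n}^3$ separately and then patch via Lemmas \ref{Kstrat} and \ref{open} together with excision.

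For the trigonal locus, Lemma \ref{trig-thm} applied with $g=5$ (and $n\leq g+7=12$, which is well within our range) shows that $\M_{5,n}^3$ has the CKgP and that every class supported on $\M_{5,n}^3\subseteq\M_{5,n}$ is tautological. So the only remaining work is to prove the analogous statement for the open complement $\M_{5,n}\smallsetminus\M_{5,n}^3$.

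For this, I would use the tetragonal Hurwitz space. By Lemma \ref{h4}, we have the identification
\[
\H_{4,5,n}^4 \;=\; \H_{4,5,n}\smallsetminus\beta^{-1}(\M_{5,n}^3),
\]
and the forgetful map $\beta_n':\H_{4,5,n}^4\to\M_{5,n}\smallsetminus\M_{5,n}^3$ is proper and surjective. Taking $f=4$ and $g=5$, the bound $n\leq 4f-2g+1=7$ in Lemma \ref{h4ckgp} is exactly the hypothesis $n\leq 7$, so $\H_{4,5,n}^4$ has the CKgP. Applying Lemma \ref{surjCKgP} transfers the CKgP to $\M_{5,n}\smallsetminus\M_{5,n}^3$. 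Combining with $\M_{5,n}^3$ via Lemma \ref{Kstrat} then gives the CKgP for $\M_{5,n}$.

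For the Chow generation statement on the open complement, Lemma \ref{tetgens} (again using $n\leq 4f-2g+1=7$) shows that $A^*(\H_{4,5,n}^4)$ is generated, as an abelian group, by pullbacks of classes from $A^*((\pp\E^\vee)^n)$. By the push-pull formula, every class on $\M_{5,n}\smallsetminus\M_{5,n}^3$ is in the image of $\beta_{n*}'$, hence is a pushforward of such a class from $\H_{4,5,n}^4$. Lemma \ref{pushlem} ensures that each such pushforward is tautological on $\M_{5,n}\smallsetminus\M_{5,n}^3$. Finally, combining this with the fact that all classes supported on $\M_{5,n}^3$ are tautological (and that $[\M_{5,n}^3]$ itself is tautological, being the pullback of a tautological class on $\M_5$ by the push-pull trick used in Lemma \ref{trig-thm}) yields $A^*(\M_{5,n})=R^*(\M_{5,n})$. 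The main structural obstacle has already been absorbed into the earlier sections --- the identification $\H_{4,5,n}^4=\H_{4,5,n}\smallsetminus\beta^{-1}(\M_{5,n}^3)$ from Lemma \ref{h4} is what makes this gonality stratification clean in genus $5$, and the sharp bound $n\leq 7$ here comes directly from the splitting-type inequality $n\leq 4f_1-2g+1$ with $f_1=4$.
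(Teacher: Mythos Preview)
Your proposal is correct and follows essentially the same route as the paper: stratify $\M_{5,n}$ into $\M_{5,n}^3$ and its complement, invoke Lemma~\ref{trig-thm} for the trigonal part, and for the open complement use the proper surjection $\beta_n':\H_{4,5,n}^4\to\M_{5,n}\smallsetminus\M_{5,n}^3$ together with Lemmas~\ref{h4ckgp}, \ref{surjCKgP}, \ref{tetgens}, and \ref{pushlem}. One minor wording point: the surjectivity of $\beta_{n*}'$ on Chow groups is not the push-pull formula but the standard fact that proper surjective maps of Deligne--Mumford stacks induce surjections on rational Chow groups; the push-pull formula is what then reduces the problem to pushing forward the module generators from Lemma~\ref{tetgens}.
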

\begin{proof}
We already know from Lemma \ref{trig-thm}, that $\M_{5,n}^3$ has the CKgP and that all classes supported on it are tautological. It thus remains to show that $\M_{5,n} \smallsetminus \M_{5,n}^3$ has the CKgP and $A^* = R^*$.

We have a proper surjective map $\beta_n': \H_{4,5,n}^4 \to \M_{5,n} \smallsetminus \M_{5,n}^3$. We know $\H_{4,5,n}^4$ has the CKgP (Lemma \ref{h4ckgp}), so $\M_{5,n} \smallsetminus \M_{5,n}^3$ has the CKgP by Lemma \ref{surjCKgP}.
We also know that $\beta_{n*}'$ induces a surjection on Chow groups, so it suffices to show that the image of $\beta_{n*}'$ is tautological.
The result now follows from Lemmas \ref{tetgens} and \ref{pushlem}.
\end{proof}

\subsection{Conclusion when $g = 6$} \label{g6}
When $g = 6$, we use push forwards along the proper map \[\beta': \H_{4,6,n}^4 = \H_{4,6,n} \smallsetminus \beta^{-1}(\M_{6,n}^3 \cup PQ_n) \to \M_{6,n} \smallsetminus (\M_{6,n}^3 \cup PQ_n)\]
where $PQ_n$ is the plane quintic locus, defined as the image of $\G_{5,n} \to \M_{6,n}$ from Section \ref{planesec}.
(The equality of domains above is Lemma \ref{h4}.)
When $f = 4$, we have $4f - 2g + 1 = 5$

\begin{lem} \label{6end} 
If $n \leq 5$, then $\M_{6,n}$ has the CKgP and $A^* = R^*$
\end{lem}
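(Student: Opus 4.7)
The plan mirrors that of Lemma~\ref{5end}, but with an extra stratum to account for the plane quintic locus. I would stratify $\M_{6,n}$ into three locally closed pieces: the low-gonality locus $\M_{6,n}^3$ (closed), the plane quintic locus $PQ_n$, and the open complement $\M':=\M_{6,n}\smallsetminus(\M_{6,n}^3\cup PQ_n)$. Note that $\M_{6,n}^3$ and $PQ_n$ are disjoint since a smooth plane quintic has gonality exactly $4$. Having shown that each piece has the CKgP and that classes supported on it are tautological in $\M_{6,n}$, I would conclude by Lemma~\ref{Kstrat} together with the excision sequence for the closed inclusion $\M_{6,n}^3\cup PQ_n\hookrightarrow\M_{6,n}$.

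For the trigonal stratum, Lemma~\ref{trig-thm} applies with $g=6$ since $n\leq 5\leq g+7=13$: $\M_{6,n}^3$ has the CKgP, and all classes supported on it are tautological. For the plane quintic stratum, $PQ_n$ is by definition the image of $\G_{5,n}\to\M_{6,n}$, which factors through the $\mu_3$-gerbe $\G_{5,n}\to\F_{5,n}\cong PQ_n$. Lemma~\ref{gck} with $d=5$ applies since $n\leq 5\leq 3d-1=14$, giving CKgP for $\G_{5,n}$; Lemma~\ref{gerbe} then transfers this to $PQ_n$. By Lemma~\ref{gdn}, $A^*(\G_{5,n})$ is generated by pullbacks of tautological classes from $\M_{6,n}$. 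Since $[PQ_n]$ is the pullback along $\M_{6,n}\to\M_6$ of $[PQ]\in R^*(\M_6)$, which is tautological by Penev--Vakil \cite{PenevVakil}, the push--pull formula yields that all classes supported on $PQ_n$ are tautological in $\M_{6,n}$.

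For the open stratum $\M'$, Lemma~\ref{h44} identifies it as the image of the proper surjection $\beta':\H_{4,6,n}^4\to\M'$. Lemma~\ref{h4ckgp} applied with $f=4$ and $n\leq 5=4f-2g+1$ shows $\H_{4,6,n}^4$ has the CKgP, and Lemma~\ref{surjCKgP} passes this to $\M'$. Since $\beta'_*$ induces a surjection on rational Chow, it suffices to check its image is tautological. By Lemma~\ref{tetgens}, every class on $\H_{4,6,n}^4$ lies in the image of $A^*((\pp\E^\vee)^n)$, and Lemma~\ref{pushlem} shows the pushforwards of such classes are tautological, giving $A^*(\M')=R^*(\M')$.

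Combining the three strata, Lemma~\ref{Kstrat} yields CKgP for $\M_{6,n}$. The excision sequence
\[A^*(\M_{6,n}^3\cup PQ_n)\to A^*(\M_{6,n})\to A^*(\M')\to 0\]
together with the further excision sequence $A^*(\M_{6,n}^3)\to A^*(\M_{6,n}^3\cup PQ_n)\to A^*(PQ_n)\to 0$ then gives $A^*(\M_{6,n})=R^*(\M_{6,n})$ by a routine diagram chase. There is no substantial technical obstacle: the argument is a bookkeeping exercise applying the tools developed in Sections~\ref{hypsec}--\ref{tetsec}, the only new input beyond the genus $5$ case being the tautological nature of the plane quintic locus in $\M_6$.
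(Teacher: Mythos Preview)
Your proposal is correct and follows essentially the same approach as the paper: stratify $\M_{6,n}$ into the trigonal locus, the plane quintic locus, and the open complement, handling each piece with the tools already developed. The only point the paper makes explicit that you gloss over is that Lemma~\ref{pushlem} is stated for pushforward to $\M_{g,n}\smallsetminus\M_{g,n}^3$, not to $\M'=\M_{6,n}\smallsetminus(\M_{6,n}^3\cup PQ_n)$; the paper notes that Proposition~\ref{ourfp} remains valid after further restricting by excision, which amounts to the flat base change you would use to transport $\beta'_{n*}$ along the open inclusion $\M'\hookrightarrow\M_{6,n}\smallsetminus\M_{6,n}^3$. This is the routine step your last paragraph alludes to.
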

\begin{proof}
We already know from Lemma \ref{trig-thm}, that $\M_{5,n}^3$ has the CKgP and that all classes supported on it are tautological.
The fundamental class of $PQ_n$ is the pullback of the class of the plane quintic locus on $\M_6$, which is tautological by an argument of Faber \cite{Faberconjecture} (as applied in \cite{PenevVakil}).
By Lemma \ref{gdn}, we know that $A^*(PQ_n)$ is generated by restrictions of tautological classes. By the push-pull formula, all classes supported on $PQ_n$ are tautological. Furthermore, $PQ_n$ has the CKgP by Lemma \ref{gck}.

It thus remains to show that the stratum $\M_{6,n} \smallsetminus (\M_{6,n}^3 \cup PQ_n)$ has the CKgP and $A^* = R^*$. The proof is now nearly identical to Lemma \ref{5end}.
The only difference is to note that, by excision, Proposition \ref{ourfp} remains valid upon restricting to the complement of $\beta^{-1}(PQ_n)$ in the domain and $PQ_n$ in the target.
\end{proof}

\subsection{Final step: filling in}
Lemmas \ref{4end}, \ref{5end} and \ref{6end} add three more columns of open circles (pictured in blue) to our chart, giving the chart on the left below.  We also have an open circle for $\M_7$ (pictured in black) by our previous work \cite[Theorem 1.1]{789}. (It is clear that each of the strata involved in the proof given in \cite{789} has the CKgP, being a quotient of an open subset of affine space by a suitable group.)
Using the filling criteria (Lemma \ref{fc1} for $\Mb_{g,n}$ and Lemma \ref{tc} for $\M_{g,n}^{\ct}$) we obtain the chart on the right.

\begin{center}
\begin{tikzpicture}[scale = .4]

\draw[->] (8.5, 7) -- (12.5,7);
\node[scale=.8] at (10.5, 8) {Filling criteria};
\node[scale=.8] at (10.5, 6) {version 1 / ct};

\node[scale=.5] at (1, -.7) {$1$};
\node[scale=.5] at (2, -.7) {$2$};
\node[scale=.5] at (3, -.7) {$3$};
\node[scale=.5] at (4, -.7) {$4$};
\node[scale=.5] at (5, -.7) {$5$};
\node[scale=.5] at (6, -.7) {$6$};
\node[scale=.5] at (7, -.7) {$7$};

\node[scale=.5] at (-.7, 1) {$1$};
\node[scale=.5] at (-.7, 2) {$2$};
\node[scale=.5] at (-.7, 3) {$3$};
\node[scale=.5] at (-.7, 4) {$4$};
\node[scale=.5] at (-.7, 5) {$5$};
\node[scale=.5] at (-.7, 6) {$6$};
\node[scale=.5] at (-.7, 7) {$7$};
\node[scale=.5] at (-.7, 8) {$8$};
\node[scale=.5] at (-.7, 9) {$9$};
\node[scale=.5] at (-.7, 10) {$10$};
\node[scale=.5] at (-.7, 11) {$11$};
\node[scale=.5] at (-.7, 12) {$12$};
\node[scale=.5] at (-.7, 13) {$13$};

\draw[->] (0, 0) -- (8, 0);
\draw[->] (0, 0) -- (0, 14);
\node[scale=.9] at (8.4,0) {$g$};
\node[scale=.9] at (0, 14.4) {$n$};
\draw (-.1, 1) -- (.1, 1);
\draw (-.1, 2) -- (.1, 2);
\draw (1, -.1) -- (1, .1);
\draw (2, -.1) -- (2, .1);
\draw (3, -.1) -- (3, .1);
\draw (4, -.1) -- (4, .1);
\filldraw (0, 3) circle (4pt);
\filldraw (0, 4) circle (4pt);
\filldraw (0, 5) circle (4pt);
\filldraw (0, 6) circle (4pt);
\filldraw (0, 7) circle (4pt);
\filldraw (0, 8) circle (4pt);
\filldraw (0, 9) circle (4pt);
\filldraw (0, 10) circle (4pt);
\filldraw (0, 11) circle (4pt);
\filldraw (0, 12) circle (4pt);
\filldraw (0, 13) circle (4pt);
\filldraw (1, 1) circle (4pt);
\filldraw (1, 2) circle (4pt);
\filldraw (1, 3) circle (4pt);
\filldraw (1, 4) circle (4pt);
\filldraw (1, 5) circle (4pt);
\filldraw (1, 6) circle (4pt);
\filldraw (1, 7) circle (4pt);
\filldraw (1, 8) circle (4pt);
\filldraw (1, 9) circle (4pt);
\filldraw (1, 10) circle (4pt);
\node[scale = .6, color = red] at (1, 11) {$\times$};
\node[scale = .6, color = red] at (1, 12) {$\times$};
\node[scale = .6, color = red] at (1, 13) {$\times$};
\filldraw (2, 0) circle (4pt);
\filldraw (2, 1) circle (4pt);
\filldraw (2, 2) circle (4pt);
\filldraw (2, 3) circle (4pt);
\filldraw (2, 4) circle (4pt);
\filldraw (2, 5) circle (4pt);
\filldraw (2, 6) circle (4pt);
\filldraw (2, 7) circle (4pt);
\filldraw (2, 8) circle (4pt);
\filldraw (2, 9) circle (4pt);
\draw (2, 10) circle (4pt);
\filldraw (3, 0) circle (4pt);
\filldraw (3, 1) circle (4pt);
\filldraw (3, 2) circle (4pt);
\filldraw (3, 3) circle (4pt);
\filldraw (3, 4) circle (4pt);
\filldraw (3, 5) circle (4pt);
\filldraw (3, 6) circle (4pt);
\filldraw (3, 7) circle (4pt);
\filldraw (3, 8) circle (4pt);
\draw (3, 9) circle (4pt);
\draw (3, 10) circle (4pt);
\draw (3, 11) circle (4pt);
\filldraw[color = white] (4,0) circle (4pt);
\draw[color=blue] (4, 0) circle (4pt);
\draw[color=blue] (4, 1) circle (4pt);
\draw[color=blue] (4, 2) circle (4pt);
\draw[color=blue] (4, 3) circle (4pt);
\draw[color=blue] (4, 4) circle (4pt);
\draw[color=blue] (4, 5) circle (4pt);
\draw[color=blue] (4, 6) circle (4pt);
\draw[color=blue] (4, 7) circle (4pt);
\draw[color=blue] (4, 8) circle (4pt);
\draw[color=blue] (4, 9) circle (4pt);
\draw[color=blue] (4, 10) circle (4pt);
\draw[color=blue] (4, 11) circle (4pt);
\filldraw[color=white] (6, 0) circle (4pt);
\draw[color=blue] (6, 0) circle (4pt);
\draw[color=blue] (6, 1) circle (4pt);
\draw[color=blue] (6, 2) circle (4pt);
\draw[color=blue] (6, 3) circle (4pt);
\draw[color=blue] (6, 4) circle (4pt);
\draw[color=blue] (6, 5) circle (4pt);

\filldraw[color=white] (5, 0) circle (4pt);
\draw[color=blue] (5, 0) circle (4pt);
\draw[color=blue] (5, 1) circle (4pt);
\draw[color=blue] (5, 2) circle (4pt);
\draw[color=blue] (5, 3) circle (4pt);
\draw[color=blue] (5, 4) circle (4pt);
\draw[color=blue] (5, 5) circle (4pt);
\draw[color=blue] (5, 6) circle (4pt);
\draw[color=blue] (5, 7) circle (4pt);
\filldraw[color=white] (7, 0) circle (4pt);
\draw (7, 0) circle (4pt);

\node[color=blue, scale = .8] at (4, -2) {Lemmas 9.11, 10.12 and 10.13};
\end{tikzpicture}
\hspace{.25in}
\begin{tikzpicture}[scale = .4]
\node[scale=.5] at (1, -.7) {$1$};
\node[scale=.5] at (2, -.7) {$2$};
\node[scale=.5] at (3, -.7) {$3$};
\node[scale=.5] at (4, -.7) {$4$};
\node[scale=.5] at (5, -.7) {$5$};
\node[scale=.5] at (6, -.7) {$6$};
\node[scale=.5] at (7, -.7) {$7$};

\node[scale=.5] at (-.7, 1) {$1$};
\node[scale=.5] at (-.7, 2) {$2$};
\node[scale=.5] at (-.7, 3) {$3$};
\node[scale=.5] at (-.7, 4) {$4$};
\node[scale=.5] at (-.7, 5) {$5$};
\node[scale=.5] at (-.7, 6) {$6$};
\node[scale=.5] at (-.7, 7) {$7$};
\node[scale=.5] at (-.7, 8) {$8$};
\node[scale=.5] at (-.7, 9) {$9$};
\node[scale=.5] at (-.7, 10) {$10$};
\node[scale=.5] at (-.7, 11) {$11$};
\node[scale=.5] at (-.7, 12) {$12$};
\node[scale=.5] at (-.7, 13) {$13$};
\draw[->] (0, 0) -- (8, 0);
\draw[->] (0, 0) -- (0, 14);
\node[scale=.9] at (8.4,0) {$g$};
\node[scale=.9] at (0, 14.4) {$n$};
\draw (-.1, 1) -- (.1, 1);
\draw (-.1, 2) -- (.1, 2);
\draw (1, -.1) -- (1, .1);
\draw (2, -.1) -- (2, .1);
\draw (3, -.1) -- (3, .1);
\draw (4, -.1) -- (4, .1);
\filldraw (0, 3) circle (4pt);
\filldraw (0, 4) circle (4pt);
\filldraw (0, 5) circle (4pt);
\filldraw (0, 6) circle (4pt);
\filldraw (0, 7) circle (4pt);
\filldraw (0, 8) circle (4pt);
\filldraw (0, 9) circle (4pt);
\filldraw (0, 10) circle (4pt);
\filldraw (0, 11) circle (4pt);
\filldraw (0, 12) circle (4pt);
\filldraw (0, 13) circle (4pt);
\filldraw (1, 1) circle (4pt);
\filldraw (1, 2) circle (4pt);
\filldraw (1, 3) circle (4pt);
\filldraw (1, 4) circle (4pt);
\filldraw (1, 5) circle (4pt);
\filldraw (1, 6) circle (4pt);
\filldraw (1, 7) circle (4pt);
\filldraw (1, 8) circle (4pt);
\filldraw (1, 9) circle (4pt);
\filldraw (1, 10) circle (4pt);
\node[scale = .6, color = red] at (1, 11) {$\times$};
\node[scale = .6, color = red] at (1, 12) {$\times$};
\node[scale = .6, color = red] at (1, 13) {$\times$};
\filldraw (2, 0) circle (4pt);
\filldraw (2, 1) circle (4pt);
\filldraw (2, 2) circle (4pt);
\filldraw (2, 3) circle (4pt);
\filldraw (2, 4) circle (4pt);
\filldraw (2, 5) circle (4pt);
\filldraw (2, 6) circle (4pt);
\filldraw (2, 7) circle (4pt);
\filldraw (2, 8) circle (4pt);
\filldraw (2, 9) circle (4pt);
\draw (2, 10) circle (4pt);
\filldraw (3, 0) circle (4pt);
\filldraw (3, 1) circle (4pt);
\filldraw (3, 2) circle (4pt);
\filldraw (3, 3) circle (4pt);
\filldraw (3, 4) circle (4pt);
\filldraw (3, 5) circle (4pt);
\filldraw (3, 6) circle (4pt);
\filldraw (3, 7) circle (4pt);
\filldraw (3, 8) circle (4pt);
\draw (3, 9) circle (4pt);
\draw (3, 10) circle (4pt);
\draw (3, 11) circle (4pt);
\filldraw[color = white] (4,0) circle (4pt);
\filldraw[color=blue] (4, 0) circle (4pt);
\filldraw[color=blue] (4, 1) circle (4pt);
\filldraw[color=blue] (4, 2) circle (4pt);
\filldraw[color=blue] (4, 3) circle (4pt);
\filldraw[color=blue] (4, 4) circle (4pt);
\filldraw[color=blue] (4, 5) circle (4pt);
\filldraw[color=blue] (4, 6) circle (4pt);
\filldraw[color=blue] (4, 7) circle (4pt);
\filldraw[color=white] (4, 7) circle (1.6pt);
\draw (4, 8) circle (4pt);
\draw (4, 9) circle (4pt);
\draw (4, 10) circle (4pt);
\draw (4, 11) circle (4pt);
\filldraw[color=white] (6, 0) circle (4pt);
\filldraw[color=blue] (6, 0) circle (4pt);
\filldraw[color=blue] (6, 1) circle (4pt);
\filldraw[color=blue] (6, 2) circle (4pt);
\filldraw[color=blue] (6, 3) circle (4pt);
\filldraw[color=white] (6, 3) circle (1.6pt);
\filldraw[color=blue] (6, 4) circle (4pt);
\filldraw[color=white] (6, 4) circle (1.6pt);
\filldraw[color=blue] (6, 5) circle (4pt);
\filldraw[color=white] (6, 5) circle (1.6pt);

\filldraw[color=white] (5, 0) circle (4pt);
\filldraw[color=blue] (5, 0) circle (4pt);
\filldraw[color=blue] (5, 1) circle (4pt);
\filldraw[color=blue] (5, 2) circle (4pt);
\filldraw[color=blue] (5, 3) circle (4pt);
\filldraw[color=blue] (5, 4) circle (4pt);
\filldraw[color=blue] (5, 5) circle (4pt);
\filldraw[color=blue] (5, 6) circle (4pt);
\draw (5, 7) circle (4pt);
\filldraw[color=white] (5, 5) circle (1.6pt);
\filldraw[color=white] (5, 6) circle (1.6pt);

\filldraw[color=white] (7, 0) circle (4pt);
\filldraw[color=blue] (7, 0) circle (4pt);

\node[color=white, scale = .8] at (4, -2) {Lemmas 9.12 and 9.13};
\end{tikzpicture}
\end{center}

\bibliographystyle{amsplain}
\bibliography{refs}
\end{document}